\numberwithin{equation}{section}
\definecolor{darkgreen}{rgb}{0.0, 0.7, 0.0}
\newenvironment{TO}{\noindent \color{darkgreen}{\bf TOM:} \footnotesize}{}
\definecolor{cyan}{cmyk}{1,0,0,0}
\newenvironment{MD}{\noindent \color{blue}{\bf MARK:} \footnotesize}{}
\newcommand{\cb}{\color{blue}}
\newcommand{\cm}{\color{magenta}}
\newcommand{\bdg}{\begin{dg}}
\newcommand{\edg}{\end{dg}}
\newtheorem{tm}{Theorem}[subsection]
\newtheorem{lm}[tm]{Lemma}
\newtheorem{pr}[tm]{Proposition}
\newtheorem{rmk}[tm]{Remark}
\newtheorem{cor}[tm]{Corollary}
\newtheorem{ex}[tm]{Example}
\newtheorem{fact}[tm]{Fact}
\newtheorem{??}[tm]{Question}
\newtheorem{???}[tm]{Questions}
\newtheorem{defi}[tm]{Definition}
\newtheorem{conj}[tm]{Conjecture}
\newcommand{\ben}{\begin{enumerate}}
\newcommand{\een}{\end{enumerate}}
\newcommand{\bit}{\begin{itemize}}
\newcommand{\eit}{\end{itemize}}
\newcommand{\beq}{\begin{equation}}
\newcommand{\eeq}{\end{equation}}
\newcommand{\la}{\label}
\newcommand{\n}{\noindent}
\newcommand\ci{\cite}
\font\tenmsb=msbm10
\font\sevenmsb=msbm7
\font\fivemsb=msbm5
\def\Bbb#1{{\fam\msbfam #1}}
\font\teneufm=eufm10
\font\seveneufm=eufm7
\font\fiveeufm=eufm5
\newcommand{\im}{ \hbox{\rm Im} }
\newcommand{\lorw}{\longrightarrow}
\newcommand\oql{\overline{\mathbb Q}_\ell}
\newcommand\zed{{\mathbb Z}}
\newcommand\pn[1]{{\mathbb P}^{#1}}
\newcommand{\w}[1]{\widetilde{#1}}
\newcommand{\ov}[1]{\overline{#1}}
\newcommand{\m}[1]{\mathcal{#1}}
\newcommand{\bb}[1]{\Bbb{#1}}
\newcommand{\tu}[1]{ \tau_{ \geq {#1} } }
\newcommand{\G}{\mathcal{G}}
\newcommand{\PP}{\mathcal{P}}
\newcommand{\Q}{\mathcal{Q}}
\newcommand{\B}{\mathcal{B}}
\newcommand{\W}{\mathcal{W}}
\newcommand{\pwp}{^{\PP}\!\W^{\PP}}
\newcommand{\xp}[1]{X_\PP ({#1})}
\newcommand{\db}[1]{D^b_m({#1}, \oql)}
\newcommand\ic[1]{\m{IC}_{#1}}
\newcommand{\bwp}[2]{_{{#1}}\m{W}_{{#2}}}
\newcommand{\xbp}[2]{X_{#1}(#2)}
\newcommand{\good}{good}
\title{Frobenius semisimplicity for convolution morphisms}
\author{Mark Andrea de Cataldo}
\address{Department of Mathematics\\ 
Stony Brook University\\ 
Stony Brook, NY 11794, U.S.A.}
\email{mark.decataldo@stonybrook.edu}
\author{Thomas J. Haines}
\address{University of Maryland\\
Department of Mathematics\\
College Park, MD 20742-4015 U.S.A.}
\email{tjh@math.umd.edu}
\author{Li Li}
\address{Department of Mathematics and Statistics\\ 
Oakland University, Rochester\\
MI 48309 U.S.A.}
\email{li2345@oakland.edu}
\thanks{The research of M.A. de Cataldo was partially supported by NSF grant DMS-1301761 and by a grant from the Simons Foundation
($\#$296737 to Mark Andrea de Cataldo). The research of T.\,Haines was partially supported by NSF grant DMS-1406787. The research of L.\,Li was partially supported by the Oakland
University URC Faculty Research Fellowship Award.}
\begin{document}

\maketitle

\begin{abstract}
This article concerns properties of mixed $\ell$-adic complexes on
varieties over finite fields, related to the action of the Frobenius
automorphism. We establish a fiberwise criterion for the
semisimplicity and Frobenius semisimplicity of the direct image
complex under a proper morphism of varieties over a finite field. We
conjecture that the direct image of the intersection complex on the
domain is always semisimple and Frobenius semisimple; this conjecture
would imply that a strong form of the decomposition theorem of
Beilinson-Bernstein-Deligne-Gabber is valid over finite fields. We
prove our conjecture for (generalized) convolution morphisms
associated with partial affine flag varieties for split connected
reductive groups over finite fields. As a crucial tool, we develop a new schematic theory of big cells for loop groups. With suitable reformulations, the main results are valid
over any algebraically closed ground field. 
\smallskip

\end{abstract}

\tableofcontents


\markboth{Mark Andrea de Cataldo, \,Thomas J.\,Haines,\, and Li Li}{Frobenius semisimplicity for convolution morphisms}

\section{Introduction and terminology}\la{intro}


\subsection{Introduction}\la{intro}$\;$

Let $k$ be a finite field with a fixed algebraic closure $\ov{k}$, let  $f:X \to Y$ be a proper $k$-morphism of $k$-varieties, and let $P$ be a mixed and simple, hence pure, perverse sheaf on $X$;  we denote the situation
after passage to $\ov{k}$ by $\ov{f}: \ov{X} \to \ov{Y}$, $\ov{P}$. The decomposition theorem
\ci{bbd} holds over $\ov{k}$, i.e.,\,the direct image complex
$R\ov{f}_* \ov{P}$ on $\ov{Y}$ splits as a finite direct sum of shifted intersection cohomology complexes
$\m{IC}_{\ov{Z'}}(\ov{L'})$ associated with pairs $(Z',L')$, where, after having passed to a finite extension
$k'$ of $k$ if necessary, $Z'$ is a geometrically integral subvariety of $Y'=Y\otimes_k k'$, and $L'$ is a pure and  simple sheaf defined on a suitable Zariski-dense smooth open subvariety of $Z'$. We abbreviate the above as follows:
after passage to $\ov{k}$, the complex $R\ov{f}_* \ov{P}$ on $\ov{Y}$
is {\em semisimple}. 

 It is not known  whether  $Rf_* P$  is already semisimple
over $k$, i.e.,\,whether $Rf_*P$ splits  into a finite direct sum of shifted terms of the form 
$\m{IC}_Z(L)$ with $Z$ being $k$-integral and $L$  pure and simple. As pointed out in \ci[Prop.\,2.1]{dC},  this is true 
if we only ask that $L$ is indecomposable, rather than simple; the only obstruction to the  simplicity of an indecomposable $L$
is the a priori possible presence of Jordan-type sheaves; see Fact \ref{factjod}.  

A different, yet intimately  related question is: 
is the action of Frobenius on the stalks of the direct image sheaves  $R^if_* P$ semisimple?  In this case, we say that the complex $Rf_*P$ on $Y$  is {\em Frobenius 
semisimple}.  

General considerations related to the Tate conjecture over finite fields lead us to conjecture (see Conjecture \ref{conjic}) that the direct image complex $Rf_* \m{IC}_X$ on $Y$  is semisimple  and  Frobenius semisimple, where $\m{IC}_X$ is the  intersection complex of $X$.\footnote{We have normalized the intersection complex $\m{IC}_X$ of an integral variety $X$
so that if $X$ is smooth, then $\m{IC}_X \cong {\oql}_X$; this is not a perverse sheaf; the perverse sheaf counterpart
is $IC_X=\m{IC}_X [\dim X]$.} Note that this is not known even for $f= {\rm Id}_X.$ Moreover, a proof of our conjecture would imply 
the semisimplicity of the action of Frobenius on the cohomology of a smooth projective variety, which is also unknown in general. (It is known in some important special cases: Weil's proof of the Riemann Hypothesis for abelian varieties implies Frobenius semisimplicity for their cohomology groups, cf.\,\cite[p.\,203]{Mum}; Deligne proved the corresponding result for K3 surfaces, using a reduction to abelian varieties, cf.\,\cite[6.6]{Del}.)

In this paper, we  establish the validity of Conjecture \ref{conjic} in the case
of  Lusztig-type convolution morphisms associated with twisted products of  Schubert varieties
in partial (affine) flag varieties. The  validity of the conjecture in the case of proper toric morphisms of toric varieties
has already been established in \ci{dC}. 

Along the way, we prove other results, some of which are valid for any proper morphism,
and some of which  are specific to the context of twisted product varieties. 

Let us summarize the main
results of this paper. 

Theorem \ref{tma}:  the direct image $Rf_*\m{IC}_X$ is semisimple and Frobenius semisimple
if and only if Frobenius acts semisimply on the cohomology groups
of all closed fibers with coefficients in $\m{IC}_X.$

Theorem \ref{tmb}:  the intersection complex $\m{IC}_{f(X)}$ is a direct summand of $Rf_* \m{IC}_X$. 

Corollary \ref{conv}: the convolution complex $\m{IC}_{X_{\PP}(w_1)} * \cdots * \m{IC}_{X_{\PP}(w_r)}$ associated with a twisted product variety is semisimple and Frobenius semisimple. In fact, viewing this result as the (Frobenius) semisimplicity of a direct image complex of a convolution morphism, it holds for a larger class of 
convolution-type morphism, which we introduce and name generalized convolution morphisms; see Theorem \ref{dtm}. Note that we prove something stronger than semisimplicity and Frobenius
semisimplicity, namely evenness (no odd cohomology) and Tateness (the lisse  and pure coefficients are constant, up to  a 
precise Tate-twist).

The proof of Theorem \ref{dtm}, which deals with the direct image of the intersection complex by a generalized convolution morphism, is intertwined with the proof of analogous statements concerning the intersection cohomology groups of twisted product varieties;
see Theorem \ref{ctm}.

One of the key ingredients is the semisimplicity of the action of Frobenius
on the cohomology of the fibers. This is achieved in two very distinct ways.
The former is by means of affine paving results for the fibers of certain convolution morphisms; see Theorem
\ref{pavingtm}. The latter is by means of the  surjectivity for fibers Theorem \ref{tmff}.

The proof of the surjectivity Theorem \ref{tmff}, which is a geometric statement, is 
arithmetic in nature (it uses the yoga of weights) and it  is inspired by the Kazhdan-Lusztig observation linking
contracting $\mathbb G_m$-actions and purity. This idea has been exploited also
in the toric case in \ci{dC}. The necessary preparation, i.e.,\,the local product structure Lemma
\ref{ulemma}, relies on a new schematic theory of ``big cells'' adapted to partial affine flag varieties, which generalizes to partial affine flag varieties results of Beauville-Laszlo \cite{BLa} and Faltings \cite{F} for affine Grassmannians and affine flag varieties, respectively. In particular, we define the ``negative'' parahoric loop group $L^{--}P_{\bf f}$ (Definition \ref{neg_parahoric_defn}) and prove

Theorem \ref{big_cell_thm}: The map $L^{--}P_{\bf f} \times L^+P_{\bf f} \rightarrow LG$ is an open immersion.

Let us remark that the big cell in a Kac-Moody full flag variety has been constructed by completely different methods (for example, see the remarks after \cite[Lem.\,8]{Mat89}). It is not clear at all that the Kac-Moody construction could be used to define big cells in our context.  Indeed, we are working with the partial affine flag varieties $LG/L^+P_{\bf f}$, and $LG$ is not a Kac-Moody group unless $G$ is a simply-connected semisimple group. Of course, if $G_{\rm sc}$ is the simply-connected cover of the derived group $G_{\rm der}$, then $LG$ is closely related to the Kac-Moody group $LG_{\rm sc}$, and one might expect one could exploit this relationship to construct the big cells for $LG$. In fact it is even true that the Kac-Moody full flag variety constructed in \cite{Mat89} for $LG_{\rm sc}$ coincides as an ind-$k$-scheme with the object $LG_{\rm sc}/L^+P_{\bf a}$ we consider (although this is not obvious; see \cite[$\S9.h$]{PR}).  However, we found no way to reduce the construction of the {\em schematic} big cell for $LG$ to that for $LG_{\rm sc}$: just one issue is that the notion of parahoric subgroup in $LG$ is much more subtle than in $LG_{\rm sc}$, where there are no issues of disconnectedness of Bruhat-Tits group schemes (such issues are the subject of \cite{HRa}).  In this article we propose a self-contained construction of the schematic big cell in $LG$, using the key group ind-scheme $L^{--}P_{\bf f}$. Most of the geometric results about convolution morphisms hinge on properties of $L^{--}P_{\bf f}$ (such as the Iwahori-type decompositions $\S\ref{Iwah_type_sec}$).  These foundations for loop groups form a substantial part of this article. They made possible our rather efficient affine paving, contraction and surjectivity techniques. We also expect these foundations to be useful apart from Frobenius semisimplicity questions.

Finally, we mention:

Theorem \ref{beqdt}: ``explicit" form of the decomposition theorem
for generalized convolution morphisms. 

Some special and important cases of our Corollary \ref{conv} have already been proved
in works by Beilinson-Ginzburg-Soergel \cite{BGS}, Bezrukavnikov-Yun \cite{BY}, and Achar-Riche \cite{AR}. The relation to these papers is discussed in Remark \ref{inpiu} and in  \S\ref{comp_lit_sec}.

The paper is organized as follows.
In \S\ref{nota} and \S\ref{cmbtpv} we introduce (a minimal amount of) terminology and notation which will be used to state the main results of this paper in \S\ref{tempmr}. We review the background and establish  preliminary results on: affine groups and affine partial flag varieties (\S\ref{affgrps}), twisted product varieties (\S\ref{twcon_sec}),  geometric $\PP$-Demazure product (\S\ref{gpdem}) and its comparison with the standard Demazure product defined using the $0$-Hecke algebra (\S\ref{Dem_comp_subsec}),  connectivity of fibers of convolution morphisms (\S\ref{fibco}), and generalized convolution morphisms (\S\ref{098}). We develop our theory of big cells in partial affine flag varieties in $\S\ref{affgrps}$; in particular some new Iwahori-type decompositions are presented in $\S\ref{Iwah_type_sec}$. The proofs of our main results are then presented in \S\ref{abssq}, \S\ref{tmctmffpfs}, and \S\ref{pf??}. 
In \S\ref{abssq}, we prove two results which hold for any proper morphism, namely the (Frobenius) semisimplicity of the proper direct image  criterion (Theorem \ref{tma}) and that the intersection complex splits off (Theorem \ref{tmb}).
In \S\ref{tmctmffpfs}, we prove our surjectivity for fibers criterion (Theorem \ref{tmff}) and apply it to prove  Theorems \ref{ctm} and \ref{dtm}. 
In \S\ref{pf??}, we prove the affine paving of Demazure-type maps (Theorem \ref{pavingtm}), and use it to give a second proof of Corollary \ref{conv} which asserts, among other things, that the convolution product is even and  Tate.
In \S\ref{KM_rmks}, we make brief remarks about the Kac-Moody setting and explain the relation of our results with other works in the published literature.
In \S\ref{otherk}, we discuss how to view our results over other fields $k$.

\smallskip

{\bf Acknowledgments.}  We gratefully acknowledge discussions with Patrick Brosnan, Pierre Deligne, Xuhua He, Robert Kottwitz, Mircea Musta\c{t}\u{a}, George Pappas, Timo Richarz, Jason Starr, Geordie Williamson, and Zhiwei Yun.

\subsection{Frobenius semisimplicity and the notion of  \underline{\good}}\la{nota} $\,$

Unless stated otherwise, we work with separated schemes of finite type over a finite field 
$k$ (varieties, for short)  and with a fixed algebraic closure $k \subseteq \ov{k}$. We fix a prime  number $\ell \neq{\rm char} \, k$,  and we work with the associated
``bounded-derived-constructible-mixed" categories with the middle perversity
$t$-structure $\db{-} \subseteq D^b_c (-, \oql)$ in \ci{bbd}, whose objects we call complexes. 
Complexes, maps, etc. defined over $k$, can be pulled-back to $\ov{k}$, in which case
they are branded with a bar over them, e.g. a map of $k$-varieties $f: X \to Y$ pulls-back to $\ov{f}: \ov{X} \to \ov{Y},$ 
and  a complex $\m{F} \in \db{X}$ on $X$  pulls-back to the complex $\ov{\m F}$
on $\ov{X}.$  The stalks $\m{H}^*(\m{F})_{\ov x}$ of a complex $\m{F} \in  \db{x}$ at a point $\ov{x} \in 
X(\ov{k})$ are finite dimensional graded Galois $\oql$-modules endowed with a weight filtration.
In particular, so are the cohomology groups $H^*(\ov{X}, \ov{\m F})$. 
Unless otherwise stated, the direct image functor $Rf_*$ will be denoted simply by $f_*$.

We are especially interested in:
the intersection cohomology groups $I\!H^*(\ov{X}, \oql):=H^*(\ov{X}, \ov{\m{IC}_X})$, where $\m{IC}_X$ is the intersection complex of $X$, normalized so that, if $X$ is smooth and connected, then $\m{IC}_X= {\oql}_X$;
the cohomology groups $H^*(\ov{f^{-1} (y)}, \ov{\m{IC}_X})$, where $y$ is a closed point in $Y$.

Let $X$ be a $k$-variety.
We consider the following properties of complexes  $\m{F} \in \db{X}$: 
\begin{itemize}
\item[-] {\em semisimplicity}: $\m{F}$  is isomorphic to  the direct sum
of  shifted simple perverse sheaves (necessarily supported on integral closed subvarieties of $X$);
\item[-] {\em Frobenius semisimplicity}: the graded Galois modules $\m{H}^*(\m{F})_{\ov{x}}$ are semisimple
for every $\ov{x} \in X(\ov{k})$; 
\item[-] {\em evenness}: the $\m{H}^*(\m{F})_{\ov{x}}$ are even, i.e.,~trivial in odd cohomological degrees 
\item[-] {\em purity with weight $w$}:   $\m{H}^*(\m{F})_{\ov{x}}$ has weights  $\leq w+i$
in degree $i$ 
and  $\m{H}^*(\m{F}^\vee)_{\ov{x}}$ has weights  $\leq -w+i$ in degree $i$ ($\m{F}^\vee$ the Verdier dual);
\item[-] {\em very pure with weight $w$} \cite[$\S4$]{KL}: $\m{F}$ is pure with weight $w$ and the mixed
graded Galois module  $\m{H}^*(\m{F})_{\ov{x}}$  is  pure 
with weight $w$, i.e.,~it has  weight $w+i$ in degree $i$.\footnote{Equivalently, $\m{F}$ is pure of weight $w$ and each $\m{H}^i(\mathcal F)$ is {\em pointwise pure} of weight $w+i$ in the sense of \,\cite[p.\,126]{bbd}.} 
\item[-] {\em Tateness}: each $\m{H}^i(\m{F})_{\ov{x}}$ is isomorphic to a direct sum of Tate modules
 $\oql (-k)$ of possibly varying weights $2k.$ 
\end{itemize}
We also have the notions of 
Frobenius semisimple/even/pure/Tate finite dimensional Galois graded modules.  
According to our definition, a Tate Galois module is  automatically semisimple.

Next, we introduce a piece of terminology that makes some of the statements we prove less lengthy.

\begin{defi} \la{even_Tate_defn} 
We say that $\m{F} \in \db{X}$ is {\bf \good} if it is semisimple, Frobenius semisimple, very pure with weight zero,
even and Tate.
We say that   a graded Galois module is {\bf \good} if it is  Frobenius semisimple, very pure
with weight zero,
even and Tate. 
\end{defi}


\subsection{Convolution morphisms between twisted product varieties}\la{cmbtpv} $\;$

What follows is a brief summary of the notions surrounding twisted product varieties and convolution maps
that are more thoroughly discussed in $\S\ref{affgrps}, \ref{twcon_sec}$ and that are needed to state some of our main results in 
\S\ref{tempmr}.

Let $G$ be a split connected  reductive group over the finite field $k$. 
Let $\G\supsetneq \m{Q} \supset \B \subset \PP$ be the associated  loop group   together with a nested sequence
of parahoric subgroups, with $\B$ being the Iwahori associated with a  $k$-rational Borel  on $G$.
Let $\m{W}$ be the extended affine Weyl group associated with $\m{G}$ and  let $\m{W}_\PP \subseteq \m{W}$
be the finite subgroup associated with $\PP$ (see $\S\ref{affgrps}$).

 The twisted product varieties $X_\PP(w_\bullet)= X_\PP (w_1, \ldots w_r)$ (see Definition \ref{zdeftp}),
with $w_i \in  \m{W}_\PP \backslash \m{W} / \m{W}_\PP$,  are closed subvarieties  in the product  $(\G/\PP)^r$. 
We denote by $w_i''$ the image of $w_i$ under the natural surjection 
 $\m{W}_\PP \backslash \m{W} / \m{W}_\PP \to \m{W}_\m{Q} \backslash \m{W} / \m{W}_\m{Q}$;
 see \S\ref{orbtz}, especially (\ref{e3.5}) and (\ref{e3.6}).
 Given $1\leq r' \leq r$ and $1 \leq i_1 < \ldots  <i_{m}=r'$,  by consideration of
the natural product of projection maps $(\G/\PP)^r \to (\G/\m{Q})^m$ onto the $i_k$-th components, 
in \S\ref{098}, we introduce the generalized convolution maps
$p:X_\PP (w_\bullet) \to X_\m{Q} (w''_{I,\bullet})$ between twisted product varieties;
they generalize the standard convolution map $X_\B (w_1,  \ldots , w_r) \to X_\B (w_1* \cdots * w_r)$
(\ref{e56})
which is the special case when $\B = \PP = \m{Q}$, $r=r'$, $m=1$ and $i_1=r.$ 

Here, $*$ is the  Demazure product
on $\W$ (see \S\ref{Dem_comp_subsec}).
In this paper, we use  an equivalent version of this product operation on $\m{W}_\PP \backslash \m{W} / \m{W}_\PP,$ which  we call {\em geometric
Demazure product}, and we denote by $\star_\PP$ (see \ref{gpdem}).

We work with  the  convolution maps for which the $w_i$, which in general correspond to $\PP$-orbit closures
in $\G/\PP$, correspond to $\m{Q}$-orbit closures in $\G/\PP.$ Such $w$'s are said to be of $\m{Q}$-type
(see Definition \ref{qpma}).
These include the $w$'s that correspond to those $\m{Q}$-orbit closures $X_\PP (w)$  that 
are the full-pre-image of their image $X_\m{Q} (w'') \subseteq \G/\m{Q}$, which we name of $\m{Q}$-maximal type.
Note that both conditions are automatic when $\PP = \m{Q}$, so that the case of classical convolution maps 
is covered.

Our results hold also in the  ``finite" (vs.\,affine) context of  partial flag varieties $G/P$, with the same, or simpler, proofs. The choice of the notation 
$\G, \B$, etc., reflects our unified   treatment of the finite and of   affine cases; see \S\ref{notsy}.

\section{The main results}\la{tempmr}$\,$

\subsection{Proper maps over finite fields}\la{pmoff}$\,$

The {\em decomposition theorem} in \ci{bbd} states that if $f:X \to Y$ is a proper $k$-morphism and $\m{F}$
is a simple perverse sheaf on $X$, then 
$\ov{f_*\m{F}}$  is semisimple. See (cf.\,\S\ref{intro}). It is not known whether $f_* \m{F}$ is semisimple.
The issue is  whether the indecomposable lisse local systems appearing in the a-priori weaker  decomposition
over the finite field  are, in fact, already simple (absence of Frobenius Jordan blocks on the stalks); see \S\ref{dtoff}.
Moreover, it is   not known whether Frobenius acts semisimply on the stalks of a simple perverse sheaf,
not even in the case of the intersection complex of  the affine cone over a smooth projective 
variety. In fact, that would imply that Frobenius acts semisimply on the cohomology of smooth projective varieties.

Let us emphasize that  in Theorems \ref{tma} and \ref{tmb}, we do not need to pass to the algebraic closure, i.e. the indicated splittings
{\em already}  hold
over the finite field. Moreover, in Theorem \ref{tmb}, we do not assume, as one usually finds in the literature, that the proper map $f$ is birational, nor generically finite.

\begin{tm}\la{tma} {\bf (Semisimiplicity criterion for direct images)} 
Let $f: X \to Y$ be a proper map of varieties over the finite field $k$  and let $\m{F} \in \db{X}$
be semisimple. The direct image complex $f_* \m{F} \in \db{Y}$  is semisimple
and  Frobenius
semisimple if and only if the graded Galois modules
$H^*(\ov{f^{-1} (y)}, \ov{\m F})$
 are Frobenius semisimple for every closed point $y$ in $Y$.
\end{tm}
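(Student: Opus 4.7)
The plan is to treat the two implications separately. The ``only if'' direction is immediate: if $f_*\m{F}$ is Frobenius semisimple then its stalks $\m{H}^i(f_*\m{F})_{\ov y}$ are semisimple Galois modules at every geometric point of $Y$, and proper base change for the proper morphism $f$ identifies these stalks with the fiber cohomologies $H^i(\ov{f^{-1}(y)}, \ov{\m{F}})$ as graded Galois modules; since every geometric point of $Y$ lies over a closed point, this gives the stated Frobenius semisimplicity of the fiber cohomology for every closed $y \in Y$. The semisimplicity hypothesis on $f_*\m{F}$ itself plays no role here.

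For the converse, I invoke the indecomposable-coefficient form of the decomposition theorem over $k$, namely \ci[Prop.\,2.1]{dC}, to obtain an isomorphism
$$f_*\m{F} \ \cong\ \bigoplus_\alpha \m{IC}_{Z_\alpha}(L_\alpha)[n_\alpha]$$
already over $k$, in which each $Z_\alpha$ is a $k$-integral closed subvariety of $Y$ and each $L_\alpha$ is a pure indecomposable lisse sheaf defined on a dense smooth $k$-open subvariety $U_\alpha \subseteq Z_\alpha$. To deduce the semisimplicity of $f_*\m{F}$ in the sense of \S\ref{nota}, it suffices to upgrade each $L_\alpha$ from indecomposable to simple; the Frobenius semisimplicity of $f_*\m{F}$ will then follow at once from the hypothesis by proper base change.

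To rule out nontrivial Jordan factors, fix $\alpha$ and pick a closed point $y_\alpha \in U_\alpha$, possible since $U_\alpha$ is a nonempty $k$-variety. On $U_\alpha$ the summand $\m{IC}_{Z_\alpha}(L_\alpha)[n_\alpha]$ restricts to a shift of the lisse sheaf $L_\alpha$, so its contribution to the graded Galois module $\m{H}^*(f_*\m{F})_{\ov y_\alpha}$ is precisely $L_\alpha|_{\ov y_\alpha}$ in a single cohomological degree, and it appears there as a direct summand of the total stalk. Proper base change combined with the hypothesis forces that total stalk to be Frobenius semisimple, whence so is the summand $L_\alpha|_{\ov y_\alpha}$. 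By Fact \ref{factjod}, the indecomposable $L_\alpha$ decomposes as $L_\alpha \cong M_\alpha \otimes J_\alpha$ with $M_\alpha$ simple and $J_\alpha$ a geometrically trivial ``Jordan-type'' sheaf on which Frobenius acts by a single unipotent Jordan block; Frobenius semisimplicity of the stalk $L_\alpha|_{\ov y_\alpha}$ then forces $J_\alpha$ to be trivial, so $L_\alpha = M_\alpha$ is simple.

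The main obstacle is thereby reduced to the two input results just cited: once \ci[Prop.\,2.1]{dC} gives an indecomposable-coefficient decomposition already over $k$, and Fact \ref{factjod} isolates the tensor structure ``simple $\otimes$ Jordan'' of indecomposable pure lisse sheaves, the heart of the argument becomes a direct stalk test at a suitably chosen closed point of $U_\alpha$, with the Frobenius-semisimplicity hypothesis on geometric fibers providing exactly the semisimplicity needed to annihilate the Jordan factor.
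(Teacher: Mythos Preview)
Your proof is correct and follows essentially the same route as the paper: apply the indecomposable-coefficient decomposition theorem over $k$ (the paper's Proposition \ref{dtff}, equivalently \ci[Prop.\,2.1]{dC}), invoke Fact \ref{factjod} to write each indecomposable coefficient as simple $\otimes$ Jordan, and test at a closed point of the open stratum to kill the Jordan factor. The one step you leave implicit---that Frobenius semisimplicity of the tensor $M_\alpha|_{\ov y_\alpha} \otimes J_\alpha$ forces semisimplicity of the $J_\alpha$ factor---is isolated in the paper as an elementary linear-algebra lemma (Lemma \ref{ss_Kron}); your ``forces $J_\alpha$ to be trivial'' is correct but would benefit from one sentence justifying it.
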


A rather different statement, which gives a sufficient condition to guarantee semisimplicity and Frobenius semisimplicity, can be found in \cite[Prop.\,9.15]{AR}.

\begin{tm}\la{tmb} {\bf (The intersection complex splits off)}
Let $f: X \to Y$ be a proper map of varieties over the finite field $k$.
The intersection complex $\ic{f(X)}$ is a direct summand of 
$f_* \ic{X}$ in $D^b_m(Y, \oql)$. In particular, the graded Galois module
$I\!H^*(\ov{f(X)}, \oql)$ is a direct summand of the graded Galois module
$I\!H^*(\ov{X}, \oql)$.
\end{tm}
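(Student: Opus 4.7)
The plan is to reduce to $f : X \twoheadrightarrow Z := f(X)$ proper surjective with $X$ and $Z$ integral $k$-varieties (by decomposing $X$ into irreducible components, treating each separately, and replacing $Y$ by the closed subvariety $Z \subseteq Y$ on which $f_*\ic{X}$ is supported), and then to combine the weak decomposition theorem valid over $k$ with a generic trace-map computation and the Krull--Schmidt theorem. By the weak decomposition theorem over $k$ (as recalled in the introduction via \ci{dC} and Fact \ref{factjod}), $f_*\ic{X}$ splits in $D^b_m(Z,\oql)$ as a finite direct sum of shifted perverse sheaves $\ic{S_\alpha}(L_\alpha)[n_\alpha]$, where each $S_\alpha \subseteq Z$ is an integral closed subvariety and each $L_\alpha$ is an \emph{indecomposable} pure lisse sheaf on a dense open of $S_\alpha$. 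The goal is to exhibit at least one summand of the form $\ic{Z}$, i.e.\ with $S_\alpha = Z$, $n_\alpha = 0$, and $L_\alpha = \oql$ the trivial constant sheaf.

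Choose a smooth dense Zariski open $V \subseteq Z$ that is contained in the open stratum, disjoint from each $S_\alpha \subsetneq Z$, and contained in the common open of definition of the $L_\alpha$. Then $(f_*\ic{X})|_V = \bigoplus_{\alpha : S_\alpha = Z} L_\alpha|_V[n_\alpha]$, so the degree-zero cohomology sheaf $L_0 := \m{H}^0((f_*\ic{X})|_V) = \bigoplus_{\alpha : S_\alpha = Z,\, n_\alpha = 0} L_\alpha|_V$ is a direct sum of indecomposable lisse $\oql$-sheaves on $V$. Since the geometric generic fiber $X_{\bar\eta}$ is an integral variety of dimension $d := \dim X - \dim Z$, one has $I\!H^0(X_{\bar\eta},\oql) = \oql$ with trivial Frobenius action, supplying a $k$-rational split inclusion $\oql_V \hookrightarrow L_0$. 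Explicitly, the unit-of-adjunction $\oql_V \to f_{V,*}\oql$ (followed by inclusion into $L_0$) and the normalized trace $f_{V,*}\oql \to \oql_V$ (the normalization being the generic number of geometric connected components of the fibers, invertible in $\oql$) are both manifestly Galois-equivariant, hence $k$-rational. Since $\oql_V$ is itself indecomposable, the Krull--Schmidt theorem for the additive category of Weil lisse sheaves on $V$ forces some $L_\alpha|_V \cong \oql_V$, whence $L_\alpha = \oql$, and the corresponding summand of $f_*\ic{X}$ is exactly $\ic{Z}$.

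The main obstacle is the descent from $\bar k$ to $k$: over $\bar k$, the BBD decomposition theorem \ci{bbd} immediately yields $\overline{\ic{Z}}$ as a direct summand of $\overline{f_*\ic{X}}$, but over $k$ the corresponding $L_\alpha$ could a priori be an indecomposable but non-simple ``Jordan-type'' local system, trivial only after base change to $\bar k$. This possibility is ruled out by the $k$-rational trace-map construction, which extracts $\oql_V$ as a $k$-rational direct summand of $L_\alpha|_V$; indecomposability of $L_\alpha$ combined with Krull--Schmidt then forces $L_\alpha$ itself to be trivial already over $k$.
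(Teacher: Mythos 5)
Your overall strategy matches the paper's: decompose $f_*\ic{X}$ over $k$ via the weak decomposition theorem (Fact \ref{factjod}/Theorem \ref{dtff}), restrict to a dense open $V$ of the target where everything is lisse, exhibit $\oql_V$ as a $k$-rational direct summand of the degree-zero cohomology, and invoke Krull--Schmidt (equivalently, uniqueness of intermediate extensions) to conclude. You have also correctly identified what the real enemy is: the descent of the summand from $\bar k$ to $k$, i.e.\ ruling out Jordan-block local systems. So the outline is right. But the proof as written sweeps exactly the hard part under the rug, and contains two concrete errors.

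First, the assertion that ``the geometric generic fiber $X_{\bar\eta}$ is an integral variety'' is false. Even with $X$ and $Z=f(X)$ integral and $f$ surjective, $X_{\bar\eta}$ can be geometrically reducible (generically finite $f$ of separable degree $>1$) or non-reduced (purely inseparable phenomena), and so $H^0(X_{\bar\eta},\ic{X_{\bar\eta}})\cong\oql^m$ for some $m\geq 1$, with the Weil group acting by permuting components. Your parenthetical about the normalization hints you know this, but as written the sentence contradicts it.

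Second, and more seriously, the ``normalized trace $f_{V,*}\oql\to\oql_V$'' is asserted to be ``manifestly Galois-equivariant, hence $k$-rational''; but there is no such trace defined directly for a proper non-finite morphism. To produce it one must pass through the Stein factorization $f_V = h_V\circ g_V$ (so $R^0f_{V,*}\oql = h_{V,*}\oql$ with $h_V$ finite), shrink $V$ so $h_V$ is finite flat (or even \'{e}tale), and invoke the trace map for finite flat morphisms with $\mathrm{tr}\circ\mathrm{adj}=\deg(h_V)$ invertible in $\oql$. This chain of reductions — together with the replacement of $X$ by its normalization, needed so that $\m{H}^0(\ic{X})\cong\oql_X$ and hence $R^0f_{V,*}\ic{X}$ can be identified with $R^0f_{V,*}\oql_X$ (you silently switch from one to the other) — is precisely the content of the paper's proof of the Claim, including Lemma \ref{factorize} and the $s^!\oql_Y\cong\oql_Z$ computation. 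In other words, what you have labelled ``manifest'' is the theorem. One genuine simplification you could claim is that using the finite-flat trace directly (SGA~4, XVII~6.2) lets you bypass the radicial/separable factorization of Lemma \ref{factorize}, since $\deg(h_V)$ is always invertible in $\oql$; but to get credit for that, you would need to make the Stein factorization, the generic flatness, and the identification $R^0f_{V,*}\ic{X}\cong h_{V,*}\oql$ explicit, none of which your write-up does.
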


Recall our Definition \ref{even_Tate_defn} of a good complex.

\begin{cor}\la{corbb} {\bf (Goodness for intersection cohomology)}
Let  $f: X \to Y$ be a proper  and surjective map of varieties over the finite field $k$.
 If the Galois module $I\!H^*(\ov{X}, \oql )$ is  pure of weight  zero \textup{(}resp.  with weights $\leq w$, resp. with weights
 $\geq w$, resp. Frobenius semisimple, resp. even, resp. Tate, 
 resp. \good\textup{)} then so is $I\!H^*(\ov{Y}, \oql)$.
\end{cor}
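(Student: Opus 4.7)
The plan is that this corollary is essentially a formal consequence of Theorem \ref{tmb}, so the proof should be short. First I would use the hypotheses on $f$: since $f$ is proper, $f(X)$ is closed in $Y$, and since $f$ is surjective, $f(X) = Y$ (as subschemes, up to the reduced structure that is already implicit in the definition of $\ic{}$). Thus Theorem \ref{tmb} applies and gives at once the crucial conclusion that $I\!H^*(\ov{Y},\oql)$ is a direct summand of $I\!H^*(\ov{X},\oql)$ as a graded Galois module; this already holds over $k$, not merely after passage to $\ov{k}$.

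Next I would verify, property by property, that each condition in the list is inherited by arbitrary direct summands in the category of mixed graded finite-dimensional $\oql$-Galois modules. For the weight conditions (pure of weight zero, weights $\leq w$, weights $\geq w$), this is immediate from the fact that the weight filtration is strictly compatible with Galois-equivariant maps, so summands of a pure (resp. mixed with bounded weights) module have the same weight constraints. For Frobenius semisimplicity, a direct summand of a semisimple Galois module is semisimple by general representation theory (the summand is a subquotient). Evenness passes to summands because a summand of a module concentrated in even degrees is again concentrated in even degrees. Tateness passes to summands because a Tate module is canonically a direct sum of one-dimensional Tate pieces, and any Galois-stable summand is a sub-sum of these by the semisimplicity built into the definition. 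Finally, goodness in the sense of Definition \ref{even_Tate_defn} is simply the conjunction of several of the above properties (applied in the appropriate grading), and so also passes to summands.

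Combining these two steps gives the corollary. There is essentially no obstacle: the only point that requires any care is the verification that the listed properties really are closed under taking direct summands of graded Galois modules, but each of these verifications is a one-line observation once the definitions are unwound. In particular, because Theorem \ref{tmb} gives the splitting already over the finite field, there is no extra step required to pass between $k$ and $\ov{k}$ beyond the identification $I\!H^*(\ov{Y},\oql) = H^*(\ov{Y},\ov{\ic{Y}})$ and its analogue for $X$ via the projection formula $H^*(\ov{Y},\ov{f_*\ic{X}}) = H^*(\ov{X},\ov{\ic{X}})$.
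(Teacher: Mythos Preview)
Your argument is correct and matches the paper's intended approach: the corollary is stated immediately after Theorem~\ref{tmb} without a separate proof, since the ``In particular'' clause of that theorem already records that $I\!H^*(\ov{Y},\oql)$ is a direct summand of $I\!H^*(\ov{X},\oql)$ as a graded Galois module, and the listed properties are visibly inherited by direct summands.
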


\subsection{Generalized convolution morphisms}\la{r42}$\;$

\begin{tm}\la{ctm} {\bf (Goodness for twisted product varieties)}
Let $X=X_\PP(w_\bullet)$ be a twisted product variety.
Then $I\!H^*(\ov{X}, \oql)$ and $\m{IC}_X$ are \good.  In particular, $\m{IC}_X$ is Frobenius semisimple and   Frobenius acts semisimply on $I\!H^*(\ov{X}, \oql)$.
\end{tm}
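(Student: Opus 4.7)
The plan is to resolve $X = X_\PP(w_\bullet)$ by a Bott-Samelson-style variety whose cohomology is manifestly good, and then to transfer goodness to $X$ via the semisimplicity criterion (Theorem \ref{tma}) and the splitting-off result (Theorem \ref{tmb}). Concretely, I first choose a reduced expression $\w{w}_i = s_{i,1}\cdots s_{i,\ell_i}$ for (a lift in $\m{W}$ of) each $w_i$, and form the Bott-Samelson twisted product $BS := X_\B(s_{1,1},\dots,s_{r,\ell_r})$. There is a natural proper surjective morphism $\pi: BS \to X$, which is a generalized convolution morphism in the sense of $\S\ref{098}$. The variety $BS$ is smooth and carries an iterated $\mathbb{P}^1$-bundle structure, hence a Bruhat paving by affine cells; so $\m{IC}_{BS} = \oql_{BS}$ and $H^*(\ov{BS},\oql)$ are manifestly good.

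Next I verify that $\pi_*\oql_{BS}$ is good on $X$. By Theorem \ref{pavingtm}, each geometric fiber $\pi^{-1}(\ov{y})$ of $\pi$ is affinely paved; proper base change then identifies the stalk $\m{H}^*(\pi_*\oql_{BS})_{\ov{y}} \cong H^*(\ov{\pi^{-1}(y)},\oql)$ with a good graded Galois module for every closed $y \in X$. The stalkwise pieces of goodness (evenness, Tateness, very-pureness of weight zero, Frobenius semisimplicity of stalks) therefore pass from the fibers to $\pi_*\oql_{BS}$ immediately, and Theorem \ref{tma} upgrades Frobenius semisimplicity on fibers to genuine semisimplicity (together with Frobenius semisimplicity) of $\pi_*\oql_{BS}$ as a complex in $\db{X}$. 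Complex-level purity comes for free from Deligne's purity applied to the proper morphism $\pi$ with pure source $\oql_{BS}$. Hence $\pi_*\oql_{BS}$ is good.

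The third step is to pass from $\pi_*\oql_{BS}$ to $\m{IC}_X$ itself. Since $\pi$ is proper with $\pi(BS) = X$, Theorem \ref{tmb} exhibits $\m{IC}_X$ as a direct summand of $\pi_*\m{IC}_{BS} = \pi_*\oql_{BS}$ in $\db{X}$. Each of the conditions defining good is stable under direct summands in a semisimple decomposition, so $\m{IC}_X$ is good. Applying $H^*(\ov X, -)$ exhibits $I\!H^*(\ov{X},\oql) = H^*(\ov{X}, \ov{\m{IC}_X})$ as a direct summand of the good graded Galois module $H^*(\ov{BS},\oql)$, so it is good as well, completing both assertions of the theorem.

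The main obstacle in this plan is of course Theorem \ref{pavingtm}: producing an affine paving of every fiber of the Bott-Samelson convolution $\pi$. This is a genuinely delicate geometric statement which rests on the schematic big cell theory for partial affine flag varieties and the key ind-scheme $L^{--}P_{\bf f}$ introduced in $\S\ref{affgrps}$ (see Theorem \ref{big_cell_thm}), together with the Iwahori-type decompositions of $\S\ref{Iwah_type_sec}$. Once those paving results are granted, the rest of the argument is a clean formal application of Theorems \ref{tma} and \ref{tmb} and is essentially insensitive to the fine combinatorics of the affine Weyl group.
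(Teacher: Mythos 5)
Your strategy is exactly the ``alternative'' route the paper mentions at the end of its proof of Theorem~\ref{ctm}: use Theorem~\ref{pavingtm} and Lemma~\ref{pavingfr} in place of the contraction argument via Theorem~\ref{tmff}.  The skeleton (Demazure source with good cohomology, pavings of fibers implying Frobenius semisimplicity of stalks, Theorem~\ref{tma} to promote this to semisimplicity of the complex, Deligne purity for complex-level purity, Theorem~\ref{tmb} to split off $\m{IC}_X$) is all correct.  There are, however, two points that need repair.

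First, your appeal to Theorem~\ref{pavingtm} does not literally apply to your morphism.  You work with $\pi : X_\B(s_{\bullet\bullet}) \to X_\PP(w_\bullet)$, which in the notation of diagram~(\ref{mmh}) is $q \circ \pi$.  But Theorem~\ref{pavingtm}(2) paves the fibers of the composition $q' \circ p' \circ \pi : X_\B(s_{\bullet\bullet}) \to X_\B(s_\star) \to X_\PP(w_\star)$, whose target is $X_\PP(w_\star)$, not $X_\PP(w_\bullet)$.  Theorem~\ref{pavingtm}(1) is likewise about convolutions into $X_\B(s_\star)$ and does not cover your map either.  So as written there is a gap.  It is fixable in two ways: either (a) reduce first to the case $r=1$, as the paper does, using Lemma~\ref{loc_triv} to observe that $\m{IC}_{X_\PP(w_\bullet)}$ is Zariski-locally the external product $\m{IC}_{X_\PP(w_1)} \boxtimes \cdots \boxtimes \m{IC}_{X_\PP(w_r)}$; in the $r=1$ case the maps $p'$ and $p$ in~(\ref{mmh}) are identities and the target of $q\pi$ is indeed $X_\PP(w_\star)$, so Theorem~\ref{pavingtm}(2) applies directly; or (b) keep general $r$ but first argue (via Lemma~\ref{loc_triv} and the local product structure of Proposition~\ref{gt50}) that over a product of big cells in $X_\PP(w_\bullet)$ the map $q\pi$ is a product of the $r=1$ convolution morphisms $X_\B(s_{i\bullet}) \to X_\PP(w_i)$, so its fibers are products of the affinely-paved fibers of those maps.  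Either route closes the gap.

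Second, you say ``a lift in $\m{W}$ of each $w_i$'' without specifying which lift.  For the morphism $X_\B(s_{\bullet\bullet}) \to X_\PP(w_\bullet)$ to be surjective (which both your use of Theorem~\ref{tmb} and the hypothesis of Theorem~\ref{pavingtm}(2) require), one must take the \emph{maximal} representative $u_i \in \, ^{\PP}\W^{\PP}$ of $w_i$ and a reduced word for $u_i$; cf.\ Lemma~\ref{covro} and~(\ref{surjective}).  With an arbitrary lift the map need not be surjective onto $X_\PP(w_\bullet)$.  Once you insert this specification and fix the paving citation, the rest of the argument goes through and recovers the paper's alternative proof.
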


\begin{tm}\la{dtm} {\bf (Goodness for generalized convolution morphisms)}
Consider a generalized convolution morphism $p: Z:=X_\PP(w_\bullet) \to X:=X_\Q (w''_{I,\bullet})$ with $w_i \in \, _\PP W_\PP$ 
of $\m{Q}$-type. Then  $p_* \m{IC}_Z$ is \good.

\n
Moreover, for every closed point $x\in X$ and every open $U \supseteq p^{-1}(x)$, the natural
restriction map $I\!H^*(\ov{U}, \oql) \to H^*(\ov{p}^{-1} (\ov{x}), \m{IC}_{\ov{Z}})$ is surjective,
and the target is \good.

\n
In particular, the fibers of $p$ are geometrically connected.
\end{tm}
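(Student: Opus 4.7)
My plan is to prove Theorems \ref{ctm} and \ref{dtm} in tandem, with the surjectivity for fibers criterion (Theorem \ref{tmff}) as the central engine and Theorem \ref{tmb} used to bootstrap \ref{ctm} via Bott-Samuelson resolutions. First I would establish \ref{dtm} in the special case where $Z = X_\B(s_\bullet)$ is a Bott-Samuelson variety attached to a sequence of simple affine reflections: then $Z$ is smooth projective and an iterated $\mathbb P^1$-bundle, $\m{IC}_Z = {\oql}_Z$, and $I\!H^*(\ov Z, \oql)$ is good via the explicit affine paving, so goodness of $\m{IC}_{\ov Z}$ is manifest. With this input in hand, I would deduce \ref{ctm} for a general twisted product $Z' = X_\PP(w'_\bullet)$ by picking a Bott-Samuelson resolution $q: \tilde Z' \to Z'$, applying the Bott-Samuelson case of \ref{dtm} to $q$ to conclude that $q_* \m{IC}_{\tilde Z'}$ is good, and using \ref{tmb} to realize $\m{IC}_{Z'}$ as a direct summand of $q_* \m{IC}_{\tilde Z'}$; goodness transfers to summands. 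Given \ref{ctm} in full generality, Theorem \ref{dtm} for an arbitrary source $Z = X_\PP(w_\bullet)$ follows by repeating the argument below with $\m{IC}_{\ov Z}$ now known to be good.

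The heart of the argument is the application of Theorem \ref{tmff} to $p: Z \to X = X_\Q(w''_{I,\bullet})$ at a closed point $x \in X$. Its hypotheses --- a contracting $\mathbb G_m$-action on a neighborhood of $p^{-1}(x)$ retracting onto the fiber --- would be supplied by the local product structure Lemma \ref{ulemma}, which in turn rests on the big cell Theorem \ref{big_cell_thm} and the Iwahori-type decompositions of the negative parahoric ind-group $L^{--}P_{\bf f}$ in $\S\ref{Iwah_type_sec}$: translating by an appropriate element of $L^{--}P_{\bf f}$ opens a big-cell-type neighborhood on which a suitable cocharacter acts contractingly. Theorem \ref{tmff} then delivers, for every open $U \supseteq p^{-1}(x)$, a surjection
\[
I\!H^*(\ov U, \oql) \longrightarrow H^*(\ov p^{-1}(\ov x), \m{IC}_{\ov Z}),
\]
and since goodness of graded Galois modules is inherited under surjections --- Frobenius semisimplicity, very purity of weight zero, evenness, and Tateness all pass to quotients in the category of mixed Galois representations --- the target inherits goodness from the source.

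With fiberwise goodness established, Theorem \ref{tma} upgrades Frobenius semisimplicity on every closed fiber to the semisimplicity and Frobenius semisimplicity of $p_* \m{IC}_Z$ over $k$. The remaining attributes of goodness for $p_* \m{IC}_Z$ --- very purity of weight zero, evenness, and Tateness --- are pointwise statements about the stalks $\m{H}^i(p_* \m{IC}_Z)_{\ov x}$, which by proper base change coincide with the fiber cohomology groups just shown to be good; purity of $p_* \m{IC}_Z$ itself is automatic, since proper direct image preserves purity and $\m{IC}_Z$ is pure of weight zero. For the final assertion, geometric connectedness of fibers drops out of the degree-zero part of the surjection: since $\ov Z$ is irreducible, $I\!H^0(\ov Z, \oql) = \oql$, so $H^0(\ov p^{-1}(\ov x), \m{IC}_{\ov Z})$ is at most one-dimensional, forcing exactly one geometric connected component on the nonempty fiber.

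The main obstacle I anticipate is the construction of the required contracting $\mathbb G_m$-action for $\Q$-type sequences $w_\bullet$ that are not of $\Q$-maximal type: the contraction must descend compatibly along the factorization $\PP \to \Q$ underlying the generalized convolution structure of $p$, and classical constructions inside the Iwahori do not suffice. This is precisely where the new schematic theory of big cells and the Iwahori-type decompositions of $L^{--}P_{\bf f}$ developed in $\S\ref{affgrps}$ become indispensable.
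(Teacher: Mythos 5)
Your overall architecture — use Theorem \ref{tmff} to get fiberwise surjectivity, feed the resulting Frobenius semisimplicity into Theorem \ref{tma}, bootstrap goodness of $\m{IC}$ from Bott--Samelson/Demazure varieties via Theorem \ref{tmb} — is the right one and closely parallels the paper. But there is a concrete gap at the "heart of the argument."

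You propose to apply Theorem \ref{tmff} directly to $p: Z \to X = X_\Q(w''_{I,\bullet})$. However, the hypotheses of Theorem \ref{tmff} require the target to be the closure $X_{\B\PP}(w)$ of a \emph{single} $\B$-orbit inside $\G/\PP$. When $m > 1$ (the generic case for a generalized convolution morphism), the target $X_\Q(w''_{I,\bullet}) \subset (\G/\Q)^m$ is a twisted product, not a single orbit closure, so Theorem \ref{tmff} does not apply to $p$ as written. The paper resolves this by invoking Proposition \ref{gt50}: Zariski-locally over the target, the $\Q$-type hypothesis allows one to write $p$ as a product $\prod_k p_k \times c$, where each $p_k : X_\PP(w_{i_{k-1}+1},\ldots,w_{i_k}) \to X_\Q(w''_{I,k})$ has a \emph{single} $\Q$-orbit closure as target (so Theorem \ref{tmff} applies to each factor) and $c$ is a structure map to a point. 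The K\"unneth formula then yields the local surjection onto the fiber cohomology, after which the weight argument from the proof of Theorem \ref{tmff} globalizes the surjection to any $U \supseteq p^{-1}(x)$. Your proposal omits both Proposition \ref{gt50} and the K\"unneth step entirely; your attribution of the difficulty to "constructing a contracting $\mathbb G_m$-action compatible with the $\PP \to \Q$ factorization" and your appeal to Lemma \ref{ulemma} point at the local structure of a single orbit closure, which is the wrong level — the missing ingredient is the factorization of the morphism $p$ itself.

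The same gap recurs in your bootstrapping of Theorem \ref{ctm}: you invoke a "Bott--Samelson case of Theorem \ref{dtm}" for the resolution $q : \tilde Z' = X_\B(s_{\bullet\bullet}) \to Z' = X_\PP(w'_\bullet)$, but the target $Z'$ is again a twisted product, so this is not the case you established at the start. The paper avoids this by first proving the $I\!H^*$-statement of Theorem \ref{ctm} through a pure summand argument (Lemma \ref{good}, Lemma \ref{covro}, Theorem \ref{tmb}, no Theorem \ref{tmff} needed), and then, for the $\m{IC}$-statement, reducing to $r=1$ via the local product structure of the twisted product (Lemma \ref{loc_triv}), at which point the target of diagram (\ref{mmh}) is a single orbit closure and Theorem \ref{tmff} applies directly. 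Without either the reduction to $r=1$ or Proposition \ref{gt50}, your argument cannot close.
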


 We remind the reader of Remark
\ref{conv_rmk}, which clarifies the relation between a convolution product of certain perverse sheaves and a
convolution morphism, i.e., the former is a complex that coincides with a direct image by the latter: see (\ref{luc}). The following corollary  is a special case of Theorem \ref{dtm} and a direct consequence of it.

\begin{cor}\la{conv} {\bf (Goodness for  convolution products)}

\n
The convolution product   $\m{IC}_{X_\PP (w_1)} * \cdots * \m{IC}_{X_\PP (w_r)}= p_* \m{IC}_{\xbp{\PP}{w_\bullet}}$ is  \good.
\end{cor}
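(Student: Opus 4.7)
The plan is to deduce Corollary \ref{conv} from Theorem \ref{dtm} by specializing to the case $\Q = \PP$, $r' = r$, $m = 1$, and $i_1 = r$. The first step is to unpack the definitions from $\S\ref{098}$ and $\S\ref{cmbtpv}$ and check that, under this choice of parameters, the generalized convolution morphism $p: X_\PP(w_\bullet) \to X_\Q(w''_{I,\bullet})$ becomes the standard convolution morphism $X_\PP(w_1, \ldots, w_r) \to X_\PP(w_1 \star_\PP \cdots \star_\PP w_r) \subseteq \G/\PP$ of (\ref{e56}), where $\star_\PP$ is the geometric Demazure product.

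Next, I would verify that the hypothesis ``$w_i \in {}_\PP\W_\PP$ of $\Q$-type'' required by Theorem \ref{dtm} is automatic in the specialization $\Q = \PP$. This is remarked explicitly at the end of $\S\ref{cmbtpv}$: when $\PP = \Q$, the $\Q$-orbit and $\PP$-orbit closures in $\G/\PP$ coincide, so both the $\Q$-type and $\Q$-maximal-type conditions are trivially satisfied, and the classical convolution morphisms are covered.

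The third step is to identify the direct image $p_* \m{IC}_{X_\PP(w_\bullet)}$ with the iterated convolution product. By Remark \ref{conv_rmk} and formula (\ref{luc}), the convolution product $\m{IC}_{X_\PP(w_1)} * \cdots * \m{IC}_{X_\PP(w_r)}$ is, by its very definition via the twisted product construction and the action of $\G$ on $\G/\PP$, precisely $p_* \m{IC}_{X_\PP(w_\bullet)}$ for the convolution morphism $p$ above. With this identification in hand, Theorem \ref{dtm} applies directly and yields that $p_* \m{IC}_{X_\PP(w_\bullet)}$ is \good, which is the assertion of the corollary.

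Since the corollary is essentially a tautological specialization, there is no substantive obstacle: the only points requiring attention are bookkeeping ones, namely confirming the translation between the convolution product notation of Corollary \ref{conv} and the generalized convolution morphism notation of Theorem \ref{dtm}, and confirming that the hypotheses on the $w_i$ degenerate to nothing when $\Q = \PP$. The real content lies in Theorem \ref{dtm} itself (proved in $\S\ref{tmctmffpfs}$ via the surjectivity-for-fibers criterion of Theorem \ref{tmff}, and reproved in $\S\ref{pf??}$ via the affine paving of Theorem \ref{pavingtm}), not in this corollary.
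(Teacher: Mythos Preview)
Your proposal is correct and matches the paper's own approach exactly: the paper states just before the corollary that it ``is a special case of Theorem \ref{dtm} and a direct consequence of it,'' and in \S\ref{anches} makes explicit the same specialization $\Q=\PP$, $r'=r$, $m=1$ (noting, as you do, that $\Q$-maximality is then automatic) together with the identification (\ref{luc}). Your remark about the alternative paving-based proof is also accurate and corresponds to \S\ref{anches}.
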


\begin{rmk}\la{anche}
We also give a different proof of Corollary \ref{conv} using the paving Theorem \ref{pavingtm}.\textup{(}2\textup{)}, in \S\ref{anches}.
\end{rmk}

\begin{rmk}\la{inpiu}   In the  case of Schubert varieties in  the finite \textup{(}i.e.\,``ordinary''\textup{)} flag variety $G/B$,
the fact that $\m{IC}_{X_B (w)}$ is \good \,    has been proved in
\ci[Corollary 4.4.3]{BGS}. The semisimplicity and Frobenius semisimplicity aspect of Corollary \ref{conv} has been addressed in \cite{AR} and \cite{BY},  and  in the cases of full \textup{(}affine\textup{)} flag varieties, one can also deduce semisimplicity and Frobenius semisimplicity using their methods.  In \S\ref{comp_lit_sec}, we shall make a few more remarks about the relation of our results with theirs. 
\end{rmk}


\begin{rmk}\la{rfz}
Recall that a \good \, graded Galois module is, in particular, Frobenius semi\-simple.
Theorem \ref{dtm} gives a proof, in  our set-up, of the general  Conjecture \ref{conjic}. 
For a proof  of this conjecture in the context of proper toric maps, see \ci{dC}.
\end{rmk}

Given Theorem \ref{dtm}, the proof of the following theorem concerning generalized convolution morphisms proceeds almost exactly as in the case \ci[Theorem 1.4.1]{dC} of proper
toric maps over finite fields and, as such, it is omitted; it is not used in the remainder of the paper.
The only issue that is not identical with respect to the proof in \ci{dC}, is the one of the geometric integrality of the varieties
$\m{O}$ below; in the case where $r'=1$ (see \S\ref{cmbtpv}), these varieties are $\Q$-orbit closures, hence they are geometrically integral
(e.g.\,by Proposition \ref{fqco});
 the  case where $r'>1$ follows from the $r'=1$ case, coupled with the local product structure Proposition \ref{gt50}. 

Following the statement is a short discussion relating the theorem to the positivity of  certain polynomials.

\begin{tm}\la{beqdt}{\rm ({\bf $\Q$-equivariant decomposition theorem for  generalized convolution
morphisms})}
Let  $p: Z:=X_\PP(w_\bullet) \to X:=X_\Q (w''_{I,\bullet})$ be a generalized convolution morphism with $w_i \in \, _\PP W_\PP$ 
of $\m{Q}$-type.
 There is an isomorphism in $\db{X}$ of \good \, complexes
\begin{align*}
p_* \ic{Z}  \cong & \;\bigoplus_{\mathcal O}  \ic{\mathcal O} \otimes \bb{M}_{p;\mathcal O},
\\
 {\bb{M}}_{p;\mathcal O}  =&\; \bigoplus_{j=0}^{\dim{Z} - \dim{\mathcal O}} {\oql}^{m_{p;\mathcal O,2j}} (-j)[-2j],
 \end{align*}
 where $\mathcal O$ is a finite collection of  geometrically integral  $\Q$-invariant closed subvarieties  in $X$,  the multiplicities 
 $m_{p; \mathcal O,2j}$ are subject to the following constraints:
 \ben
 \item
 Poincar\'e-Verdier duality:
 $m_{p;\mathcal O,2j} = m_{p;\mathcal O, 2\dim{Z} - 2\dim{\mathcal O} -2j};$
 \item
 relative hard Lefschetz:
 $m_{p;\mathcal O,2j} \geq m_{p;\mathcal O,2j-2},$ for every $2j \leq \dim{Z} - \dim{\mathcal O}.$
 \een
\end{tm}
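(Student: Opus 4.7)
The plan is to bootstrap Theorem \ref{dtm} to the full $\Q$-equivariant decomposition, following the template of \cite[Thm.\,1.4.1]{dC} for proper toric maps. By Theorem \ref{dtm}, $p_*\ic{Z}$ is \good; in particular it is semisimple, Frobenius semisimple, very pure of weight zero, even, and Tate. Semisimplicity alone yields, already over $k$, a decomposition of $p_*\ic{Z}$ into shifted simple perverse summands of the form $\m{IC}_{Y_\alpha}(L_\alpha)[n_\alpha]$, where $Y_\alpha\subset X$ is $k$-integral and $L_\alpha$ is a simple lisse sheaf on a smooth dense open subset of $Y_\alpha$. Since $p$ is $\Q$-equivariant (the projections $(\G/\PP)^r\to(\G/\Q)^m$ cutting out the twisted products and the Demazure product being $\Q$-equivariant), so is $p_*\ic{Z}$, hence each support $Y_\alpha$ is $\Q$-invariant. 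Call these supports $\mathcal O$ and group the summands with a common support into a graded Galois multiplicity module $\bb{M}_{p;\mathcal O}$.

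Next, I would verify the two structural properties of $\mathcal O$ and $\bb{M}_{p;\mathcal O}$ asserted in the statement. For geometric integrality, in the case $r'=1$ the $\Q$-invariant closed subvarieties $\mathcal O$ are $\Q$-orbit closures in $X_\Q(w''_\bullet)$, and their geometric integrality is Proposition \ref{fqco}; the general $r'>1$ case reduces to $r'=1$ via the local product structure Proposition \ref{gt50}. As for the shape of $\bb{M}_{p;\mathcal O}$: Tateness, evenness, and Frobenius semisimplicity of the stalks of $p_*\ic{Z}$ force $L_\alpha$ to be pointwise a Tate module of even weight, and for a simple lisse sheaf on a geometrically connected smooth base this forces $L_\alpha\simeq\oql(-j)$ (trivial monodromy after a Tate twist). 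Collecting terms gives the required form $\bb{M}_{p;\mathcal O}=\bigoplus_j \oql^{m_{p;\mathcal O,2j}}(-j)[-2j]$.

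Finally, the symmetry and the monotonicity of the multiplicities come from two classical inputs applied to the decomposition above, exactly as in the toric case. Self-duality of $\ic{Z}$ up to the appropriate shift and twist, together with compatibility of Verdier duality with proper pushforward, gives $m_{p;\mathcal O,2j}=m_{p;\mathcal O,2\dim Z-2\dim\mathcal O-2j}$. The relative hard Lefschetz theorem applied to the projective morphism $p$, with a $p$-relatively ample class $\eta$, induces isomorphisms ${}^{\frak p}\!\m{H}^{-i}(p_*IC_Z)\xrightarrow{\eta^i}{}^{\frak p}\!\m{H}^{i}(p_*IC_Z)$ which, after rewriting in terms of the normalization $\ic{Z}=IC_Z[-\dim Z]$ and comparing the Tate summands on both sides for each fixed support $\mathcal O$, produces the inequalities $m_{p;\mathcal O,2j}\geq m_{p;\mathcal O,2j-2}$ for $2j\leq\dim Z-\dim\mathcal O$.

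The step I expect to be the only genuinely delicate one is the passage from ``each stalk is Tate'' to ``the local system $L_\alpha$ is isomorphic to a Tate twist of the constant sheaf'': this is where one really uses semisimplicity of $L_\alpha$ coupled with pointwise Tateness to kill monodromy, and it is the mechanism that absorbs everything but $\ic{\mathcal O}$ into the Tate multiplicity module $\bb{M}_{p;\mathcal O}$. Everything else is a formal manipulation of the goodness property provided by Theorem \ref{dtm} together with the standard BBD package (decomposition, Verdier self-duality, relative hard Lefschetz).
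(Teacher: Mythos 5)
Your proposal is correct and follows essentially the same route as the paper, which in fact omits the proof, citing it as proceeding ``almost exactly as in \cite[Theorem 1.4.1]{dC}'' once Theorem \ref{dtm} is known, with the only non-identical ingredient being the geometric integrality of the supports $\mathcal O$ via Propositions \ref{fqco} and \ref{gt50} --- exactly the two facts you invoke. Your closing observation about where the argument genuinely bites (using semisimplicity of $L_\alpha$ together with pointwise Tateness to trivialize the monodromy and reduce $L_\alpha$ to a Tate twist of the constant sheaf) is the right identification of the non-formal step in the dC template.
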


In what follows, we are going to use freely the notion of incidence algebra of the poset associated with the 
$\B$-orbits in $\G/\B$ as summarized in \ci{dMM}, \S6. In particular, (\ref{anz}) is the analogue of \ci[Theorem 7.3]{dMM}
 in the context of the map $p$ below. The reader is warned that in \ci{dMM}, the poset of orbits in the toric variety has the order
 opposite to the one employed below in $\G/\B$, i.e.  here we have $v \leq w$ iff $X_\B (v) \subseteq X_\B (w)$. One can also use Hecke algebras in place of incidence algebras.  

Let $p: X_\B (w_\bullet) \to \xbp{\B}{w}$ be a  convolution morphism ($w$ is the Demazure product of the $w_i$'s).
For every $u\leq v  \leq w \in \W$, 
let: $F_{p;v}(q) \in \zed_{\geq 0}[q]$ be the Poincar\'e polynomial of the geometric  fiber $\ov{p^{-1} ({v\B})}$;
$P_{uv}(q) \in \zed_{\geq 0}[q]$ be the Kazhdan-Lusztig polynomial (which we view, thanks to the Kazhdan-Lusztig theorem
\ci{KL}, as the graded dimension of graded stalk 
of intersection complex of $\xbp{\B}{v}$ at the geometric point  at $u\B$);   $\w{P}_{uv}(q) \in \zed [q]$ be the 
function  inverse  to the 
Kazhdan-Lusztig polynomials in the incidence $\zed [q]$-algebra associated
with the poset of $\B$-orbits in $\G/\B$, i.e. we have  $\sum_{u \leq x\leq v}P_{ux}  \w{P}_{xv}= \delta_{uv}$;
 $M_{p;v}(q) \in \zed_{\geq 0}[q]$ be the graded dimension of
$\mathbb M_{p;v}$. 

Of course, $v\leq w$ appears non trivially in the decomposition Theorem  for $p$ iff $\mathbb M_{p;v}\neq 0;$
in this case, we say that {\em $v$ is a support of the map $p.$} 
By virtue of the precise form of the decomposition  Theorem \ref{beqdt}  for $p$, and by  using incidence algebras (or Hecke algebras)
exactly as in \ci[Theorem 7.3]{dMM},
one gets the following identities
\beq\la{anz}
F_{p;v} = \sum_{v \leq x \leq w} P_{vx} \, M_{p;x} \;\; \mbox{(in $\zed_{\geq 0} [q]$)},
\qquad M_{p;x} = \sum_{x \leq z \leq w} \w{P}_{xz} \, F_{p;z}
 \;\; \mbox{(in $\zed [q]$)},
\eeq
where the first one stems from Theorem \ref{beqdt}, and the  second one is obtained by inverting the first one by means of the identity $\sum_{u \leq x\leq v}P_{ux}  \w{P}_{xv}= \delta_{uv}$.

The polynomials $\w{P}$ satisfy the identity $\w{P}_{uv} =(-1)^{\ell(u) + \ell(v)} Q_{uv}$, where the
polynomials $Q_{uv} \in \zed_{\geq 0} [q]$ are the inverse Kazhdan-Lusztig polynomials; see \cite[Prop.\,5.7]{KL} and \cite[Thm.\,3.7]{GH}. In particular, 
it is not a priori clear that the r.h.s.   of the second identity in (\ref{anz})  should be a polynomial with non-negative coefficients:
here, we see this as a consequence of the decomposition theorem.

Further, let us specialize to $p: X_\B (s_\bullet) \to X_\B (s)$ being a Demazure map, where $s_\bullet \in \m{S}^r$
and $s$ is the Demazure product $s_1 * \cdots *s_r.$
The polynomial $F_{p;v}[q]$ counts  the number of affine cells in each dimension
in the fiber of $p$ over the point $v \B$ (cfr. Theorem \ref{pavingtm}.(1)). The total number of these,
i.e. the Euler number of the fiber, is of course $F_{p;v}(1)$ and it is also the cardinality of the  set
$
\{(t_1, \ldots, t_{r}) \, | \; t_i= s_i \, \mbox{\rm or}\, 1;\,\Pi_i t_i = v \}.
$

The identity
$M=\sum \w{P} F$ in (\ref{anz}) expresses  
a non-trivial  relation between the  supports of the Demazure map $p$, the topology of its fibers,
and  the inverse Kazhdan-Lusztig polynomials.

\begin{??}\la{domanda!} {\rm ({\bf  Supports for Demazure maps})}
Which Schubert subvarieties $\xbp{\B}{v}$ of a Schubert variety $\xbp{\B}{w}$ in $\G/\B$ appear as supports of a given  
Demazure map?
This boils down to determining when the non-negative  $M_{p;v}(1)$ is in fact positive.  This seems to be 
a difficult problem  --even in the finite case!--, in part because of the presence of the inverse Kazhdan-Lusztig polynomials.
\end{??}

\subsection{The negative parahoric loop group and big cells} \la{big_cell_intro_sec}$\;$

In $\S\ref{affgrps}$ we introduce a ``negative'' loop group $L^{--}P_{\bf f}$ associated to a parahoric loop group $L^+P_{\bf f}$, for any facet ${\bf f}$ of the Bruhat-Tits building for $G(k(\!(t)\!))$. More precisely, assuming ${\bf f}$ is in the closure of an alcove ${\bf a}$ corresponding to a Borel subgroup $B = TU$ of $G$, we have the standard definition $L^{--}P_{\bf a} := L^{--}G \cdot \bar{U}$, and then we define 
$$
L^{--}P_{\bf f} \, := \, \bigcap_{w \in \w{W}_{\bf f}} \, ^wL^{--}P_{\bf a}.
$$
Our results on the geometry of twisted products of Schubert varieties rest on the following theorem:

\begin{tm} \la{big_cell_thm}
The multiplication map gives an open immersion
$$
L^{--}P_{\bf f} \times L^+P_{\bf f} \longrightarrow LG.
$$
\end{tm}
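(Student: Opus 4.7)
The plan is to bootstrap from the known alcove case (${\bf f} = {\bf a}$) to the general facet case, using an Iwahori-type factorization as the bridge. The alcove statement, namely that $L^{--}P_{\bf a} \times L^+P_{\bf a} \to LG$ is an open immersion, is due to Beauville--Laszlo in the affine Grassmannian setting and Faltings in the affine flag variety setting, so I may treat it as known input. The transfer to a general facet ${\bf f}$ should parallel the classical finite-dimensional factorization $\bar{U} = \bar{U}_P \cdot (\bar{U} \cap L)$ that is used to pass from the Iwahori big cell $\bar{U} \cdot B$ to the parabolic big cell $\bar{U}_P \cdot P$ in a reductive group.

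Concretely, the first step is to establish an Iwahori-type isomorphism of ind-schemes
$$L^{--}P_{\bf a} \;\xrightarrow{\sim}\; L^{--}P_{\bf f} \,\times\, \bigl(L^{--}P_{\bf a} \cap L^+P_{\bf f}\bigr),$$
realized by multiplication. Here the defining intersection $L^{--}P_{\bf f} = \bigcap_{w \in \w{W}_{\bf f}} {}^wL^{--}P_{\bf a}$ isolates precisely the affine root directions of $L^{--}P_{\bf a}$ that remain ``negative at ${\bf a}$'' after every conjugation by $\w{W}_{\bf f}$, while the complementary finite-dimensional factor $L^{--}P_{\bf a} \cap L^+P_{\bf f}$ collects the root directions that move into $L^+P_{\bf f}$ under some such conjugation. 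Such a factorization is of exactly the type advertised in \S\ref{Iwah_type_sec}. Combined with the alcove big cell and with the symmetric statement $L^+P_{\bf f} = (L^{--}P_{\bf a} \cap L^+P_{\bf f}) \cdot L^+P_{\bf a}$ on the positive side, it yields a set-theoretic equality $L^{--}P_{\bf f} \cdot L^+P_{\bf f} = L^{--}P_{\bf a} \cdot L^+P_{\bf a}$ inside $LG$, so the image of the target multiplication map is automatically the open alcove big cell.

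The second step is injectivity: it suffices to show $L^{--}P_{\bf f} \cap L^+P_{\bf f} = \{1\}$. Since $L^{--}P_{\bf f} \subseteq L^{--}P_{\bf a}$ (the $w = e$ term in the defining intersection), this sits inside $L^{--}P_{\bf a} \cap L^+P_{\bf f}$, and the factorization above together with the alcove-case triviality $L^{--}P_{\bf a} \cap L^+P_{\bf a} = \{1\}$ forces it to vanish. To upgrade from set-theoretic bijection to a scheme-theoretic open immersion, one computes the differential of the multiplication map $m$ at $(1,1)$: it realises the affine root space decomposition $\mathrm{Lie}(L^{--}P_{\bf f}) \oplus \mathrm{Lie}(L^+P_{\bf f}) \xrightarrow{\sim} \mathrm{Lie}(LG)$ attached to ${\bf f}$. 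Propagating this isomorphism of tangent spaces by the transitive left-right action of $L^{--}P_{\bf f} \times L^+P_{\bf f}$ on the source, and approximating by finite-dimensional Schubert-type pieces to stay within the ind-scheme framework, packages everything into an open immersion.

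The main obstacle is the rigorous ind-schematic form of the Iwahori-type factorization: identifying $L^{--}P_{\bf a} \cap L^+P_{\bf f}$ as a concrete ind-subgroup and showing multiplication is an \emph{isomorphism} of ind-schemes, not merely a bijection on points. This is precisely where the new schematic big cell foundations developed in \S\ref{affgrps} are needed; once that factorization is in hand, the remainder of the argument is largely formal.
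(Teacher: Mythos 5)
Your high-level approach — bootstrap from the alcove case via an Iwahori-type factorization — coincides with the paper's, and you correctly identify that factorization (equation (\ref{decomp3}), a consequence of Proposition \ref{proto_decomp2}) as the technical crux. However there is a concrete error: your claimed ``symmetric statement'' $L^+P_{\bf f} = (L^{--}P_{\bf a} \cap L^+P_{\bf f}) \cdot L^+P_{\bf a}$ is false whenever ${\bf f}\neq{\bf a}$, since the right-hand side is only the preimage under $L^+P_{\bf f} \twoheadrightarrow L^+P_{\bf f}/L^+P_{\bf a}$ of the open big cell in a finite-dimensional flag variety (e.g.\ for $G={\rm SL}_2$, ${\bf f}={\bf 0}$, it gives $\mathbb{A}^1\subsetneq L^+G/\mathcal{B}\cong\mathbb{P}^1$). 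Hence your derived equality $L^{--}P_{\bf f}\cdot L^+P_{\bf f}=L^{--}P_{\bf a}\cdot L^+P_{\bf a}$ fails; the containment $\supseteq$ is strict, and the parahoric big cell is genuinely larger than the alcove big cell. The correct argument, which is exactly the paper's Lemma \ref{atof_lem}, is that $L^{--}P_{\bf f}\cdot L^+P_{\bf f}$ is stable under right-multiplication by $L^+P_{\bf f}$ and contains the open set $L^{--}P_{\bf a}\cdot L^+P_{\bf a}$, so it is a union of open right-translates and hence open — but not itself a single alcove cell.

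There is also a missing base case: Beauville--Laszlo prove the ${\rm SL}_d$, ${\bf f}={\bf 0}$ statement, and Faltings proves ${\bf f}={\bf 0},{\bf a}$ for \emph{semisimple} $G$, but the theorem is needed for arbitrary connected reductive $G$, for which $LG$ is not a Kac-Moody group. The paper supplies the reduction via Lemmas \ref{bigcell_fctr} and \ref{par_bigcell_fctr}, which use the dynamical big cells $\Omega_G(\lambda)=U_G(-\lambda)\cdot P_G(\lambda)$ and a stabilizer-of-a-line argument to transport the statement along a closed embedding $G\hookrightarrow{\rm SL}_d$, together with a self-contained lattice-chain argument for ${\rm SL}_d$ and the alcove. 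This is where the paper's proof departs substantially from your plan. Note also that the paper never computes a differential: the open-immersion property is obtained in the ${\rm SL}_d$ lattice case by constructing an explicit inverse morphism on the complement of theta divisors, and the reduction lemmas then carry this property through; your proposed Lie-algebra/tangent-space upgrade would require its own ind-scheme bookkeeping, which the constructive route sidesteps.
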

This allows us to define the Zariski-open {\em big cell} $\mathcal C_{\bf f} := L^{--}P_{\bf f} \cdot x_e$ in the partial affine flag variety $\mathcal F_{P_{\bf f}}$. The proof rests on new Iwahori-type decompositions, most importantly Proposition \ref{proto_decomp2}.

\smallskip

\subsection{A surjectivity criterion}\la{asurjctz}$\;$

The part of Theorem \ref{dtm} concerning fibers  is a consequence   of the following more technical statement.
Recall the Virasoro action via $L_0$ of $\mathbb G_m$ on $\G$ (\ci[p.\,48]{F}) (defined and named ``dilation" action in  \S\ref{pftmff} and denoted by $c$).

\begin{tm}\la{tmff} {\bf (Surjectivity for fibers criterion)} 
Let $X:=\xbp{\B\PP}{w} \subseteq \mathcal G / \mathcal P$ be the closure of a $\mathcal B$-orbit,
 let $g: Z \to X$ be  a proper and $\B$-equivariant map of $\B$-varieties. 
In the partial affine flag  case, we further assume that 
there is a $\mathbb{G}_m$-action $c_Z$ on $Z$ such that $c_Z$ commutes with the $T(k)$-action, and such  that $g$ is equivariant with respect to the action $c_Z$ on $Z$ and  with the action  $c$ on $X$. 

\n
For every closed point $x \in X$, for every Zariski open subset $U \subseteq Z$ such that
$g^{-1}(x) \subseteq U$,  
 the natural restriction map of graded Galois modules
\beq\la{vobn}
I\!H^*(\ov{U}) \to H^*(\ov{g}^{-1} (\ov{x}), \ic{\ov{Z}})
\eeq
 is surjective and  the target is pure of weight zero.

\n
If, in addition, $I\!H^*(\ov{Z})$ is Frobenius semisimple \textup{(}resp.\,\,even, Tate, \good\textup{)}, then so is the target.
\end{tm}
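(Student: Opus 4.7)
The plan is to exploit the Kazhdan--Lusztig principle linking contracting $\bb{G}_m$-actions to purity, combined with the yoga of weights. First I would construct a $\bb{G}_m$-stable Zariski-open attracting neighborhood $V \subseteq X$ of $x$. In the affine case this uses Theorem \ref{big_cell_thm}: an appropriate translate of the big cell $\m{C}_{\mathbf{f}} = L^{--}P_{\mathbf{f}}\cdot x_e$, intersected with $X$, produces an open on which the dilation action $c$ extends to an $\bb{A}^1$-action with $\lim_{t\to 0} c(t)\cdot v = x$ for every $v \in V$. (In the finite case, the opposite Bruhat cell at $x$, acted on by a regular dominant cocharacter, serves the same role.) By equivariance of $g$ and the hypothesis on $c_Z$, the preimage $g^{-1}(V) \subseteq Z$ is $\bb{G}_m$-stable, and since $\lim_{t\to 0} c_Z(t)\cdot z$ maps under $g$ to $x$, the $\bb{A}^1$-extension contracts $g^{-1}(V)$ onto $g^{-1}(x)$.

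Next I would apply the Drinfeld--Gaitsgory contraction principle (equivalently, the argument of \ci[\S4]{KL}) to the pure $\bb{G}_m$-equivariant complex $\ic{Z}|_{g^{-1}(V)}$: restriction to the fiber is an isomorphism
$$IH^*(g^{-1}(V)) \;\xrightarrow{\sim}\; H^*(g^{-1}(x), \ic{Z}).$$
Combined with two weight bounds, this yields the purity claim: since $\ic{Z}$ is pure of weight $0$ and $g^{-1}(V)$ is open in $Z$, the source $IH^n(g^{-1}(V))$ has Frobenius weights $\geq n$ (BBD); since $g$ is proper, $g_*\ic{Z}$ is pure of weight $0$ on $X$ by Deligne, so by proper base change $\m{H}^n(g_*\ic{Z})_x = H^n(g^{-1}(x), \ic{Z})$ has weights $\leq n$. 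The isomorphism forces both bounds to be tight, and the target is pointwise pure of weight $0$.

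For surjectivity from an arbitrary open $U \supseteq g^{-1}(x)$, I would use the properness of $g$ to produce an open $W \subseteq V$ in $X$ with $x \in W$ and $g^{-1}(W) \subseteq U$. The composition $IH^*(g^{-1}(V)) \to IH^*(g^{-1}(W)) \to H^*(g^{-1}(x), \ic{Z})$ coincides with the isomorphism above, so the second arrow is surjective. The full composite $IH^*(U) \to IH^*(g^{-1}(W)) \to H^*(g^{-1}(x), \ic{Z})$ is then shown to be surjective by a weight-theoretic comparison: the target is pure of weight $n$ in degree $n$, $IH^n(U)$ has weights $\geq n$ and maps into the weight-$n$ part of $IH^n(g^{-1}(W))$, and the $\bb{G}_m$-equivariance of $\ic{Z}$ together with the established iso on $g^{-1}(V)$ identifies this weight-$n$ image with the full image needed to surject onto the target.

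Finally, the goodness transfer follows because $H^*(g^{-1}(x), \ic{Z})$, being pure of weight $0$ and a Galois-controlled quotient of $IH^*(Z)$ through the pipeline $IH^*(Z) \to IH^*(g^{-1}(V)) \xrightarrow{\sim} H^*(g^{-1}(x), \ic{Z})$, inherits Frobenius semisimplicity, evenness, and Tateness from $IH^*(Z)$. The hardest step is the weight-theoretic surjectivity in the third paragraph: the contraction principle directly provides only the isomorphism on the $\bb{G}_m$-stable open $g^{-1}(V)$, and converting it into surjectivity from an arbitrary, not necessarily $\bb{G}_m$-stable, $U$ requires the delicate bookkeeping of weights together with the $\bb{G}_m$-equivariance of the pure complex $g_*\ic{Z}$.
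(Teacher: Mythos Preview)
Your overall strategy (contraction + weights) is on target, but the construction of the contracting $\mathbb{G}_m$-action has a genuine gap. First, you omit the reduction via $\B$-equivariance from an arbitrary closed point to a $T$-fixed point $x_v$. More seriously, neither the dilation $c$ alone (affine case) nor a single regular dominant cocharacter (finite case) contracts the translated big cell $v\m{C}_\PP$ to $x_v$: the dilation $c$ acts trivially on the ``constant'' subgroup $\bar{U} \subset L^{--}P_{\bf a}$, so it cannot even contract $\bar{U}\cdot x_e$; and in the finite case $v\bar{U}_P v^{-1} \cdot x_v$ contains root groups $U_{v\alpha}$, $\alpha < 0$, on which a fixed dominant cocharacter has weights of both signs once $v\neq e$. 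So your key isomorphism $I\!H^*(g^{-1}(V)) \cong H^*(g^{-1}(x), \ic{Z})$ is not established.

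The paper proceeds differently. After reducing to $x = x_v$, it does not try to contract a full neighborhood. Instead it proves a local product structure (Lemma~\ref{ulemma}): $X \cap v\m{C}_\PP \cong Y(v) \times S_v$, where $Y(v)$ is the $\B$-orbit (an affine space) and $S_v$ is a transversal slice, compatibly lifted to $g^{-1}(X_v) \cong Y(v) \times g^{-1}(S_v)$. A combined action $(\mu = -2n\rho^\vee,\, c^{-i})$ with $i \gg n \gg 0$ contracts only the slice $S_v$ to $x_v$; the affine-space factor $Y(v)$ is eliminated by K\"unneth. The chain $I\!H^*(\ov{Z}) \to H^*(\ov{g^{-1}(X_v)}) \xrightarrow{\sim} H^*(\ov{g^{-1}(S_v)}) \xrightarrow{\sim} H^*(\ov{g^{-1}(x_v)})$ then gives purity and surjectivity via exactly the weight argument you sketch. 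Your route is in fact salvageable: for each $v$ one can instead take $(\mu, c^j)$ with $j<0$ and $\mu/j$ in the relative interior of the facet $v\cdot{\bf f}$, and this \emph{does} contract all of $v\m{C}_\PP$ to $x_v$; but that is not the bare dilation and requires its own justification. Finally, once surjectivity from $I\!H^*(\ov{Z})$ is known, surjectivity from any open $U \supseteq g^{-1}(x)$ is immediate since the surjection factors through the restriction $I\!H^*(\ov{Z}) \to I\!H^*(\ov{U})$; your third paragraph overcomplicates this step.
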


\begin{rmk}\la{fg56}
The hypothesis on the existence of the action $c_Z$ in Theorem \ref{tmff} is not restrictive
in the context of this paper,  as it is automatically satisfied in the situations we meet when 
proving  Theorem \ref{dtm}. Let us emphasize that it is also automatically satisfied in the finite case. Similarly, the  hypotheses
 at the end of the statement of 
  Theorem \ref{tmff}  are also not  too restrictive, since, as it turns out, they are  automatically satisfied in the context of Theorem \ref{dtm},
  by virtue of Theorem \ref{ctm}.
\end{rmk}

\subsection{Affine paving of fibers of Demazure-type maps}\la{affdm}$\;$

\begin{defi}\la{affpav}
{\rm 
({\bf Affine paving}) A $k$-variety $X$ is {\em paved by affine spaces} if there exists a sequence of closed subschemes $\emptyset = X_0 \subset X_1 \subset \cdots \subset X_n =: X_{\rm red}$ such that, for every $1 \leq i \leq n$, the difference $X_i - X_{i-1}$ is the (topologically) disjoint union of finitely many affine spaces ${\mathbb A}^{n_{i_j}}$. 
}
\end{defi}

We refer to $\S\ref{cmbtpv}, \ref{notsy}, \ref{orbtz}$ for the notation used in the following result.

\begin{tm}\la{pavingtm} Given sequences  \textup{}$s_\bullet \in \m{S}^r$, $w_\bullet \in (\m{W}_\PP \backslash \m{W} /
\m{W}_{\PP})^r$\textup{},
the following can be paved by affine spaces
\ben
\item
The fibers of the convolution map $p : X_\B (s_\bullet) \to X_\B (s_\star)$, and $p^{-1}(Y_\B (v))$, $\forall v  \leq s_\star.$
\item
The fibers of  the convolution map obtained as the composition  $X_\B (s_{\bullet \bullet}) \to X_\B (s_\star)\to X_\PP (u_\star)$,
where, for every $1\leq i \leq r$,  $s_{i\bullet}$ is a reduced word for an element $u_i \in \m{W}$ which is 
$\PP$-maximal, and $s_\star$ is the Demazure product of the $s_{\bullet \bullet}$ which, by associativity,
coincides with the one, $u_\star$, for the $u_i$. 

\item The twisted product varieties $X_\PP(w_\bullet)$.
\een
\end{tm}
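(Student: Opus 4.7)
The plan is to prove the three parts in the order (3), (1), (2), proceeding from the most transparent to the most intricate case, and in each case combining induction on $r$ with the iterated bundle structure of the twisted product varieties.

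For part (3), I would argue by induction on $r$, using the projection $X_\PP(w_1, \ldots, w_r) \to X_\PP(w_1, \ldots, w_{r-1})$ that forgets the last factor. By the definition of the twisted product recalled in \S\ref{twcon_sec}, this projection is a Zariski-locally trivial fibration with fiber the Schubert variety $X_\PP(w_r)$, which is itself paved by the Bruhat cells $Y_\PP(v_r)$ with $v_r \leq w_r$. Assembling the inductive paving of the base with the Bruhat paving of the fiber, the pieces in the resulting paving are indexed by tuples $(v_1 \leq w_1, \ldots, v_r \leq w_r)$ and are iterated affine-space bundles over affine spaces, hence affine spaces themselves.

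For part (1), I would induct on $r$ using the $\mathbb P^1$-bundle $X_\B(s_1, \ldots, s_r) \to X_\B(s_1, \ldots, s_{r-1})$, with the case $r=1$ being trivial. One can split along the two cases of the Demazure recursion: if appending $s_r$ strictly increases the Demazure product, then $p$ is injective on each $\mathbb P^1$-fiber, and the fiber of $p$ over $v\B$ is obtained from the inductively paved smaller fiber by adjoining at most one $\mathbb A^1$; if appending $s_r$ leaves the Demazure product unchanged, then $p$ factors through the last projection and the fiber over $v\B$ becomes a $\mathbb P^1$-bundle over a smaller inductively paved fiber, giving the required paving via $\mathbb P^1 = \mathbb A^0 \sqcup \mathbb A^1$. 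A cleaner alternative, which I would prefer, is a Bialynicki-Birula argument: a regular dominant cocharacter induces a $\mathbb G_m$-action on $X_\B(s_\bullet)$ whose fixed locus is indexed by sequences $(t_1, \ldots, t_r)$ with $t_i \in \{1, s_i\}$ and whose attracting cells are affine and $\B$-stable; intersecting these cells with $p^{-1}(v\B)$ then paves the fiber by affine spaces. The statement for $p^{-1}(Y_\B(v))$ follows from $\B$-equivariance together with $Y_\B(v) \cong \mathbb A^{\ell(v)}$: the preimage is an affine-space bundle over an affine space, so it trivializes and inherits a paving from that of $p^{-1}(v\B)$.

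For part (2), the composition $X_\B(s_{\bullet\bullet}) \to X_\B(s_\star) \to X_\PP(u_\star)$ is analyzed fiberwise. The $\PP$-maximality of each $u_i$, inherited by the Demazure product $u_\star$, implies that $X_\B(s_\star)$ is the full preimage of $X_\PP(u_\star)$ under $\G/\B \to \G/\PP$, so the second arrow is a Zariski-locally trivial $\PP/\B$-bundle. The fiber of the composition over $x \in X_\PP(u_\star)$ is therefore the preimage, under the convolution map of part (1), of a copy of $\PP/\B$ sitting inside $X_\B(s_\star)$; stratifying this $\PP/\B$ by its finite Bruhat cells $Y_\B(v)$ and invoking part (1) for the preimage of each cell yields an affine paving of the total fiber. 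The principal obstacle is part (1), specifically keeping track of the Demazure recursion or, in the Bialynicki-Birula approach, verifying that the attracting cells meet each $\B$-orbit of the target transversely; once (1) is settled, (2) and (3) reduce to routine descent along locally trivial fibrations.
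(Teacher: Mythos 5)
Your outline agrees with the paper's for parts (2) and (3), and your preferred route for part (1) — the Bialynicki-Birula argument — is exactly the strategy the paper adopts (they note your first alternative, the BN-pair/Demazure recursion, was their earlier approach). But your version of the Bialynicki-Birula argument contains a genuine gap that is precisely the crux of the affine case. You propose to use ``a regular dominant cocharacter'' of $T$ to generate the contracting $\mathbb G_m$-action. In the finite flag variety $G/B$ this works, because the big cell $\overline{U}B/B$ has $T$-weights $\alpha<0$, all of one sign. In the partial affine flag variety $\mathcal G/\B = LG/L^+P_{\bf a}$ it does not: the big cell $v\overline{\mathcal U}x_e$ is a (pro-)product of affine root subgroups $U_{\alpha+n}$ with $\alpha+n<0$, and each $U_{\alpha+n}$ has $T$-weight $\alpha$ independent of $n$. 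For $n$ negative enough both $\alpha$ and $-\alpha$ occur among the indices, so any cocharacter $\lambda$ of $T$ has both positive and negative weights on the big cell; equivalently, no $T$-cocharacter contracts (or repels) a $\mathbb T$-invariant neighborhood of $x_v$, and the crucial hypothesis of Lemma \ref{BB_paving} fails. The paper repairs this by enlarging the acting torus to $\mathbb T := T\times\mathbb G_m$, where the extra $\mathbb G_m$ acts through the loop-rotation/dilation action $c$ ($t\mapsto at$), and by taking the contracting cocharacter to be $v\,\mu_{c^{-i}}\,v^{-1}$ with $\mu = -2n\rho^\vee$ and $i\gg n\gg 0$; the weight of $U_{\alpha+n}$ under this cocharacter picks up the extra term $-in$, which for $n<0$ dominates $\langle\alpha,\mu\rangle$ and makes all weights uniformly positive on the big cell. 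Without the dilation factor, the Bialynicki-Birula paving cannot be set up, so your part (1) has a real gap.

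The remaining parts are essentially the paper's argument, modulo small points. In part (3) you write that $X_\PP(w_r)$ is paved by ``the Bruhat cells $Y_\PP(v_r)$''; for $\PP\neq\B$ the $\PP$-orbits $Y_\PP(v)$ are not affine spaces — what you need are the $\B$-orbits $Y_{\B\PP}(v)$, $v\in\W/\W_\PP$, which are affine spaces by (\ref{Y_B_in_big_cell}), and this is what the paper uses. Your use of the last-factor-forgetting projection in place of the paper's first-factor projection ${\rm pr}_1$ (Lemma \ref{zlt1}) is an inessential variation; both trivialize over each $Y_{\B\PP}(v)$ via big cells. Your part (2) coincides with the paper's argument via diagram (\ref{mmh}): the fiber $q'^{-1}(w\PP/\PP)\cong \PP/\B$ is stratified by the $\B$-orbits $Y_\B(ww')$, $w'\in\W_\PP$, and part (1), applied to $p'\circ\pi$, paves each preimage. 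Once the gap in part (1) is filled by enlarging the torus to include the dilation action, the proposal is sound.
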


\begin{rmk}\la{re45}
In  Theorem \ref{pavingtm}.\textup{(}1\textup{)} it is important that we {\em do not require} $s_1\cdots s_r$ to be a reduced expression.
As we shall show, associated with a convolution map with
$\PP=\m{Q}$, there is a commutative diagram \textup{(}\ref{mmh}\textup{)} of maps
of twisted product varieties. Theorem \ref{pavingtm}.\textup{(}2\textup{)}  is a paving result for the fibers of the morphisms $q'p'\pi$ appearing in that diagram.
\end{rmk}

\begin{rmk} \label{gen_paving_rem}
We do not know whether every fiber of a convolution morphism $p : X_\PP(w_\bullet) \rightarrow X_\PP(w_\star)$ is paved by affine spaces, even in the context of affine Grassmannians \textup{(}cf.\,\cite[Cor.\,1.2 and Question 3.9]{H06}\textup{)}. However, it is not difficult to show that, in general, each fiber can be paved by varieties which are iterated bundles $B_{N} \rightarrow B_{N-1} \rightarrow \cdots B_1 \rightarrow B_0 = \mathbb A^0 $ where each $B_{i+1} \rightarrow B_i$ is a locally trivial ${\mathbb A}^1$ or ${\mathbb A}^1 - {\mathbb A}^0$ fibration. We shall not use this result.
\end{rmk}


\section{Loop groups and partial affine flag varieties}\la{affgrps}$\,$

In $\S \ref{Borel_pair_sec}- \ref{rt_grp_sec}$ we review some standard background material; our main references are \cite{BLa, F, PR, HRa}. In $\S\ref{neg_loop_grp_sec}-\ref{orbtz}$ we develop new material, including our definition of the negative parahoric loop group, various Iwahori-type decompositions, and our theory of the big cell.

\subsection{Reductive groups and Borel pairs} \la{Borel_pair_sec}

Throughout the paper $G$ will denote a split connected reductive group over a finite field $k$, and $\bar{k}$ will denote an algebraic closure of $k$. Fix once and for all a $k$-split maximal torus $T$ and a $k$-rational Borel subgroup $B \supset T$.  Let $U$ be the unipotent radical of $B$, so that $B = TU$. Let $\bar{B} = T\bar{U}$ be the opposite Borel subgroup; among the Borel subgroups containing $T$, it is characterized by the equality $T = B \cap \bar{B}$. Let $\Phi(G, T) \subset X^*(T)$ denote the set of roots associated to $T \subset G$; write $\alpha^\vee \in X_*(T)$ for the coroot corresponding to $\alpha \in \Phi(G,T)$. Write $U_\alpha \subset G$ for the root subgroup corresponding to $\alpha \in \Phi(G, T)$; we say $\alpha$ is {\em positive} (and  write $\alpha > 0$) if $U_\alpha \subset U$; we have $B = T \displaystyle\prod_{\alpha > 0} U_\alpha$.

The following remark will be used a few times in the paper.

\begin{rmk} \label{Borel_pair_rem} We work over any perfect field $k$. Fix a faithful finite-dimensional representation of $G$, i.e.,\,a closed immersion of group $k$-schemes $G \hookrightarrow {\rm GL}_N$. Let $B_N$ be a $k$-rational Borel subgroup of ${\rm  GL}_N$ containing $B$ \textup{(}cf.\,\cite[15.2.5]{Spr}\textup{)}, and choose inside $B_N$ a $k$-split maximal torus $T_N$ containing $T$. Let $\bar{B}_N$ be the $k$-rational Borel subgroup  opposite to $B_N$ with respect to $T_N$.  Let $U_N \subset B_N$ \textup{(}resp.,\, $\bar{U}_N \subset \bar{B}_N$\textup{)} be the unipotent radical. Since $T$ is its own centralizer in $G$, we have $T = G \cap T_N$. Also, we clearly have $B = G \cap B_N$ \textup{(}cf.\,\cite[$\S23.1$, Cor.\,A]{Hum1}\textup{)}, and therefore $U = G \cap U_N$.  Note that $(G \cap \bar{B}_N)^\circ$ is a Borel subgroup $B'$ containing $T$, since it is connected and solvable and $G/(G \cap \bar{B}_N)^\circ$ is complete. As $T \subseteq B \cap B' \subseteq G \cap B_N \cap \bar{B}_N = G \cap T_N = T$, we have $T = B \cap B'$ and so $B' = \bar{B}$. Hence we have $\bar{B} = G \cap \bar{B}_N$ and $\bar{U} = G \cap \bar{U}_N$ as well. The content of this set-up is that the standard Iwahori loop-group $L^+P_{\bf a}$ \textup{(}and its ``negative'' analogues $L^{--}P_{\bf a}(m)$, etc.\,, defined  later in $\S\ref{affgrps}$\textup{)} in a loop group $LG$ can be realized by intersecting $LG$ with the corresponding object in $L{\rm GL}_N$. 
\end{rmk}

\subsection{Affine roots, affine Weyl groups, and parahoric group schemes}\la{affterm}$\,$

A convenient reference for the material recalled here is \cite{HRa}.

Let $N_G(T)$ be the normalizer of $T$ in $G$. Let $W = N_G(T)/T$ be the finite Weyl group, and let $\w{W}:= X_*(T) \rtimes W $ be the {\em extended affine Weyl group}. The groups $W$ and $\w{W}$ act by affine-linear automorphisms on the Euclidean space $\mathbb X_* = X_*(T)\otimes \mathbb R$; in the case of $\w{W}$ this is defined by setting,  for every  $ \lambda \in  X_*(T)$, $\ov{w} \in W$  and $x \in \mathbb X_*$
\beq\la{mnbv}
t_\lambda \bar{w} (x)  := (\lambda, \ov{w}) (x): = \lambda + \bar{w}(x).
\eeq

The set $\Phi_{\rm aff}$ of {\em affine roots} consists of the affine-linear functionals on $\mathbb X_*$ of the form $\alpha + n$, where $\alpha
\in \Phi(G,T)$ and $n \in \mathbb Z$. The {\em affine root hyperplanes}
are the zero   loci  $H_{\alpha + n} \subseteq \mathbb X_*$
of  the affine roots. The {\em alcoves} are the connected components of $\mathbb X_* 
- \bigcup_{\alpha + n} H_{\alpha +k}$. The hyperplanes give the structure of a polysimplicial complex to the Euclidean space $\mathbb X_*$, and the {\em facets} are the simplices (thus alcoves are facets).

The action of $\widetilde{W} = X_*(T) \rtimes W$ on $\mathbb X_*$ induces an action on the set of affine roots, by $w(\alpha + n)(\cdot) := (\alpha + n)(w^{-1}(\cdot))$. Let $\mathbb X^+_* \subset \mathbb X_*$ be the {\em dominant chamber} consisting of the $x \in \mathbb X_*$ with $\alpha(x) > 0$ for all $\alpha >0$. Let ${\bf a}$ be the unique alcove in $\mathbb X^+_*$ whose closure contains the origin of $\mathbb X_*$. 

We say an affine root $\alpha + n$ is {\em positive} if either $n \geq 1$, or $n = 0$ and $\alpha > 0$.  Equivalently, $\alpha + n$ takes positive values on ${\bf a}$. We write $\alpha + n > 0$ in this case. We write $\alpha + n < 0$ if $-\alpha - n > 0$.  Let $S_{\rm aff}$ be the set of {\em simple affine roots}, namely those positive affine roots of the form $\alpha_i$ (where $\alpha_i$ is a simple positive root in $\Phi(G,T)$), or $-\tilde{\alpha} + 1$ (where $\tilde{\alpha} \in \Phi(G, T)$ is a highest positive root).

Let $s_{\alpha+n}$ be the {\em affine reflection} on $\mathbb X_*$ corresponding to 
the affine root $\alpha + n$; this is the map sending $x \in \mathbb X_*$ to $x - (\alpha + n)(x)\alpha^\vee = x - \alpha(x) \alpha^\vee - n\alpha^\vee$.  We have   $s_{\alpha + n} = t_{-n\alpha^\vee} s_\alpha \in \widetilde{W}$.  We can think of $S_{\rm aff}$ as the set of reflections on $\mathbb X_*$ through the walls of ${\bf a}$. Let $Q^\vee : = \langle t_{\beta^\vee}  \in \w{W} ~ | ~ \beta \in \Phi(G,T) \rangle$, and let $W_{\rm aff} : = Q^\vee \rtimes W$.  Then $(W_{\rm aff}, S_{\rm aff})$ is a Coxeter system, and hence there is a length function $\ell: W_{\rm aff} \rightarrow {\mathbb Z}_{\geq 0}$ and a Bruhat order $\leq$ on $W_{\rm aff}$.

The action of $\w{W}$ on $\mathbb X_*$ permutes the affine hyperplanes, and hence the alcoves in $\mathbb X_*$; let $\Omega_{\bf a} \subset \w{W}$ denote the stabilizer of ${\bf a}$.  Then we have a semi-direct product
\begin{equation} \la{q.cox}
\w{W} = W_{\rm aff} \rtimes \Omega_{\bf a}.
\end{equation}
This gives $\w{W}$ the structure of a {\em quasi-Coxeter group}: a semi-direct product of a Coxeter group with an abelian group.  We can extend $\ell$ and $\leq$ to $\w{W}$: for $w_1, w_2 \in W_{\rm aff}$ and $\tau_1, \tau_2 \in \Omega_{\bf a}$, we set  $\ell(w_1 \tau_1) = \ell(w_1)$, and $w_1 \tau_1 \leq w_2 \tau_2$ iff $\tau_1 = \tau_2$ and $w_1 \leq w_2$ in $W_{\rm aff}$.

Let $t$ be an indeterminate. We now fix, once and for all, a set-theoretic embedding $\w{W} \hookrightarrow N_G(T)(k(\!(t)\!))$ as follows: send $\bar{w} \in W$ to any lift in $N_G(T)(k)$, chosen arbitrarily; send $\lambda \in X_*(T)$ to $\lambda(t^{-1}) \in T(k(\!(t)\!))$.\footnote{The reason for using $t^{-1}$ instead of $t$ here is to make (\ref{f'f_Bruhat_closure}) and (\ref{U_conj}) true.}  Henceforth, any $w \in \widetilde{W}$ will be viewed as an element of $G(k(\!(t)\!))$ using this convention.

There is an isomorphism
$$
N_G(T)(k(\!(t)\!))/T(k[\![t]\!]) \, \overset{\sim}{\rightarrow} \, \w{W},
$$
which sends $\lambda(t^{-1}) \bar{w}$ to $t_\lambda \bar{w}$. Via this isomorphism, $N_G(T(k(\!(t)\!))$ acts on $\mathbb X_*$. The Bruhat-Tits building $\mathfrak B(G, k(\!(t)\!))$ is a polysimplicial complex containing $\mathbb X_*$ as an ``apartment''. It is possible to extend the action of $N_G(T(k(\!(t)\!))$ on $\mathbb X_*$ to an action of $G(k(\!(t)\!))$ on $\mathfrak B(G, k(\!(t)\!))$. 

Let ${\bf f}$ be a facet in $\mathbb X_*$. In \cite[5.2.6]{BTII}, Bruhat and Tits construct the {\em parahoric group scheme} $P_{\bf f}$ over ${\rm Spec}(k[\![t]\!])$.  In our present setting, $P_{\bf f}$ can be characterized as the unique (up to isomorphism) smooth affine group scheme over ${\rm Spec}(k[\![t]\!])$ with connected geometric fibers, with generic fiber isomorphic to $G_{k(\!(t)\!)}$, and with $P_{\bf f}(k[\![t]\!])$ identified via that isomorphism with the subgroup of $G(k(\!(t)\!))$ which fixes ${\bf f}$ pointwise. 

We call $P_{\bf a}$ ``the'' {\em Iwahori group scheme}.  Its $k[\![t]\!]$-points can also be characterized as the preimage of $B(k)$ under the reduction map $G(k[\![t]\!]) \rightarrow G(k), \,\,\, t \mapsto 0$. If ${\bf 0} \subset \mathbb X_*$ is the facet containing the origin, then $P_{\bf 0}$ is a  (hyper)special maximal parahoric group scheme and its $k[\![t]\!]$-points can be identified with $G(k[\![t]\!])$. For any $k$-algebra $R$, we can similarly characterize $P_{\bf a}(R[\![t]\!])$ and $P_{\bf 0}(R[\![t]\!])$ (using the Iwahori decomposition in the case of $P_{\bf a}$).

Assume from now on that ${\bf f}$ is contained in the closure of ${\bf a}$. Set $$\w{W}_{\bf f} := [N_G(T)(k(\!(t)\!)) \cap P_{\bf f}(k[\![t]\!])]/T(k[\![t]\!]).$$ 
Then $\w{W}_{\bf f}$ can be identified with the subgroup of $W_{\rm aff}$ which fixes ${\bf f}$ pointwise. Let $S_{\rm aff, \bf f} \subset S_{\rm aff}$ be the simple affine reflections through the walls containing ${\bf f}$.  Then it is known that $(\widetilde{W}_{\bf f}, S_{\rm aff, \bf f})$ is a sub-Coxeter system of $(W_{\rm aff}, S_{\rm aff})$. Note that $\w{W}_{\bf f}$ is a {\em finite} group.

It is well-known that for any two facets ${\bf f}_1, {\bf f}_2$ in $\mathbb X_*$, the embedding $\w{W} \hookrightarrow G(k(\!(t)\!))$ induces a bijection (the Bruhat-Tits decomposition)
\begin{equation} \la{proto_BT_decomp}
\w{W}_{{\bf f}_1} \backslash \w{W}/ \w{W}_{{\bf f}_2} \, \overset{\sim}{\rightarrow} P_{{\bf f}_1}(k[\![t]\!]) \backslash G(k(\!(t)\!)) / P_{{\bf f}_2}(k[\![t]\!]).
\end{equation}

\subsection{Loop groups, parahoric loop groups, and partial affine flag varieties}

A convenient reference for the material recalled in this subsection is \cite{PR}.

The loop group $LG$ of $G$ is the  ind-affine group ind-scheme over $k$ that represents the functor $R \mapsto G(R(\!(t)\!))$ on $k$-algebras $R$. The positive loop group $L^+G$ is the affine group scheme over $k$ that represents $R \mapsto G (R[\![t]\!])$. The negative loop group  $L^- G$ is the  ind-affine group ind-scheme 
over  $k$ that represents the  functor $R \mapsto G(R[t^{-1}])$.

We have the natural inclusion maps  $L^\pm G \to LG$  and the natural reduction maps $L^\pm G \to G$ (sending $t^{\pm 1} \mapsto 0$). The kernels of the reduction maps are denoted by $L^{++} G \subset L^+G$ resp.\,$L^{--}G \subset L^-G$.

We may also define $L^+P_{\bf f}$ to be the group scheme representing the functor
$$
R \mapsto P_{\bf f}(R[\![t]\!]).
$$
This makes sense as $R[\![t]\!]$ is a $k[\![t]\!]$-algebra. Also, $R(\!(t)\!) = R[\![t]\!][\frac{1}{t}]$, and we define $LP_{\bf f}$ as the group ind-scheme representing $R \mapsto P_{\bf f}(R(\!(t)\!))$. Since $P_{\bf f} \otimes_{k[\![t]\!]} k(\!(t)\!) \cong G_{k(\!(t)\!)}$, we have $LP_{\bf f} \cong LG$.

It is not hard to show that $L^+P_{\bf f}$ is formally smooth, pro-smooth, and integral as a $k$-scheme. We omit the proofs.

\begin{defi} We define the partial affine flag variety $\mathcal F_{P_{\bf f}}$ to be the {\em fpqc}-sheaf associated to the presheaf on the category of $k$-algebras $R$ 
$$
R \longmapsto LG(R)/L^+P_{\bf f}(R).
$$
\end{defi}
It is well-known that $\mathcal F_{P_{\bf f}}$ is represented by an ind-$k$-scheme which is ind-projective over $k$; see e.g.\,\cite[Thm.\,1.4]{PR}. We denote this ind-scheme also by $\mathcal F_{P_{\bf f}}$.  Note that $\mathcal F_{P_{\bf f}}$ is usually not reduced (see \cite[$\S6$]{PR}). Therefore, from $\S\ref{notsy}$ onward, we will always consider $\mathcal F_{P_{\bf f}}$ {\em with its reduced structure}.  The same goes for the Schubert varieties (and the twisted products of Schubert varieties) which are defined in what follows.

\subsection{Schubert varieties and closure relations} 

Fix facets ${\bf f}'$ and ${\bf f}$ in the closure of ${\bf a}$. Given $v \in \w{W}$ (viewed in $N_G(T)(k[t,t^{-1}])$ according to our convention in $\S\ref{affterm}$), we write $Y_{\bf f', \bf f}(v)$ for the reduced $L^+P_{{\bf f}'}$-orbit of 
$$
x_v := v L^+P_{\bf f}/L^+P_{\bf f}
$$
in the ind-scheme $\mathcal F_{P_{\bf f}}$. Then $Y_{\bf f', \bf f}(v)$ is an integral smooth $k$-variety.  Let $X_{\bf f', \bf f}(v)$ denote its (automatically reduced) Zariski closure in $\mathcal F_{P_{\bf f}}$. Then $X_{\bf f', \bf f}(v)$ is a possibly singular integral $k$-variety.

A fundamental fact is that the Bruhat order describes closure relations:
\begin{equation} \la{f'f_Bruhat_closure}
X_{\bf f', \bf f}(w) = \coprod_{v \leq w} Y_{\bf f', \bf f}(v)
\end{equation}
where $v, w \in \w{W}_{\bf f'} \backslash \w{W} / \w{W}_{\bf f}$ and $v \leq w$ in the Bruhat order on $\w{W}_{\bf f'} \backslash \w{W} / \w{W}_{\bf f}$ induced by the Bruhat order $\leq$ on $\w{W}$. The closure relations can be proved using Demazure resolutions and thus, ultimately, the BN-pair relations; 
 see e.g.,\,\cite[Prop.\,0.1]{Ric}.

In what follows, we will often write $Y_{\bf f}$ (resp.\,$X_{\bf f}$) for $Y_{\bf f, \bf f}$ (resp.\,$X_{\bf f, \bf f}$).

\subsection{Affine root groups} \la{rt_grp_sec}

Given $\alpha \in \Phi(G, T)$, let $u_\alpha: {\mathbb G}_a \rightarrow U_\alpha$ be the associated root homomorphism.  We can (and do) normalize the $u_\alpha$ such that for $w \in W$, $wu_{\alpha}(x)w^{-1} = u_{w\alpha}(\pm x)$. Given an affine root $\alpha + n$, we define the {\em affine root group} as the $k$-subgroup $U_{\alpha + n } \subset LU_\alpha$ which is the image of the homomorphism 
\begin{align*}
{\mathbb G}_a &\rightarrow LU_\alpha \\
x &\mapsto u_\alpha(x\,t^n).
\end{align*}

Representing $w \in \widetilde{W}$ by an element $w \in N_G(T))(k(\!(t)\!))$ according to our conventions, we have
\begin{equation} \label{U_conj}
w\, U_{\alpha + n} \, w^{-1} = U_{w(\alpha + n)}.
\end{equation}
Associated to a root $\alpha > 0$ we have a homomorphism $\varphi_\alpha: {\rm SL}_2 \rightarrow G$ such that $u_\alpha(y) = \varphi_\alpha\big(\begin{bmatrix} 1 & y \\ 0 & 1 \end{bmatrix}\big)$, $u_{-\alpha}(x) = \varphi_\alpha\big(\begin{bmatrix} 1 & 0 \\ x & 1 \end{bmatrix}\big)$, and $\varphi_\alpha$ sends the diagonal torus of ${\rm SL}_2$ into $T$.
We have the following commutation relations for the bracket $[g,h] := g h g^{-1}h^{-1}$:
\begin{align} 
[u_{\alpha}(t^m x) \, , \,  u_{\beta}(t^n y) ] &= \prod u_{i\alpha + j \beta} (c_{\alpha, \beta; i,j} (t^mx)^i (t^ny)^j)  \la{comm_rel1} \\
[ u_{-\alpha}(x) \, , \, u_\alpha(y)] &= \varphi_\alpha\big(\begin{bmatrix} 1-xy & xy^2 \\ -x^2y & 1+xy+x^2y^2 \end{bmatrix}\big) \la{comm_rel2}
\end{align}
where in the first relation $\alpha \neq \pm \beta$ and the product ranges over pairs of integers $i,j > 0$ such that $i \alpha + j \beta$ is a root, and the $c_{\alpha, \beta;i,j} \in k$ are the structure constants for the group $G$ over $k$ (see \cite[9.2.1]{Spr}). The second relation is for $\alpha > 0$ but an obvious analogue holds for $\alpha < 0$.

\subsection{The ``negative'' parahoric loop group} \la{neg_loop_grp_sec}

Fix a facet ${\bf f}$ in the closure of ${\bf a}$.  We write $\alpha + n \overset{\bf f}{<} 0$ if the affine root $\alpha + n$ takes negative values on ${\bf f}$. Note that $\alpha + n \overset{\bf f}{<} 0$ implies $\alpha + n < 0$.

Let $H$ be any affine $k$-group (not necessarily reductive). For $m \geq 1$, let $L^{(-m)}H$ be the group ind-scheme representing the functor
$$
R \mapsto {\rm ker}[H(R[t^{-1}]) \rightarrow H(R[t^{-1}]/t^{-m})].
$$
Note that $L^{(-1)}H = L^{--}H$.  Also, set $L^{(-0)}H := L^-H$.

Let us define 
$$L^{--}P_{\bf a} := L^{(-1)}G \cdot \bar{U}.$$ 
Note this is a group as it is contained in $L^-G$ and $L^{(-1)}G$ is normal in $L^-G$. We wish to define $L^{--}P_{\bf f}$ for any facet ${\bf f}$ in the closure of ${\bf a}$.  Its Lie algebra should be generated by Lie subalgebras of the form ${\rm Lie}(U_{\alpha + n})$ where $\alpha + n \overset{\bf f}{<} 0$.  Thus it should be contained in $L^{--}P_{\bf a}$.  In general it might not contain $L^{(-1)}G$, but it should always contain $L^{(-2)}G$.

Let $P = M U_P \supset B$ and $\bar{P} = M\bar{U}_P \subset \bar{B}$ be opposite parabolic subgroups of $G$ with the same Levi factor $M$; let $W_M \subset W$ be the finite Weyl group generated by the simple reflections for roots appearing in ${\rm Lie}(M)$.  Then it is easy to prove that
$$
\bar{U}_{P} = \bigcap_{w \in W_M} \, ^w\bar{U}.
$$
This fact is the inspiration for the following definition (we thank Xuhua He for suggesting this alternative to our original definition, which appears in Proposition \ref{alt_defn}).

\begin{defi} \la{neg_parahoric_defn}
We define $L^{--}P_{\bf f}$ to be the ind-affine group ind-scheme over $k$ defined by
$$
L^{--}P_{\bf f} = \bigcap_{w \in \w{W}_{\bf f}} \, ^wL^{--}P_{\bf a},
$$
where the intersection is taken in $LG$.
\end{defi}

Here we consider conjugation by $w \in \w{W}_{\bf f}$ viewed as an element of $G(k[t,t^{-1}])$ according to our convention in $\S \ref{affterm}$. This definition gives what it should in the ``obvious'' cases. For example, if ${\bf f} = {\bf a}$, then $\w{W}_{\bf a} = \{e \}$ and the r.h.s.\,is $L^{--}P_{\bf a}$.  If ${\bf f} = {\bf 0}$, the $\w{W}_{\bf 0} = W$ and the r.h.s.\,is $L^{--}G$.

We would like another, more concrete, understanding of $L^{--}P_{\bf f}$, given in Proposition \ref{alt_defn} below. Before that, we will need a series of definitions and lemmas. 

For $m \geq 0$, we define $L^{--}P_{\bf a}(m)$ to be the group ind-$k$-scheme representing the group functor sending $R$ to the preimage of $T(R[t^{-1}]/t^{-m})$ under the natural map
\begin{equation} \la{nat_map}
L^{--}P_{\bf a}(R) \hookrightarrow L^-G(R) \rightarrow G(R[t^{-1}]/t^{-m}).
\end{equation}
In particular $L^{--}P_{\bf a}(0) = L^{--}P_{\bf a}$.

Now we also define $L^{--}P_{\bf a}[m]$ to represent the functor sending $R$ to the preimage of $\bar{B}(R[t^{-1}]/t^{-m})$ under (\ref{nat_map}). Further, for $m \geq 1$ let 
$$
L^{--}P_{\bf a}\langle m \rangle  := L^{--}P_{\bf a}[m] \cap L^{--}P_{\bf a}(m-1).
$$
Note that $L^{--}P_{\bf a}\langle 1 \rangle = L^{--}P_{\bf a}[1] = L^{--}P_{\bf a}$. For $m \geq 1$ we have
\begin{align}
L^{--}P_{\bf a}(m+1) \, &\triangleleft \, L^{--}P_{\bf a}(m) \la{normal_1} \\
L^{--}P_{\bf a}\langle m+1 \rangle \, &\triangleleft \, L^{--}P_{\bf a}\langle m \rangle. \la{normal_2}
\end{align}
By Remark \ref{Borel_pair_rem}, (\ref{normal_1}) and (\ref{normal_2}) reduce to the case $G = {\rm GL}_N$, where they can be checked by matrix calculations. Therefore, we have for $m \geq 1$ a very useful chain of subgroups
\begin{equation} \la{norm_sgs}
L^{--}P_{\bf a}\langle m+1 \rangle \, \triangleleft \, L^{--}P_{\bf a}(m) \, \triangleleft \, L^{--}P_{\bf a}\langle m \rangle.
\end{equation}
Their usefulness hinges on the normalities in (\ref{norm_sgs}) and on the fact that the quotients in (\ref{norm_sgs}) are isomorphic as $k$-functors to
\begin{equation} \la{quots}
\frac{L^{(-m)}U}{L^{(-m-1)}U} \hspace{.2in}, \hspace{.2in} \frac{L^{(-m+1)}\bar{U}}{L^{(-m)}\bar{U}},
\end{equation}
respectively. Let us prove this assertion. Using the $k$-variety isomorphisms $U \cong \prod_{\alpha > 0} U_\alpha$ (resp.\,$\bar{U} \cong \prod_{\alpha < 0} U_{\alpha}$) -- with indices taken in any order -- it is easy to show
\begin{equation} \la{prod_eq}
L^{(-m-1)}U \backslash L^{(-m)}U \, \cong \, \prod_{\alpha > 0} U_{\alpha - m} \hspace{.2in} , \hspace{.2in} L^{(-m)}\bar{U} \backslash L^{(-m+1)}\bar{U} \, \cong \, \prod_{\alpha < 0} U_{\alpha - m+1}. \hspace{.2in} 
\end{equation}
Then it is straightforward to identify the two subquotients of (\ref{norm_sgs}) with the terms in (\ref{quots}). For example, we show that the map $L^{(-m)}U \rightarrow L^{--}P_{\bf a}\langle m+1 \rangle \backslash L^{--}P_{\bf a}(m)$ is surjective by right-multiplying an element in the target by a suitable sequence of elements in the groups $U_{\alpha - m}$, $\alpha > 0$, until it becomes the trivial coset.

\begin{rmk} \la{abelian}
In fact \textup{(}\ref{prod_eq}\textup{)} gives isomorphisms {\em of group functors}, which are all {\em abelian}, except for $L^{(-1)}\bar{U} \backslash L^-\bar{U} \cong \prod_{\alpha < 0} U_\alpha$. Similarly, the groups $L^{--}P_{\bf a}\langle m\!+\!2\rangle \backslash L^{--}P_{\bf a}\langle m\!+\!1 \rangle$ and  $L^{--}P_{\bf a}(m\!+\!1) \backslash L^{--}P_{\bf a}(m)$ are abelian for $m \geq 1$.
\end{rmk}

\begin{lm} \la{facto} Let $m \geq 1$. There is a factorization of functors of $k$-algebras
\begin{equation} \la{facto_eq}
L^{--}P_{\bf a} = L^{--}P_{\bf a}\langle m+1 \rangle \cdot \prod_{\alpha > 0} U_{\alpha}\{m,1\} \cdot \prod_{\alpha < 0} U_{\alpha}\{m\!-\!1,0\},
\end{equation}
where 
\begin{itemize}
\item for $j \geq i$, the factor $U_{\alpha}\{j,i\}$ is the affine $k$-space whose $R$-points consist of the elements of the form $u_{\alpha}(x_{-j,\alpha} t^{-j} + \cdots + x_{-i, \alpha} t^{-i})$, where $x_{-l, \alpha} \in R$ for $i \leq l \leq j$;
\item the products $\prod_{\alpha > 0}$ and $\prod_{\alpha < 0}$ are taken in any order.
\end{itemize}
\end{lm}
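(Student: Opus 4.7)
The plan is to induct on $m \geq 1$, at each step combining the normal series (\ref{norm_sgs}) at the appropriate depth with the commutation relations (\ref{comm_rel1}),(\ref{comm_rel2}) to rearrange the resulting factors. For the base case $m=1$, the chain $L^{--}P_{\bf a}\langle 2\rangle \triangleleft L^{--}P_{\bf a}(1) \triangleleft L^{--}P_{\bf a}$ together with the subquotient identifications (\ref{quots}),(\ref{prod_eq}) yields successive quotients $\prod_{\alpha > 0} U_{\alpha - 1}$ and $\prod_{\alpha < 0} U_\alpha$, and these are split by the natural inclusions of the corresponding affine root subgroups into $L^{--}P_{\bf a}$. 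This gives at once $L^{--}P_{\bf a} = L^{--}P_{\bf a}\langle 2 \rangle \cdot \prod_{\alpha > 0} U_{\alpha - 1} \cdot \prod_{\alpha < 0} U_\alpha$, which is (\ref{facto_eq}) for $m=1$ since $U_\alpha\{1,1\} = U_{\alpha-1}$ and $U_\alpha\{0,0\} = U_\alpha$.

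For the inductive step, I would apply the same reasoning one level deeper to obtain $L^{--}P_{\bf a}\langle m+1 \rangle = L^{--}P_{\bf a}\langle m+2 \rangle \cdot \prod_{\alpha > 0} U_{\alpha - m - 1} \cdot \prod_{\alpha < 0} U_{\alpha - m}$, and substitute this into the inductive hypothesis for $m$. The result presents $L^{--}P_{\bf a}$ as $L^{--}P_{\bf a}\langle m+2 \rangle$ multiplied by the interleaved product $\prod_{\alpha > 0} U_{\alpha - m - 1} \cdot \prod_{\alpha < 0} U_{\alpha - m} \cdot \prod_{\alpha > 0} U_\alpha\{m,1\} \cdot \prod_{\alpha < 0} U_\alpha\{m-1,0\}$. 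Since each single root group $U_\alpha$ is additive in its parameter, once the middle negative-root block $\prod_{\alpha < 0} U_{\alpha - m}$ has been moved past the positive-root block $\prod_{\alpha > 0} U_\alpha\{m,1\}$, the two positive-root factors combine cleanly into $\prod_{\alpha > 0} U_\alpha\{m+1,1\}$ and the two negative-root factors into $\prod_{\alpha < 0} U_\alpha\{m,0\}$, matching the target of (\ref{facto_eq}) for $m+1$.

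The main obstacle is to show that the commutator corrections produced by this rearrangement (and by those needed to reorder roots within each sign class, which secures the ``any order'' clause) can all be absorbed into the three stated factors. By (\ref{comm_rel1}),(\ref{comm_rel2}), every such commutator is supported in affine root groups at levels $\leq -(m+1)$, together with a $T$-contribution at that depth in the $\alpha = -\beta$ case. The absorption is then a level-by-level trichotomy: contributions at level $\leq -(m+2)$ lie in $L^{(-m-2)}G$, which is contained in $L^{--}P_{\bf a}\langle m+2 \rangle$; contributions at level $-(m+1)$ lying in $T$ or in a negative root group are in $\bar{\mathfrak b}$ at that depth, hence also in $L^{--}P_{\bf a}\langle m+2 \rangle$ (whose defining conditions are $\bar{B}$-reduction modulo $t^{-(m+2)}$ and $T$-reduction modulo $t^{-(m+1)}$); contributions at level $-(m+1)$ lying in a positive root group fit precisely into $\prod_{\alpha > 0} U_{\alpha - m - 1}$, the deepest layer of the new positive block $\prod_{\alpha > 0} U_\alpha\{m+1,1\}$. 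Because every commutator correction produced during the absorption step is strictly deeper than its parent (one checks this directly from the bilinear form of the level in (\ref{comm_rel1}),(\ref{comm_rel2})), the process terminates after finitely many iterations, yielding (\ref{facto_eq}) for $m+1$.
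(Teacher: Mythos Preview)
Your argument is correct and uses the same essential ingredients as the paper---the normal chain (\ref{norm_sgs}) with its quotient identifications (\ref{quots}), (\ref{prod_eq}), together with the commutation relations (\ref{comm_rel1}), (\ref{comm_rel2}) to control the depth of all corrections---but it is organized differently. The paper does not induct on $m$: it first writes down the full ``proto-factorization''
\[
L^{--}P_{\bf a} = L^{--}P_{\bf a}\langle m + 1 \rangle \cdot \prod_{\alpha > 0} U_{\alpha-m} \cdot \prod_{\alpha<0} U_{\alpha-m+1} \cdots  \prod_{\alpha < 0} U_{\alpha -1} \cdot \prod_{\alpha > 0} U_{\alpha - 1} \cdot \prod_{\alpha < 0} U_\alpha
\]
coming directly from the entire chain, and then reorders by commuting the negative-root layers to the right past the positive-root layers one depth at a time, starting with the shallowest; at each stage the commutators land in a concretely identified ``group to the left'' such as $L^{--}P_{\bf a}(2)$, $L^{--}P_{\bf a}(3)$, etc. Only at the very end does it collapse the level-by-level products into the root-by-root factors $U_\alpha\{m,1\}$ and $U_\alpha\{m-1,0\}$, absorbing the residue in $L^{(-m)}\bar U$ and $L^{(-m-1)}U$ into $L^{--}P_{\bf a}\langle m+1\rangle$.

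Your inductive scheme instead adds one new depth at each step and commutes it past the already-assembled blocks. Your trichotomy for the level-$-(m{+}1)$ corrections (torus and negative-root pieces go into $L^{--}P_{\bf a}\langle m+2\rangle$; positive-root pieces go into the new deepest layer $\prod_{\alpha>0}U_{\alpha-m-1}$; everything at level $\leq -(m{+}2)$ is in the normal subgroup $L^{(-m-2)}G$) is accurate, and the termination claim is justified because after one pass all secondary corrections lie in $L^{(-m-2)}G\triangleleft L^-G$. The paper's direct approach avoids your three-way case split by always absorbing into a single normal subgroup at each stage, which makes the bookkeeping slightly cleaner; your inductive approach is arguably more conceptual and makes the role of the single new layer transparent. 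Both leave the ``direct product'' and ``any order'' assertions as straightforward, which they are: the former follows from the uniqueness in the filtration quotients (\ref{quots}), and the latter from the fact that reordering positive (resp.\ negative) roots within a block produces commutators in $L^{(-m-1)}U$ (resp.\ $L^{(-m)}\bar U$), which are absorbed exactly as in the final step.
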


\begin{proof}
From (\ref{norm_sgs}), (\ref{quots}), and (\ref{prod_eq}), it is clear that
\begin{equation} \la{proto_facto}
L^{--}P_{\bf a} = L^{--}P_{\bf a}\langle m + 1 \rangle \cdot \prod_{\alpha > 0} U_{\alpha-m} \cdot \prod_{\alpha<0} U_{\alpha-m+1} \cdots  \prod_{\alpha < 0} U_{\alpha -1} \cdot \prod_{\alpha > 0} U_{\alpha - 1} \cdot \prod_{\alpha < 0} U_\alpha,
\end{equation}
and the only task is to reorder the affine root groups to achieve (\ref{facto_eq}).

By (\ref{comm_rel1}) and (\ref{comm_rel2}), any factor in $\prod_{\alpha < 0}U_{\alpha-1}$ can be commuted to the right past all factors of any element in $\prod_{\alpha >0} U_{\alpha-1}$, at the expense of introducing after each commutation an element of $L^{(-2)}G$, which can be conjugated and absorbed (since $L^{(-2)}G\triangleleft L^{--}P_{\bf a}$) into the group to the left of $\prod_{\alpha < 0} U_{\alpha-1} \cdot \prod_{\alpha > 0} U_{\alpha-1}$, which is
$$
L^{--}P_{\bf a}\langle m + 1 \rangle \cdot \prod_{\alpha > 0} U_{\alpha-m} \cdot \prod_{\alpha<0} U_{\alpha-m+1} \cdots  \prod_{\alpha < 0} U_{\alpha -2} \cdot \prod_{\alpha > 0} U_{\alpha - 2} = L^{--}P_{\bf a}(2).
$$
Thus the above product can be written
$$
L^{--}P_{\bf a}\langle m + 1 \rangle \cdot \prod_{\alpha > 0} U_{\alpha-m} \cdot \prod_{\alpha<0} U_{\alpha-m+1} \cdots \prod_{\alpha <0} U_{\alpha-2} \cdot \big(\prod_{\alpha >  0} U_{\alpha -2} \cdot \prod_{\alpha > 0} U_{\alpha -1}\big) \cdot \big(\prod_{\alpha < 0} U_{\alpha - 1} \cdot \prod_{\alpha <0} U_\alpha \big).
$$
Similarly, we commute factors of $\prod_{\alpha <0} U_{\alpha-2}$ past factors of $\big(\prod_{\alpha >0} U_{\alpha -2} \prod_{\alpha > 0} U_{\alpha-1}\big)$, introducing commutators, which thanks to (\ref{comm_rel1}), (\ref{comm_rel2}) belong to $L^{(-3)}G$, hence can be absorbed into the group appearing to the left of $\prod_{\alpha < 0} U_{\alpha -2} \cdot \prod_{\alpha > 0} U_{\alpha-2}$, which is
$$
L^{--}P_{\bf a}\langle m + 1 \rangle \cdot \prod_{\alpha > 0} U_{\alpha-m} \cdot \prod_{\alpha<0} U_{\alpha-m+1} \cdots  \prod_{\alpha < 0} U_{\alpha -3} \cdot \prod_{\alpha > 0} U_{\alpha - 3} = L^{--}P_{\bf a}(3). 
$$
Continuing, we get an equality
$$
L^{--}P_{\bf a} = L^{--}P_{\bf a}\langle m+1\rangle \cdot \big(\prod_{\alpha>0}U_{\alpha-m} \cdots \prod_{\alpha > 0} U_{\alpha-1}\big) \cdot \big( \prod_{\alpha <0}U_{\alpha -m+1} \cdots \prod_{\alpha<0} U_\alpha \big).
$$
Consider $U_{\alpha}\{\infty, i\} := \cup_{j \geq i} U_{\alpha}\{j, i\} = L^{(-i)}U_\alpha$.
Clearly $\prod_{\alpha <0}U_{\alpha -m+1} \cdots \prod_{\alpha<0} U_\alpha$ belongs to the group
$$
\prod_{\alpha < 0} U_{\alpha} \{\infty, 0\} = L^{(-m)}\bar{U} \cdot  \prod_{\alpha < 0} U_{\alpha}\{m\!-\!1,0\}.
$$
We then commute the part in $L^{(-m)}\bar{U}$ to the left past the $\prod_{\alpha>0}U_{\alpha-m} \cdots \prod_{\alpha > 0} U_{\alpha-1}$ factor; the commutators which arise lie in $L^{(-m-1)}G$, and so they, like $L^{(-m)}\bar{U}$, get absorbed into $L^{--}P_{\bf a}\langle m+1 \rangle$. Finally we arrive at a decomposition
$$
L^{--}P_{\bf a} = L^{--}P_{\bf a}\langle m+1 \rangle \cdot  \big(\prod_{\alpha>0}U_{\alpha-m} \cdots \prod_{\alpha > 0} U_{\alpha-1}\big) \cdot \prod_{\alpha < 0} U_\alpha \{m-1,0\},
$$
and applying the same argument to 
$$
\prod_{\alpha>0}U_{\alpha-m} \cdots \prod_{\alpha > 0} U_{\alpha-1} \subseteq L^{(-m-1)}U \cdot \prod_{\alpha > 0} U_\alpha \{m,1\}
$$
yields the decomposition (\ref{facto_eq}). The fact that the latter is really a direct product is straightforward.
\end{proof}

For each root $\alpha$, let $i_{\alpha, \bf f}$ be the smallest integer such that $\alpha - i_{\alpha, \bf f} \overset{\bf f}{<} 0$.  Of course, $i_{\alpha, \bf f} \geq 0$ for all $\alpha$, and $i_{\alpha, \bf f} \geq 1$ if $\alpha > 0$.

\begin{pr} \la{alt_defn}
For any integer $m \geq 1$ such that $L^{--}P_{\bf a}\langle m+1 \rangle \subseteq L^{--}P_{\bf f}$,  we have the equality of functors on $k$-algebras
\begin{align} 
L^{--}P_{\bf f} ~ &=  ~ L^{--}P_{\bf a}\langle m+1 \rangle \cdot \langle U_{\alpha + n} \, | \, \alpha + n \overset{\bf f}{<} 0 \rangle \la{alt_defn_eq1}\\
&= L^{--}P_{\bf a} \langle m+1 \rangle \cdot \prod_{\alpha > 0} U_\alpha \{ m, i_{\alpha, \bf f}\} \cdot \prod_{\alpha < 0} U_\alpha\{m-1, i_{\alpha, \bf f}\} \la{alt_defn_eq2}
\end{align}
where $\langle  U_{\alpha + n} \, | \, \alpha + n \overset{\bf f}{<} 0 \rangle$ is the smallest ind-Zariski-closed subgroup of $LG$ containing the indicated affine root groups $U_{\alpha + n}$. Moreover, \textup{(}\ref{alt_defn_eq2}\textup{)} is a direct product of functors.
\end{pr}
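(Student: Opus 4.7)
The plan is to prove three containments, from which the proposition follows: (i) the RHS of (\ref{alt_defn_eq2}) is contained in the RHS of (\ref{alt_defn_eq1}), which is trivial since each factor $U_\alpha\{\cdot,\,i_{\alpha,{\bf f}}\}$ is built from affine root groups $U_{\alpha - j}$ with $\alpha - j \overset{\bf f}{<} 0$; (ii) the RHS of (\ref{alt_defn_eq1}) is contained in $L^{--}P_{\bf f}$; and (iii) $L^{--}P_{\bf f}$ is contained in the RHS of (\ref{alt_defn_eq2}), with the product direct (the directness is inherited from Lemma \ref{facto} as soon as the containment is established).

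For (ii), the hypothesis gives $L^{--}P_{\bf a}\langle m+1 \rangle \subseteq L^{--}P_{\bf f}$, so it suffices to place each $U_{\alpha+n}$ with $\alpha + n \overset{\bf f}{<} 0$ inside $L^{--}P_{\bf f}$. For any $w \in \w{W}_{\bf f}$, the affine functional $w^{-1}(\alpha+n)$ takes the same values on ${\bf f}$ as $\alpha+n$ does (since $w$ fixes ${\bf f}$ pointwise), so it is again $\overset{\bf f}{<} 0$ and in particular a negative affine root. By Lemma \ref{facto} it lies in $L^{--}P_{\bf a}$, so (\ref{U_conj}) yields $U_{\alpha+n} \subseteq {}^{w} L^{--}P_{\bf a}$; intersecting over $w \in \w{W}_{\bf f}$ places $U_{\alpha+n}$ in $L^{--}P_{\bf f}$.

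For (iii), let $g \in L^{--}P_{\bf f}$ and apply Lemma \ref{facto} to write uniquely $g = h \cdot u_+ \cdot u_-$ with $h \in L^{--}P_{\bf a}\langle m+1 \rangle$ and $u_\pm$ parameterized by coefficients $x_{-j,\alpha}$ as in the lemma. I must show $x_{-j, \alpha} = 0$ whenever $\alpha - j$ is not $\overset{\bf f}{<} 0$. Within the index ranges of Lemma \ref{facto}, $\alpha - j$ is always a negative affine root, and since affine roots have constant sign on the open alcove ${\bf a}$ and ${\bf f} \subseteq \bar{\bf a}$, we have $(\alpha - j)({\bf f}) \leq 0$; the bad case is equality, which is precisely the condition that the reflection $s := s_{\alpha - j}$ lies in $\w{W}_{\bf f}$. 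I would then induct downward on the depth $j$, from $j = m$ to $j = 0$, using the filtration (\ref{norm_sgs}) whose successive subquotients are described by (\ref{prod_eq}). At each level $j$, the component of $g$ is well-defined modulo the already-handled deeper levels; since $s g s^{-1} \in L^{--}P_{\bf a}$ and (\ref{U_conj}) sends $U_{\alpha - j}$ to $U_{-(\alpha - j)}$, which is a positive affine root group not contained in $L^{--}P_{\bf a}$, the $(\alpha, j)$-component of $g$ is forced to vanish.

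The main obstacle is the inductive bookkeeping: conjugation by $s$ does not preserve the filtration (\ref{norm_sgs}) on the nose, and rewriting $s g s^{-1}$ in Lemma \ref{facto}-form introduces commutators via (\ref{comm_rel1})--(\ref{comm_rel2}) as well as displacing the $L^{--}P_{\bf a}\langle m+1\rangle$-tail. One must verify that these corrections all live in subquotients already controlled by the induction hypothesis, so that they do not pollute the current level. The abelianness of the quotients above the outermost $\bar{U}$-level (Remark \ref{abelian}) is what keeps this commutator analysis tractable, and the outermost level, which concerns only affine roots of the form $\alpha$ with $\alpha < 0$, can be treated as a separate base case.
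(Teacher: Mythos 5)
Your containments (i) and (ii) are fine and match the paper exactly. The gap is in (iii), and it comes from not noticing the key geometric fact that collapses the problem.

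You propose a downward induction over all depths $j = m, m-1, \dots, 0$, managing commutator corrections at each level. But the crucial observation, which the paper makes explicit, is that if $\alpha + n$ is a negative affine root with $\alpha + n \overset{\bf f}{\geq} 0$, then necessarily $n \in \{0, -1\}$: on the open alcove ${\bf a}$ one has $-1 < \alpha < 1$, so by continuity on ${\bf f} \subseteq \bar{\bf a}$ one has $n - 1 \leq \alpha + n \leq n + 1$, forcing $n + 1 \geq 0$. Put differently, in the factorization of Lemma \ref{facto}, every factor $U_{\alpha - j}$ with $j \geq 2$ (or with $\alpha < 0$ and $j \geq 1$) automatically satisfies $\alpha - j \overset{\bf f}{<} 0$, so there is nothing to verify at those depths. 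The only potentially offending factors are $U_\alpha$ with $\alpha < 0$ (Case $n = 0$) and $U_{\alpha - 1}$ with $\alpha > 0$ (Case $n = -1$). Your ``induction from $j = m$ down to $j = 0$'' is therefore doing no work above $j = 1$, and the commutator bookkeeping you defer to ``tractability'' is precisely where the real content lies and is never carried out.

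Beyond that, your conjugation heuristic (``$sgs^{-1}$ has a component in a positive affine root group, contradiction'') is too coarse as stated. In the paper's Case 1 ($\alpha < 0$, $j = 0$) the argument is not about $sgs^{-1}$ directly but about the reduction $\bar g$ modulo $t^{-1}$, which lands in $\bar U \cap {}^{s_\alpha}\bar U$ in the finite group $G$ and must therefore have trivial $\alpha$-component. In Case 2 ($\alpha > 0$, $j = 1$), the conjugate ${}^{s_{\alpha-1}}g$ does lie in $L^{-}G$, and the contradiction comes from an explicit computation showing that a factor $u_{-\alpha}(\dots + x_{-1,\alpha}t)$ with a positive power of $t$ appears, combined with Lemma \ref{GLN_calc} to separate $U'$- and $\bar U'$-components; this cannot be read off naively from ``$U_{-(\alpha-1)}$ is a positive affine root group.'' So while your identification of the offending reflections $s_{\alpha - j} \in \w{W}_{\bf f}$ is correct, and the overall strategy of exploiting Lemma \ref{facto} plus conjugation by $\w{W}_{\bf f}$ is the right one, your sketch omits both the bound $n \in \{0,-1\}$ that makes the argument finite, and the actual mechanism (reduction mod $t^{-1}$ in one case, the GLN-style computation of Lemma \ref{GLN_calc} in the other) that produces the contradiction.
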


\begin{proof}
Suppose $\alpha + n \overset{\bf f}{<} 0$ and $w \in \w{W}_{\bf f}$. By (\ref{U_conj}), $^wU_{\alpha + n} = U_{w(\alpha + n)}$. As $\w{W}_{\bf f}$ preserves ${\bf f}$, $w(\alpha  + n) \overset{\bf f}{<} 0$. Thus $w(\alpha + n) < 0$, which implies that $^wU_{\alpha + n} \subset L^{--}P_{\bf a}$. This shows that the r.h.s.\,of (\ref{alt_defn_eq1}) is contained in $L^{--}P_{\bf f}$.  It is clear that (\ref{alt_defn_eq2}) is contained in the r.h.s.\,of (\ref{alt_defn_eq1}).  Therefore it remains to show that $L^{--}P_{\bf f}$ is contained in (\ref{alt_defn_eq2}).

Suppose an element $g$ in (\ref{facto_eq}) belongs to $L^{--}P_{\bf f}$, yet its factor corresponding to some $\alpha$ does not lie in (\ref{alt_defn_eq2}).  Without loss of generality, the element $g$ has trivial component in $L^{--}P_{\bf a}\langle m+1 \rangle$, hence it belongs to $U(R[t^{-1}]) \cdot \bar{U}(R[t^{-1}])$; write it as a tuple $g = (g_\beta)_\beta$, where $\beta \in \Phi(G, T)$ and $g_\beta \in U_\beta(R[t^{-1}])$. We may write $g_\beta = u_\beta(x_{-j, \beta}t^{-j} + \cdots + x_{-i, \beta}t^{-i})$ for some $0 \leq i \leq j$ depending on $g, \beta$. 

We must have $g_\alpha = u_\alpha(x_{-j, \alpha}t^{-j} + \cdots + x_{n, \alpha}t^{n})$ where $\alpha +n \overset{\bf f}{\geq} 0$, $-j \leq n \leq 0$, and $x_{n, \alpha} \neq 0$. We will show this leads to a contradiction. This will prove the proposition.

First note that $\alpha +n \overset{\bf f}{\geq} 0$ implies $n = 0$ or $n = -1$.  Indeed, on ${\bf a}$ we have $-1 < \alpha < 1$, so on ${\bf f}$ we have $n-1 \leq \alpha + n \leq n+1$ by continuity. Therefore $n + 1 \geq 0$. 

\noindent {\bf Case 1}: $n = 0$.  Then by (\ref{facto_eq}) we have $\alpha < 0$. From $\alpha \overset{\bf f}{\geq} 0$, it follows that $\alpha = 0$ on ${\bf f}$, that is, $s_\alpha \in \w{W}_{\bf f}$. Now as $g \in L^{--} P_{\bf a} \cap \, ^{s_\alpha}L^{--}P_{\bf a}$, the reduction $\bar{g}$  modulo $t^{-1}$ belongs to 
$$
\bar{U} \cap \, ^{s_\alpha}\bar{U} = \prod_{\overset{\beta < 0}{s_\alpha(\beta) < 0}} U_\beta.
$$
But $\bar{g}$ contains the nontrivial factor $u_{\alpha}(x_{0,\alpha})$, which is impossible since $s_{\alpha}(\alpha) > 0$.

\smallskip

\noindent {\bf Case 2}: $n = -1$. Since $\alpha - 1 < 0$ and $\alpha - 1 \overset{\bf f}{\geq} 0$, we have $\alpha  -1 = 0$ on ${\bf f}$, that is, $s_{\alpha - 1} = t_{\alpha^\vee} s_\alpha = s_\alpha t_{-\alpha^\vee} \in \w{W}_{\bf f}$. 

By assumption, $g' := \, ^{s_{\alpha -1}}g \in L^{--}P_{\bf a} \subset L^-G$. Writing $g_\alpha = u_{\alpha}(x_{-j, \alpha}t^{-j} + \cdots + x_{-1, \alpha} t^{-1})$, using that $t_{\alpha^\vee}$ is identified with $\alpha^\vee(t^{-1}) \in T(k(\!(t)\!))$, and using $\langle \alpha, \alpha^\vee \rangle = 2$, we compute 
\begin{equation} \la{bad}
g'_{-\alpha} = u_{-\alpha}(\pm(x_{-j,\alpha}t^{2-j} + \cdots + x_{-1, \alpha} t)).
\end{equation}
Since $x_{-1, \alpha} \neq 0$, $g'_{-\alpha}$ does not belong to $U_{-\alpha}(R[t^{-1}])$.

On the other hand, $g' \in G(R[t^{-1}]) \, \cap \,  \big(U'(R[t, t^{-1}]) \cdot \bar{U}'(R[t, t^{-1}])\big)$, where $U' := \, ^{s_\alpha}U$.  But (\ref{bad}) shows that either the $\bar{U}'$-component or the $U'$-component of $g'$ does not lie in $\bar{U}'(R[t^{-1}])$, resp.\,$U'(R[t^{-1}])$. This contradicts Lemma \ref{GLN_calc} below (use $U', \bar{U}'$ as the $U,\bar{U}$ there). The proposition is proved.
\end{proof}

\begin{lm} \la{GLN_calc}
Let $R$ be a $k$-algebra, and suppose $\bar{u} \in \bar{U}(R[t, t^{-1}])$, and $u \in U(R[t, t^{-1}])$ have the property that $u \cdot \bar{u} \in G(R[t^{-1}])$.  Then $\bar{u} \in \bar{U}(R[t^{-1}])$ and $u \in U(R[t^{-1}])$.
\end{lm}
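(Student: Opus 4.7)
My plan is to first reduce to the case $G = \mathrm{GL}_N$. Fix an embedding $G \hookrightarrow \mathrm{GL}_N$ compatible with Borel pairs as in Remark \ref{Borel_pair_rem}, so that $U = G \cap U_N$ and $\bar{U} = G \cap \bar{U}_N$ as closed subschemes. Because $U$ is a closed subscheme of $U_N$, for any $k$-algebra $S$ the map $U(S) \hookrightarrow U_N(S)$ is an injection, and one has $U(R[t^{-1}]) = U(R[t,t^{-1}]) \cap U_N(R[t^{-1}])$ inside $U_N(R[t,t^{-1}])$ (a morphism $\mathrm{Spec}(R[t,t^{-1}]) \to U$ factors through $\mathrm{Spec}(R[t^{-1}])$ iff the composite to $U_N$ does). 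The analogous statement holds for $\bar{U}$, and likewise $G(R[t^{-1}]) = G(R[t,t^{-1}]) \cap \mathrm{GL}_N(R[t^{-1}])$. Hence it is enough to prove the lemma with $(G, U, \bar{U})$ replaced by $(\mathrm{GL}_N, U_N, \bar{U}_N)$.

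For $G = \mathrm{GL}_N$, I would argue by induction on $N$, the case $N = 1$ being trivial since $U = \bar{U} = 1$. Set $M := u \bar{u}$. Because $u$ is upper unipotent and $\bar{u}$ is lower unipotent, the last row of $u$ is $(0, \ldots, 0, 1)$ and the last column of $\bar{u}$ is $(0, \ldots, 0, 1)^{\!\top}$. Direct computation then shows that the last row of $M$ equals the last row of $\bar{u}$, while the last column of $M$ equals the last column of $u$. Since $M$ has all entries in $R[t^{-1}]$ by hypothesis, the last row of $\bar{u}$ and the last column of $u$ also have entries in $R[t^{-1}]$.

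For the inductive step, let $u', \bar{u}', M'$ denote the upper-left $(N-1) \times (N-1)$ blocks of $u, \bar{u}, M$, respectively. Writing $u$ and $\bar{u}$ in block form and multiplying, one obtains
\[
M' \;=\; u'\,\bar{u}' \;+\; v\cdot w,
\]
where $v$ is the column vector consisting of the first $N-1$ entries of the last column of $u$, and $w$ is the row vector consisting of the first $N-1$ entries of the last row of $\bar{u}$. The previous paragraph ensures $v$ and $w$ have entries in $R[t^{-1}]$, and $M'$ does too by hypothesis; hence $u' \bar{u}' \in \mathrm{GL}_{N-1}(R[t^{-1}])$. By the inductive hypothesis, $u' \in U_{N-1}(R[t^{-1}])$ and $\bar{u}' \in \bar{U}_{N-1}(R[t^{-1}])$, which combined with the conclusion of the previous paragraph yields $u \in U_N(R[t^{-1}])$ and $\bar{u} \in \bar{U}_N(R[t^{-1}])$, as required.

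Overall the lemma is essentially a bookkeeping exercise, and I do not anticipate any serious obstacle. The only small point requiring care is the compatibility of Borel pairs under $G \hookrightarrow \mathrm{GL}_N$ needed for the first step, but this is exactly what Remark \ref{Borel_pair_rem} supplies; the $\mathrm{GL}_N$ case itself is a direct block-matrix induction requiring no division and no Gaussian elimination.
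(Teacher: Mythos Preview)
Your proof is correct and follows precisely the second of the two approaches the paper sketches: reduce to $\mathrm{GL}_N$ via Remark~\ref{Borel_pair_rem}, then do a direct matrix calculation (your block-matrix induction is a clean way to carry this out). The paper also offers a slicker one-line argument you may want to know: since the multiplication map $U \times \bar{U} \to G$ is a \emph{closed immersion} of affine $k$-schemes, the ring map $k[G] \twoheadrightarrow k[U] \otimes_k k[\bar{U}]$ is surjective, so the image of the composite $k[U]\otimes_k k[\bar U]\to R[t,t^{-1}]$ coincides with that of $k[G]\to R[t,t^{-1}]$; if the latter lands in $R[t^{-1}]$, so does the former, giving $(u,\bar u)\in U(R[t^{-1}])\times \bar U(R[t^{-1}])$ immediately.
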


\begin{proof}
Use the fact that the multiplication map $U \times \bar{U} \rightarrow G$ is a closed immersion of $k$-varieties. Alternatively, use Remark \ref{Borel_pair_rem} to reduce to the case $G = {\rm GL}_N$, and then use a direct calculation with matrices. 
\end{proof}


\subsection{Iwahori-type decompositions} \la{Iwah_type_sec}

Let $L^{++}P_{\bf a} \subset L^+P_{\bf a}$ be the sub-group scheme over $k$ representing the functor which sends $R$ to the preimage of $U$ under the natural map
$$
P_{\bf a}(R[\![t]\!]) \hookrightarrow G(R[\![t]\!]) \rightarrow G(R[\![t]\!]/t).
$$ 
We abbreviate by setting $\mathcal U := L^{++}P_{\bf a}$. For two facets ${\bf f'}, {\bf f}$ in the closure of ${\bf a}$, we similarly use the abbreviations $\mathcal P := L^+P_{\bf f}$, \,$\overline{\mathcal U}_{\mathcal P} := L^{--}P_{\bf f}$, \,$\mathcal Q := L^+P_{\bf f'}$, and $\overline{\mathcal U}_{\mathcal Q} := L^{--}P_{\bf  f'}$.

Our first goal is to prove the following result.

\begin{pr} \la{proto_decomp2} For $w \in \widetilde{W}$, we have a decomposition of group functors
\begin{equation} \label{proto_decomp2_eq}
\overline{\mathcal U}_{\mathcal Q} = (\overline{\mathcal U}_{\mathcal Q} \cap \, ^w\overline{\mathcal U}_{\mathcal P}) \cdot (\overline{\mathcal U}_{\mathcal Q} \cap \, ^w\mathcal P),
\end{equation}
and moreover $$
\overline{\mathcal U}_{\mathcal Q} \cap \, ^w\mathcal P = \prod_{a} U_a
$$
where $a$ ranges over the finite set of negative affine roots with $a \overset{\bf f'}{<} 0$ and $w^{-1}a \overset{\bf f}{\geq} 0$, and the product is taken in any order.
\end{pr}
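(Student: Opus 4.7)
The plan is to apply Proposition \ref{alt_defn} to $\overline{\mathcal U}_{\mathcal Q}$, expressing each of its elements as an ordered product of affine root subgroups modulo a sufficiently deep normal subgroup, and then to rearrange this product via the commutator relations (\ref{comm_rel1})--(\ref{comm_rel2}) so that the factors destined for $\overline{\mathcal U}_{\mathcal Q} \cap \,^w\mathcal P$ collect on the right. The rearrangement terminates because commutators systematically fall into deeper strata of the normal subgroup filtration (\ref{norm_sgs}), with the level-zero part (lying in the classical $\bar{U}$) handled by the nilpotence of $\bar{U}$.

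Concretely, since $L^{--}P_{\bf f}$ contains $L^{(-N)}G$ for $N \gg 0$ and $L^{(-N)}G$ is normal in $LG$, I choose $m \geq 1$ so that $L^{--}P_{\bf a}\langle m+1 \rangle \subseteq \overline{\mathcal U}_{\mathcal Q} \cap \,^w\overline{\mathcal U}_{\mathcal P}$. By Proposition \ref{alt_defn}, each $g \in \overline{\mathcal U}_{\mathcal Q}(R)$ has a unique factorization $g = g_0 \cdot \prod_{a \in S} g_a$, where $g_0 \in L^{--}P_{\bf a}\langle m+1 \rangle(R)$, $g_a \in U_a(R)$, and $S$ is a finite set of affine roots $a$ with $a \overset{\bf f'}{<} 0$ of depth at most $m$, taken in a fixed ordering. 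Let $A$ denote the finite set of all affine roots $a$ with $a \overset{\bf f'}{<} 0$ and $w^{-1}a \overset{\bf f}{\geq} 0$. By (\ref{U_conj}), one has $U_a \subseteq \,^w\overline{\mathcal U}_{\mathcal P}$ iff $w^{-1}a \overset{\bf f}{<} 0$, and $U_a \subseteq \,^w\mathcal P$ iff $a \in A$; partition $S = S_1 \sqcup S_2$ with $S_2 := S \cap A$.

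I then move all $S_2$-factors rightward past all $S_1$-factors. Commuting $g_a$ ($a \in S_2$) past $g_b$ ($b \in S_1$) yields, via (\ref{comm_rel1})--(\ref{comm_rel2}), factors in $U_{ia+jb}$ for integers $i, j \geq 1$ with $ia+jb$ an affine root; each such $ia+jb$ is still $\overset{\bf f'}{<} 0$ and lies either in a strictly deeper stratum of (\ref{norm_sgs}) or, in the level-zero case, at a strictly smaller height inside the nilpotent classical group $\bar{U}$. Each such commutator is either in $^w\overline{\mathcal U}_{\mathcal P}$ (absorbed into the left factor, or into the deep part $L^{--}P_{\bf a}\langle m+1 \rangle$ once its depth exceeds $m$) or in $^w\mathcal P$ (pushed further right, generating still deeper commutators). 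Termination follows from boundedness of the filtration (\ref{norm_sgs}) together with the nilpotence of $\bar{U}$, producing a factorization $g = g_L \cdot g_R$ with $g_L \in \overline{\mathcal U}_{\mathcal Q} \cap \,^w\overline{\mathcal U}_{\mathcal P}$ and $g_R \in \prod_{a \in A} U_a$. For uniqueness, I verify $\overline{\mathcal U}_{\mathcal P} \cap \mathcal P = \{1\}$: the intersection lies in $L^-G \cap L^+G = G(R)$, and among constants the affine root groups $U_\alpha$ in $\overline{\mathcal U}_{\mathcal P}$ (with $\alpha \overset{\bf f}{<} 0$) and in $\mathcal P$ (with $\alpha \overset{\bf f}{\geq} 0$) are disjoint; conjugating by $w$ then yields uniqueness of the factorization (\ref{proto_decomp2_eq}). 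The identification $\overline{\mathcal U}_{\mathcal Q} \cap \,^w\mathcal P = \prod_{a \in A} U_a$ as a direct product of functors follows from the direct product assertion in Proposition \ref{alt_defn} applied to $\overline{\mathcal U}_{\mathcal Q}$.

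The principal technical obstacle is the bookkeeping in the rearrangement step: one must verify that the iterated commutation procedure terminates and yields a well-defined, functorial factorization, despite cross-type commutators potentially landing either in $A$ or in its complement. The normal subgroup filtration (\ref{norm_sgs}) (beyond level zero) and the nilpotence of $\bar{U}$ (at level zero) provide termination, while the uniqueness of the product decomposition in Proposition \ref{alt_defn} ensures consistency of the resulting factorization across different orderings of the commutation steps.
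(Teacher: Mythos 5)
Your rearrangement step is morally close to the paper's argument (the paper organizes the commutations with a carefully chosen total order $\prec$ on affine roots and normal subgroups ${\bf H}_a$ ``to the left of $a$'', via Lemmas \ref{bfH_lem} and \ref{[UaUb]}, whereas you appeal more loosely to depth in (\ref{norm_sgs}) plus nilpotence of $\bar U$), and either version plausibly yields a factorization $g = g_L g_R$ with $g_L \in \overline{\mathcal U}_{\mathcal Q} \cap \,^w\overline{\mathcal U}_{\mathcal P}$ and $g_R \in \prod_{a \in A} U_a$. However, there is a genuine gap in what you do next. That factorization only shows $\overline{\mathcal U}_{\mathcal Q} = (\overline{\mathcal U}_{\mathcal Q}\cap\,^w\overline{\mathcal U}_{\mathcal P}) \cdot \prod_{a\in A} U_a$; it does \emph{not} give you the second assertion $\overline{\mathcal U}_{\mathcal Q}\cap\,^w\mathcal P = \prod_{a\in A} U_a$. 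For that you need the reverse inclusion: every element of $\overline{\mathcal U}_{\mathcal Q}\cap\,^w\mathcal P$ already lies in the subgroup generated by the $U_a$, $a \in A$. This does not follow from the direct-product statement in Proposition \ref{alt_defn}, which describes $\overline{\mathcal U}_{\mathcal Q}$ as an abstract product, not its intersection with a different group. In the paper this reverse inclusion is the hard part: one must first choose $m$ so that $L^{--}P_{\bf a}\langle m+1\rangle \cap \,^w\mathcal P = \{e\}$ scheme-theoretically (this is Lemma \ref{R[[t]]_lem}, whose proof requires the Tannakian/lattice description of the parahoric because the diagonal-entries claim is not a formality), and then run an induction along the chain $H_j$ using a $T(\bar k)$-equivariance argument and a Lie-algebra comparison (a variant of \cite[28.1]{Hum1}) to show each $H_j \cap \,^w\mathcal P$ is generated by the affine root subgroups it contains.

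Your uniqueness step also has an error. You assert that $\overline{\mathcal U}_{\mathcal P}\cap\mathcal P$ lies in $L^-G\cap L^+G = G(R)$, but this rests on the false premise that $\mathcal P \subseteq L^+G$. For a general facet ${\bf f}$, the parahoric $L^+P_{\bf f}(R) = P_{\bf f}(R[\![t]\!])$ is \emph{not} contained in $G(R[\![t]\!])$: if ${\bf f}$ lies on a wall $\alpha - 1 = 0$, then $L^+P_{\bf f}$ contains $U_{\alpha - 1}$, whose nontrivial elements $u_\alpha(xt^{-1})$ have poles. (This is precisely why the paper needs Lemma \ref{R[[t]]_lem}, which only asserts the \emph{diagonal} entries lie in $R[\![t]\!]$.) Note also that the paper deduces $\overline{\mathcal U}_{\mathcal P}\cap\mathcal P = \{e\}$ \emph{from} the present proposition (the case $\mathcal Q = \mathcal P$, $w = 1$, where the index set $A$ is empty), so one cannot quote it as an independent ingredient without risking circularity unless one gives a self-contained proof.
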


We will need a few simple lemmas before giving the proof.

\medskip

Let $x_0 \in {\bf a}$ be a sufficiently general point that distinct affine roots take distinct values on $x_0$.  For $a, b \in \Phi_{\rm aff}$, define $a \prec b$ if and only if $a(x_0) < b(x_0)$.  This is a total order on $\Phi_{\rm aff}$.  Let $\alpha_1, \dots, \alpha_r$ be the positive roots, written in increasing order with respect to $\prec$.  Choose $x_0$ sufficiently close to the origin so that for all $m \geq 1$ we have
$$
\alpha_1 - m \prec \alpha_2 - m \prec \cdots \prec \alpha_r -m \prec -\alpha_r - m+1 \prec \cdots \prec -\alpha_1 - m + 1.
$$
Choose an integer $m>>0$ and list all the affine roots appearing explicitly in (\ref{proto_facto}), as
$$
r_1, r_2, ......, r_M,
$$
in increasing order for $\prec$. This sequence has the advantage that for $1 \leq j \leq M+1$ 
\begin{equation} \la{Hj_defn}
H_{j} := L^{--}P_{\bf a}\langle m\!+\!1\rangle \cdot U_{r_1} \cdots U_{r_{j-1}}
\end{equation}
is a chain of groups, each normal in its successor, with $H_{j+1}/H_{j} \cong U_{r_{j}}$\, ( $1 \leq j \leq M$). We call $H_{r_j}$  the {\em group to the left of $r_j$}.  Note that $H_\bullet$ refines the chain coming from (\ref{norm_sgs}).

\begin{lm} \la{bfH_lem}
Let $\alpha >0$, and consider an integer $k \geq 0$.  Define subgroups
\begin{align*}
{\bf H}_{-\alpha - k} &= L^{(-k-1)}G \\
{\bf H}_{\alpha - k} &= L^{(-k)}G \cap L^{--}P_{\bf a}\langle k+1 \rangle.
\end{align*}
Let $\sigma \in \{ \pm 1\}$ and set $\beta := \sigma \alpha$.  Then for $\beta -k < 0:$
\begin{enumerate}
\item[(1)] ${\bf H}_{\beta-k} \, \triangleleft L^{--}P_{\bf a}$ and ${\bf H}_{\beta-k}$ lies in the group to the left of $\beta-k$.
\item[(2)] Assume $-\beta-j <0$, i.e.,\,$j \geq 0$ and $j \geq 1$ when $\sigma = -1$. Then 
$$
[U_{\beta -k}, U_{-\beta - j}] \subset {\bf H}_{\beta - k}.
$$
\end{enumerate}
\end{lm}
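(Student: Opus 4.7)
The plan is to establish both parts via the explicit product decomposition in Lemma \ref{facto} together with the commutation relations (\ref{comm_rel1}) and (\ref{comm_rel2}). The underlying principle is that $L^{--}P_{\bf a}$ is filtered by subgroups indexed by the $\prec$-ordering, with associated graded pieces that are essentially abelian and with commutators dropping to strictly deeper levels; this controls both the normality and the left-containment statements.

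For part (2), the argument is a direct computation. Applying (\ref{comm_rel2}) (after inverting if necessary), $[u_\beta(yt^{-k}), u_{-\beta}(xt^{-j})]$ lies in $\varphi_\alpha({\rm SL}_2)$ with deviation-from-identity entries supported in powers of the form $t^{-k-j}$, $t^{-2k-j}$, $t^{-k-2j}$. For $\sigma = +1$, the hypotheses $k \geq 1$ (forced by $\alpha - k < 0$) and $j \geq 0$ place the overall deviation at depth $\geq t^{-k}$ and the $U_\alpha$-entry at depth $\geq t^{-k-1}$; this is exactly the condition for membership in $L^{(-k)}G \cap L^{--}P_{\bf a}\langle k+1 \rangle = {\bf H}_{\alpha-k}$. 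For $\sigma = -1$, the hypothesis $j \geq 1$ forces every entry to depth $\geq t^{-k-1}$, placing the commutator in $L^{(-k-1)}G = {\bf H}_{-\alpha-k}$.

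For part (1) in the case $\beta = -\alpha$, normality is immediate since $L^{(-k-1)}G$ is the kernel of the reduction map $L^-G \to G(R[t^{-1}]/t^{-k-1})$, hence normal in $L^-G$, and a fortiori in $L^{--}P_{\bf a}$. Containment in the group to the left of $-\alpha - k$ reduces, via (\ref{facto_eq}) with some large $m$, to checking that each affine root $\pm\alpha' - j$ (with $\alpha' > 0$, $j \geq k+1$) appearing in the factorization of $L^{(-k-1)}G$ has value on $x_0$ strictly less than $-\alpha(x_0) - k$; this is clear since $0 < \alpha'(x_0), \alpha(x_0) \ll 1$. In the more delicate case $\beta = \alpha$ (so $k \geq 1$), I would first describe ${\bf H}_{\alpha-k}$ explicitly using (\ref{facto_eq}): modulo $L^{(-k-1)}G$, it is generated by the abelian groups $U_{-\alpha'-k}$ for $\alpha' > 0$ together with the $T$-pieces $\lambda \mapsto \lambda(1 + ct^{-k})$, all of affine-data depth exactly $t^{-k}$; left-containment follows from the same $x_0$-inequalities. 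For normality, the fact that $L^{(-k-1)}G$ is already normal reduces the problem to conjugating each such generator by a root-group element $u_\gamma(zt^n)$ with $\gamma + n < 0$ in $L^{--}P_{\bf a}$. By (\ref{comm_rel1}) and (\ref{comm_rel2}): when $n \leq -1$ the commutator drops to depth $\geq t^{-k-1}$ and lands in $L^{(-k-1)}G$; when $n = 0$ (forcing $\gamma < 0$), every resulting root-group summand $U_{i\gamma - \alpha'}$ has index $i\gamma - \alpha'$ negative, being as a root a sum of negative roots, and so produces a lower-root factor at $t^{-k}$ which is itself a generator of ${\bf H}_{\alpha-k}$.

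The main obstacle I expect is the normality of ${\bf H}_{\alpha-k}$, since this subgroup is not the kernel of any natural homomorphism and mixes $T$-pieces with root group pieces at the common level $t^{-k}$. The key point making the argument work is that $L^{--}P_{\bf a}$ is generated by affine root groups with negative labels, so in the delicate case $n = 0$ above the accompanying root $\gamma$ is automatically negative; this sign constraint prevents any commutator summand from producing an upper-root element at $t^{-k}$ which would violate the $\bar{B}$-condition in $L^{--}P_{\bf a}\langle k+1 \rangle$.
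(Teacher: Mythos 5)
Your proposal is correct, and for the normality statement in part (1) it takes a genuinely different route from the paper. The paper simply says that the normality can be reduced (via Remark \ref{Borel_pair_rem}, which sets up a compatible closed embedding $G \hookrightarrow {\rm GL}_N$ under which $L^{--}P_{\bf a}(m)$, $L^{--}P_{\bf a}\langle m\rangle$, etc.\ are realized as intersections with the corresponding objects for ${\rm GL}_N$) to $G = {\rm GL}_N$, where it is a direct calculation with matrices: since ${\bf H}^G_{\beta-k} = {\bf H}^{{\rm GL}_N}_{\beta-k}\cap LG$ and $L^{--}P_{\bf a}^G = L^{--}P_{\bf a}^{{\rm GL}_N}\cap LG$, normality for ${\rm GL}_N$ implies normality for $G$. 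You instead give an intrinsic argument, describing ${\bf H}_{\alpha-k}$ modulo the congruence subgroup $L^{(-k-1)}G$ as generated by $\bar{\mathfrak b}$-level-$t^{-k}$ pieces, and then using (\ref{comm_rel1}), (\ref{comm_rel2}) to show that conjugation by affine root group elements of $L^{--}P_{\bf a}$ sends these generators back into ${\bf H}_{\alpha-k}$, with the $n=0$ case working out precisely because any root group $U_{\gamma}$ appearing there must have $\gamma<0$, so the resulting summands at level $t^{-k}$ stay in $\bar{\mathfrak n}$. Both approaches are valid; the paper's is quicker given its systematic use of the ${\rm GL}_N$-reduction trick elsewhere, while yours avoids the ambient-${\rm GL}_N$ machinery at the cost of a more hands-on commutator analysis. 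Your treatment of part (2) via (\ref{comm_rel2}) and of the ``group to the left'' containment are essentially the paper's (which dispatches these with ``Part (2) follows from (\ref{comm_rel2}). The rest is clear.''); for the left-containment one can note even more directly that ${\bf H}_{\alpha-k}\subseteq L^{--}P_{\bf a}\langle k+1\rangle$ and ${\bf H}_{-\alpha-k}\subseteq L^{--}P_{\bf a}(k+1)$ by definition, and these groups are themselves ``groups to the left'' of the $\prec$-minimal affine roots at level $k$ of the respective signs. Two small caveats in your write-up: (i) you should also say a word about conjugating by and conjugating the toral generators (both are routine, via $\tau u_\gamma(z)\tau^{-1}=u_\gamma(\gamma(\tau)z)$), since $L^{--}P_{\bf a}$ is not generated by affine root groups alone; and (ii) the summand label ``$U_{i\gamma-\alpha'}$'' should read $U_{-i\alpha'+j\gamma}$ with $i,j>0$ (a typo, as your subsequent analysis of the $i=1$ versus $i\geq 2$ cases makes clear).
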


\begin{proof}
Thanks to Remark \ref{Borel_pair_rem}, the normality statement can be reduced to $G = {\rm GL}_N$ and checked by a matrix calculation. Part (2) follows from (\ref{comm_rel2}). The rest is clear.
\end{proof}

\begin{lm} \la{[UaUb]} 
Let $a, b$ be negative affine roots. Then:
\begin{enumerate}
\item[(i)] ${\bf H}_a \subseteq {\bf H}_b$ if $a \preceq b$, and
\item[(ii)] 
$
[U_a, U_b] \subset {\bf H}_a \cdot \langle U_c \,  | c \preceq a + b \rangle.
$
\end{enumerate}
\end{lm}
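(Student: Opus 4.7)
The plan is to argue by case analysis on the signs of the finite-root components of $a = \mu + m$ and $b = \nu + n$, using the filtration $L^{(-k)}G \supseteq L^{(-k-1)}G$, the chain (\ref{norm_sgs}), and the commutator relations (\ref{comm_rel1}) and (\ref{comm_rel2}). The key preliminary observation is that, because $x_0$ is chosen close to the origin, the inequality $a(x_0) \leq b(x_0)$ forces $m \leq n$, with equality allowed only according to the internal ordering of the finite roots $\mu,\nu$.

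For part (i), I would unfold the definition of ${\bf H}_\bullet$ in each of the four sign combinations for $(\mu,\nu)$ and verify separately the containment at the $L^{(-\bullet)}G$-level and at the $L^{--}P_{\bf a}\langle \bullet \rangle$-level. In all cases, the monotonicities $L^{(-k)}G \supseteq L^{(-k-1)}G$ and $L^{--}P_{\bf a}\langle j \rangle \supseteq L^{--}P_{\bf a}\langle j+1 \rangle$ reduce the verification to comparing a handful of shift indices. The only identification not immediate from the definitions is
\[
L^{(-l-1)}G \, \subseteq \, L^{--}P_{\bf a}\langle l+1 \rangle \quad \text{for all } l \geq 0,
\]
which follows because any element of $L^{(-l-1)}G$ reduces to $1$ both modulo $t^{-l}$ (placing it in $L^{--}P_{\bf a}(l)$) and modulo $t^{-l-1}$ (placing it in $L^{--}P_{\bf a}[l+1]$).

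For part (ii), I split into subcases. If $\mu = \nu$, then $U_a, U_b \subset LU_\mu$ and this group is commutative, so the commutator is trivial. If $\mu \neq \pm\nu$, I apply (\ref{comm_rel1}): the commutator is a product of factors in $U_{c_{i,j}}$ with $c_{i,j} := i\mu + j\nu + im + jn$, where $i, j \geq 1$ and $i\mu + j\nu$ is a root. One has $c_{1,1} = a+b$, while for $i + j \geq 3$ the difference
\[
c_{i,j}(x_0) - (a+b)(x_0) \, = \, (i-1)\mu(x_0) + (j-1)\nu(x_0) + (i-1)m + (j-1)n
\]
is strictly negative, since $m, n \leq 0$, at least one of $(i-1), (j-1)$ is positive, and $x_0$ is taken close enough to the origin. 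Hence every $c_{i,j} \preceq a+b$, and so $[U_a, U_b] \subseteq \langle U_c \mid c \preceq a+b\rangle$.

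The main obstacle is the remaining subcase $\mu = -\nu$. Choose $\alpha > 0$ so that $\{\mu,\nu\} = \{\alpha, -\alpha\}$; then (\ref{comm_rel2}) shows that the commutator equals $\varphi_\alpha(M)$ for an ${\rm SL}_2(R[t^{-1}])$-valued matrix $M = I + O(t^{-(k+l)})$, where $k = -m$ and $l = -n$. Since $\varphi_\alpha$ is a morphism of $k$-group schemes, $\varphi_\alpha(M) \in L^{(-(k+l))}G$, and it remains to check $\varphi_\alpha(M) \in {\bf H}_a$. If $a = -\alpha - k$, this is immediate because $k+l \geq k+1$. If $a = \alpha - k$, I verify by direct inspection of $M$ that its reduction modulo $t^{-k}$ is trivial (so $\varphi_\alpha(M) \in L^{--}P_{\bf a}(k)$) and its reduction modulo $t^{-k-1}$ is lower triangular in ${\rm SL}_2$ (so $\varphi_\alpha(M) \in L^{--}P_{\bf a}[k+1]$, because $\varphi_\alpha$ maps lower-triangular matrices into $T \cdot U_{-\alpha} \subseteq \bar B$). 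This last reduction, which must be controlled by the explicit entries on the right-hand side of (\ref{comm_rel2}), is the only place in the argument where more than a valuation bound is used, and so it is the technical heart of the proof.
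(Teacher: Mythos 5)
Your argument is correct and takes essentially the same route as the paper: split on whether the finite parts of $a$ and $b$ are linearly independent, opposite, or equal, then apply (\ref{comm_rel1}) in the first case and (\ref{comm_rel2}) in the second (with the third trivial since $LU_\mu$ is abelian). The paper handles the opposite-root case by simply citing the preceding Lemma~\ref{bfH_lem}(2) --- itself proved by a one-line appeal to (\ref{comm_rel2}) --- so your explicit calculation of $\varphi_\alpha(M)$ and its reductions modulo $t^{-k}$ and $t^{-k-1}$ just unwinds what that lemma encapsulates.
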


\begin{proof}
Part (i) is clear, and part (ii) follows from Lemma \ref{bfH_lem}(2) combined with (\ref{comm_rel1}). 
\end{proof}

\medskip

\noindent {\em Proof of Proposition \ref{proto_decomp2}}: 

First consider the case where $\mathcal Q = \mathcal B$. Choose $m >\!> 0$ so that we have  $L^{--}P_{\bf a}\langle m+1 \rangle \subset L^{--}P_{\bf a} \cap \, ^wL^{--}P_{\bf f}$. Use (\ref{proto_facto}) to write $g \in L^{--}P_{\bf a}$ in the form
$$
g = h_\infty \cdot u_{r_1} \cdots u_{r_j} \cdots u_{r_M},
$$
where $h_\infty \in L^{--}P_{\bf a}\langle m+1 \rangle$, and $u_{r_j} \in U_{r_j}$.  We wish to commute ``to the far right'' all terms of the form $u_{r_j}$ with $w^{-1}r_j \overset{\bf f}{\geq} 0$, starting with the $\prec$-maximal such $r_j$ and continuing with the other such $r_j$ in decreasing order.  Fix $a = r_j$. It is enough to prove, inductively on the number $t$ of commutations of $u_a = u_{r_j}$ (to the right) already performed, that we can write
$$
h_j \cdot u_{r_{j+1}}\cdots u_{r_{j+t}} \cdot u_a \cdot u_b
$$
with $h_j \in H_j$, in the form
$$
h'_j \cdot u_{r_{j+1}} \cdots u_{r_{j+t}} \cdot u_b \cdot u_a,
$$
for a possibly different $h'_j \in H_j$. Write $u_a u_b = \Delta u_b u_a$, where by Lemma \ref{[UaUb]}(ii), $\Delta \in {\bf H}_{a} \cdot \langle U_c \, | \ c \preceq a+b \prec a \rangle$.  By Lemma \ref{bfH_lem}(1), the $u_{r_{j+1}} \cdots u_{r_{j+t}}$-conjugate of the ${\bf H}_{a}$-factor lies in $H_j$, so we can suppress it.  As for the product of $U_c$-terms, we successively commute $u_{r_{j+t}}$ past each of them until it is adjacent to $u_b$, introducing at each step more terms of the same form as $\Delta$; using Lemmas \ref{bfH_lem}(1) and \ref{[UaUb]} as needed, we can assume these are in $\langle U_c \, | \, c \prec a \rangle$. Then repeat with $u_{r_{j+t-1}}$, etc. In the end, all the commutators have been moved adjacent to $h_j$ and belong to $H_j$; then $h'_j$ is their product with $h_j$. 

Let us summarize what we have done so far: we started with the direct product factorization (\ref{proto_facto}), then we rearranged the $U_a$-factors, all the time retaining the factorization property, until at the end we achieved a factorization
$$
\ov{\mathcal U} = \big(\ov{\mathcal U} \cap \, ^w\ov{\mathcal U}_{\PP} \big) \cdot \prod_{a} U_a,
$$
where $a$ ranges over the set of roots with $a < 0$ and $w^{-1}a \overset{\bf f}{\geq} 0$.  It is therefore enough to prove that the closed embedding
$$
\prod_{a} U_a \, \hookrightarrow \, \ov{\mathcal U} \cap \,^w{\PP}
$$
is an isomorphism.  It suffices to check this after base-change to $\bar{k}$, so henceforth we work over $\bar{k}$.  

It is enough to prove that $\ov{\mathcal U} \cap \, ^w\PP$ is generated by the subgroups $U_a$ which it contains.  Choose $m >\!>0$ large enough that $L^{--}P_{\bf a}\langle m+1 \rangle \cap \, ^w\PP = \{ e \}$ (scheme-theoretically): this is possible because the off-diagonal coordinates of $^w\PP$ (in the ambient ${\rm GL}_N$ of Remark \ref{Borel_pair_rem}) are zero or have $t$-adic valuation bounded below, while the diagonal coordinates lie in $R[\![t]\!]$ (see Lemma \ref{R[[t]]_lem} below). Let us prove by induction on $j$ that $H_j \cap \, ^w\PP$ is generated by the subgroups $U_a$ it contains (see (\ref{Hj_defn})); the case $j=1$ was discussed above. Now abbreviate $H = H_j$, $U_a = U_{r_j}$, $P = \, ^w{\PP}$. It is enough to prove that $HU_a \cap P$ equals $H \cap P$ or $(H\cap P)U_a$.  We intersect the chain (\ref{Hj_defn}) with $P$, and get an inclusion of group schemes
\begin{equation*}
H \cap P \backslash HU_a \cap P \, \hookrightarrow \, H \backslash HU_a \, \cong \, U_a.
\end{equation*}
If the left hand side is not trivial, then, since the morphism is $T(\bar{k})$-equivariant, its image is not finite and hence it is all of $U_a$, and we have an isomorphism $H \cap P \backslash HU_a \cap P \overset{\sim}{\rightarrow} U_a$. Using the Lie algebra analogue of this, a variant of \cite[28.1]{Hum1} implies that $(H \cap P)U_a = HU_a \cap P$, as desired.  This completes the proof in the case where $\mathcal Q = \B$.

Now we consider the general case, where $\mathcal Q = L^+P_{\bf f'}$. By intersecting (\ref{norm_sgs}) with $L^{--}P_{\bf f'}$ we obtain an analogue of (\ref{proto_facto}) for $m >\!>0$:
\begin{equation} \label{Q-proto_facto}
L^{--}P_{\bf f'} = L^{--}P_{\bf a}\langle m+1 \rangle \cdot U^*_{r_1} \cdots U^*_{r_M}
\end{equation}
where 
$$
U^*_{r_j} = \begin{cases} U_{r_j}, \,\,\, \mbox{if $r_j \overset{\bf f'}{<} 0$} \\
e, \,\,\, \mbox{otherwise}.\end{cases}
$$
We have a chain of subgroups $H^*_{j} = L^{--}P_{\bf a}\langle m+1 \rangle \cdot U^*_{r_1} \cdots U^*_{r_{j-1}}$, and the same argument as above works.

Finally, we may order the $U_a$-factors in $\overline{\mathcal U}_{\mathcal Q} \cap \, ^w\mathcal P$ freely, thanks to \cite[Lem.\,2.1.4]{BTII}.  \qed

The following result is proved like Proposition \ref{proto_decomp2}.

\begin{pr} \la{proto_decomp1} In the notation above, we have a factorization of group functors
\begin{equation} \label{Iwah_P_facto_eq}
\mathcal U = (\mathcal U \cap \, ^w\overline{\mathcal U}_{\mathcal P}) \cdot (\mathcal U \cap \, ^w{\mathcal P}),
\end{equation}
and $\mathcal U \cap \, ^w\ov{\mathcal U}_{\mathcal P} = \prod_{a} U_a$, where $a$ ranges over the finite set of affine roots with $a > 0$ and $w^{-1}a \overset{\bf f}{<} 0$, and the product is taken in any order.
\end{pr}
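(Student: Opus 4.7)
The plan is to mirror the proof of Proposition \ref{proto_decomp2}, dualized to positive affine roots. First, I would establish a positive analogue of Lemma \ref{facto}. Define a ``deep'' congruence subgroup $\mathcal U\langle m+1 \rangle \subset \mathcal U$ (roughly, elements whose entries lie in $t^{m+1}R[\![t]\!]$ after embedding into some ${\rm GL}_N$ via Remark \ref{Borel_pair_rem}), and check, by an argument completely parallel to the derivation of (\ref{facto_eq}) via the quotients (\ref{quots})--(\ref{prod_eq}), that for $m \geq 1$ one has
$$
\mathcal U \, = \, \mathcal U\langle m+1 \rangle \cdot \prod_{\alpha < 0} U_{\alpha}\{m,1\} \cdot \prod_{\alpha > 0} U_{\alpha}\{m-1, 0\},
$$
where the factors $U_\alpha\{j,i\}$ are now meant with non-negative powers of $t$, and where the product is direct.

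Next, I would totally order the positive affine roots by choosing a sufficiently generic point $x_0^\ast \in {\bf a}$ close enough to the origin so that the natural filtration by depth in $t$ is refined by the resulting order $\prec^\ast$; list the positive affine roots appearing explicitly above as $r_1^\ast \prec^\ast \cdots \prec^\ast r_M^\ast$, and form the chain
$$
H_j^\ast \, := \, \mathcal U\langle m+1 \rangle \cdot U_{r_1^\ast} \cdots U_{r_{j-1}^\ast},
$$
each normal in its successor with $H_{j+1}^\ast/H_j^\ast \cong U_{r_j^\ast}$. The dual of Lemmas \ref{bfH_lem} and \ref{[UaUb]} then gives, for positive affine roots $a, b$, an inclusion $[U_a, U_b] \subset {\bf H}_a^\ast \cdot \langle U_c \mid c \succeq^\ast a+b \rangle$ with ${\bf H}_a^\ast$ lying to the right of $a$ in the chain $H_\bullet^\ast$. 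Using these commutation relations inductively, exactly as in the first half of the proof of Proposition \ref{proto_decomp2}, I can rearrange the factors $U_{r_j^\ast}$ so that those with $w^{-1}r_j^\ast \overset{\bf f}{<} 0$ are pushed to the far left, obtaining a factorization
$$
\mathcal U \, = \, \Big(\prod_{a} U_a\Big) \cdot \big(\mathcal U \cap \, ^w\mathcal P\big),
$$
with $a$ ranging over positive affine roots satisfying $w^{-1}a \overset{\bf f}{<} 0$, since such $U_a$ lie in $^w\overline{\mathcal U}_{\mathcal P}$ by (\ref{U_conj}) combined with Proposition \ref{alt_defn}, whereas the remaining factors lie in $^w\mathcal P$ because their indices $r_j^\ast$ satisfy $w^{-1}r_j^\ast \overset{\bf f}{\geq} 0$, placing $U_{w^{-1}r_j^\ast}$ inside $\mathcal P = L^+P_{\bf f}$.

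Finally, I would identify the closed immersion $\prod_a U_a \hookrightarrow \mathcal U \cap \, ^w\overline{\mathcal U}_{\mathcal P}$ as an isomorphism, by running the inductive argument from the end of the proof of Proposition \ref{proto_decomp2}: after base change to $\bar k$, one shows that $H_j^\ast \cap \, ^w\overline{\mathcal U}_{\mathcal P}$ is generated by the root subgroups it contains, using at the base of the induction that $\mathcal U\langle m+1 \rangle \cap \, ^w\overline{\mathcal U}_{\mathcal P} = \{ e \}$ for $m \gg 0$. This last vanishing is the dual of the vanishing $L^{--}P_{\bf a}\langle m+1 \rangle \cap \, ^w\mathcal P = \{e\}$ used in the previous proposition: in an ambient ${\rm GL}_N$ the off-diagonal entries of $^w\overline{\mathcal U}_{\mathcal P}$ have $t$-adic valuations bounded above, while elements of $\mathcal U\langle m+1 \rangle$ have all entries divisible by $t^{m+1}$ away from the diagonal block in $U$. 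The freedom to reorder the factors in $\prod_a U_a$ is again a consequence of \cite[Lem.\,2.1.4]{BTII}. I expect the main technical nuisance to be verifying the normality and absorption properties of $\mathcal U\langle m+1 \rangle$ and checking that the commutation cascade terminates inside the chain $H_\bullet^\ast$; both reduce to matrix computations in ${\rm GL}_N$ via Remark \ref{Borel_pair_rem}.
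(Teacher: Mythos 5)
Your proposal correctly unpacks what the paper means by its one-line proof (``proved like Proposition \ref{proto_decomp2}''), and it takes exactly the same route: dualize the congruence filtration, the commutator chain $H_\bullet$, and the ``intersection with a conjugate of $\mathcal P$'' argument from Proposition \ref{proto_decomp2}, replacing negative affine roots by positive ones. A few small bookkeeping points to watch when writing this out carefully: the dual of $L^{--}P_{\bf a}\langle m+1\rangle$ should be defined as $\mathcal U[m+1]\cap \mathcal U(m)$ (preimage of $B$ mod $t^{m+1}$ intersected with preimage of $T$ mod $t^m$), not merely ``entries in $t^{m+1}R[\![t]\!]$''; the total order $\prec^\ast$ on positive affine roots must list the \emph{deeper} roots (those of higher $t$-adic valuation) first so that the chain $H_j^\ast$ starting from $\mathcal U\langle m+1\rangle$ absorbs commutators correctly, which means your dual of Lemma \ref{[UaUb]} should read $[U_a,U_b]\subset {\bf H}^\ast_a\cdot\langle U_c \mid c\preceq^\ast a+b\rangle$ with ${\bf H}^\ast_a$ to the \emph{left} of $a$ (the way you stated it has the inequality and side reversed); and since $\mathcal U\langle m+1\rangle$ sits at the left end of the direct-product factorization, it is cleanest to first prove $\mathcal U=(\mathcal U\cap\,^w\mathcal P)\cdot(\mathcal U\cap\,^w\overline{\mathcal U}_{\mathcal P})$ by pushing factors to the far right, and then pass to the stated ordering via inversion and \cite[Lem.\,2.1.4]{BTII}. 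None of this changes the substance: the plan is sound and matches the paper's intent.
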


We conclude this subsection with a lemma needed to complete the proof of Proposition \ref{proto_decomp2}.

\begin{lm} \la{R[[t]]_lem}
For any faithful representation $G \hookrightarrow {\rm GL}(V)$, there is a suitable $k$-basis $e_1, \dots, e_N$ for $V$ identifying ${\rm GL}(V)$ with ${\rm GL}_N$, and a corresponding ``diagonal'' torus $T_N$ as in Remark \ref{Borel_pair_rem}, such that the diagonal entries of $L^+P_{\bf f}(R)$ lie in $R[\![t]\!]$.
\end{lm}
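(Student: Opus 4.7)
The plan is to exhibit a $T_N$-stable $R[\![t]\!]$-lattice $L_R \subset V \otimes_k R(\!(t)\!)$ which is preserved by every element of $L^+P_{\bf f}(R)$. In a $T_N$-eigenbasis $e_1, \dots, e_N$ of $V$, any such lattice must have the form $L_R = \bigoplus_i t^{n_i}\,R[\![t]\!]\,e_i$, and preservation of such an $L_R$ immediately forces the matrix coefficients $g_{ij}$ of any $g \in L^+P_{\bf f}(R)$ to satisfy $g_{ij} \in t^{n_i - n_j}\,R[\![t]\!]$; in particular $g_{ii} \in R[\![t]\!]$, which is the assertion of the lemma.

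To implement this, I would apply Remark \ref{Borel_pair_rem} to put the compatible pair $(T,B) \subseteq (T_N, B_N)$ inside ${\rm GL}(V) \cong {\rm GL}_N$, and choose $e_1, \dots, e_N$ so that $T_N$ acts diagonally; let $\chi_i \in X^*(T)$ denote the restriction to $T$ of the corresponding $T_N$-weight. Pick any point $x$ in the relative interior of ${\bf f} \subset \mathbb X_* = X_*(T) \otimes \mathbb R$, and set $n_i := \lceil -\chi_i(x) \rceil \in \mathbb Z$, so $L_R := \bigoplus_i t^{n_i}\,R[\![t]\!]\,e_i$ is manifestly $T_N$-stable. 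I would verify preservation of $L_R$ by checking on generators of $L^+P_{\bf f}$. The torus $L^+T$ acts on $R[\![t]\!]\,e_i$ through $\chi_i : L^+T \to L^+\mathbb G_m = R[\![t]\!]^\times$, preserving the summand. For an element $u_\alpha(\xi t^n) \in U_{\alpha+n}(R)$ with $\alpha + n \overset{\bf f}{\geq} 0$, the expansion
\[
u_\alpha(\xi t^n)\,e_i \;=\; \sum_{k \geq 0}(\xi t^n)^k\,w_{k,i}, \qquad w_{k,i} \in V_{\chi_i + k\alpha} \,=\, \bigoplus_{\chi_j \,=\, \chi_i + k\alpha}\, k\cdot e_j,
\]
reduces preservation of $L_R$ to the estimate $kn + n_i - n_j \geq 0$ whenever $\chi_j = \chi_i + k\alpha$ and $k \geq 1$. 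This follows from the elementary ceiling/floor inequality $\lceil u \rceil - \lceil v \rceil \geq \lfloor u - v \rfloor$ (applied to $u = -\chi_i(x)$, $v = -\chi_j(x)$, giving $n_i - n_j \geq \lfloor k\alpha(x) \rfloor \geq k\lfloor\alpha(x)\rfloor$) combined with $n \geq \lceil -\alpha(x)\rceil = -\lfloor\alpha(x)\rfloor$, the latter forced by $n \in \mathbb Z$ and $\alpha(x) + n \geq 0$.

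The main subtlety is bootstrapping preservation on these generators up to preservation by an arbitrary $R$-point of $L^+P_{\bf f}$. At the level of $\bar k$-points this is automatic from the Iwahori-type product decomposition of the parahoric, but over a general $R$ one needs additional input, since ``preserves $L_R$'' is a closed sub-group functor of $L^+G$. I would either invoke a group-scheme-theoretic product decomposition of $L^+P_{\bf f}$ in the spirit of Proposition \ref{proto_decomp1} --- so that every $R$-point is literally a finite product of elements of $L^+T$ and of the affine root subgroups sitting inside $L^+P_{\bf f}$, each factor of which we have already checked preserves $L_R$ --- or appeal to Landvogt's functoriality for parahoric $k[\![t]\!]$-group schemes, which yields a morphism $P_{\bf f} \to P_{{\bf f}_N, N}$ (with ${\bf f}_N$ the facet of the ${\rm GL}(V)$-apartment $\mathbb X_*(T_N)\otimes \mathbb R$ containing the image of $x$), reducing the statement to the classical description of ${\rm GL}_N$-parahorics as stabilizers of $T_N$-stable lattice chains, whose top lattice recovers exactly the required $L_R$.
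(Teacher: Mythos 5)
Your explicit lattice $L_R=\bigoplus_i t^{n_i}R[\![t]\!]\,e_i$ with $n_i=\lceil -\chi_i(x)\rceil$ is precisely the Moy--Prasad lattice $V_{x,0}$ from formula (\ref{MP_filt_def}) in the paper, so the starting point is identical; and your elementary verification that $L^+T$ and the affine root groups $U_{\alpha+n}\subset L^+P_{\bf f}$ preserve $L_R$ (via the ceiling/floor estimates) is correct and is a nice hands-on substitute for the black-box citation of Lemma \ref{cl_emb_lem}. Where the argument breaks is in your bootstrapping route (a). There is no ``finite product'' decomposition of $L^+P_{\bf f}$ into torus and affine-root-group factors: the group is an inverse limit of finite-type quotients, the positive affine roots are infinite in number, and a general $R$-point is a $t$-adic limit, not a literal finite word in the checked generators. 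Nor do Propositions \ref{proto_decomp2}--\ref{proto_decomp1} give what you want; they concern $\mathcal U=L^{++}P_{\bf a}$ and $\ov{\mathcal U}_{\Q}$, and even there the factorizations retain an infinite-type tail $L^{--}P_{\bf a}\langle m+1\rangle$. To repair (a) you would need the schematic big cell of the $k[\![t]\!]$-group scheme $P_{\bf f}$ from Bruhat--Tits, plus a density-and-reducedness (or $t$-adic limit) argument to pass from the big cell to all of $L^+P_{\bf f}$ --- that is, you would be re-importing exactly the kind of structure theory you are trying to sidestep, and it is not the routine step your write-up makes it appear.

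Your route (b), functoriality for parahoric group schemes, is in substance the same approach the paper takes: the paper appeals to the Tannakian description of $L^+P_{\bf f}(R)\subset L^+{\rm Aut}(V_{x,\bullet})(R)$ from \cite{Wil, HW}, while you would invoke Landvogt; both land on the classical lattice-chain description of ${\rm GL}_N$-parahorics and then finish with the matrix computation. One thing the paper offers that you did not mention: Remark \ref{avoid_Tannak_rem} gives a cheaper argument valid for $R=\bar k$ only (which suffices for the reduced ind-schemes used after $\S\ref{notsy}$), using the triviality of the Kottwitz invariant of $P_{\bf f}(\bar k[\![t]\!])$ to place it inside an ambient ${\rm GL}_N$-parahoric; that route avoids both the Tannakian machinery and the delicacy your route (a) runs into, at the cost of only covering geometric points.
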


\begin{proof}
For general $k$-algebras $R$, the proof uses some of the Tannakian description of Bruhat-Tits buildings and parahoric group schemes, and thus we will need to cite results from \cite{Wil, HW}. For reduced $k$-algebras (which suffice for the purposes of this paper), one can avoid citing this theory; see Remark \ref{avoid_Tannak_rem}. We abbreviate by writing $\mathcal O = k[\![t]\!]$ and $K = k(\!(t)\!)$. 

Let $x \in \mathbb X_*$ be a point in the apartment of $\mathcal B(G, K)$, let $V$ be any finite-dimensional $k$-representation of $G$, and write $V_\mathcal O$ for the representation $V \otimes_k \mathcal O$ of $G_{\mathcal O}$. Then in \cite{Wil, HW} is defined the {\em Moy-Prasad filtration}  by $\mathcal O$-lattices in $V \otimes_k K$
\begin{equation} \la{MP_filt_def}
V_{x,r} := \bigoplus_{\lambda \in X^*(T)} V_\lambda \otimes_\mathcal O \mathcal Ot^{\lceil r - \langle \lambda,x \rangle \rceil},
\end{equation}
where $r \in \mathbb R$ and $V_{\lambda}$ is the $\lambda$-weight space for the action of $T_\mathcal O$ on $V_\mathcal O$. Note that $V_{x, r} \subseteq V_{x, s}$ if $r \geq s$ and $V_{x, r+1} = tV_{x, r}$. One can define the automorphism group ${\rm Aut}(V_{x, \bullet})$ to be the $\mathcal O$-group-functor whose points in an $\mathcal O$-algebra $R'$ consist of automorphisms of $V_{x, \bullet}$, that is, of tuples $(g_r)_r \in {\rm Aut}_{R'}(V_{x, r} \otimes_\mathcal O R')$ such that the ``diagram commutes'' and $g_{r+1} = g_r$ for all $r$. The following is a consequence of \cite{HW}.

\begin{lm} \la{cl_emb_lem} 
If $V$ is a faithful representation of $G$ and $x \in {\bf f}$, then $L^+P_{\bf f}(R)$ is a subgroup of $L^+{\rm Aut}(V_{x, \bullet})(R)$ for every $k$-algebra $R$.
\end{lm}

Now let $\lambda_1, \dots, \lambda_t$ be the distinct $T$-weights appearing in $V$. Choose a split maximal torus $T'$ of ${\rm GL}(V)$ containing $T \subset G \subset {\rm GL}(V)$. Let $\lambda_{ij}$ be the distinct weights of $T'$ which restrict to $\lambda_i$, and let $e_1, \dots, e_N$ be a basis of eigenvectors corresponding to $\{\lambda_{ij} \}_{i,j}$ for the $T'$-action on $V$, listed in some order. Using this we identify ${\rm GL}(V) \cong {\rm GL}_N$ and $T' \cong T_N$, the ``diagonal'' torus.  In this set-up, $L^+{\rm Aut}(V_{x, \bullet})$ is the group $k$-scheme parametrizing $R[\![t]\!]$-automorphisms of $\Lambda^{V,\bf f}_\bullet \otimes_\mathcal O R[\![t]\!]$ for some partial chain of $\mathcal O$-lattices $\cdots \Lambda^{V, \bf f}_{i} \subset \Lambda^{V, \bf f}_{i+1} \subset \cdots \subset \mathcal O^N$ of the form
$$
\Lambda^{V,\bf f}_i = t^{a_{i1}}\mathcal O e_1 \oplus \cdots t^{a_{iN}} \mathcal O e_N
$$
for certain integers $a_{ij}$. It is enough to prove that the diagonal elements of any $R[\![t]\!]$-automorphism of a 
single  $\Lambda^{V, \bf f}_i \otimes_\mathcal O R[\![t]\!]$ belong to $R[\![t]\!]$.  But this follows from a simple computation with matrices.
\end{proof}

\begin{rmk} \la{avoid_Tannak_rem}
If we only want to prove $L^{--}P_{\bf f, \rm red} \times L^+P_{\bf f} \rightarrow LG_{\rm red}$ is an open immersion \textup{(}which is what we use in all applications after $\S\ref{notsy}$\textup{)}, then we need $L^{--}P_{\bf f}(\bar{k}) \cap L^+P_{\bf f}(\bar{k}) = \{ e \}$, and thus we need Lemma \ref{R[[t]]_lem} for $R = \bar{k}$. In lieu of Lemma \ref{cl_emb_lem}, this can be proved by showing that $L^+P_{\bf f}(\bar{k})$ is contained in {\em some} parahoric subgroup $L^+P^{\rm GL_N}_{\bf f_N}(\bar{k})$ of an ambient ${\rm GL}_N$: note that $x$ belongs to {\em some} facet of the ambient apartment; as $P_{\bf f}(\bar{k}[\![t]\!])$ fixes that point and has trivial Kottwitz invariant \textup{(}cf.\,\cite[$\S5$]{PR},\cite[Prop.\,3]{HRa}\textup{)}, it fixes all the points in the ambient facet and belongs to the parahoric subgroup for that ambient facet.
\end{rmk}

\subsection{Parahoric big cells}

\subsubsection{Statement of theorem}

Our aim is to prove the following theorem, which plays a fundamental role in this article.

\medskip

\noindent{\bf Theorem 2.3.1}\,\, {\em The multiplication map gives an open immersion}
$$
L^{--}P_{\bf f} \times L^+P_{\bf f} \longrightarrow LG.
$$

\medskip

It is clear that $L^{--}P_{\bf f} \cap L^+P_{\bf f} = \{e\}$, ind-scheme-theoretically: take $\mathcal Q = \mathcal P$ and $w=1$ in Proposition \ref{proto_decomp2}.  Thus, we just need to check that $L^{--}P_{\bf f} \cdot L^+P_{\bf f}$ is open in $LG$.

\medskip

Suppose ${\bf f'}$ is in the closure of ${\bf f}$. By \cite[1.7]{BTII}, the inclusion $P_{\bf f}(k[\![t]\!]) \subset P_{\bf f'}(k[\![t]\!])$ prolongs to a homomorphism of group $k[\![t]\!]$-schemes $P_{\bf f} \rightarrow P_{\bf f'}$ and hence to a homomorphism of group $k$-schemes $L^+P_{\bf f} \rightarrow L^+P_{\bf f'}$. The latter is a closed immersion: as $P_{\bf f}$ is finite type and flat over $k[\![t]\!]$, it has a finite rank faithful representation over $k[\![t]\!]$ (\cite[1.4.3]{BTII}), which implies $L^+P_{\bf f} \hookrightarrow LP_{\bf f} = LG$ is a closed immersion. We obtain natural morphisms of ind-schemes
\begin{equation} \label{want_open}
\pi_{\bf f'} \, : \, LG \overset{\pi_{\bf f}}{\longrightarrow} LG/L^+P_{\bf f} \overset{\pi_{\bf f', \bf f}}{\longrightarrow} LG/L^+P_{\bf f'}.
\end{equation}
By \cite[Thm.\,1.4]{PR} the morphisms $\pi_{\bf f}$ and $\pi_{\bf f'}$ are surjective and locally trivial in the \'{e}tale topology, hence in particular $\pi_{\bf f}$, 
$\pi_{\bf f'}$, and $\pi_{\bf f', \bf f}$ are open morphisms.  As $\pi_{\bf f}$ is open,  the multiplication map $L^{--}P_{\bf f} \times L^+P_{\bf f} \rightarrow LG$ is an open immersion if and only if the map $L^{--}P_{\bf f} \rightarrow \mathcal F_{P_{\bf f}}$ given by $g \mapsto g\cdot x_e$ is an open immersion.  This allows us to define the big cell:

\begin{defi} \la{big_cell_defn}
We call the image of the open immersion $L^{--}P_{\bf f} \rightarrow \mathcal F_{P_{\bf f}}$, namely 
$$
\mathcal C_{\bf f} := L^{--}P_{\bf f} \cdot x_e,
$$
the {\em big cell} at $x_e$; it is a Zariski-open subset of the partial affine flag variety $\mathcal F_{P_{\bf f}}$.
\end{defi}

Before proving Theorem \ref{big_cell_thm}, we state an immediate consequence, which is used to prove Lemma \ref{ZLT_lem}:

\begin{cor} \la{big_cell_cor}
The morphisms in \textup{(}\ref{want_open}\textup{)} are locally trivial in the Zariski topology, and in particular, if $R$ is local, we have
$$
\mathcal F_{P_{\bf f}}(R) = G(R(\!(t)\!))/P_{\bf f}(R[\![t]\!]).
$$
\end{cor}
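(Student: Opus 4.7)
The plan is to bootstrap from Theorem~\ref{big_cell_thm} by covering $\mathcal F_{P_{\bf f}}$ with Zariski-open translates of the big cell $\mathcal C_{\bf f}$, each of which trivializes $\pi_{\bf f}$, and then to deduce the local-$R$ statement formally.

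First I would note that Theorem~\ref{big_cell_thm} identifies $\pi_{\bf f}^{-1}(\mathcal C_{\bf f}) \cong \mathcal C_{\bf f} \times L^+P_{\bf f}$, giving a canonical Zariski section $s_e \colon \mathcal C_{\bf f} \to LG$, \,$h \cdot x_e \mapsto h$ (for $h \in L^{--}P_{\bf f}$). Because $\pi_{\bf f}$ is equivariant for left multiplication by $LG$ on both source and target, for any $S$-valued point $g \colon S \to LG$ the translate $g \cdot \mathcal C_{\bf f} \subset \mathcal F_{P_{\bf f}} \times_k S$ is again Zariski-open, and $\pi_{\bf f}$ trivializes over it via the translated section $gh \cdot x_e \mapsto gh$.

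Next I would verify that these translates exhaust $\mathcal F_{P_{\bf f}}$. Since $\mathcal F_{P_{\bf f}}$ is the fpqc-sheafification of $R \mapsto LG(R)/L^+P_{\bf f}(R)$ (equivalently, by the \'etale local triviality of $\pi_{\bf f}$ from \cite[Thm.\,1.4]{PR}), $\pi_{\bf f}$ is surjective on geometric points, so every $\bar p \in \mathcal F_{P_{\bf f}}(\bar k)$ lifts to some $g \in LG(\bar k)$, whence $\bar p \in g \cdot \mathcal C_{\bf f, \bar k}$. This yields a Zariski-open cover of $\mathcal F_{P_{\bf f}}$ over which $\pi_{\bf f}$ trivializes. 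The identical argument handles $\pi_{\bf f'}$; the remaining morphism $\pi_{\bf f', \bf f}$ inherits Zariski local triviality via the factorization $\pi_{\bf f'} = \pi_{\bf f', \bf f} \circ \pi_{\bf f}$, or equivalently because its fibers $L^+P_{\bf f'}/L^+P_{\bf f}$ are ordinary partial flag varieties admitting their own big-cell decompositions.

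For the second assertion, let $R$ be a local $k$-algebra and $p \in \mathcal F_{P_{\bf f}}(R)$. The fibered product $\pi_{\bf f}^{-1}(p) \to \mathrm{Spec}(R)$ is an $L^+P_{\bf f}$-torsor which is Zariski-locally trivial by the preceding paragraph, hence trivial because $\mathrm{Spec}(R)$ has a unique closed point. A global section of this torsor is exactly a lift $\tilde p \in LG(R) = G(R(\!(t)\!))$ of $p$, giving the desired equality $\mathcal F_{P_{\bf f}}(R) = G(R(\!(t)\!))/P_{\bf f}(R[\![t]\!])$. The main obstacle is ensuring that the geometric Zariski local triviality really transfers to arbitrary local $R$; this is painless because each $\bar k$-translate $g \cdot \mathcal C_{\bf f}$ is already defined over a finite extension of $k$, and any Zariski-local trivialization of the torsor near the unique closed point of $\mathrm{Spec}(R)$ extends to all of $\mathrm{Spec}(R)$ by locality.
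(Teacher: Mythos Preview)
Your approach is the natural one and matches what the paper intends (the paper states the corollary as an ``immediate consequence'' of Theorem~\ref{big_cell_thm} and gives no proof). The basic scheme---cover $\mathcal F_{P_{\bf f}}$ by translates of $\mathcal C_{\bf f}$ carrying sections of $\pi_{\bf f}$, then use that a Zariski-locally trivial torsor over a local base is trivial---is exactly right, and your treatment of $\pi_{\bf f',\bf f}$ via the factorization is fine once you observe that a section of $\pi_{\bf f'}$ over $V$, composed with $\pi_{\bf f}$, yields a section of $\pi_{\bf f',\bf f}$ over $V$, and the $L^+P_{\bf f'}$-action then produces the product structure.

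There is one small slip. Your assertion that ``each $\bar k$-translate $g \cdot \mathcal C_{\bf f}$ is already defined over a finite extension of $k$'' is not correct for arbitrary $g \in LG(\bar k)=G(\bar k(\!(t)\!))$: a Laurent series over $\bar k$ may have coefficients lying in an unbounded tower of finite subfields, so $g$ need not be defined over any finite extension. This gap is easily closed by choosing the translates more carefully: the paper's convention embeds $\widetilde W \hookrightarrow N_G(T)(k[t,t^{-1}])$, so each $w\cdot \mathcal C_{\bf f}$ is a $k$-defined Zariski open, and by (\ref{Y_B_in_big_cell}) one has $Y_{\B\PP}(w)\subset w\cdot\mathcal C_{\bf f}$; hence the Bruhat decomposition shows that $\{w\cdot\mathcal C_{\bf f}\}_{w\in\widetilde W}$ already covers $\mathcal F_{P_{\bf f}}$. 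With this in hand, for local $R$ and $p\in\mathcal F_{P_{\bf f}}(R)$, the pullbacks $p^{-1}(w\cdot\mathcal C_{\bf f})$ form a $k$-defined Zariski cover of $\mathrm{Spec}(R)$; one of them contains the closed point and hence is all of $\mathrm{Spec}(R)$, so $p$ factors through some $w\cdot\mathcal C_{\bf f}$ and the section there lifts $p$ to $LG(R)$.
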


\subsubsection{Preliminary lemmas}

\begin{lm} \la{atof_lem}
If Theorem \ref{big_cell_thm} holds for $G$ and ${\bf a}$, it also holds for $G$ and ${\bf f}$.
\end{lm}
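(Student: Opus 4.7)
By the reduction noted just before the lemma, it suffices to prove that $L^{--}P_{\mathbf{f}} \cdot L^+P_{\mathbf{f}}$ is open in $LG$, given that the analogous statement holds for $\mathbf{a}$. So I will aim to show this single openness assertion, from which the lemma is immediate.

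The plan is to apply Proposition \ref{proto_decomp2} with $\mathcal{Q} = \mathcal{B}$, $\mathcal{P} = L^+P_{\mathbf{f}}$, and $w = e$. Because $L^{--}P_{\mathbf{f}} \subseteq L^{--}P_{\mathbf{a}}$ by the very definition of $L^{--}P_{\mathbf{f}}$, the first intersection factor $L^{--}P_{\mathbf{a}} \cap L^{--}P_{\mathbf{f}}$ collapses to $L^{--}P_{\mathbf{f}}$, and the proposition yields the direct product decomposition
$$
L^{--}P_{\mathbf{a}} \;=\; L^{--}P_{\mathbf{f}} \cdot (L^{--}P_{\mathbf{a}} \cap L^+P_{\mathbf{f}}).
$$
Since $L^{--}P_{\mathbf{a}} \cap L^+P_{\mathbf{f}} \subseteq L^+P_{\mathbf{f}}$, multiplying on the right by $L^+P_{\mathbf{a}} \subseteq L^+P_{\mathbf{f}}$ yields the inclusion of subfunctors of $LG$
$$
L^{--}P_{\mathbf{a}} \cdot L^+P_{\mathbf{a}} \;\subseteq\; L^{--}P_{\mathbf{f}} \cdot L^+P_{\mathbf{f}}.
$$

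By hypothesis the left-hand side is open in $LG$. Observe that $L^{--}P_{\mathbf{f}} \cdot L^+P_{\mathbf{f}}$ is stable under left multiplication by $L^{--}P_{\mathbf{f}}$ and right multiplication by $L^+P_{\mathbf{f}}$, and that every element of $L^{--}P_{\mathbf{f}} \cdot L^+P_{\mathbf{f}}$, written as $np$ with $n \in L^{--}P_{\mathbf{f}}$ and $p \in L^+P_{\mathbf{f}}$, lies in the translate $n \cdot \bigl(L^{--}P_{\mathbf{a}} \cdot L^+P_{\mathbf{a}}\bigr) \cdot p$, which is open in $LG$ and contained in $L^{--}P_{\mathbf{f}} \cdot L^+P_{\mathbf{f}}$. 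Thus $L^{--}P_{\mathbf{f}} \cdot L^+P_{\mathbf{f}}$ is a union of open translates of $L^{--}P_{\mathbf{a}} \cdot L^+P_{\mathbf{a}}$, hence is itself open, as required.

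The decisive step is the factorization of $L^{--}P_{\mathbf{a}}$ supplied by Proposition \ref{proto_decomp2} at $w = e$; this is exactly what ``fits $L^{--}P_{\mathbf{a}} \cdot L^+P_{\mathbf{a}}$ inside $L^{--}P_{\mathbf{f}} \cdot L^+P_{\mathbf{f}}$'' even though $L^{--}P_{\mathbf{f}}$ is strictly smaller than $L^{--}P_{\mathbf{a}}$. The remainder of the argument is a routine group-theoretic observation, so I do not anticipate a serious obstacle beyond correctly applying Proposition \ref{proto_decomp2}.
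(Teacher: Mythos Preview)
Your proof is correct and essentially identical to the paper's. The paper cites the decomposition $\overline{\mathcal U} = \overline{\mathcal U}_{\PP} \cdot (\overline{\mathcal U} \cap \PP)$ (equation (\ref{decomp3}), itself a special case of Proposition \ref{proto_decomp2} at $w=e$) to write $L^{--}P_{\bf a} \cdot L^+P_{\bf a} = L^{--}P_{\bf f} \cdot (L^{--}P_{\bf a} \cap L^+P_{\bf f}) \cdot L^+P_{\bf a}$, then covers $L^{--}P_{\bf f} \cdot L^+P_{\bf f}$ by right $L^+P_{\bf f}$-translates of this open set; your use of two-sided translates is a harmless variation of the same idea.
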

\begin{proof}
Using (\ref{decomp3}) we have
$$
L^{--}P_{\bf a} \cdot L^+P_{\bf a} = L^{--}P_{\bf f} \cdot (L^{--}P_{\bf a} \cap L^+P_{\bf f}) \cdot L^+P_{\bf a}.
$$
By the result for ${\bf a}$, this is an open subset of $LG$. Its right-translates under $L^+P_{\bf f}$ cover $L^{--}P_{\bf f} \cdot L^+P_{\bf f}$.
\end{proof}

Our plan is to reduce the theorem to the group ${\rm SL}_d$. We may choose a closed embedding $G \hookrightarrow {\rm SL}_d$, and by Remark \ref{Borel_pair_rem}, well-chosen Borel and unipotent radical subgroups of ${\rm SL}_d$ restrict to the corresponding objects in $G$.  However, it does not follow from this that the big cell $\bar{U}^{\rm SL_d} B^{\rm SL_d}$ in ${\rm SL}_d$ restricts to its counterpart in $G$. In order to reduce the theorem to ${\rm SL}_d$, therefore, we need to use a more flexible notion of big cell, where the restriction property is automatic.

For a homomorphism of $k$ groups $\lambda: \mathbb G_m \rightarrow G$, we define subgroups $P_G(\lambda)$ and $U_G(\lambda)$ of $G$ to consist of the elements $p$ (resp.\,$u$) with $\underset{t \rightarrow 0}{\rm lim}\, \lambda(t) p \lambda(t)^{-1}$ exists (resp.\,$= e$); see \cite[$\S2.1$]{CGP}. Define $\Omega_G(\lambda) = U_G(-\lambda) \cdot P_G(\lambda)$, a Zariski-open subset of $G$ isomorphic to $U_G(-\lambda) \times P_G(\lambda)$, by \cite[Prop.\,2.1.8]{CGP}. If $\lambda$ is $B$-dominant and regular, $\Omega_G(\lambda) = \bar{U}B$, the usual big cell in $G$.
\begin{lm} \la{bigcell_fctr}
Suppose $\pi: G \rightarrow G'$ is an inclusion. Let $\lambda : {\mathbb G}_m \rightarrow G$ be a homomorphism and define $\lambda' = \pi \circ \lambda$. Then
\begin{equation} \label{big_cell_pullback}
\pi^{-1}\Omega_{G'}(\lambda') = \Omega_G(\lambda).
\end{equation}
\end{lm}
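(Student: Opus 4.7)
The plan is to reduce the equality to the corresponding dynamical-subgroup identities provided by \cite[Prop.\,2.1.8(3)]{CGP} (which handles $U$, $P$, $Z$ but not directly the open cell $\Omega$), and then conclude by a simple open-and-closed argument inside the irreducible scheme $G$.

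First I would observe that since $\pi$ is a closed immersion and $\lambda'=\pi\circ\lambda$ factors through $G$, conjugation via $\lambda'$ preserves the closed subgroup $G$ of $G'$. Applying \cite[Prop.\,2.1.8(3)]{CGP} to this $\mathbb G_m$-stable closed subgroup yields the scheme-theoretic identities
$$
U_G(-\lambda) = G\cap U_{G'}(-\lambda'), \qquad P_G(\lambda) = G\cap P_{G'}(\lambda').
$$
Multiplying, one inclusion is immediate:
$$
\Omega_G(\lambda) = U_G(-\lambda)\cdot P_G(\lambda)\;\subseteq\; G\cap \Omega_{G'}(\lambda') \;=\; \pi^{-1}\Omega_{G'}(\lambda'),
$$
both being open subschemes of $G$.

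For the reverse inclusion, I would exploit the uniqueness of the factorization in $G'$. The isomorphism $U_{G'}(-\lambda')\times P_{G'}(\lambda')\overset{\sim}{\rightarrow}\Omega_{G'}(\lambda')$ yields a projection morphism $\alpha:\pi^{-1}\Omega_{G'}(\lambda')\to U_{G'}(-\lambda')$. For a point $g\in \pi^{-1}\Omega_{G'}(\lambda')$, I claim $g\in \Omega_G(\lambda)$ if and only if $\alpha(g)\in U_G(-\lambda)$: if $\alpha(g)\in G$, then $\alpha(g)^{-1}g\in G\cap P_{G'}(\lambda')= P_G(\lambda)$, so $g\in\Omega_G(\lambda)$; the converse follows from the uniqueness of the $G'$-decomposition. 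Since $U_G(-\lambda)\hookrightarrow U_{G'}(-\lambda')$ is a closed immersion (by the previous step), this description exhibits $\Omega_G(\lambda)=\alpha^{-1}(U_G(-\lambda))$ as a closed subscheme of $\pi^{-1}\Omega_{G'}(\lambda')$.

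To finish, note that $\Omega_G(\lambda)$ is also open in $\pi^{-1}\Omega_{G'}(\lambda')$ (both are open in $G$) and contains $e$. In the intended applications $G$ is connected reductive, hence geometrically irreducible, so its open subscheme $\pi^{-1}\Omega_{G'}(\lambda')$ is irreducible and therefore connected; a non-empty subset which is both open and closed must be all of it, giving $\Omega_G(\lambda)=\pi^{-1}\Omega_{G'}(\lambda')$. The only mild subtlety I anticipate is handling a possibly disconnected $G$, which is easily circumvented by reducing to the identity component after observing the inclusion $\Omega_G(\lambda)\subseteq G^\circ$, forced by the fact that $U_G(-\lambda)$ and $P_G(\lambda)$ are both connected.
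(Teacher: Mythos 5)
Your argument is correct for connected $G$, which is the only case the paper uses, and it expands the paper's one-line citation of \cite[Prop.\,2.1.8(3)]{CGP}. The two identities $U_G(-\lambda)=G\cap U_{G'}(-\lambda')$ and $P_G(\lambda)=G\cap P_{G'}(\lambda')$ are what CGP gives for a $\mathbb G_m$-equivariant closed immersion, and they yield the easy inclusion $\Omega_G(\lambda)\subseteq\pi^{-1}\Omega_{G'}(\lambda')$. Your device for the reverse inclusion — using uniqueness of the $G'$-factorization to realize $\Omega_G(\lambda)$ as $\alpha^{-1}(U_G(-\lambda))$, hence as a closed subscheme of $\pi^{-1}\Omega_{G'}(\lambda')$, then invoking open-and-closed in the irreducible scheme $\pi^{-1}\Omega_{G'}(\lambda')\subset G$ — is a clean way to pass from the group-level compatibilities to the equality of big cells, and it is precisely the step the paper leaves implicit in the citation.

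One caution about your closing aside: the claim that $\Omega_G(\lambda)\subseteq G^\circ$ for disconnected $G$ is false in general, because $P_G(\lambda)$ (unlike $U_G(\pm\lambda)$) need not be connected when $G$ is disconnected. For instance, take $G=\mu_n\times\mathbb G_a\subset G'=\mathbb G_m\times\mathbb G_a$ with $\lambda$ acting with positive weight on $\mathbb G_a$ and trivially on the first factor; then $P_G(\lambda)=\Omega_G(\lambda)=G$, which meets every component. (A less artificial example: $G=N_{G'}(T)$ for a maximal torus $T$ of a connected reductive $G'$, with $\lambda$ a non-regular cocharacter of $T$.) Since the paper applies the lemma only to connected reductive groups and to ${\rm SL}_d$, ${\rm SL}(V)$, this does not affect the body of your argument; but the suggested reduction to $G^\circ$ would need to be replaced by a genuine argument were one to want the disconnected case.
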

\begin{proof}
This follows from \cite[Prop.\,2.1.8(3)]{CGP}. 
\end{proof}

For the next lemma, we fix a $B$-dominant and regular homomorphism $\lambda: \mathbb G_m \rightarrow T \hookrightarrow G$, and suppose we have a homomorphism of $k$-groups $f: G \overset{\iota}{\hookrightarrow} {\rm SL}_d \overset{\rho}{\rightarrow} {\rm SL}(V)$ where  $\iota$ is a closed embedding identifying $G$ with the scheme-theoretic stabilizer in ${\rm SL}_d$ of a line $L = kv$ in $V$.  (Given $\iota$, such a pair $(V,L)$ exists by e.g.\,\cite[Prop.\,A.2.4]{CGP}.)  Let $\lambda' = \iota \circ \lambda$, and let $\mathcal P(\lambda')$ (resp.~$\overline{\mathcal U}(-\lambda')$) be the groups $L^+P^{{\rm SL}_d}_{\bf f'}$ (resp.~$L^{--}P^{{\rm SL}_d}_{\bf f'}$) 
for ${\rm SL}(V)$ associated to the parabolic subgroup $P(\lambda')$ (resp.\,opposite unipotent radical $U(-\lambda')$) of ${\rm SL}_d$ (i.e.\,$P(\lambda')$ is the ``reduction modulo $t$ of $\mathcal P(\lambda') \subset {\rm SL}_d(k[\![t]\!])$,'' etc.).

\begin{lm} \la{par_bigcell_fctr}
In the above situation,
\begin{equation} \label{par_bigcell_pullback}
\iota^{-1}\big(\overline{\mathcal U}(-\lambda') \cdot \mathcal P(\lambda') \big) = L^{--}P_{\bf a} \cdot L^+P_{\bf a}.
\end{equation}
\end{lm}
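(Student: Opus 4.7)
\noindent \emph{Plan.} The inclusion $\supseteq$ is straightforward from ingredients already at hand. Lemma \ref{bigcell_fctr} applied to the closed embedding $\iota : G \hookrightarrow {\rm SL}_d$ gives $\iota^{-1}(U(-\lambda')) = U_G(-\lambda)$ and $\iota^{-1}(P(\lambda')) = P_G(\lambda)$; because $\lambda$ is $B$-dominant and regular, these equal $\bar U$ and $B$, respectively.  Since $\iota$ is a homomorphism of group schemes sending identity to identity, it also sends $L^{(-1)}G$ into $L^{(-1)}{\rm SL}_d$ and $L^{++}G$ into $L^{++}{\rm SL}_d$ (functoriality of the two evaluation kernels).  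Combining these observations with the defining decompositions $L^{--}P_{\bf a} = L^{(-1)}G \cdot \bar U$ and $L^+P_{\bf a} = L^{++}G \cdot B$ gives $\iota(L^{--}P_{\bf a}) \subseteq \overline{\mathcal U}(-\lambda')$ and $\iota(L^+P_{\bf a}) \subseteq \mathcal P(\lambda')$, hence the inclusion of products.

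\smallskip

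\noindent For the reverse inclusion, I would take $g \in LG(R)$ with $\iota(g) = \bar u' \cdot p'$, where by Theorem \ref{big_cell_thm} \emph{applied to} ${\rm SL}_d$ (proved in advance, say by a direct Beauville--Laszlo argument, so its use here is not circular) the factorization is uniquely determined with $\bar u' \in \overline{\mathcal U}(-\lambda')(R)$ and $p' \in \mathcal P(\lambda')(R)$.  The goal is to show that $\bar u'$ and $p'$ each lie in the image $\iota(LG(R))$.  Once this is established, the injectivity of $\iota$ yields a factorization $g = \bar u \cdot p$ in $LG(R)$ with $\iota(\bar u) = \bar u'$ and $\iota(p) = p'$, and then the closedness of $\iota$ combined with the identifications from the first paragraph forces $\bar u \in L^{(-1)}G \cdot \bar U = L^{--}P_{\bf a}$ and $p \in L^{++}G \cdot B = L^+P_{\bf a}$.

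\smallskip

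\noindent The main obstacle is this ``separation'' step. My proposal is a contraction argument using the conjugation action $c_s(h) := \lambda(s)\, h\, \lambda(s)^{-1}$ on $L{\rm SL}_d$.  Because $\lambda$ factors through $G$, the action $c_s$ preserves $LG$ inside $L{\rm SL}_d$.  By the very definitions of $U(-\lambda')$ and $P(\lambda')$ via $\lambda'$-limits (combined with normality of $L^{(-1)}{\rm SL}_d$ and $L^{++}{\rm SL}_d$ in $L^-{\rm SL}_d$ and $L^+{\rm SL}_d$), $c_s$ also preserves $\overline{\mathcal U}(-\lambda')$ and $\mathcal P(\lambda')$ individually.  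Applying $c_s$ to the equality $\iota(g) = \bar u' \cdot p'$ gives $\iota(c_s(g)) = c_s(\bar u') \cdot c_s(p')$, which is still a factorization of an element of $\iota(LG)$; letting $s$ tend to a suitable limit in $\mathbb G_m$ contracts the $U(-\lambda')$-directions of $\bar u'$ while merely rescaling its $L^{(-1)}{\rm SL}_d$-component by $T$-weights, and analogously for $p'$.  Combined with the uniqueness of the ${\rm SL}_d$-factorization and the closedness of $\iota(LG) \subset L{\rm SL}_d$, this is expected to force $\bar u'$ and $p'$ individually into $\iota(LG)$.

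\smallskip

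\noindent The most delicate point in executing the plan is making the contraction argument rigorous in the ind-scheme setting, where $L{\rm SL}_d$ is infinite-dimensional and ``letting $s \to 0$'' must be interpreted schematically.  An alternative that sidesteps the limit entirely is a factor-by-factor matching: use the explicit product decomposition of Proposition \ref{alt_defn} for $L^{--}P_{\bf a}^G$ and its analogue for $\overline{\mathcal U}(-\lambda')$ (together with the Iwahori-type decomposition Proposition \ref{proto_decomp1} for the positive side), and show that $\iota$ respects these decompositions root-group by root-group via the compatibility $\iota(U_\alpha) \subseteq \prod_{\beta} U_\beta^{{\rm SL}_d}$ with $\beta$ ranging over roots of ${\rm SL}_d$ restricting to $\alpha$; uniqueness of the direct-product factorizations on both sides then gives the desired identification without any limiting procedure.
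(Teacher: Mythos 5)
Your $\supseteq$ direction is fine and broadly parallels the paper's, which simply observes that $LG \cap \overline{\mathcal U}(-\lambda') = L^{--}P_{\bf a}$ and $LG \cap \mathcal P(\lambda') = L^+P_{\bf a}$. The problem lies entirely in the $\subseteq$ direction, where you correctly diagnose that the essential step is ``separation'': given $g^- \in \overline{\mathcal U}(-\lambda')$ and $g^+ \in \mathcal P(\lambda')$ with $g^-g^+ \in LG$, one must conclude $g^-, g^+ \in LG$ individually. Neither of your two proposed routes closes this gap.

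The contraction route is circular. The conjugation $c_s$ by $\lambda(s)$ preserves $\iota(LG)$ inside $L{\rm SL}_d$, so for all $s$ we just learn that $c_s(g^-)\,c_s(g^+) = \iota(c_s(g)) \in \iota(LG)$ — which is exactly the kind of statement we started with, with $g$ replaced by $c_s(g)$. To extract that $g^- = c_1(g^-) \in \iota(LG)$ from the behaviour of the family at $s \to 0$, you would need precisely the ``the factors of anything in $\iota(LG)\cap \overline{\mathcal U}(-\lambda')\mathcal P(\lambda')$ lie in $\iota(LG)$'' statement you are trying to prove. Moreover, $c_s$ contracts only the finite $U(-\lambda')$-directions of $g^-$ and merely rescales the $L^{(-1)}{\rm SL}_d$-component by $T$-weights, so no limit at $s=0$ can force that loop component into $\iota(LG)$. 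The factor-by-factor alternative is vaguer still: a single root group $U_\alpha^G$ maps into a product of several root groups of ${\rm SL}_d$, the $L^{--}P_{\bf a}\langle m{+}1\rangle$-factor of Proposition~\ref{alt_defn} has no cell structure respected by $\iota$, and uniqueness of the ${\rm SL}_d$-decomposition does not by itself constrain $\iota^{-1}$ of a general factorization.

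What you have dropped is the one piece of structure that actually makes the lemma go: the pair $(V, L)$ realizing $G \subset {\rm SL}_d$ as the \emph{scheme-theoretic stabilizer of the line} $L = kv$. This is exactly what the paper's set-up fixes just before the lemma (Chevalley's theorem, via \cite[Prop.\,A.2.4]{CGP}), and the paper's proof, following Faltings, exploits it by studying $\rho(g^-)\rho(g^+)\,v \in L_v \otimes R(\!(t)\!)$. Writing $g^- = g^-(0)g^-_\infty$ and $g^+ = g^+_\infty g^+(0)$ (constant part times loop part), the statement that $\rho(g^-)\rho(g^+)v$ stays in the line forces, upon comparing coefficients of $t^{\pm 1}$, that the constant parts $g^-(0)g^+(0)$ already lie in $G \cap U(-\lambda')P(\lambda')$; then Lemma~\ref{bigcell_fctr} (the finite-group statement) separates those, and a second comparison of $\rho(g^-_\infty g^+_\infty)v$ — which must be a constant multiple of $v$ and hence equal to $v$ by looking at the $t^0$-term — separates the loop parts. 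You do use Lemma~\ref{bigcell_fctr}, but only to identify $\iota^{-1}(U(-\lambda'))$ etc., not as the finishing move of a coefficient comparison; without the line $L_v$ you have no handle to perform that comparison, and hence no bridge from $g^-g^+ \in LG$ to $g^\pm \in LG$. This is a genuine gap, not a matter of polish.
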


\begin{proof}
The proof is a variation on the theme of \cite[proof of Cor.\,3]{F}, which concerns the case ${\bf f} = {\bf 0}$. Think of $\iota$ as an inclusion. By construction, we have $LG \cap \ov{\mathcal U}(-\lambda') = L^{--}P_{\bf a}$ and $LG \cap \mathcal P(\lambda') = L^+P_{\bf a}$, which proves the r.h.s.~of (\ref{par_bigcell_pullback}) is contained in the l.h.s.

Suppose we have $g^- \in \overline{\mathcal U}(-\lambda')$ and $g^+ \in \mathcal P(\lambda')$ and $g^- \cdot g^+ \in LG$.  We need to show that $g^-, \, g^+ \in LG$. Let $L_v$ be the scheme-theoretic line generated by $v$. Write $g^-(0)$ (resp.~$g^+(0)$) for the value of $g^-$ (resp.~$g^+$) at $t^{-1} =0$ (resp.~$t=0$), and also set $g^-_\infty := g^-(0)^{-1} g^-$ (resp.~$g^+_\infty := g^+ g^+(0)^{-1}$). Starting with
\begin{equation} \la{prod_eq}
\rho(g^-(0) g^-_\infty) \cdot \rho(g^+_\infty g^+(0)) \, v \in L_v,
\end{equation}
comparing coefficients of $t^{-1}$ and $t$ shows that $\rho(g^-(0)) \cdot \rho(g^+(0)) \, v \in L_v$, and hence $g^-(0) \cdot g^+(0) \in G \cap \big(U(-\lambda') \cdot P(\lambda')\big)$. By Lemma \ref{bigcell_fctr}, we see that $g^-(0) \in G$ and $g^+(0) \in G$.  Now going back to (\ref{prod_eq}), we deduce 
$$
\rho(g^-_\infty \cdot g^+_\infty) \, v \in L_v.
$$
Then (looking at $R$-points), there is a $c \in R^\times$ such that
$$
\rho(g^-_\infty)^{-1} \, cv = \rho(g^+_\infty)\, v.
$$
Therefore this element belongs both to $cv + t^{-1}V[t^{-1}]$ and to $v + tV[\![t]\!]$; thus both sides are equal to $cv$, and we see $g^-_\infty, \, g^+_\infty \in LG$, as desired.
\end{proof}

\subsubsection{Reduction to case ${\rm SL}_d$, ${\bf f} = {\bf a}$}

Suppose we know the theorem holds for ${\rm SL}_d$ when the facet is a particular alcove.  Since all alcoves are conjugate under the action of ${\rm SL}_d(k(\!(t)\!))$ on its Bruhat-Tits building, the theorem holds for ${\rm SL}_d$ and {\em any} alcove. Then by Lemma \ref{atof_lem}, it holds for ${\rm SL}_d$ and any facet.  Therefore the subset $\overline{\mathcal U}(-\lambda') \cdot \mathcal P(\lambda')$ of Lemma \ref{par_bigcell_fctr} is open in $L{\rm SL}_d$, and hence by that lemma, the theorem holds for any $G$ when the facet is an alcove.  By Lemma \ref{atof_lem} again, it holds for any $G$ and any facet.

\subsubsection{Proof for ${\rm SL}_d$, ${\bf f} = {\bf a}$}

In \cite[p.\,42-46]{F}, Faltings proved Theorem \ref{big_cell_thm} for ${\bf f} = {\bf 0}$ and ${\bf f} = {\bf a}$,  for any semisimple group $G$. For ${\rm SL}_d$ and ${\bf f} = {\bf 0}$, this result was proved earlier by Beauville and Laszlo \cite[Prop.\,1.11]{BLa}.

Here, we simply adapt the method Faltings used for ${\rm SL}_d$ and ${\bf f} = {\bf 0}$ to elucidate, in an elementary way, the case ${\rm SL}_d$ and ${\bf f} = {\bf a}$. (For the most part, this amounts to giving an elaboration of the remarks at the end of \cite[$\S2$]{F}.) 

Let $R$ be a $k$-algebra, and we define for $0 \leq i \leq d$
\begin{align*}
\Lambda_i &:= R[\![t]\!]^i \, \oplus \, \big(t R[\![t]\!]\big)^{d-i} \\
 M_i &:= \big(t^{-1}R[t^{-1}]\big)^i \, \oplus \, R[t^{-1}]^{d-i}.
\end{align*}
We have $\Lambda_0 \subset \Lambda_1 \subset \cdots \subset \Lambda_d = t^{-1}\Lambda_0$ and $M_0 \supset M_1 \supset \cdots \supset M_d = tM_0$.  Also,
$$
\Lambda_i \, \oplus \, M_i = R(\!(t)\!)^d,
$$
for $0 \leq i \leq d$.

Write $H = {\rm SL}_d$.  The affine flag variety $\mathcal F_H$ for $H$ is the ind-scheme parametrizing chains of projective $R[\![t]\!]$-modules $L_0 \subset L_1 \subset \cdots \subset L_d = t^{-1}L_0 \subset R(\!(t)\!)^d$, such that
\begin{enumerate}
\item[(1)] $t^n\Lambda_i  \subset L_i \subset t^{-n}\Lambda_i$ for all $i$ and $n >\!>0$
\item[(2)] ${\rm det}(L_i) = {\rm det}(\Lambda_i) = t^{d-i}R[\![t]\!]$.
\end{enumerate}

We consider the complex of projective $R$-modules, supported in degrees $-1$ and $0$ and of virtual rank $0$,
$$
0 \, \longrightarrow \, \frac{L_i \oplus M_i}{t^n\Lambda_i \oplus M_i} \, \overset{\varphi}{\longrightarrow} \, \frac{R(\!(t)\!)^d}{t^n\Lambda_i \oplus M_i} \, \longrightarrow \, 0.
$$
The determinant of this complex determines a line bundle $\mathcal L_i$ on $\mathcal F_H$ and the determinant of $\varphi$ gives a section $\nu_i$ of $\mathcal L_i$.   Let $\Theta_i$ be the zero locus of $\nu_i$. Then $\bigcap_i \mathcal F_H - \Theta_i$ is an open subset of $\mathcal F_H$ and consists precisely of the points $L_\bullet$ satisfying $L_i \oplus M_i = R(\!(t)\!)^d$ for all $i$. This locus contains the $L^{--}P_{\bf a}$-orbit of $x_e$, as it contains the base point $x_e = \Lambda_\bullet$ and is stable under $L^{--}P_{\bf a}$ since this stabilizes  $M_\bullet$. Our goal is to prove that the locus is precisely $L^{--}P_{\bf a}\cdot x_e$.

Assume $L_\bullet \in \bigcap_i \mathcal F_H - \Theta_i$. Write the $i$-th standard basis vector $e_i$ as
$$
e_i = \lambda_i + \big(\sum_{j \leq i} t^{-1}a_{ji} e_j \, + \, \sum_{j > i} a_{ji} e_j\big) \in L_i \oplus M_i$$ 
where $a_{ji} \in R[t^{-1}]$ and $\lambda_i \in L_i$, \,$\forall i,j$. It follows that there is a unique matrix $h \in R[t^{-1}]^{d \times d}$ whose reduction modulo $t^{-1}$ is strictly lower triangular, such that $h(e_i) \in L_i$ for all $i$. Since $tL_{d} \subseteq L_i$, we easily see that $h(\Lambda_i) \subset L_i$ for all $i$. 

We claim that $h(\Lambda_i) = L_i$ for all $i$.  We start with $i=d$. It is enough to prove $h(\Lambda_d)$ generates the $R[\![t]\!]$-module $L_d/t^n\Lambda_d$. Each element in this quotient can be represented by an element $f \in L_d \subset \Lambda_d \oplus M_d$ whose projection to $\Lambda_d$ is an $R$-linear combination of $t^l e_1, \dots, t^{l} e_d$ for $l < n$.  But $h(e_j)$ is $e_j$ plus an $R[t^{-1}]$-linear combination of the elements $t^{-1}e_1, \dots, t^{-1}e_j, e_{j+1}, \dots, e_d$. Thus by decreasing induction on $l$ (and working with the coefficients of $e_1, e_{2}, \dots,$ in that order) we can make the $\Lambda_d$-projection of $f$ and thus also $f$ itself vanish, proving $h(\Lambda_d) = L_d$.

Since ${\rm det}(\Lambda_d) = {\rm det}(L_d)$, we see ${\rm det}(h) \in R[\![t]\!]^\times \cap (1+ R[t^{-1}])$, so ${\rm det}(h) =1$ and therefore $h \in L^{--}P_{\bf a}$. Also $h$ induces an isomorphism $\Lambda_d/\Lambda_0 \overset{\sim}{\rightarrow} L_d/L_0$. Therefore, by induction on $i$, $h : \Lambda_i/\Lambda_0 \overset{\sim}{\rightarrow} L_i/L_0$ and $h(\Lambda_i) = L_i$ for all $i$.

We conclude that the morphism $\bigcap_i \mathcal F_H - \Theta_i \, \rightarrow \, L^{--}P_{\bf a}$ defined by $L_\bullet \mapsto h$ is inverse to the $L^{--}P_{\bf a}$-action on $\Lambda_\bullet = x_e$. This completes the proof of Theorem \ref{big_cell_thm}.
\qed

\subsection{Uniform notation  for the finite case $G$ and for the affine case $LG$}\la{notsy}$\,$

We introduce a   unified notational system that allows us to discuss the usual partial flag varieties and the partial affine flag varieties at the same time.  We use symbols $\mathcal G$, $\mathcal W$, etc., to abbreviate the objects above them in the following table:

\vskip.3cm

\begin{center}
\begin{tabular}{c|c|c|c|c|c|c|c|c|c|c|c}
 $LG$ & $\widetilde{W}$ & $S_{\rm aff}$  & $L^+P_{\bf a}$ & $L^+P_{\bf f}$ & $L^{++}P_{\bf a}$  & $L^{--}P_{\bf a}$ & $L^{--}P_{\bf f}$ & $\widetilde{W}_{\bf f}$ & $LG/L^+P_{\bf f}$ & $Y_{\bf f}(w)$ & $X_{\bf f', \bf f}(w)$  \\ \hline
\vspace{.1in}  
$\mathcal G$ & $\mathcal W$ & $\mathcal S$ & $\mathcal B$ & $\mathcal P$ & $\mathcal U$ & $\overline{\mathcal U}$ & $\overline{\mathcal U}_{\mathcal P}$ & $\mathcal W_{\mathcal P}$ & $\mathcal G/\mathcal P$ &  $Y_{\mathcal P}(w)$ & $X_{\mathcal P' \mathcal P}(w)$
\end{tabular}
\end{center}

\vskip.3cm
In particular, we will denote the big cell $\mathcal C_{\bf f}$ in $LG/L^+P_{\bf f} = \mathcal G/\PP$ attached to $L^+P_{\bf f} = \PP$ by
$$
\mathcal C_{\PP} = \ov{\mathcal U}_{\PP}\, x_e.
$$
We define the {\em big cell at $x_v$} to be
\begin{equation} \la{bigcell}
v\mathcal C_{\PP}  = \,^{v}\ov{\mathcal U}_{\PP} \, x_v.
\end{equation}

Also, if $\PP = L^+P_{\bf f}$, we sometimes write $\overset{\PP}{<} $ intead of $\overset{\bf f}{<} $.  From now on, we will call $\PP$ a ``parahoric" group. Recall that $\mathcal W_{\PP}$ is always a finite subgroup of $\mathcal W = \w{W}$.

The new notation is modeled on the customary notation for finite flag varieties. If $P \supset B$ is a standard $k$-rational parabolic subgroup of $G$, it corresponds to a standard parahoric subgroup $\mathcal P$, and we have embeddings $G/P \hookrightarrow \mathcal G/\mathcal P$, and similar inclusions on the level of Schubert varieties, Weyl groups, etc. All of our results for convolution morphisms or Schubert varieties for partial affine flag varieties for $LG$ have analogues for partial flag varieties for $G$.  The big cells $\mathcal C_{\PP}$ are then just the more standard objects $\bar{U}_P P/P \subset G/P$.

Henceforth, when we discuss $\mathcal G/\mathcal P$, $X_{\PP}(w)$, etc., we shall give these object their {\em reduced} structure.

The following is familiar and it exemplifies the use of big cells, in this case in $\G/\PP$.

\begin{lm}\la{ZLT_lem}  {\rm ({\bf  $\G/\B \to \G/\PP$ is a $\PP/\B$-bundle})}
 The $k$-ind-projective map  of $k$-ind-projective varieties  $\mathcal G/\mathcal B \rightarrow \mathcal G/\mathcal P$ is 
a Zariski locally trivial fibration  over  $\mathcal G/\mathcal P$ with fiber the  geometrically integral, 
smooth, 
projective, rational and homogenous $k$-variety $\mathcal P/\mathcal B$.
\end{lm}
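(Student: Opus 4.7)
The plan is to use the big cell (Theorem \ref{big_cell_thm}) to produce explicit Zariski-open trivializations of $\pi : \G/\B \to \G/\PP$, and then to use left-translates of the big cell by Weyl-group elements, together with the Bruhat decomposition, to cover the base.

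First, Theorem \ref{big_cell_thm} asserts that $\overline{\mathcal U}_{\PP} \times \PP \to \G$ is an open immersion. Since $\B \subset \PP$ acts freely on $\G$ on the right with quotient $\G/\B$, dividing by this right action yields an open immersion
\[
\overline{\mathcal U}_{\PP} \times (\PP/\B) \;\hookrightarrow\; \G/\B
\]
whose image is exactly $\pi^{-1}(\mathcal C_{\PP})$. Projecting to the first factor gives a trivialization $\pi^{-1}(\mathcal C_{\PP}) \cong \mathcal C_{\PP} \times (\PP/\B)$ of $\pi$ over the big cell. Left-translation by any $v \in \mathcal W$ extends this to trivializations $\pi^{-1}(v\mathcal C_{\PP}) \cong v\mathcal C_{\PP} \times (\PP/\B)$, and the claim is that these opens cover $\G/\PP$. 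Indeed, using Proposition \ref{proto_decomp1} in the form $\mathcal U = (\mathcal U \cap {}^v\overline{\mathcal U}_{\PP})(\mathcal U \cap {}^v\PP)$, together with the fact that $\B = T\mathcal U$ with $T$ fixing $x_e$, one computes
\[
Y_{\PP}(v) \;=\; \mathcal U \cdot v x_e \;=\; (\mathcal U \cap {}^v\overline{\mathcal U}_{\PP}) \cdot v x_e \;\subset\; v\overline{\mathcal U}_{\PP} x_e \;=\; v\mathcal C_{\PP},
\]
so the Bruhat decomposition $\G/\PP = \bigsqcup_{v} Y_{\PP}(v)$ gives the desired cover. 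In the affine case this is applied to each finite-dimensional Schubert subvariety, which is covered by finitely many $v\mathcal C_{\PP}$.

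It remains to identify $\PP/\B$ as a classical partial flag variety. Reduction modulo $t$ gives a surjection $\PP \twoheadrightarrow \bar L_{\PP}$ onto a connected split reductive group over $k$, carrying $\B$ onto a Borel subgroup $\bar B_{\PP} \subset \bar L_{\PP}$; the pro-unipotent kernel of $\PP \to \bar L_{\PP}$ is contained in $\B$, so the induced morphism $\PP/\B \overset{\sim}{\longrightarrow} \bar L_{\PP}/\bar B_{\PP}$ is an isomorphism of $k$-varieties. This realizes $\PP/\B$ as an ordinary partial flag variety of a split connected reductive group, hence as a geometrically integral, smooth, projective, rational, and homogeneous $k$-variety. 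The step requiring the most care is this final identification, together with the verification that forming the right-quotient by $\B$ genuinely commutes with the open immersion supplied by Theorem \ref{big_cell_thm}.
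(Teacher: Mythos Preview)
Your proof is correct and follows essentially the same big-cell trivialization as the paper; the paper's version is simply terser, covering $\G/\PP$ by translates $g\mathcal C_\PP$ for arbitrary $g$ (so the covering is immediate), whereas you restrict to $\W$-translates and invoke the Bruhat decomposition, which also works. One small notational slip: what you write as $Y_\PP(v) = \mathcal U \cdot v x_e$ is the $\B$-orbit $Y_{\B\PP}(v)$, not the $\PP$-orbit $Y_\PP(v)$; the argument is unaffected (cf.\ (\ref{Y_B_in_big_cell})).
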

\begin{proof} 
We may cover $\mathcal G/\mathcal P$ with open big cells $g\m{C}_\PP= g \ov{\mathcal U}_\mathcal P  \, x_e$. It is immediate to verify that
 the inverse image in $\mathcal G/\mathcal B$ of such a set is isomorphic to the product  $g\bar{\mathcal U}_\mathcal 
 P \times (\mathcal P/\mathcal B)$: send $(hu_\PP, p) \mapsto hu_\PP p$. 
\end{proof}


For reference purposes we list some consequences of Propositions \ref{proto_decomp2} and \ref{proto_decomp1}: for $v \in \w{W}$
\begin{align}
\,^{v^{-1}}\mathcal{U} &= (\,^{v^{-1}}\mathcal{U}\cap \ov{\mathcal{U}}_{\mathcal{P}}) 
\,(^{v^{-1}}\mathcal{U}\cap\mathcal{P})
\label{decomp1}
\\
\, ^v\overline{\mathcal U}_\mathcal P&=(\mathcal U\cap\, ^v\overline{\mathcal U}_\mathcal P) ( \overline{\mathcal U}\cap \, ^v\overline{\mathcal U}_\mathcal P)
\label{decomp2}
\\
\, \ov{\mathcal U} &=  \ov{\mathcal U}_{\PP} \cdot (\ov{\mathcal U} \cap \PP). \label{decomp3}
\end{align}

At least some of these were known before (although we could not locate proofs in the literature). For example, the decomposition $\ov{\mathcal U} = (\ov{\mathcal U} \cap \,^w\ov{\mathcal U}) (\ov{\mathcal U} \cap \, ^w\mathcal U)$, a special case of (\ref{decomp2}), was stated by Faltings \cite[page 47--48]{F}.

\subsection{Orbits and relative position}\la{orbtz}$\;$

For the purpose of discussing orbits, let us fix, temporarily and for ease of exposition, a  configuration of parahoric subgroups of $\G$:  $\m{Z} \supseteq \m{X}  \supseteq \B \subseteq \m{Y}.$

The group $\m{W}$ contains the finite subgroups $\m{W}_\m{Z} \supseteq \m{W}_\m{X}  \supseteq  \m{W}_{\B} = \{1\}  \subseteq \m{W}_\m{Y}.$ The double  coset spaces $\bwp{\m{Z}}{\m{Y}}: = \m{W}_\m{Z} \backslash 
\m{W} / \m{W}_{\m{Y}}$ inherit a natural poset structure from $(\m{W}, \leq)$.

We have the natural surjective map of posets   $\bwp{\m{X}}{\m{Y}} \to  {\bwp{\m{Z}}{\m{Y}}}$.

The Bruhat-Tits decomposition takes the form 
$
\G = \coprod_{z \in {  \bwp{\m{Z}}{\m{Y} }}}   {\m{Z}z \m{Y}}.
$

For  every $z \in {\bwp{\m{Z}}{\m{Y}}},$ we have  the finite union decomposition 
$Y_{\m{Z}\m{Y}}(z)= \coprod_{x\mapsto z} Y_{\m{X}\m{Y}}(x)$
of the corresponding  $\m{Z}$-orbit in $\G/\m{Y}$, where $x \in {\bwp{\m{X}}{\m{Y}}}.$
Similarly, for the orbit closures
$X_{\m{Z}\m{Y}}(z) = \coprod_{\zeta \leq z} Y_{\m{Z}\m{Y}}(\zeta)$ (inequality in the poset
${\bwp{\m{Z}}{\m{Y}}}$). Of course, we have that $X_{\m{Z}\m{Y}}(z)= \ov{\m{Z}z \m{Y}/\m{Y}},$ etc.


The decomposition \eqref{decomp1} implies that $Y_{\B \PP}(v)$ is an affine space:
\begin{equation} \label{Y_B_in_big_cell}
Y_{\B \PP}(v) = \mathcal U v \, x_e = v 
\left(^{v^{-1}}\mathcal U \cap \overline{\mathcal U}_{\PP}\right) x_e
\cong\;^{v^{-1}}\mathcal U \cap \overline{\mathcal U}_{\PP}=\prod_{\alpha+n\in S} U_{\alpha+n}\cong \mathbb{A}^{|S|}
\end{equation}
where 
$S=\{\alpha+n\;|\;v(\alpha+n)>0, \textrm{ and }\alpha+n\stackrel{\PP}{<}0\}$.  
The dimension $|S|$ can also be described as the length $\ell(v_{min})$, where $v_{min}$ is the minimal representative in the coset $v\mathcal{W}_\mathcal{P}$.

\begin{lm}\la{proj}
Let $X$ be an ind-projective ind-scheme over $k$. Let $Y\subseteq X$ be a closed sub-ind-scheme over $k$ that is also an integral $k$-scheme. Then $Y$ is a $k$-projective scheme.
\end{lm}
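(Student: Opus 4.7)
\medskip

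\noindent\textbf{Proof proposal.} The plan is to exhibit $Y$ itself as one of the projective ``stages'' of the ind-structure on $X$, using that $Y$ is integral (in particular irreducible and reduced).

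First I would unpack the hypothesis: by definition of an ind-projective ind-scheme, we may write $X = \varinjlim_n X_n$, where the $X_n$ are projective $k$-schemes and the transition maps $X_n \hookrightarrow X_{n+1}$ are closed immersions. Setting $Y_n := Y \times_X X_n$, the closedness of $Y$ in $X$ makes $Y_n$ a closed subscheme of $X_n$, hence projective over $k$, and also a closed subscheme of $Y$. On the level of functors we have $Y = \varinjlim_n Y_n$, so that for every affine scheme $T$, $Y(T) = \bigcup_n Y_n(T)$.

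The key point is then to prove that $Y = Y_n$ (as schemes) for some $n$. For any point $y$ of the scheme $Y$, the canonical morphism $\mathrm{Spec}(\mathcal O_{Y,y}) \to Y$ is a $T$-point with $T$ affine, so it factors through some $Y_n$; hence $y \in Y_n$, and thus $Y = \bigcup_n Y_n$ as sets. Since $Y$ is integral, it has a unique generic point $\eta$; pick $n$ with $\eta \in Y_n$. As $Y_n$ is closed in $Y$ and contains $\eta$, we get $Y_n = Y$ as topological spaces. To upgrade this to a scheme-theoretic equality, let $\mathcal I \subseteq \mathcal O_Y$ denote the quasi-coherent ideal sheaf cutting out $Y_n \subseteq Y$; its support is empty, so $\mathcal I$ lies in the nilradical of $\mathcal O_Y$, which is zero because $Y$ is reduced. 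Hence $\mathcal I = 0$ and $Y = Y_n$.

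I do not anticipate any serious obstacle here. The only mildly delicate step is the one verifying $Y = \bigcup_n Y_n$ set-theoretically, which amounts to making sure that the scheme structure on $Y$ is compatible with the ind-scheme colimit structure; this is handled cleanly by testing on $\mathrm{Spec}(\mathcal O_{Y,y})$-points. Once $\eta$ lands in some $Y_n$, irreducibility and reducedness of the integral scheme $Y$ force $Y = Y_n$, and we are done since $Y_n$ is projective over $k$.
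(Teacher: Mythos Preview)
Your proof is correct and follows essentially the same route as the paper's: locate the generic point $\eta$ of $Y$ in some $Y_n = Y \cap X_n$, then use that $Y_n$ is closed in the integral scheme $Y$ to conclude $Y = Y_n$, hence projective. You are more explicit than the paper in justifying why $\eta$ lands in some $Y_n$ (via the $\mathrm{Spec}(\mathcal O_{Y,\eta})$-point) and in separating the topological equality from the scheme-theoretic one, but the argument is the same.
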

\begin{proof}
Let $X= \cup_{n\geq 0} X_n$ be an increasing sequence of closed projective $k$-subschemes of $X$ which exhaust $X$. There is $n_0$ such that the generic point of $Y$ is contained in $X_{n_0}$.  The intersection $Y \cap X_{n_0}$  is a closed subscheme of $Y$ and contains the generic point of $Y$; since $Y$ is reduced, $Y = Y \cap X_{n_0}$.  It follows
that $Y$ is a closed $k$-subscheme of the projective $k$-scheme $X_{n_0}$ and, as such, it is $k$-projective.
\end{proof} 

Faltings \ci{F} (in the $\G/\B$-setting)  and Pappas-Rapoport \ci{PR} have proved that the $\PP$-orbit closures
 in $\G/\PP$, when given their reduced structure, are geometrically integral, normal, projective $k$-varieties. 
 The following is a consequence of their results.
 
\begin{pr}\la{fqco} {\rm ({\bf Normality of orbit closures})}
The orbit closures $X_{\m{Z}\m{Y}}(z)$, endowed with their 
reduced structure, are geometrically integral, normal, projective $k$-varieties.
\end{pr}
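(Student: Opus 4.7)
The plan is to reduce, by Bruhat-combinatorial manipulations, to the theorem of Faltings \ci{F} and Pappas-Rapoport \ci{PR} on $\B$-orbit closures in $\G/\B$, and then to descend the desired properties along the Zariski-locally trivial fibration $p : \G/\B \to \G/\m{Y}$ of Lemma \ref{ZLT_lem}. The main technical nuisance will be the Bruhat-combinatorial bookkeeping; the descent itself is formal.

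First, I would reduce to $\B$-orbit closures in $\G/\m{Y}$. Let $w_{\max}$ be the unique Bruhat-maximal element of the double coset $\m{W}_{\m{Z}} w \m{W}_{\m{Y}}$, where $w$ is any lift of $z$. Using the Bruhat decomposition $\m{Z} = \bigsqcup_{u \in \m{W}_{\m{Z}}} \B u \B$, one has $\m{Z} w \m{Y} = \bigcup_{u \in \m{W}_{\m{Z}}} \B (uw) \m{Y}$, so $Y_{\m{Z} \m{Y}}(z)$ is the union of the $\B$-orbits in $\G/\m{Y}$ indexed by $\m{W}_\m{Z} w \m{W}_\m{Y}/\m{W}_\m{Y}$. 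Taking closures and invoking the closure relations on $\bwp{\B}{\m{Y}}$ gives the identification
$$
X_{\m{Z}\m{Y}}(z) \; = \; X_{\B \m{Y}}(w_{\max})
$$
as reduced closed sub-ind-schemes of $\G/\m{Y}$. Thus it suffices to handle $X_{\B \m{Y}}(w)$ for $w \in \m{W}$.

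Second, I would compute the preimage of $X_{\B \m{Y}}(w)$ under $p$. Choose $w$ minimal in its coset $w\m{W}_\m{Y}$ and let $w_{0,\m{Y}}$ be the longest element of $\m{W}_\m{Y}$. Decomposing $\m{Y} = \bigsqcup_{u \in \m{W}_\m{Y}} \B u \B$ yields
$$
p^{-1}(Y_{\B \m{Y}}(w)) \; = \; \bigsqcup_{u \in \m{W}_\m{Y}} Y_{\B \B}(wu).
$$
Since $p$ is Zariski-locally trivial, it is open, so preimage commutes with closure, and the maximum of $\{wu \mid u \in \m{W}_\m{Y}\}$ in the Bruhat order on $\m{W}$ is $w w_{0,\m{Y}}$. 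Hence
$$
p^{-1}(X_{\B \m{Y}}(w)) \; = \; X_{\B \B}(w w_{0, \m{Y}}),
$$
and by \ci{F} and \ci{PR}, the right-hand side, given its reduced structure, is a geometrically integral, normal, projective $k$-variety.

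Finally, the restriction $p : X_{\B \B}(w w_{0,\m{Y}}) \to X_{\B \m{Y}}(w)$ is a Zariski-locally trivial fibration with smooth, geometrically integral, projective fiber $\m{Y}/\B$, so Zariski-locally on $X_{\B \m{Y}}(w)$ the source is a product of the target with $\m{Y}/\B$. Faithfully flat descent preserves normality and geometric reducedness (in the direction source$\to$target), and surjectivity of $p$ combined with geometric irreducibility of the source forces geometric irreducibility of the target; thus $X_{\B \m{Y}}(w)$ is geometrically integral and normal. Projectivity then follows from Lemma \ref{proj}, as $X_{\B \m{Y}}(w)$ is a reduced closed sub-ind-scheme of the ind-projective $\G/\m{Y}$ that is now known to be an integral $k$-scheme. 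The most substantive step is the initial identification $X_{\m{Z}\m{Y}}(z) = X_{\B \m{Y}}(w_{\max})$ together with the preimage computation; once these are in hand, the geometric conclusions are standard descent.
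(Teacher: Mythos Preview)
Your proposal is correct and follows essentially the same approach as the paper: pass to the Bruhat-maximal representative $z_{\max}$ of the double coset, identify the preimage under $\G/\B \to \G/\m{Y}$ with the $\B$-orbit closure $X_{\B\B}(z_{\max})$, invoke Faltings and Pappas--Rapoport, and descend geometric integrality and normality along the Zariski-locally trivial smooth fibration with fiber $\m{Y}/\B$, finishing projectivity via Lemma~\ref{proj}. Your two-step reduction (first $X_{\m{Z}\m{Y}}(z)=X_{\B\m{Y}}(w_{\max})$, then the preimage computation) simply makes explicit what the paper does in one step; the paper also records the intermediate observation that the source being quasi-compact of finite type forces the same for the target before invoking Lemma~\ref{proj}, which you leave implicit.
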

\begin{proof} 
Let $z_{max} \in \m{W}$ be the maximal representative of $z.$ Then $\ov{\B z_{max} \B}= \ov{\m{Y} z_{max} \m{Y}}$.
It follows that the  natural
map  $p: X_{\m{B}\m{B}}(z_{max}) \to X_{\m{Z}\m{Y}}(z)$  is  a Zariski locally trivial  fiber bundle with fiber the 
geometrically connected, nonsingular and projective $k$-variety $\m{Y}/\B$, as it coincides with the full pre-image
of $X_{\m{Z}\m{Y}}(z)$ under the natural projection map $\G/\B \to \G/\m{Y}$ (cf.\,Lemma \ref{ZLT_lem}).

 According to \ci{F} and \ci{PR},
$X_{\m{B}\m{B}}(z_{max})$,   being a $\B$-orbit closure in $\G/\B$ endowed with the reduced structure,
is  a geometrically integral, normal, projective $k$-variety.  By using the  fact that $p$ is a Zariski locally trivial bundle,  and the fact that
$X_{\m{B}\m{B}}(z_{max})$ is  
quasi-compact and  of finite type
over $k,$ we have that the same is true for    $X_{\m{Z}\m{Y}}(z)$. According to Lemma 
\ref{proj},  the orbit-closure $X_{\m{Z}\m{Y}}(z)$ is then $k$-projective.

The desired conclusions, except the  already-proved projectivity assertion, follow by descending the desired geometric integrality and normality
from the pre-image to the image, along the smooth projection map $p.$
\end{proof}

Let us now fix a configuration of parahoric subgroups of $\G$:  $\m{Q} \supseteq \PP  \supseteq \B.$
We have the natural projections, which are maps of posets
\beq\la{e3.5}
\xymatrix{
& & \W \ar[ld] \ar[rd]  & &   \\
&_\PP \W_\B  \ar[rd] \ar[ld] &&  \,_\B \W_\PP \ar[ld] \ar[rd] &  \\
 _\m{Q} \W_\B  \ar[rd] && _\PP \W_\PP \ar[ld] \ar[rd]  &  & _\B \W_\m{Q}  \ar[ld] \\
& _\m{Q} \W_\PP  \ar[rd] &&  \,_\PP \W_\m{Q} \ar[ld] &  \\
&  & \,_\m{Q} \W_\m{Q}. & &
}
\eeq
Each rhomboid, including the big one, is determined by two of the three parahorics. 
For each rhomboid, we denote the system of images of an element $w$ in the summit as follows
\beq\la{e3.6}
\xymatrix{
& w \ar[ld] \ar[rd]  &   \\
'{w}  \ar[rd] &&  w' \ar[ld]  \\
 & w'', & 
}
\eeq
Of course, when dealing with orbits and their closures, we write $Y_\B (w)$ in place of $Y_{\B \B}(w)$, etc.

Consider the rhomboid  determined by $\PP,\m{Q}$. The pre-image in $_{\m{Q}} W _\PP$ of $w''$ is
the finite collection of closures of  $\m{Q}$-orbits in $\G/\PP$ that surject onto $X_{\m{Q}}(w'')$.  The pre-image in $_\PP W _\m{Q}$ of $w''$ is
the finite collection of closures of $\PP$-orbits in $\G/\m{Q}$ that are in the closure $X_\m{Q} (w'')$. 
The pre-image in $_\PP W_\PP$ of $w''$ is
the finite collection of the closure of  $\PP$-orbits in $\G/\PP$ that  map into $X_\m{Q}(w'')$ and, among them, we find
$X_\PP (w_{max})$ i.e. the full-preimage in $\G/\PP$ of $X_\m{Q}(w'')$ under $\G/\PP \to \G/\m{Q}$, so that the resulting map
is a  Zariski locally trivial  $\PP/\B$-bundle.
If $w_1,w_2 \mapsto w''$, then we have
\beq\la{4es}
\ov{ \PP w_1\PP /\PP} \to \ov{\m{Q}w_1\m{Q}/\m{Q}} = \ov{\m{Q}w''\m{Q}/\m{Q}} = 
 \ov{\m{Q} w_2 \m{Q}/\m{Q}} \leftarrow \ov{\PP w_2 \PP/ \PP}
\eeq
and $ \ov{\m{Q}w''\m{Q}/\m{Q}}$ is the smallest $\m{Q}$-orbit-closure   in $\G/\m{Q}$ containing
the $\PP$-orbits-closures  $\ov{\PP w_i \m{Q}/\m{Q}}$.

\begin{defi}\la{qpma}{\rm
We say that $w \in {\bwp{\PP}{\PP}}$ is  of {\em $\m{Q}$-type}  if $X_\PP (w)$ is $\m{Q}$-invariant; this is equivalent to
having $\ov{\PP w \PP} =\ov{\m{Q}w\PP}$; it is also equivalent to $w$ admitting a lift
in $^\Q\W^\PP \subseteq \W$, the set of maximal representatives of $_\Q\W_\PP$ inside  $\W$.
We say that it is {\em $\m{Q}$-maximal} if it is the maximal  representative of its image $w''$; 
this is equivalent to $\ov{\PP w \PP} =\ov{\m{Q}w{Q}}$;
it is also equivalent to $w$ admitting a lift
in $^\Q\W^\Q \subseteq \W$, the set of maximal representatives of $_\Q\W_\Q$ inside  $\W$.
}
\end{defi}

Note that being of $\m{Q}$-type means that  $X_\PP(w) = X_\PP (w')$, and it implies that $X_\PP(w) \to X_\m{Q}(w'')$ is surjective
(the converse is not true: take $\G/\PP \ni \PP/\PP \to \m{Q}/\m{Q} \in \G/\m{Q}$).
Note that being of $\m{Q}$-maximal type is equivalent to $X_\PP(w) \to X_\m{Q}(w'')$ being a Zariski locally trivial bundle with fiber $\m{Q}/\PP$. Finally,  if $w$ is $\m{Q}$-maximal, then $w$ is of $\m{Q}$-type, but not vice versa.

Let $\PP_1, \PP_2  \in \G/\PP$.  According to the Bruhat-Tits decomposition of $\G$, 
there is a unique  and well-defined   $w\in {\bwp{\PP}{\PP}}$ such that, having written $\PP_i:= g_i \PP$, $g_i \in \G$, we have that  $g_1^{-1} g_2 \in 
\PP w \PP$.

\begin{defi}\la{zrlp}
{\rm
Given $P_1, P_2 \in \G/\PP$ we define their  {\em relative position} to be the unique element $w \in \, _\PP \W_\PP$ 
such that $g_1^{-1} g_2 \in 
\PP w \PP$, and we denote this property by
$
P_1 \, \overset{ w}{\textendash \textendash} \, P_2
$
We say that their 
{\em relative position is less then or equal to $w$} if their relative position
is so, i.e.  $g_1^{-1} g_2 \in \ov{\PP w\PP},$ and we denote this property by $
P_1 \, \overset{\leq w}{\textendash \textendash} \, P_2
$. 
}
\end{defi}

The following statements can be interpreted at the level of $k$ or $\bar{k}$-points, but we will suppress this from the notation. 
We have
\begin{equation*}
Y_\mathcal B(w) = \{ \mathcal B' ~ | ~ \mathcal B \, \overset{w}{\textendash \textendash}\, \mathcal B' \} \hspace{.25in} \mbox{and} \hspace{.25in} X_\mathcal B(w) = \{ \mathcal B' ~ | ~ \mathcal B \, \overset{\leq w}{\textendash \textendash} \, \mathcal B' \}.
\end{equation*}

The BN-pair relations hold for $v \in \mathcal W$ and $s \in \mathcal S$:
\begin{align} \la{BN-pair_eq}
\mathcal B v \mathcal B s \mathcal B &= \begin{cases} \mathcal B vs \mathcal B, \,\,\,\,\,\,\,\,\,\,\,\,\,\,\,\,\,\,\,\,\,\,\, \mbox{if $v < vs$,} \\
\mathcal B vs \mathcal B \cup \mathcal B v \mathcal B, \,\,\,\,\, \mbox{if $vs < v$.} \end{cases} \\
s\mathcal B s &\nsubseteq \mathcal B. \notag
\end{align}
Note that for every $v \in \mathcal W$ and $s \in \mathcal S$, there is an isomorphism $\{ \mathcal B' ~ | ~ v\mathcal B \,\, \overset{\leq s}{\textendash \textendash} \,\mathcal B' \} \cong \mathbb P^1$ and $\{ \mathcal B' ~ | ~ v\mathcal B \, \,\overset{\leq s}{\textendash \textendash} \,\mathcal B' \} \subset Y_\mathcal B(v) \cup Y_\mathcal B(vs)$.

\section{Twisted products and generalized convolutions} \la{twcon_sec}

\subsection{Twisted product varieties}\la{ztpv} $\;$

Let  $r\geq 1$ and let $w_\bullet = (w_1, \ldots w_r) \in (\bwp{\PP}{\PP})^r.$
\begin{defi}\la{zdeftp}
{\rm 
The {\em twisted product scheme associated with $w_\bullet$} is   the closed $k$-ind-subscheme of $(\G/\PP)^r$  defined by setting
\beq\la{9i8}
X_\PP(w_\bullet) := \left\{ (P_1, \dots, P_r) ~ | ~ \PP \, \overset{\leq w_1}{\textendash \textendash} \, P_1 \, \overset{\leq w_2}{\textendash \textendash} \, \cdots \, \overset{\leq w_r}{\textendash \textendash} P_r \right\}.
\eeq
endowed with the reduced structure.}
\end{defi}

\begin{lm}\label{loc_triv}
Twisted products $X_\mathcal P(w_\bullet)$
are  Zariski-locally isomorphic to the usual products $X_\mathcal P(w_1) \times \cdots \times X_\mathcal P(w_r)$.
\end{lm}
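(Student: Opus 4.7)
The plan is to use the big cell $\mathcal C_\PP = \overline{\mathcal U}_\PP \cdot x_e$ of Theorem \ref{big_cell_thm} to build an explicit Zariski-local trivialization, rather than proceeding by induction on $r$. For each $r$-tuple $g_\bullet = (g_1, \dots, g_r) \in \mathcal G(\bar k)^r$, the product of translated big cells
\[
V_{g_\bullet} := g_1 \mathcal C_\PP \times \cdots \times g_r \mathcal C_\PP
\]
is a Zariski-open subset of $(\mathcal G/\PP)^r$, and as $g_\bullet$ varies these opens cover $(\mathcal G/\PP)^r$, hence in particular cover $X_\PP(w_\bullet)$. On each factor, the big cell theorem supplies a section $s_i : g_i \mathcal C_\PP \to \mathcal G$ of the quotient $\pi : \mathcal G \to \mathcal G/\PP$, characterized by $s_i(g_i u \cdot x_e) = g_i u$ for $u \in \overline{\mathcal U}_\PP$.

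Using these sections, I would define an ``untwisting'' morphism $\phi : V_{g_\bullet} \to (\mathcal G/\PP)^r$ by
\[
(P_1, \dots, P_r) \longmapsto \bigl(P_1,\; s_1(P_1)^{-1} P_2,\; s_2(P_2)^{-1} P_3, \;\dots,\; s_{r-1}(P_{r-1})^{-1} P_r \bigr).
\]
Writing $Q_1 := P_1$ and $Q_i := s_{i-1}(P_{i-1})^{-1} P_i$ for $i \geq 2$, the identity $s_{i-1}(P_{i-1})\PP = P_{i-1}$ translates the chain of relative-position conditions defining $X_\PP(w_\bullet)$ into the independent conditions $Q_i \in X_\PP(w_i)$ for $i = 1, \dots, r$. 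Hence $\phi$ restricts to a morphism
\[
X_\PP(w_\bullet) \cap V_{g_\bullet} \; \longrightarrow \; X_\PP(w_1) \times \cdots \times X_\PP(w_r),
\]
with inverse $P_1 = Q_1$ and $P_i = s_{i-1}(P_{i-1}) \cdot Q_i$, defined inductively on the open locus where each $P_i$ lies in $g_i \mathcal C_\PP$. I would then verify that this locus is Zariski-open in the product, which is immediate since the successive conditions $Q_i \in s_{i-1}(P_{i-1})^{-1} g_i \mathcal C_\PP$ are open and the $s_i$ are regular.

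The only genuine obstacle is the Zariski (as opposed to \'etale) local triviality of $\pi$, which is exactly what Theorem \ref{big_cell_thm} and Corollary \ref{big_cell_cor} provide; without the schematic big cell one would obtain only an \'etale-local version of the lemma. Once the sections $s_i$ are in hand, all the remaining content of the lemma is a direct and essentially formal calculation with the Bruhat--Tits relative position, so no further input is required.
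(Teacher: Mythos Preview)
Your proof is correct and is essentially the same as the paper's, just presented from the opposite direction. The paper constructs the map from (an open in) the product to (an open in) the twisted product by the explicit formula $(\gamma_1 u_1, \ldots, \gamma_r u_r) \mapsto (\gamma_1 u_1,\, \gamma_1 u_1 \gamma_2 u_2,\, \ldots)$, whereas you construct its inverse, the ``untwisting'' map, via the big-cell sections $s_i$; these are literally the same isomorphism, and both rest on exactly the same input, namely the Zariski section over $g_i\mathcal C_\PP$ furnished by Theorem~\ref{big_cell_thm}.
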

\begin{proof}
For every $1\leq i \leq r$, pick any point $P_i \in X_\PP(w_i)$, and $\gamma_i \in \G$ so that 
$P_i = {\gamma_i}\PP$.
In particular, $\gamma_i \in \ov{\PP w_i \PP}.$
Intersect with the big cell (\ref{bigcell}) at $\gamma_i \PP$ to obtain the dense open subset $X_\PP (w_i) \bigcap \gamma_i \m{C}_\PP.$ Its elements
have the form 
$\gamma_i u_i$,  for a unique $u_i \in \bar{\m{U}}_{\PP}$, and
with $\gamma_i, \gamma_i u_i \in  \ov{\PP w_i \PP}.$
Let $\gamma := (\gamma_1, \dots, \gamma_r)$, and set $A_\gamma:= \prod_{i=1}^r X_\PP (w_i) \bigcap \gamma_i \m{C}_\PP.
$
 Then $A_\gamma$ is open and dense in $\prod_{i=1}^r X_\PP (w_i)$, and 
its points have the form 
$
(\gamma_1 u_1, \ldots , \gamma_r u_r), \; \gamma_i, \gamma_i u_i \in  
\ov{\PP w_i \PP}.
$
Define the map $A_\gamma \to X_\PP(w_\bullet)$ by the assignment:
\[
(\gamma_1 u_1, \ldots , \gamma_r u_r) \mapsto 
\left( \gamma_1 u_1,  \gamma_1 u_1  \gamma_2 u_2  , \ldots,
  \gamma_1 u_1  \gamma_2 u_2 \cdots  \gamma_r u_r   \right).
\]
Set $\w{\gamma}_i= \prod_{j=1}^i \gamma_{j} u_j \in X_\PP(w_\bullet)$.
The image  of this map in $X_P(w_\bullet)$ is contained in the  open and dense set $\w{A}_{\w{\gamma}}$ defined by requiring 
that:
 $g_1 \in \pi^{-1}(X_P (w_1) \bigcap \gamma_1 C_P)$,  
$\;{g_1}^{-1}g_2 \in \pi^{-1}(X_P (w_2) \bigcap \gamma_2 C_P)$, etc., where $\pi: \G \to \G/\PP$.
The map $A_\gamma \to \w{A}_{\w{\gamma}}$ admits an evident  inverse and is thus  an isomorphism.
Finally, every point in $(g_1, \ldots g_r) \in X_\PP (w_\bullet)$ can be written
in the form $g_i=\prod_{j=1}^i \gamma_j$, with $\gamma_i \in X_\PP (w_i)$ for every $1\leq i\leq r$
(just take  $\gamma_i := g_{i-1}^{-1} g_i$, with $g_0:=1$).
It follows, that the $\w{A}_{\w{\gamma}}$, with $\gamma \in \prod_{i=1}^r X_\PP(w_i)$,  cover $X_\PP (w_\bullet)$
\end{proof}

\begin{lm}\la{zlt1} 
The first projection defines a map  ${\rm pr}_1 :X_\PP (w_\bullet) \to X_\PP (w_1)$ which is a Zariski locally trivial
bundle 
over the base with fiber $X_\PP(w_2, \ldots, w_r)$.
\end{lm}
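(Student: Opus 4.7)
The plan is to produce Zariski-open trivializations around each point of $X_\PP(w_1)$ by using the big cells from Theorem \ref{big_cell_thm} to obtain local sections of the natural projection $\G \to \G/\PP$ over $X_\PP(w_1)$, and then to transport fibers via left-translation by those sections, exploiting $\G$-equivariance of relative positions.

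First I would cover $X_\PP(w_1)$ by the Zariski-open subsets $V_\gamma := X_\PP(w_1) \cap \gamma \m{C}_\PP$ for $\gamma \in \G$; since the translates $\gamma \m{C}_\PP$ cover $\G/\PP$, their intersections with the closed subvariety $X_\PP(w_1)$ cover it. Theorem \ref{big_cell_thm} yields a scheme-theoretic isomorphism $\bar{\m{U}}_\PP \overset{\sim}{\to} \m{C}_\PP$ sending $u \mapsto u\PP$, and after translation by $\gamma$, every point of $V_\gamma$ has a unique expression $\gamma u \PP$ with $u \in \bar{\m{U}}_\PP$. This produces a morphism (a section of $\G \to \G/\PP$ over $V_\gamma$)
$$
s_\gamma : V_\gamma \longrightarrow \G, \qquad \gamma u \PP \longmapsto \gamma u.
$$

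Next I would define
$$
\phi_\gamma : V_\gamma \times X_\PP(w_2, \ldots, w_r) \longrightarrow \mathrm{pr}_1^{-1}(V_\gamma),
$$
$$
(P_1, (Q_2, \ldots, Q_r)) \longmapsto (P_1,\, s_\gamma(P_1) Q_2,\, \ldots,\, s_\gamma(P_1) Q_r),
$$
using the diagonal $\G$-action on $(\G/\PP)^{r-1}$. The image lies in $X_\PP(w_\bullet)$ because left-translation by $s_\gamma(P_1) \in \G$ preserves all relative positions: the chain $\PP \overset{\leq w_2}{\textendash\textendash} Q_2 \overset{\leq w_3}{\textendash\textendash} \cdots \overset{\leq w_r}{\textendash\textendash} Q_r$ becomes $P_1 \overset{\leq w_2}{\textendash\textendash} s_\gamma(P_1)Q_2 \overset{\leq w_3}{\textendash\textendash} \cdots \overset{\leq w_r}{\textendash\textendash} s_\gamma(P_1)Q_r$, which is exactly the condition for being in the fiber of $\mathrm{pr}_1$ over $P_1$. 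A two-sided inverse is given by $(P_1, P_2, \ldots, P_r) \mapsto (P_1,(s_\gamma(P_1)^{-1}P_2, \ldots, s_\gamma(P_1)^{-1}P_r))$, which is a morphism for the same reason. Hence $\phi_\gamma$ is an isomorphism of varieties, and the $V_\gamma$ give the desired local trivializations.

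The main delicate point is that $s_\gamma$ is a morphism of schemes (not merely of sets of points); this is precisely what Theorem \ref{big_cell_thm} supplies, via the scheme-theoretic open immersion $\bar{\m{U}}_\PP \times \PP \hookrightarrow \G$ underlying the big cell. After that, everything is essentially formal: all verifications reduce to the $\G$-equivariance of relative positions, in the same spirit as the argument of Lemma \ref{loc_triv}, and there is no further combinatorial input required.
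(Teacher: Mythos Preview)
Your proposal is correct and is essentially the same argument as the paper's own proof: both trivialize over $V_\gamma = X_\PP(w_1)\cap\gamma\m{C}_\PP$ via the local section $s_\gamma(\gamma u\PP)=\gamma u$ coming from the big cell, and both use the assignment $(P_1,(Q_2,\dots,Q_r))\mapsto (P_1,\,s_\gamma(P_1)Q_2,\dots,s_\gamma(P_1)Q_r)$. Your write-up is simply more explicit about the scheme-theoretic well-definedness of $s_\gamma$ and the $\G$-equivariance of relative position.
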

\begin{proof}
We trivialize over the intersection $X_\PP(w_1) \cap \gamma \m{C}_\PP$ with   a big cell centered at
 a fixed arbitrary point of  $X_\PP(w_1)$:
 denote by $(P_2, \ldots, P_r)$  the points in $X_\PP (w_2, \ldots, w_r)$ and define
 the trivialization of the map ${\rm pr}_1$   over $X_\PP(w_1) \cap \gamma \m{C}_\PP$ by the assignment:
 \[
 (\gamma_1 u_1, (P_2, \ldots, P_r) ) \longmapsto (\gamma_1 u_1 {\PP}, 
 \gamma_1 u_1 P_2, \ldots, \gamma_1 u_1 P_r).
 \]
 \end{proof}

 \begin{cor}\la{vst}
 The twisted product varieties $X_\PP (w_\bullet)$ are geometrically integral, normal, projective $k$-varieties.
  \end{cor}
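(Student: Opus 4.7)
The plan is to argue by induction on $r$, exploiting the Zariski locally trivial fibration of Lemma \ref{zlt1}. The base case $r=1$ is immediate from Proposition \ref{fqco}, applied to the $\PP$-orbit closure $X_\PP(w_1) \subset \G/\PP$. For the inductive step, the first projection realizes $X_\PP(w_\bullet)$ as a Zariski locally trivial fiber bundle over $X_\PP(w_1)$ with fiber $X_\PP(w_2,\dots,w_r)$; by the inductive hypothesis the fiber has the desired properties, and the base does so by Proposition \ref{fqco}.

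Next I would deduce normality and geometric reducedness of the total space from this locally trivial structure: both are local conditions on the source, and locally the total space is a product of varieties possessing the corresponding property. Since $k$ is a finite, hence perfect, field, a product of normal (resp. geometrically reduced) $k$-varieties is again normal (resp. geometrically reduced), so $X_\PP(w_\bullet)$ is normal and geometrically reduced. The same local argument yields local irreducibility after base change to $\ov{k}$.

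To upgrade this to global geometric irreducibility, I would argue as follows: after base change to $\ov{k}$ the base $X_\PP(w_1) \otimes_k \ov{k}$ is connected, being irreducible, and each geometric fiber of the projection is connected, being geometrically integral by induction; hence the total space $X_\PP(w_\bullet) \otimes_k \ov{k}$ is connected. A connected, locally irreducible, reduced scheme is integral, so $X_\PP(w_\bullet)$ is geometrically integral. Projectivity is then automatic: $X_\PP(w_\bullet)$ is by construction a closed sub-ind-scheme of the ind-projective ind-scheme $(\G/\PP)^r$, and Lemma \ref{proj} promotes any closed integral sub-ind-scheme of an ind-projective ind-scheme to a projective $k$-scheme.

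The only real obstacle is promoting the manifestly local consequences of Lemmas \ref{loc_triv} and \ref{zlt1} --- normality, reducedness, and local irreducibility --- to the global property of irreducibility. This is handled by the standard observation that a Zariski locally trivial fibration with connected base and connected fibers has connected total space, after which local irreducibility closes the gap.
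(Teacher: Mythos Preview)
Your argument is correct and follows essentially the same approach as the paper: induction on $r$ via the first-projection fibration of Lemma~\ref{zlt1}, normality from the local product structure of Lemma~\ref{loc_triv} together with Proposition~\ref{fqco}, and projectivity via Lemma~\ref{proj}. The only minor difference is in how geometric irreducibility is concluded: the paper observes that ${\rm pr}_1$ is open and invokes the fact that an open surjection with irreducible base and irreducible fibers has irreducible source, whereas you argue ``connected $+$ locally irreducible $\Rightarrow$ irreducible''; both are standard and equally short.
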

 \begin{proof}

 The twisted product $X_\PP(w_\bullet)$ is a $k$-scheme of finite type. This can be easily  proved by induction on $r$, using the fact that ${\rm pr}_1 : X_\PP(w_\bullet) \rightarrow X_\PP(w_1)$ is surjective, Zariski locally trivial, and has fibers isomorphic to $X_\PP(w_2, \dots, w_{r})$. 

Next, we prove that $X_\PP(w_\bullet)$ is geometrically irreducible. We may replace the field of definition $k$ with its algebraic closure. We argue as before by induction on $r$.  The Zariski locally trivial  bundle map  ${\rm pr}_1$  above is open. Then
irreducibility follows from the fact that any open morphism of schemes with irreducible image and irreducible fibers, has irreducible domain.

Since $X_\PP(w_\bullet)$ is given its reduced structure, it is geometrically integral. As it is closed inside the ind-scheme $(\G/\PP)^r$, Lemma \ref{proj} implies that it is $k$-projective.

Finally, the normality can be checked Zariski locally, hence follows from the normality of each Schubert variety $X_\PP(w_i)$ (Proposition \ref{fqco}) thanks to Lemma \ref{loc_triv}.
\end{proof}

 \subsection{Geometric $\PP$-Demazure product  on $\bwp{\PP}{\PP}$}\la{gpdem} $\;$
 
 \begin{defi}\la{dstp}
{\rm  Define  the {\em geometric $\PP$-Demazure product} as the  binary operation
 \beq\la{r1}
 \star = \star_\PP: 
 {\bwp{\PP}{\PP}} \times \, \bwp{\PP}{\PP} \to \, \bwp{\PP}{\PP} \quad (w_1 ,w_2) \mapsto w_1\star w_2,
 \eeq
 by means of the defining equality
 \beq\la{r1.5}
 X_\PP (w_1 \star w_2) := \im \, \left\{ X_\PP(w_1, w_2) \stackrel{pr_2}\to \G/\PP\right\},
 \eeq
 the point being that, since  the image is irreducible, closed and  $\PP$-invariant, then it is 
 the closure of a $\PP$-orbit.
 More generally, given $w_\bullet \in ({\bwp{\PP}{\PP}})^r$, we define
 \beq\la{r1bis}
 X_\PP (w_{\star}):= X_\PP (w_1 \star \cdots \star w_r) := \im \, \left\{ X_\PP(w_\bullet) \stackrel{pr_r}\to \G/\PP\right\}.
 \eeq
 The resulting surjective  and $k$-projective map 
 \beq\la{e56}
 p: X_\PP (w_\bullet) \to X_\PP (w_\star)
 \eeq
 is called {\em the convolution morphism associated with  $w_\bullet \in ({\bwp{\PP}{\PP}})^r.$}
 }
 \end{defi}

 \begin{rmk}\la{4c4}
 We have an inclusion $X_\B (w_1w_2) \subseteq X_\B (w_1 \star w_2)$, which in general is strict.
 \end{rmk}

\begin{rmk}\la{conn}
{\rm ({\bf  Relation geometric/standard Demazure product})}
In \S\ref{Dem_comp_subsec} below, we shall show that the geometric Demazure product can be easily described in terms of the usual notion of Demazure product defined on the group $\mathcal W$.
\end{rmk}

 \begin{rmk}\la{obvious}
 By definition, given $w_\bullet \in ({\bwp{\PP}{\PP}})^r$ and  $P_r \in X_\PP(w_\star)$, there is $(P_1, \ldots, P_r) \in X_\PP(w_\bullet)$ mapping to it. More generally, given $1 \leq s \leq r$, the natural map
 $X_\PP (w_1, \ldots, w_r) \to X_\PP (w_1 \star \cdots \star w_s, w_{s+1} \star \cdots \star  w_r)$ is also surjective:
 given $(P_s = g\PP, P_r)$ in the target, take $(P_1, \ldots , P_{s-1}, g \PP) \in X_\PP (w_1 \star \cdots \star w_s)$ and 
 $ (P_{s+1}, \ldots , P_r) \in X_\PP(w_{s+1} \star \cdots \star  w_r)$. Then, by the invariance of relative position 
 with respect to left multiplication by elements $g \in \G$, we have that  
 $(P_1, \ldots , P_{s-1}, g \PP, g P_{s+1}, \ldots , gP_r) \in X_\PP (w_\star)$ and maps to $(P_s,P_r)$.
 \end{rmk}
 
 \begin{lm}\la{r2}
 $(\bwp{\PP}{\PP} , \star_\PP)$ is an associative monoid with unit the class $1 \in \, \bwp{\PP}{\PP}$.
 \end{lm}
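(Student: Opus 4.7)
The plan is to show that both axioms follow directly from unraveling Definition \ref{dstp}, using Remark \ref{obvious} as the crucial input for associativity. Throughout, I will exploit the fact that the Bruhat--Tits decomposition puts $\PP$-orbit closures in $\G/\PP$ in bijection with elements of $\bwp{\PP}{\PP}$, so equalities of elements reduce to equalities of orbit closures.

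For the unit, I would first observe that $X_\PP(1) = \PP/\PP$ is the single base point $x_e$. Unpacking \eqref{9i8}, a point $(P_1, P_2) \in X_\PP(1, w)$ satisfies $\PP \, \overset{\leq 1}{\textendash\textendash}\, P_1 \, \overset{\leq w}{\textendash\textendash}\, P_2$, forcing $P_1 = \PP$ and $P_2 \in X_\PP(w)$, so the second projection is an isomorphism onto $X_\PP(w)$ and hence $1 \star w = w$. Symmetrically, $(P_1, P_2) \in X_\PP(w, 1)$ forces $P_2 = P_1 \in X_\PP(w)$, so the image of the second projection is $X_\PP(w)$ and $w \star 1 = w$.

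For associativity, the idea is that both $(w_1 \star w_2) \star w_3$ and $w_1 \star (w_2 \star w_3)$ are represented by the same closed subvariety of $\G/\PP$, namely
\[
Z \,:=\, \im\left\{X_\PP(w_1, w_2, w_3) \xrightarrow{\,\mathrm{pr}_3\,} \G/\PP\right\},
\]
which by \eqref{r1bis} is $X_\PP(w_1 \star w_2 \star w_3)$. Applying Remark \ref{obvious} with $r=3$ and $s=2$ yields a surjection $X_\PP(w_1, w_2, w_3) \twoheadrightarrow X_\PP(w_1 \star w_2, w_3)$ whose composition with $\mathrm{pr}_2$ on the target agrees with $\mathrm{pr}_3$ on the source; therefore the image of $X_\PP(w_1 \star w_2, w_3)$ under its second projection is also $Z$, and hence $X_\PP((w_1 \star w_2) \star w_3) = Z$. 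The same argument with $s=1$ gives $X_\PP(w_1 \star (w_2 \star w_3)) = Z$. Since distinct elements of $\bwp{\PP}{\PP}$ yield distinct $\PP$-orbit closures, this forces the two elements to coincide.

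There is no real obstacle here: the only thing one has to be careful about is verifying that the image of the projection in \eqref{r1bis} really is $\PP$-invariant, irreducible, and closed so that it is of the form $X_\PP(w)$ for a unique $w$ (this was already noted in the text just after \eqref{r1.5}, using that $X_\PP(w_\bullet)$ is geometrically integral and $\PP$-stable under the left $\PP$-action on the last coordinate, and projective over $k$ by Corollary \ref{vst}). Once this identification is in place, associativity and the unit axiom both reduce to the transparent set-theoretic manipulations above.
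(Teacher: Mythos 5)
Your proof is correct and follows essentially the same route as the paper's: both arguments derive associativity from the surjectivity of the natural maps $X_\PP(w_\bullet) \to X_\PP(w_{I,\bullet})$ supplied by Remark \ref{obvious}, combined with the bijection between elements of $\bwp{\PP}{\PP}$ and $\PP$-orbit closures in $\G/\PP$. The paper phrases this for an arbitrary grouping $1 \leq i_1 < \cdots < i_m = r$ of the indices, which yields the generalized associativity of the $r$-fold product in one go, whereas you spell out the two groupings $(w_1 \star w_2) \star w_3$ and $w_1 \star (w_2 \star w_3)$ and the unit axiom explicitly; this is a matter of exposition, not substance.
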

 \begin{proof}
 Fix an arbitrary sequence $1 \leq i_1 < \ldots  < i_m =r$. It is easy to verify, by using Remark \ref{obvious}, 
 that   the natural map 
 \[X_\PP(w_\star) \to
 X_\PP (w_1 \star \cdots \star w_{i_1}, \ldots , w_{i_{m-1}+1} \star \cdots \star w_{i_m=r}),  \qquad
 (P_1, \ldots, P_r) \mapsto (P_{i_1}, \ldots, P_{i_m})
 \] is surjective.
 This implies that
 \[
 w_1 \star \cdots \star w_r = (w_1 \star \cdots \star w_{i_1}) \star (w_{i_1+1} \star \cdots \star w_{i_2}) \star \cdots
 \star (w_{i_{m-1}+1} \star \cdots \star w_{i_m=r}),
 \]
and in particular $w_1 \star \cdots \star w_r$ is the $r$-fold extension of an associative product $w_1 \star w_2$
which, as it is immediate to verify, has the properties stated in the lemma.
 \end{proof}

 In general, we  have  the following inequality  in  the poset $(\bwp{\m{Q}}{\m{Q}}, \leq_\m{Q})$
 \beq\la{4vbo}(w_1 \star_\PP \cdots\star_\PP w_r)'' \leq_\m{Q} w''_1 \star_\m{Q} \ldots
\star_\m{Q} w''_r,
\eeq 
more precisely, if we set $w_{\star \PP}: =w_1 \star_\PP \cdots\star_\PP w_r$  and 
$w''_{\star \m{Q}}: =w_1'' \star_\m{Q} \cdots\star_\m{Q} w''_r$, then we have the following inclusions, each of which
may be strict (cfr. (\ref{4es}))
\beq\la{3cv5}
\ov{\PP w_{\star \PP} \m{Q}/\m{Q}}  \subseteq \ov{\m{Q} w_{\star \PP} \m{Q}/\m{Q}}=
\ov{\m{Q} w_{\star \PP}'' \m{Q}/\m{Q}} \subseteq \ov{\m{Q} w''_{\star \m{Q}} \m{Q}/\m{Q}}.
\eeq
 The inequality (\ref{4vbo})  also  follows immediately from the comparison with the standard Demazure product in \ref{Dem_comp_subsec}.

The notion of $\m{Q}$-type (cfr. Definition \ref{qpma}) is related to the potential strictness of the inclusions in (\ref{3cv5}).

\begin{pr}\la{01012}
Assume that $w_\bullet \in (\bwp{\PP}{\PP})^r$ is of $\m{Q}$-type (resp., $\m{Q}$-maximal), i.e.  that $w_i$  is of $
\m{Q}$-type
(resp. $\m{Q}$-maximal), $\forall i$. Then 
we have:
\ben
\item[i)]
  $(w_1 \star_\PP \cdots\star_\PP w_r)'' = w''_1 \star_\m{Q} \ldots
\star_\m{Q} w''_r$; 
\item[ii)] $w_1 \star_\PP \ldots
\star_\PP w_r$ is of $\m{Q}$-type   (resp., $\m{Q}$-maximal).
\een
\end{pr}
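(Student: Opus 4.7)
My plan is to reduce to the case $r=2$ via the associativity of the geometric Demazure product (Lemma \ref{r2}); the general case then follows by induction on $r$. The argument for $r=2$ will run in three steps, interlacing parts (i) and (ii).

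The first step is the $\m{Q}$-type part of (ii). The key observation is that, when $w_1, w_2$ are of $\m{Q}$-type, the twisted product $X_\PP(w_1, w_2)$ carries a natural $\m{Q}$-action extending its $\PP$-action, defined by $q \cdot (P_1, P_2) := (qP_1, qP_2)$. Well-definedness rests on two facts: $qP_1 \in X_\PP(w_1)$ because $X_\PP(w_1)$ is $\m{Q}$-invariant (which is precisely the $\m{Q}$-type hypothesis on $w_1$), and the relative position $P_1 \overset{\leq w_2}{\text{---}} P_2$ is invariant under simultaneous left translation by any element of $\G$. Since the second projection $\mathrm{pr}_2 : X_\PP(w_1, w_2) \to \G/\PP$ is $\m{Q}$-equivariant, its image $X_\PP(w_1 \star_\PP w_2)$ is $\m{Q}$-invariant, showing that $w_1 \star_\PP w_2$ is of $\m{Q}$-type.

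The second step proves (i). Consider the natural map $\Pi: X_\PP(w_1, w_2) \to X_\m{Q}(w_1'', w_2'')$ obtained by applying $\pi: \G/\PP \to \G/\m{Q}$ componentwise; this is well-defined because $\ov{\PP w_i \PP} \subseteq \ov{\m{Q} w_i \m{Q}}$. The crux is to show $\Pi$ is surjective: given $(Q_1, Q_2) \in X_\m{Q}(w_1'', w_2'')$, pick $P_1 \in \pi^{-1}(Q_1) \cap X_\PP(w_1)$ using that $\pi : X_\PP(w_1) \to X_\m{Q}(w_1'')$ is surjective (a direct consequence of the $\m{Q}$-type hypothesis $\ov{\PP w_1 \PP} = \ov{\m{Q} w_1 \PP}$); writing $P_1 = g\PP$, use $\G$-invariance of relative position to reduce the existence of $P_2 \in \pi^{-1}(Q_2)$ with $P_1 \overset{\leq w_2}{\text{---}} P_2$ to the analogous surjectivity for $w_2$. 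From $\pi \circ \mathrm{pr}_2 = \mathrm{pr}_2 \circ \Pi$ we then deduce
$$
\pi\bigl(X_\PP(w_1 \star_\PP w_2)\bigr) \;=\; \mathrm{pr}_2\bigl(X_\m{Q}(w_1'', w_2'')\bigr) \;=\; X_\m{Q}(w_1'' \star_\m{Q} w_2'').
$$
On the other hand, since $w_1 \star_\PP w_2$ is of $\m{Q}$-type by step one, the left-hand side equals $X_\m{Q}((w_1 \star_\PP w_2)'')$, and comparing the two closures gives (i).

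For the $\m{Q}$-maximal case in (ii), the inclusion $X_\PP(w_1 \star_\PP w_2) \subseteq \pi^{-1}(X_\m{Q}(w_1'' \star_\m{Q} w_2''))$ follows from (i), and I will prove the reverse inclusion by explicit lifting: given $P_2' \in \pi^{-1}(X_\m{Q}(w_1'' \star_\m{Q} w_2''))$, set $Q_2 := \pi(P_2')$, choose $Q_1$ with $(Q_1, Q_2) \in X_\m{Q}(w_1'', w_2'')$, and pick any $P_1' \in \pi^{-1}(Q_1)$. The $\m{Q}$-maximality hypotheses $X_\PP(w_i) = \pi^{-1}(X_\m{Q}(w_i''))$ then verify at once that $\PP \overset{\leq w_1}{\text{---}} P_1' \overset{\leq w_2}{\text{---}} P_2'$, i.e., $(P_1', P_2') \in X_\PP(w_1, w_2)$, so $P_2' \in X_\PP(w_1 \star_\PP w_2)$. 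The main obstacle is the surjectivity of $\Pi$ in step two, where the $\m{Q}$-type hypothesis must be used twice — once to lift the first coordinate, and once, after translating by $g^{-1}$, to lift the second coordinate within the correct $\m{Q}$-coset.
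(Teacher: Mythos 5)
Your proposal is correct and uses essentially the same core argument as the paper: the crucial step in both is the coordinate-by-coordinate lifting that shows the natural map $X_\PP(w_\bullet) \to X_\m{Q}(w''_\bullet)$ is surjective under the $\m{Q}$-type hypothesis, using $\G$-invariance of relative position and the identity $\ov{\m{Q}w_i\PP}=\ov{\PP w_i\PP}$. The paper handles general $r$ directly via a commutative square and (implicitly) the inequality $(w_{\star\PP})'' \leq_\m{Q} w''_{\star\m{Q}}$, whereas you reduce to $r=2$ by associativity and instead use the $\m{Q}$-type conclusion of (ii) to identify $\pi(X_\PP(w_1\star_\PP w_2))$ with $X_\m{Q}((w_1\star_\PP w_2)'')$ — a valid, and perhaps slightly cleaner, way to close the loop.
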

\begin{proof}
We have the commutative diagram
\beq\la{dcf}
\xymatrix{
X_\PP(w_\bullet) \ar@{->>}[d]^{a_\bullet}  \ar@{->>}[r]  & X_\PP (w_1 \star_\PP \cdots \star_\PP w_r) \ar[d]^a & \subseteq 
\G/\PP \;\;\mbox{($r$-th copy)} \ar[d] \\
X_\m{Q}(w''_\bullet)  \ar@{->>}[r]  & X_\m{Q} (w''_1 \star_\m{Q} \cdots \star_\m{Q} w''_r) & \subseteq \G/\m{Q} \;\;\mbox{($r$-th copy)},
}
\eeq
where the horizontal convolution morphisms are  surjective by  their very definition.

 We claim that $a_\bullet$ is surjective.
Let $(g_1\m{Q}, \ldots ,g_r \m{Q}) \in X_{\m{Q}} (w''_{\bullet}).$ Then, having set for convenience $g_0:=1$, we have
$g_{i-1}^{-1} g_i \in \ov{\m{Q} w_i\m{Q}}.$ The assumption that the $w_i$ are of $\m{Q}$-type, implies that $X_\PP (w_i) 
\to X_\m{Q} (w''_i)$ is surjective, so that, for every $i$ there is $q_i \in \m{Q}$ such that
$g_{i-1}^{-1} g_i  q_i \in  \ov{\PP w_i \PP}$, which, again by $w_i$ being of $\m{Q}$-type, equals $\ov{\m{Q}w_i \PP}.$
Clearly, the $r$-tuple $(g_1q_1 \PP, \ldots g_rq_r \PP)$ maps to $(g_1\m{Q}, \ldots ,g_r \m{Q}).$ In order to establish the surjectivity of $a_\bullet$, it remains to show that
$(g_1q_1 \PP, \ldots , g_rq_r \PP) \in X_\PP (w_\bullet)$, i.e. that $(g_{i-1} q_{i-1})^{-1} g_i q_i 
\in \ov{\PP w_i \PP}$. This latter equals  $ q_{i-1}^{-1}  (g_{i-1}^{-1} g_i q_i) \in q_{i-1}^{-1} \ov{\PP w_i \PP}\subseteq 
  \ov{\m{Q} w_i \PP}=  \ov{\PP w_i \PP}.$

Given the commutative diagram (\ref{dcf}), we have that the map  $a$  is  surjective as well, which
yields  the desired equality i). 

In order to prove the statement ii) in the $\m{Q}$-type case, we need to prove
that the image  of the top horizontal arrow  is $\m{Q}$-invariant.  
This follows immediately once we note that the source of the arrow is $\m{Q}$-invariant
 for the left-multiplication  diagonal action on $(\G/\PP)^r$  and that  the $r$-th projection map  onto $\G/\PP$ is $\m{Q}$-invariant.
 
  In order to prove the statement ii) in the $\m{Q}$-maximal case,
we need to show that the domain of $a$ is the full pre-image under $\G/\PP \to \G/ \m{Q}$ of
the target of $a$. 
For simplicity, set $w_\star:= w_1 \star_\PP \cdots \star_\PP w_r$ and 
set $w''_\star:= w''_1 \star_\m{Q} \cdots \star_\m{Q} w''_r$.
Let $x =g \m{Q} \in X_\m{Q} (w''_\star)$.
By Remark \ref{obvious}, we there is $(g_1 \m{Q}, \ldots, g_{r-1} \m{Q}, g \m{Q}) \in X_\m{Q} (w''_\bullet).$
By the surjectivity of $a_\bullet$  observed above ($\m{Q}$-maximal implies $\m{Q}$-type), there are
$q_i \in \m{Q}$ such that
$(g_1q_1 \PP, \ldots, g_{r-1}q_{r-1} \PP, gq_r \PP) \in X_\PP (w_\bullet)$
maps to $(g_1 \m{Q}, \ldots, g_{r-1} \m{Q}, g \m{Q})$.
Since we are assuming $\m{Q}$-maximality, i.e. that $\ov{\PP w_i \PP}=\ov{\m{Q} w_i \m{Q}}$ for every $1\leq i \leq r$,
we see that 
for every $q \in \m{Q}$, we have that $(g_1q_1q \PP, \ldots, g_{r-1}q_{r-1}q \PP, gq_r q \PP) \in X_\PP (w_\bullet)$.
As $q$ varies in $\m{Q}$, $gq_r q \PP$ traces the full pre-image of $g \m{Q}$ under $\G/\PP \to \G/ \m{Q}$.
\end{proof}

\begin{rmk}\la{4vo0} 
The inequality \textup{(}\ref{4vbo}\textup{)},  Lemma \ref{r2} and Proposition \ref{01012}  also  follow immediately from the comparison with the standard Demazure product in \ref{Dem_comp_subsec}.
 \end{rmk}

\subsection{Comparison of geometric and standard Demazure products} \label{Dem_comp_subsec}$\;$

In what follows, we shall use, sometimes without mention, a standard lemma about the Bruhat order (see e.g.\,\cite[Prop.\,5.9]{Hum2}).

\begin{lm} \la{trick} Let $(W,S)$ be a Coxeter group and $x,y \in W$ and $s \in S$. Then $x \leq y$ implies $x \leq ys$ or $xs \leq ys$ \textup{(}or both\textup{)}.  
\end{lm}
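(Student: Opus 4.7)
The plan is to invoke the standard subword characterization of the Bruhat order on a Coxeter system $(W,S)$: for $u,v \in W$, one has $u \leq v$ if and only if some (equivalently, every) reduced expression for $v$ contains a subword which is a reduced expression for $u$. Once this is available, the lemma follows by a short case analysis on the sign of $\ell(ys) - \ell(y)$.

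First I would dispense with the easy case. If $ys > y$, then $y \leq ys$, and transitivity of the Bruhat order gives $x \leq y \leq ys$, so the first alternative $x \leq ys$ holds, with nothing further to check.

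The substantive case is $ys < y$. Here I would invoke the Strong Exchange Condition to produce a reduced expression for $y$ whose last letter is $s$, say $y = s_1 s_2 \cdots s_{k-1} s$ with $k = \ell(y)$; deleting the final letter yields a reduced expression $ys = s_1 \cdots s_{k-1}$. Applying the subword characterization to $x \leq y$, I extract a reduced subword $s_{i_1} \cdots s_{i_j}$ of $s_1 \cdots s_{k-1} s$ whose product equals $x$. A bifurcation now occurs according to whether the index $k$ (corresponding to the final letter $s$) is selected in that subword. If it is not, then the chosen subword lies entirely within the truncated expression $s_1 \cdots s_{k-1}$, which already shows $x \leq ys$. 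If it is selected, then $x = (s_{i_1} \cdots s_{i_{j-1}}) s$ as a reduced expression, so $xs = s_{i_1} \cdots s_{i_{j-1}}$ is a reduced expression arising from a subword of $s_1 \cdots s_{k-1}$, yielding $xs \leq ys$.

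The only real content is thus the subword characterization of the Bruhat order together with the fact that, whenever $ys < y$, there is a reduced expression for $y$ ending in $s$. Both are bedrock facts about Coxeter systems, so I do not anticipate any genuine obstacle; the argument is essentially a one-line appeal to the lifting/exchange formalism once that formalism is in hand, which is why the paper simply cites \cite[Prop.\,5.9]{Hum2}.
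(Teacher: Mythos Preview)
Your argument is correct. The paper does not actually give a proof of this lemma; it simply cites \cite[Prop.\,5.9]{Hum2} as a reference. Your subword-characterization argument is essentially the standard proof one finds in that reference, so there is no discrepancy to discuss.
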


\medskip

In this subsection we will explain how the geometric Demazure product is expressed in terms of the usual Demazure product (indeed we will show they are basically the same thing).  Recall that the usual Demazure product is defined on any quasi-Coxeter system $(\W, \mathcal S)$.  For $w_1, \dots, w_r \in \W$, we will denote this product by $w_1 * \cdots * w_r \in \W$.  Its precise definition can be given neatly using the $0$-Hecke algebra, as follows. Associated to $(\mathcal W, \mathcal S)$ we have the (affine) Hecke algebra $\mathcal H = \mathcal H(\mathcal W, \mathcal S)$ which is an associative $\mathbb Z[v, v^{-1}]$-algebra with generators $T_w$, $w \in \mathcal W$, 
and relations
\begin{align*}
T_{w_1} T_{w_2} &= T_{w_1 w_2},  \,\,\,\, \mbox{if $\ell(w_1 w_2) = \ell(w_1) + \ell(w_2)$} \\
T^2_s &= (v^2-1) T_s + v^2 T_1, \,\,\,\, \mbox{if $s \in \mathcal S$}.
\end{align*}
The 0-Hecke algebra $\mathcal H_0$ is defined by taking the $\mathbb Z[v]$-algebra generated by the symbols $T_w, \,\,\, w \in \mathcal W$, subject to the same relations as above, and then specializing $v = 0$.  

We define the Demazure product $x * y \in \mathcal W$ for $x, y \in \mathcal W$ as follows: set $T'_x = (-1)^{\ell(x)}T_x$ as elements in $\mathcal H_0$, and note that in that algebra we have that 
$$
T'_x T'_y = T'_{x*y}
$$
for a certain element $x*y \in \mathcal W$ (see Remark \ref{Dem_def_rem} below). Clearly $(\mathcal W, *)$ is  a monoid (since $\mathcal H_0$ is associative). It follows from the definitions that for $w \in \mathcal W$ and $s \in \mathcal S$, we have
\begin{equation} \label{Dem_orig_defn}
w * s = {\rm max}(w, ws),
\end{equation}
where the maximum is taken relative to the Bruhat order on $\mathcal W$. 

\begin{rmk} \label{Dem_def_rem}
Sometimes \textup{(}\ref{Dem_orig_defn}\textup{)} is taken as the definition of the Demazure product $w*s$, and then one has the challenge of showing this can be extended uniquely to a monoid product $\W \times \W \to \W$. With the $0$-Hecke algebra definition, this challenge is simply avoided, and the only exercise one does is  to verify that the element $T'_x T'_y \in \mathcal H_0$ is supported on a single element, which we define to be $x*y$. One does that simple exercise using induction on the length of $y$.
\end{rmk}

In considering $X_{\PP}(w_\bullet)$, we are free to represent each element $w_i \in \, _{\PP}\W_{\PP}$ by any lift in $\W$.  We shall use the same symbol $w_i$ to denote both an element in $\W$  and its image in $_{\PP}\W_{\PP}$. 

Recall $^{\PP}\W^{\PP}$ denotes the set of elements $w \in \W$ such that $w$ is the unique {\em maximal} length element in $\W_{\PP} w \W_{\PP}$. For $w_\bullet \in (\, ^{\PP}\W^{\PP})^r$, note that $(\mathcal G/\B)^r \to (\mathcal G/\PP)^r$ induces a {\em surjective} morphism
\begin{equation} \label{surjective}
X_{\B}(w_\bullet) \twoheadrightarrow X_{\PP}(w_\bullet).
\end{equation}

\begin{pr} \label{im_of_p} Suppose that $w_i \in \, ^{\PP}\W^{\PP}$ for all $i=1,\dots, r$.  Then the geometric Demazure product $w_\star = w_1 \star \cdots \star w_r$ is the image of the Demazure product $w_* = w_1 * \cdots * w_r$ under the 
natural quotient map $\W \to \, _{\PP}\W_{\PP}$.
\end{pr}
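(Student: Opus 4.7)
My plan is to reduce the proposition to the special case $\PP = \B$, $r=2$, $w_2 = s \in \mathcal S$, which follows directly from the BN-pair relations, and then propagate the result by associativity and by comparing images under $\G/\B \to \G/\PP$.

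First I would reduce to $r=2$. Both operations $\star_\PP$ and $*$ are associative monoid operations (for $\star_\PP$ this is Lemma \ref{r2}; for $*$ this is built into the $0$-Hecke algebra), and the quotient map $\W \to \, _\PP \W_\PP$ is compatible with the projection $X_\PP(w_1,\dots,w_r) \twoheadrightarrow X_\PP(w_1 \star \cdots \star w_j, w_{j+1} \star \cdots \star w_r)$ shown in Remark \ref{obvious}. Once $r=2$ is established, iterating yields the general statement.

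Next I would treat the case $\PP = \B$, which amounts to proving $w_1 \star_\B w_2 = w_1 * w_2$ in $\W$. The core computation is when $w_2 = s$ is a simple reflection. Here the second projection sends $X_\B(w_1, s)$ onto the $\B$-invariant closed irreducible subset whose underlying group-theoretic description is $\overline{\B w_1 \B} \cdot \overline{\B s \B}/\B$. The BN-pair relations (\ref{BN-pair_eq}) give $\B w_1 \B \cdot \B s \B \subseteq \B w_1 \B \cup \B w_1 s \B$, so the image is contained in $X_\B(w_1) \cup X_\B(w_1 s) = X_\B(\max(w_1, w_1 s)) = X_\B(w_1 * s)$. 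The reverse inclusion is immediate: if $w_1 s > w_1$, the point $(w_1 \B, w_1 s \B)$ lies in $X_\B(w_1, s)$ and maps to $w_1 s\B$; if $w_1 s < w_1$, then $(P, P)$ lies in $X_\B(w_1, s)$ for any $P \in X_\B(w_1)$, using $1 \leq s$. Thus $w_1 \star_\B s = w_1 * s$. For arbitrary $w_2$, write $w_2 = w_2' s$ with $\ell(w_2) > \ell(w_2')$, so that $w_2' * s = w_2 = w_2' \star_\B s$ (by the base case applied to $w_2'$). Associativity of both products, combined with induction on $\ell(w_2)$, yields $w_1 \star_\B w_2 = w_1 * w_2$.

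Finally, for general $\PP$, I would exploit the surjection \eqref{surjective}\,\,$X_\B(w_\bullet) \twoheadrightarrow X_\PP(w_\bullet)$, which is available because each $w_i \in \, ^\PP \W^\PP$ is the maximal representative of its double coset. The square
\[
\xymatrix{
X_\B(w_\bullet) \ar@{->>}[d] \ar[r]^-{pr_r} & \G/\B \ar[d] \\
X_\PP(w_\bullet) \ar[r]^-{pr_r} & \G/\PP
}
\]
commutes. The image of the top row is $X_\B(w_1 * \cdots * w_r)$ by the $\B$-case just established, and its image in $\G/\PP$ is $X_\PP(\overline{w_1 * \cdots * w_r})$ with the overline denoting reduction to $_\PP \W_\PP$. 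By surjectivity of the left vertical arrow, this also equals the image of the bottom row, which is $X_\PP(w_1 \star_\PP \cdots \star_\PP w_r)$ by definition. Comparing these two descriptions of the same Schubert variety in $\G/\PP$ gives the desired equality in $_\PP \W_\PP$.

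The only nontrivial obstacle is the BN-pair computation in the base case; once that is in hand, everything else is a bookkeeping exercise in associativity and commutative diagrams.
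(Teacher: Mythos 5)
Your proposal follows the same overall strategy as the paper's own proof: handle the case $\PP=\B$ first via BN-pair relations, then transfer to general $\PP$ through the surjection $X_\B(w_\bullet)\twoheadrightarrow X_\PP(w_\bullet)$ coming from the hypothesis $w_i\in \,^\PP\W^\PP$. There are two points worth comparing.

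First, there is a genuine gap in the final step. You assert that the image of $X_\B(w_*)$ under $\G/\B\to\G/\PP$ is $X_\PP(\overline{w_*})$. This is not automatic: the image of a $\B$-orbit closure in $\G/\B$ under the projection to $\G/\PP$ is a $\B$-orbit closure $X_{\B\PP}(\hat w_*)$ (with $\hat w_*\in\W/\W_\PP$ the image of $w_*$), and this equals the $\PP$-orbit closure $X_\PP(\overline{w_*})$ only when $\hat w_*$ is the maximal representative of $\overline{w_*}$ in $\W/\W_\PP$, i.e.\ when $w_*\in\,^\PP\W^\PP$. That the Demazure product $w_*=w_1*\cdots*w_r$ of elements of $^\PP\W^\PP$ again lies in $^\PP\W^\PP$ is precisely the content of Lemma \ref{demazmax}, which the paper proves and explicitly invokes for this purpose; you neither prove nor cite anything equivalent. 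Fortunately your commutative square still yields a correct proof after a small reformulation: by commutativity and surjectivity of the left arrow, the image of $X_\B(w_*)\to\G/\PP$ coincides with the image of $X_\PP(w_\bullet)\to\G/\PP$, which is $X_\PP(w_{\star\PP})$ by definition. Then $X_{\B\PP}(\hat w_*)=X_\PP(w_{\star\PP})$ forces $\hat w_*$ to be the maximal lift of $w_{\star\PP}$ in $\W/\W_\PP$, and hence $\overline{w_*}=w_{\star\PP}$. Either cite Lemma \ref{demazmax} or argue this way.

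Second, the $\B$-case is organized differently from the paper. You reduce to $r=2$ and $w_2=s$ a simple reflection, then propagate by associativity plus induction on $\ell(w_2)$; the paper instead passes directly to an arbitrary sequence $s_\bullet$ of simple reflections and inducts on the sequence length, using Lemma \ref{trick} (the lifting property) in the inductive step. Your base-case computation --- image equals closure of the image of the dense open stratum $Y_\B(w_1,s)$, which is $\B w_1\B s\B/\B$ --- is slightly more elementary in that it sidesteps the lifting lemma; both arguments are valid and roughly equal in length.
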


\begin{proof} Combining Lemma \ref{demazmax} below with (\ref{surjective}), we easily see that it is enough to prove the proposition in the case $\mathcal P = \mathcal B$. Indeed, the lemma implies $w_* \in \, ^{\PP}\W^{\PP}$, and then using $X_{\B}(w_*) \twoheadrightarrow X_{\PP}(w_*)$ (cf.\,(\ref{surjective})) we would get a commutative diagram
$$
\xymatrix{
X_{\B}(w_\bullet) \ar@{->>}[d]  \ar@{->>}[r] & X_{\B}(w_*) \ar@{->>}[d] \\
X_{\PP}(w_\bullet) \ar[r] & X_{\PP}(w_*)}
$$
whose top arrow is surjective by the $\PP = \B$ case of the proposition. Actually, a priori we do not know the bottom arrow actually exists. Instead we only know we have a diagram
$$
\xymatrix{
X_{\B}(w_\bullet) \ar@{->>}[d]  \ar@{->>}[r] & X_{\B}(w_*) \ar[d] \\
X_{\PP}(w_\bullet) \ar[r] & \mathcal{G}/\PP.}
$$
But the left arrow of this diagram is surjective and the right arrow has image $X_{\PP}(w_*)$, by \eqref{surjective}. So the image of the bottom arrow is $X_{\PP}(w_*)$. It follows that the previous diagram exists, and that $X_{\PP}(w_\bullet) \to X_{\PP}(w_*)$ is surjective.

So, let us prove the proposition in the case $\PP = \B$; note there is no longer any hypothesis on the elements $w_i \in \W$. Write reduced expressions $w_i = s_{i1} \cdots s_{i k_i}$ for $s_{ij} \in \mathcal S$, for each $i$.  Clearly
\begin{equation} \label{Dem_prod}
w_1 * w_2 * \cdots * w_r = (s_{11}* \cdots * s_{1k_1}) * (s_{21} * \cdots * s_{2k_2}) * \cdots * (s_{r1} * \cdots * s_{r k_r}).
\end{equation}
Let $s_{\bullet \bullet} = (s_{11}, \dots, s_{r k_r})$. There is a commutative diagram
$$
\xymatrix{
X_\mathcal B (s_{\bullet \bullet}) \ar@{->>}[r] \ar[dr] & X_\mathcal B(w_\bullet)  \ar[d] \\
& \mathcal G/\mathcal B}
$$
where the horizontal arrow forgets the elements in the tuple except those indexed by $i k_i$.
Therefore, by (\ref{Dem_prod}), it is enough to replace $s_{\bullet \bullet}$ with an arbitrary sequence $s_\bullet = (s_1, \dots, s_k)$, set $s_* = s_1 * \cdots * s_k$, and show that the image of the morphism
\begin{align*}
p_k: X_{\B}(s_\bullet) &\longrightarrow \mathcal G/\mathcal B \\
(\mathcal B_1, \dots, \mathcal B_k) &\longmapsto \mathcal B_k
\end{align*}
is precisely $X_{\B}(s_*)$.  We will prove this by induction on $k$. Let $s'_\bullet = (s_1, \dots, s_{k-1})$ and $s'_* = s_1 * \cdots * s_{k-1}$. By induction, the image of 
\begin{align*}
p_{k-1}: X_{\B}(s'_\bullet) &\longrightarrow \mathcal G/\mathcal B \\
(\mathcal B_1, \dots, \mathcal B_{k-1}) &\longmapsto \mathcal B_{k-1}
\end{align*}
 is precisely $X_{\B}(s'_*)$.

First we claim the image of $p_k$ is contained in $X_{\B}(s_*)$.  Suppose $(\mathcal B_1, \dots, \mathcal B_{k-1}, \mathcal B_k) \in X_{\B}(s_\bullet)$. By induction we have
$$
\mathcal B \, \overset{\leq s'_*}{\textendash \textendash} \, \mathcal B_{k-1} \, \overset{\leq s_k}{\textendash \textendash} \, \mathcal B_k.
$$
If $\mathcal B_k = \mathcal B_{k-1}$, then $\mathcal B_k \in X_{\B}(s_*') \subseteq X_{\B}(s_*)$, the inclusion holding since
\begin{equation} \label{Dem_fact}
s_* = {\rm max}(s'_*, s'_* s_k).
\end{equation}
If $\mathcal B_k \neq \mathcal B_{k-1}$, then $\mathcal B \,\overset{v}{\textendash \textendash} \,\mathcal B_{k-1} \,\overset{s_k}{\textendash \textendash} \, \mathcal B_k$ for some $v \leq s'_*$. Thus $\mathcal B \, \overset{u}{\textendash \textendash} \, \mathcal B_k$ for $u \in \{ v, vs_k  \}$. Note we are implicitly using the BN-pair relations (\ref{BN-pair_eq}) here.) On the other hand, $v \leq s'_*$ implies $vs_k \leq s'_*$ or $vs_k \leq s'_* s_k$ (Lemma \ref{trick}), so by (\ref{Dem_fact}), we have both $v \leq s_*$ and $vs_k \leq s_*$. This implies that $\mathcal B_k \in X_{\B}(s_*)$.

Conversely, assume $\mathcal B_k \in X_{\B}(s_*)$; we need to show that $\mathcal B_k \in {\rm Im}(p_k)$.  We have $\mathcal B \, \overset{v}{\textendash \textendash} \, \mathcal B_k$ for some $v \leq s_*$. 

 If $v \leq s'_*$, then, by induction, $\mathcal B_k =: \mathcal B_{k-1} = p_{k-1}(\mathcal B_1, \dots, \mathcal B_{k-1})$ for some $(\mathcal B_1, \dots, \mathcal B_{k-1}) \in X_{\B}(s'_\bullet)$. But then $\mathcal B_k = p_k(\mathcal B_1, \dots, \mathcal B_{k-1}, \mathcal B_{k-1}) \in p_k(X_{\B}(s_\bullet))$. 

If $v \nleq s'_*$, then $vs_k \leq s'_*$ and $vs_k < v$. Then there exists $\mathcal B_{k-1} \in Y_{\B}(vs_{k}) \subset X_{\B}(s'_*)$ with $\mathcal B_{k-1} \, \overset{s_k}{\textendash \textendash} \, \mathcal B_k$. By induction $\mathcal B_{k-1} = p_{k-1}(\mathcal B_1, \dots, \mathcal B_{k-1})$ for some $(\mathcal B_1, \dots, \mathcal B_{k-1}) \in X_{\B}(s'_\bullet)$ and we see $\mathcal B_k = p_k(\mathcal B_1, \dots, \mathcal B_{k-1}, \mathcal B_k) \in p_k(X_{\B}(s_\bullet))$.
\end{proof}

We conclude this subsection with the following lemma, a special case of which was used in the proposition above. Let $^\PP\W$ (resp.\, $\W^{\PP}$) be the set of $w \in \W$ which are the unique maximal elements in their cosets $\W_{\PP} w$ (resp. $w \W_\PP$). It is a standard fact that $\W^\PP = \{ w \in \W ~ | ~ ws < w, \,\, \forall s \in \mathcal S \cap \W_P \}$, and similarly for $\, ^\Q\W$ and $\,^\Q\W^\PP$. Thus $^\Q \W^\PP = \, ^\Q \W \cap \W^\PP$. The reader should compare the statement below with Lemma \ref{r2} and Proposition \ref{01012}.

\begin{lm}\la{demazmax}
Let $\Q$ and $\PP$ be parahoric subgroups, with no relation to each other. If $w_1  \in \, ^{\Q}\W$, and $w_2 \in \, \W^{\PP}$, then $w_1 * w_2 \in \, ^{\Q}\W^{\PP}$. In particular, the Demazure
product defines an associative product $\pwp \times \, \pwp \to \,\pwp$. 
\end{lm}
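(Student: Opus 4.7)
My plan is to reduce the statement to two symmetric one-step claims and then iterate. The claims are:

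\textbf{Claim A:} If $w \in \W^{\PP}$ and $s \in \mathcal S$, then $s * w \in \W^{\PP}$.

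\textbf{Claim B:} If $w \in \,^{\Q}\W$ and $s \in \mathcal S$, then $w * s \in \,^{\Q}\W$.

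To prove Claim A I would use the standard Coxeter-theoretic characterization $\W^{\PP} = \{w \in \W : wt < w \text{ for all } t \in \mathcal S \cap \W_{\PP}\}$, and split on whether $sw < w$ or $sw > w$. In the former case $s*w = w$ already lies in $\W^{\PP}$, so we are done. In the latter case $s*w = sw$; for any $t \in \mathcal S \cap \W_{\PP}$ we have $wt < w$ by hypothesis, so $\ell(swt) = \ell(s(wt)) \in \{\ell(w)-2, \ell(w)\}$, which is strictly less than $\ell(sw) = \ell(w)+1$. Because $sw$ and $swt$ differ by right multiplication by a simple reflection, the length inequality forces $swt < sw$ in the Bruhat order (here is the one place where I need to be careful: Bruhat comparability is automatic for a right simple-reflection step, so the inequality of lengths really does imply the Bruhat inequality). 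Claim B follows from the mirror of this argument, swapping ``left'' and ``right''.

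With the two claims in hand, I fix reduced expressions $w_1 = s_1\cdots s_n$ and $w_2 = t_1 \cdots t_k$ in $\W$, so that in the $0$-Hecke algebra
$$
w_1 * w_2 = s_1 * s_2 * \cdots * s_n * t_1 * t_2 * \cdots * t_k.
$$
Starting from $w_2 \in \W^{\PP}$ and applying Claim A repeatedly with $s_n, s_{n-1}, \ldots, s_1$ on the left shows $w_1 * w_2 \in \W^{\PP}$. Starting from $w_1 \in \,^{\Q}\W$ and applying Claim B repeatedly with $t_1, \ldots, t_k$ on the right shows $w_1 * w_2 \in \,^{\Q}\W$. Combining, $w_1 * w_2 \in \,^{\Q}\W \cap \W^{\PP} = \,^{\Q}\W^{\PP}$.

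For the ``in particular'' statement I would specialize $\Q = \PP$: the above shows $*$ restricts to a binary operation on the set of maximal double-coset representatives $^{\PP}\W^{\PP}$. Since the canonical bijection $^{\PP}\W^{\PP} \overset{\sim}{\to} \,_{\PP}\W_{\PP}$ transports this to a well-defined binary operation on $_{\PP}\W_{\PP}$, and associativity is inherited from the associativity of $*$ on $\W$ (ultimately from associativity of $\mathcal H_0$), we obtain the asserted associative product. The only subtle point is the Bruhat-comparability claim in the proof of Claim A, but as noted it follows directly from the fact that $sw$ and $swt$ differ by a simple reflection on the right, so no further argument is needed.
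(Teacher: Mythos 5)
Your proof is correct, but it takes a noticeably different route from the paper's. The paper handles the whole thing in one line: for a simple reflection $s \in \W_{\PP}$, associativity of $*$ gives $(w_1*w_2)*s = w_1*(w_2*s) = w_1*w_2$, where the second equality uses $w_2*s = \max(w_2,w_2s) = w_2$ (valid because $w_2\in\W^{\PP}$ and $s\in\W_{\PP}$); comparing with $(w_1*w_2)*s = \max\big(w_1*w_2,\,(w_1*w_2)s\big)$ immediately yields $(w_1*w_2)s < w_1*w_2$, and the $\Q$-side is symmetric. Your approach instead unpacks $w_1$ and $w_2$ into reduced words, reduces to the single-simple-reflection claims A and B, and proves those by direct length arithmetic together with the standard fact that elements differing by a right simple reflection are Bruhat-comparable. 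Both proofs ultimately lean on the same two ingredients — associativity of $*$ and the rule $x*s=\max(x,xs)$ — but the paper applies them at the "macro" level (directly to $w_2*s$) whereas you apply them at the "micro" level one reflection at a time and then iterate. Your version is more elementary in flavor (explicit length bookkeeping, no need to invoke $w_2*s=w_2$ wholesale) at the cost of being longer; the paper's version is sharper and avoids the reduced-word induction entirely. The Bruhat-comparability point you flag as the "one place to be careful" in Claim A is indeed sound: $sw$ and $swt$ are adjacent by a right simple reflection, so the length inequality does force the Bruhat inequality. No gaps.
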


\begin{proof}
Let $s \in \W_{\PP}$ be a simple reflection. It is enough to prove that $(w_1 * w_2)s < (w_1 * w_2)$ (the same argument will also give us $s(w_1 * w_2) < (w_1*w_2)$ when $s \in \W_\Q$).  Recall that $x*s = {\rm max}(x, xs)$. Using that $*$ is associative, we compute
\begin{align*}
(w_1 * w_2) * s &= w_1 * (w_2 * s) \\
&= w_1 * w_2.
\end{align*}
We are using $w_2 s < w_2$ to justify $w_2 *s = w_2$.  But then we see 
$$(w_1 * w_2) = {\rm max}((w_1*w_2), (w_1 * w_2)s),$$ 
and we are done. \end{proof}

\subsection{Connectedness of fibers of convolution morphisms}

Before proving the connectedness, we need a few definitions and lemmas. Let $f: X \twoheadrightarrow Y$ be a finite surjective morphism between integral varieties over a field of characteristic $p$. 

\begin{defi}
We say $f$ is {\em separable} if the field extension $K(X)/K(Y)$ is separable.  We say $f$ is {\em purely inseparable} \textup{(}or {\em radicial}\textup{)} if $f$ is injective on topological spaces, and if for every $x \in X$ the field extension $k(x)/k(f(x))$ is purely inseparable.
\end{defi}


Radicial morphisms are defined in  \ci[Def.\,3.5.4]{EGAI};  we have adopted 
the equivalent re-formulation given by \ci[Prop.\,3.5.8]{EGAI}.  
A radicial morphism is universally injective by \ci[Rem.\,3.5.11]{EGAI}. Recall that a morphism is a universal homeomorphism if and only if it is integral, surjective and radicial; see \ci[Prop.\,2.4.4]{EGAIV2}.
Since finite morphisms are automatically integral, we see that a finite, surjective and radicial morphism is a universal homeomorphism.

The following lemma is trivial in characteristic zero, for then every morphism is separable.

\begin{lm} \la{factorize}
Lef $f : X \twoheadrightarrow Y$ be a finite surjective morphism between integral varieties over a field of characteristic $p$. Then $f$ factors as $X \overset{i}{\rightarrow} Y' \overset{s}{\rightarrow} Y$, where $i, s$ are finite surjective, $i$ is radicial \textup{(}hence a universal homeomorphism\textup{)}, and $s$ is separable.  Moreover, generically over the target, $s$ is \'{e}tale.
\end{lm}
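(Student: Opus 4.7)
The plan is to construct $Y'$ locally as the relative spectrum of a coherent $\mathcal{O}_Y$-subalgebra of $f_*\mathcal{O}_X$, obtained via the standard separable-purely inseparable decomposition of the function field extension $K(X)/K(Y)$. Let $K' \subseteq K(X)$ denote the separable closure of $K(Y)$ in $K(X)$; by standard field theory, $K'/K(Y)$ is finite separable and $K(X)/K'$ is purely inseparable. A natural first guess is to take $Y'$ to be the normalization of $Y$ in $K'$, but as an example such as the cuspidal cubic $\mathrm{Spec}\,k[t^2,t^3] \to \mathrm{Spec}\,k[t^2]$ shows, when $X$ is not normal the would-be integral closure need not embed into $B = \Gamma(X,\mathcal{O}_X)$. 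The fix is to instead use the intersection $A' := B \cap K'$ taken inside $K(X)$.

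More precisely, work Zariski-locally with $Y = \mathrm{Spec}\,A$, $X = \mathrm{Spec}\,B$, and $A \subseteq B$ a finite extension of integral domains. Define $A' := B \cap K'$ inside $K(X)$. Then $A'$ is an $A$-subalgebra of $B$, finite as an $A$-module (being an $A$-submodule of a finite module over a Noetherian ring). To see that $\mathrm{Frac}(A') = K'$, given $x \in K'$ write $x = b_1/b_2$ with $b_i \in B$; since $b_2 \in B$ is integral over $A$ and $B$ is a domain, its minimal polynomial over $A$ has a nonzero constant term $a_0$, allowing one to write $b_2^{-1} = a_0^{-1}P(b_2)$ for some polynomial $P \in A[T]$. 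Then $x = b_1 P(b_2)/a_0$ with $a_0 \in A \subseteq A'$ and $b_1 P(b_2) = a_0 x \in B \cap K' = A'$. The local construction is compatible with localization (one checks $(B \cap K')[a^{-1}] = B[a^{-1}] \cap K'$), so the sheaves $\mathcal A'$ glue to a coherent $\mathcal{O}_Y$-algebra, and we set $Y' := \mathrm{Spec}_Y \mathcal A'$. This yields the desired factorization $X \xrightarrow{i} Y' \xrightarrow{s} Y$ with both maps finite surjective.

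Separability of $s$ is immediate since the generic fiber of $s$ is $\mathrm{Spec}(K')$, with $K'/K(Y)$ separable. For radicialness of $i$, the key observation is that for every $b \in B$, purely-inseparability of $K(X)/K'$ gives some $n$ with $b^{p^n} \in K'$, and since $b^{p^n} \in B$ as well, in fact $b^{p^n} \in B \cap K' = A'$. Given primes $\mathfrak q_1, \mathfrak q_2 \subseteq B$ both lying above $\mathfrak p \subseteq A'$, any $b \in \mathfrak q_1$ satisfies $b^{p^n} \in \mathfrak q_1 \cap A' = \mathfrak p \subseteq \mathfrak q_2$, forcing $b \in \mathfrak q_2$; hence $\mathfrak q_1 = \mathfrak q_2$ by symmetry. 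The same $p$-power observation shows every residue extension $k(\mathfrak q)/k(\mathfrak p)$ is purely inseparable, so $i$ is radicial and, being also finite and surjective, is a universal homeomorphism.

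Finally, for generic étaleness of $s$, note that $s$ is finite and its generic fiber is $\mathrm{Spec}(K')$ over $\mathrm{Spec}(K(Y))$, which is étale because $K'/K(Y)$ is finite separable. Combining finiteness with generic flatness (EGA IV, 6.9.1) and the openness of the étale locus yields a dense open $V \subseteq Y$ over which $s$ is étale. The main obstacle, and the only real subtlety, is the correct choice of $A'$: using $B \cap K'$ rather than the integral closure of $A$ in $K'$ is essential when $X$ fails to be normal, and verifying $\mathrm{Frac}(A') = K'$ for this choice is the one nontrivial computation.
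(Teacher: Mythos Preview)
Your proof is correct and follows essentially the same approach as the paper's. Your definition $A' = B \cap K'$ in fact coincides with the paper's $A' = A^s \cap B$ (where $A^s$ is the integral closure of $A$ in $K'$), since every element of $B$ is already integral over $A$; but your formulation is more transparent, and your direct verification that $\mathrm{Frac}(A') = K'$ via the minimal-polynomial trick is cleaner than the paper's route through a normalizing localization.
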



\begin{proof}
First, assume $Y$ is affine.  We write $Y = {\rm Spec}(A)$ for an integral domain $A$. As $f$ is finite, $X = {\rm Spec}(B)$ where $B$ is an $A$-finite integral domain.

Let $K(A) \subseteq K(B)$ be the inclusion of fraction fields of $A,\,B$. Let $K^s$ be the maximal separable subextension, and let $A^s$ be the integral closure of $A$ in $K^s$. Define $A' := A^s \cap B$. We have $A \subseteq A' \subseteq B$, and the map $f$ factors as the composition of $i: {\rm Spec}(B) \to {\rm Spec}(A')$ and $s: {\rm Spec}(A') \twoheadrightarrow {\rm Spec}(A)$. Note that as $B$ is $A$-finite and $A$ is Noetherian, $A'$ is also $A$-finite.

\n
{\bf Claim:} {\em $i$ is radicial, and $s$ is separable}.

First, we prove that $K(A') = K^s$, which will prove the morphism $s$ is separable, since $K^s/K(A)$ is separable. It is easy to see that $K(A^s) = K^s$, using the fact that $A^s$ is the integral closure of $A$ in $K^s$. Let $b \in B$ be a nonzero element chosen so that the localization $B_b$ is normal.  The element $b$ satisfies a minimal monic polynomial with coefficients in $A$; let $a \in A$ be its constant term.  Thus $B_a$ is normal.  Now $A^s_a$ (resp.\,$B_a$) is the integral closure of $A_a$ in $K^s$ (resp.\,$K(B)$), so that $A_a \subseteq A^s_a \subseteq B_a$. Hence $A'_a = A^s_a \cap B_a = A^s_a$, which implies $K(A') = K^s$. We have used here that taking finite intersections and integral closures commutes with localization.


Next, we prove that $i$ is radicial. Since $K(B)/K^s$ is a finite, purely inseparable extension of characteristic $p$ fields, there is a
positive power  $p^n$ such that $b^{p^n} \in K^s$ for all $b \in K(B)$. Therefore $b^{p^n} \in A'$ for all $b \in B$. Now let $x=P, \, Q \in {\rm Spec}(B)$ lie over $i(x) = p' \in {\rm Spec}(A')$. We have $b \in P \Leftrightarrow b^{p^n} \in p' \Leftrightarrow b \in Q$, which shows $i$ is injective. The extension $k(x) \supset k(i(x))$ is the extension ${\rm Frac}(B/P) \supset {\rm Frac}(A'/p')$.  We have $b^{p^n} \in A'/p'$ for every $b \in B/P$, and this shows that $k(x)/k(i(x))$ is purely inseparable, and the claim is proved.

Now, we turn to the generic \'{e}taleness of $s$. Shrinking the target of $s$, we may assume $s$ is flat (cf.\,e.g.\,\cite[Cor.\,10.85]{GW}). The $A'$-module $\Omega_{A'/A}$ is finitely generated as an $A$-module, and $\Omega_{A'/A} \otimes_{A} K(A) = \Omega_{K(A')/K(A)} = 0$, the last equality holding since $K(A')/K(A)$ is finite separable. Thus there is a non-zero $a \in A$ such that the localization $(\Omega_{A'/A})_{a} = 0$.  Therefore $\Omega_{A'_{a}/A_a} = 0$, and over the complement of the divisor $a = 0$, we see that $s$ is \'{e}tale.  The lemma is proved in the case when $Y$ is affine.

To prove the general case, we need to show that the construction  $(A \to B) \leadsto (A\to A' \to B)$
``glues;" for this it is enough to prove it is compatible with restriction to smaller open affine subsets ${\rm Spec}(A_U) \subset {\rm Spec}(A)$. Using the corresponding homomorphism $A \rightarrow A_U$, we obtain a homomorphism $\phi: A' \otimes_A A_U \rightarrow (A_U)'$, and we need to show this is an isomorphism. By covering ${\rm Spec}(A_U)$ with principal open subsets of ${\rm Spec}(A)$ we are reduced to proving
that $\phi$ is an isomorphism in the special case $A_{U}= A_a,$ with $a \in A \setminus \{0\}.$
But then $\phi$ is just the isomorphism $A' \otimes_A A_a \overset{\sim}{\rightarrow}  (A')_a = (A_a)'$. 
\end{proof}

\begin{rmk}\la{nmz}
If in Lemma {\rm \ref{factorize}} we assume that $X$ is normal, then there is a {\em unique} radicial/separable factorization $f:X\to Y'\to Y$ with the requirement that $Y'$ is normal. 
\end{rmk}

The following proposition establishes a quite general principle of connectedness. As pointed out to us by Jason Starr,
the proposition also admits a proof via the use of \ci[Cor.\,4.3.7]{EGAIII1}. We are also very grateful to Jason Starr
for providing us with an alternative proof of Lemma \ref{factorize} (omitted).

\begin{pr}  \la{ext_conn}
Let $p : X \rightarrow Y$ be a surjective proper morphism of  integral varieties over an algebraically closed field.
Assume that $Y$ is normal. If $p^{-1}(y)$ is connected for all points $y$ in a dense open subset $V \subseteq Y$, then $p^{-1}(y)$ is connected for all $y \in Y$. In particular, if $p : p^{-1}(V) \rightarrow V$ is isomorphic to ${\rm pr}_1: V \times p^{-1}(y) \rightarrow V$ for all $y \in V$, then $p^{-1}(y)$ is connected for all $y \in Y$.
\end{pr}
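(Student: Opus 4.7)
The plan is to apply Stein factorization to $p$, and then use Lemma \ref{factorize} to reduce the problem to showing that the ``separable part'' of a certain intermediate finite morphism is an isomorphism.

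First, I would form the Stein factorization $p : X \xrightarrow{q} Y' \xrightarrow{f} Y$, where $q$ is proper and surjective with geometrically connected non-empty fibers, and $f$ is finite and surjective. Since $X$ is integral and $q$ is surjective with connected fibers, the scheme $Y'$ is integral (locally, $f_*\m{O}_X$ is a domain, being a subring of sections of $\m{O}_X$ over preimages of affines). For every $y \in Y$, the set-theoretic decomposition $p^{-1}(y) = \bigsqcup_{y' \in f^{-1}(y)} q^{-1}(y')$ expresses $p^{-1}(y)$ as a disjoint union of non-empty connected subsets. Hence the connectedness of $p^{-1}(y)$ for $y \in V$ forces the fiber $f^{-1}(y)$ to be a single point for every $y \in V$.

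Next, I would apply Lemma \ref{factorize} to $f$ to obtain a factorization $Y' \xrightarrow{i} Y'' \xrightarrow{s} Y$ with $i$ finite, surjective, and radicial (hence a universal homeomorphism, bijective on points), and $s$ finite, surjective, separable, and generically \'{e}tale over $Y$. Since $i$ is bijective on points and $f$ is bijective over $V$, so is $s$. Combined with the generic \'{e}taleness, this forces $s$ to have degree $1$, i.e.\,to be birational. As $s$ is then a finite birational morphism and $Y$ is normal, $s$ must be an isomorphism; this is either Zariski's Main Theorem, or simply the observation that $s_* \m{O}_{Y''}$ is a finite $\m{O}_Y$-algebra lying inside $K(Y)$, which coincides with $\m{O}_Y$ by normality. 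Therefore $f$ is radicial, in particular bijective on points, so for every $y \in Y$ the space $p^{-1}(y)$ coincides with the single connected fiber $q^{-1}(f^{-1}(y))$.

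For the ``in particular'' clause, the hypothesis $p^{-1}(V) \cong V \times p^{-1}(y)$ (for some $y \in V$) exhibits $V \times p^{-1}(y)$ as an open subscheme of the integral variety $X$. A product is integral only if both factors are, so $p^{-1}(y)$ is integral, in particular connected; this reduces the claim to the first statement.

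The main obstacle is the bookkeeping needed to verify integrality of $Y'$ in the Stein factorization and to confirm that the separable/radicial factoring from Lemma \ref{factorize} interacts correctly with the fiber bijectivity over $V$; the conceptual content is concentrated in the step where a finite, surjective, separable, generically bijective morphism to a normal variety is forced to be an isomorphism.
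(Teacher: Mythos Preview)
Your proof is correct and follows essentially the same approach as the paper's: Stein factorization, then the radicial/separable factoring from Lemma \ref{factorize}, then the observation that a finite birational morphism onto a normal variety is an isomorphism. The only differences are cosmetic (naming, and the order in which you deduce that the separable piece has degree $1$); your treatment of the ``in particular'' clause via integrality of the product is the same as the paper's irreducibility argument.
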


 \begin{proof} 
First assume that the fibers $p^{-1}(y)$ for $y \in V$ are connected. By the Stein factorization theorem, we may factor $p$ as the composition $X \overset{p_c}{\rightarrow} \hat{X} \overset{p_f}{\rightarrow} Y$,  where $p_f, \, p_c$ are proper and surjective, the fibers of $p_c$ are connected, and $p_f$ is finite. Since $p_{c,*}\mathcal O_X = \mathcal O_{\hat{X}}$, the scheme $\hat{X}$ is an integral variety over $\bar{k}$. Let $\hat{X} \overset{i}{\rightarrow} Y' \overset{s}{\rightarrow} Y$ be the factorization $p_f = s \circ i$ from Lemma \ref{factorize}, so that $K(Y')/K(Y)$ is the maximal separable subextension of $K(\hat{X})/K(Y)$.

The surjective morphism $s: Y' \rightarrow Y$ is finite separable, hence finite \'{e}tale generically over the target (cf.\,end of Lemma \ref{factorize}), and it follows that generically its fibers have some finite cardinality $n:= [K(Y')\!:\! K(Y)]$. Our assumption on the fibers of $p$ forces $n = 1$. Since $Y$ is normal, the finite birational morphism $s:Y' \rightarrow Y$ must be an isomorphism, and so the fibers of $s$ are singletons. Therefore the fibers of $p = p_c \circ i \circ s$ are connected as this holds for $p_c \circ i$.

For the second assertion, note that $p^{-1}(V)$ is irreducible since $X$ is, and so the triviality assumption forces $p^{-1}(y)$ to be irreducible for $y \in V$. Then the first part implies that all fibers of $p$ are connected.
\end{proof}


\begin{cor}\la{fibco} 
The fibers of the convolution morphism $p: X_\mathcal P(w_\bullet) \rightarrow X_\mathcal P(w_\star)$ are  geometrically connected.
\end{cor}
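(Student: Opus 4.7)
My plan is to apply Proposition \ref{ext_conn} to the convolution morphism $p$. The source $X_\PP(w_\bullet)$ is geometrically integral and projective by Corollary \ref{vst}, the target $X_\PP(w_\star)$ is geometrically integral and normal by Proposition \ref{fqco}, and $p$ is proper (since the source is projective) and surjective (by Definition \ref{dstp}). Proposition \ref{ext_conn} then reduces the statement to showing that the geometric fibers of $p$ are connected over a dense open subset of $X_\PP(w_\star)$.

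I would take that dense open to be the open $\PP$-orbit $V := Y_\PP(w_\star)$. Since $p$ is $\PP$-equivariant (for the left-multiplication actions on source and target) and $\PP$ acts transitively on $V$, any two geometric fibers of $p$ over $V$ are pairwise isomorphic, so it suffices to show that just one of them is connected. Moreover, $p^{-1}(V)$ is a dense open subset of the irreducible variety $X_\PP(w_\bullet)$, and hence is itself irreducible.

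To finish, I would invoke Stein factorization, writing $p = p_f \circ p_c$, where $p_c \colon X_\PP(w_\bullet) \to Z$ has geometrically connected fibers and $p_f \colon Z \to X_\PP(w_\star)$ is finite and surjective. Since the source is irreducible, so is $Z$, and by functoriality of Stein factorization the $\PP$-action lifts to $Z$ making both $p_c$ and $p_f$ equivariant. Applying Lemma \ref{factorize}, I decompose $p_f$ as $s \circ i$ with $i$ radicial (hence a universal homeomorphism, topologically trivial on fibers) and $s$ finite separable, generically \'etale onto the normal target $X_\PP(w_\star)$. Thus it is enough to show that $s$ is an isomorphism; and since the target is normal and $s$ is finite, this reduces to showing that $s$ has generic degree one. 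This in turn should follow from the $\PP$-equivariance of $s$ combined with the transitivity of the $\PP$-action on $V$: the restriction $s|_V$ is a finite separable $\PP$-equivariant cover of a single $\PP$-orbit, and any such cover is trivial once the stabilizer subgroup acts trivially, forcing the generic fiber of $s|_V$ to consist of a single point.

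The principal obstacle I anticipate is the rigorous bookkeeping of $\PP$-equivariance through the Stein and radicial/separable factorizations in the ind-scheme setting of affine flag varieties (where $\PP$ is an infinite-dimensional loop group), together with the verification that the point stabilizer $\PP \cap w_\star \PP w_\star^{-1}$ is connected; both facts are expected from the structure theory developed earlier in $\S\ref{affgrps}$, but require careful treatment.
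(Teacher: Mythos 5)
Your proposal, while ultimately organized around the same Proposition~\ref{ext_conn}, diverges from the paper's proof at the crucial step. The paper uses the \emph{second} assertion of Proposition~\ref{ext_conn}: it produces an explicit product trivialization of $p$ over the $\mathcal{B}$-orbit $\mathcal U w_* \PP/\PP$ inside $Y_\PP(w_\star)$, via the decomposition (\ref{decomp1}) and the $\mathcal{B}$-equivariance of $p$, so the hypothesis of that assertion is verified by writing down a two-line translation formula. You instead aim at the \emph{first} assertion: you pass through Stein factorization and the radicial/separable factorization of Lemma~\ref{factorize} yourself, and then try to force the separable part $s$ to have degree one by $\PP$-equivariance over the open $\PP$-orbit. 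Note that this makes your citation of Proposition~\ref{ext_conn} largely redundant --- once you re-derive its internal machinery over the whole target, the connectedness of all fibers follows directly from ``$s$ has degree one'' without any need to invoke the proposition.

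The substantive gap is the point you flag in your last paragraph and do not resolve. Your mechanism for showing $s|_V$ has degree one rests on the claim that a finite separable $\PP$-equivariant cover of $V = \PP/H$ with irreducible total space must be trivial, and this requires the stabilizer $H = \PP \cap \, ^{w_\star}\PP$ to be \emph{connected} (a connected group scheme can act on a finite discrete set only trivially). This connectedness is nowhere established in the paper, and in the loop-group setting it is not a formality: one would need a direct-product Iwahori-type decomposition of $\PP$ itself with respect to $w_\star$, analogous to Propositions~\ref{proto_decomp2}--\ref{proto_decomp1}, plus a connectedness argument for the resulting complementary factor. The paper avoids the issue entirely by constructing the trivialization. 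If you want to complete your route as written, you must supply a proof of connectedness of $\PP \cap \, ^{w_\star}\PP$. A cleaner fix is to replace the $\PP$-orbit with the $\mathcal{B}$-orbit $\mathcal U w_* \PP/\PP$: there the stabilizer is $\mathcal U \cap \, ^{w_*}\PP$, which is a complementary factor in the direct-product decomposition of Proposition~\ref{proto_decomp1}, and since $\mathcal U$ and the finite product $\mathcal U \cap \, ^{w_*}\ov{\mathcal U}_\PP = \prod_a U_a$ are both connected, so is $\mathcal U \cap \, ^{w_*}\PP$. Simpler still is to adopt the paper's explicit trivialization, which sidesteps all stabilizer considerations.
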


 We remark that a somewhat stronger result is proved cohomologically in Theorem \ref{dtm}, but here we give a direct geometric proof.

\begin{proof}  We pass to the fixed  algebraic closure of our finite field; for simplicity,
we do not alter the notation. By Corollary \ref{vst}, $p$ is proper, surjective, and the source and target of $p$ are normal and integral. Hence by Proposition \ref{ext_conn}, it is sufficient to show that $p$ is trivial over an open dense subset of $Y_{\PP}(w_\star)$, in the sense of the second assertion of that proposition. We may represent $w_i$ by an element in $\, ^\PP \W ^\PP$, so that $w_\star$ is represented by $w_* \in \,^\mathcal P \mathcal W^\mathcal P$ (Lemma \ref{demazmax}), and so $Y_\mathcal P(w_\star)$ contains $\mathcal U w_*\mathcal P/\mathcal P$ as an open subset.  We have
$$
\mathcal U w_* \mathcal P/\mathcal P \, = \, (\mathcal U \cap \, ^{w_*}\overline{\mathcal U}_\mathcal P) w_* \mathcal P/\mathcal P \, \cong \, \mathcal U \cap \, ^{w_*}\overline{\mathcal U}_\mathcal P ,
$$ 
by \eqref{decomp1}. Since $p$ is $\B$-equivariant, it is clearly trivial over this subset in the sense of Proposition \ref{ext_conn}. More precisely, an element $P \in \mathcal U w_* \PP/\PP$ can be written in the form 
$$
P = \, uw_* \PP/\PP
$$
for a unique element $u \in \mathcal U \cap \, ^{w_*} \overline{\mathcal U}_\PP$. We can then define an isomorphism
$$
p^{-1}(\mathcal Uw_*\PP/\PP) ~ \widetilde{\rightarrow} ~ \, p^{-1}( w_* \PP/\PP) \times \mathcal Uw_*\PP/\PP
$$
by sending $(P_1, \dots, P_{r-1},\, uw_* \PP/\PP)$ to $(\, u^{-1} P_1, \cdots, u^{-1} P_{r-1}, \, w_*\mathcal P/\PP) \times \, u w_*\PP/\PP$. 
\end{proof}

\subsection{Generalized convolution morphisms $p: X_\PP(w_\bullet) \to X_\m{Q} (w''_{I,\bullet})$}\la{098}$\;$

Let $1 \leq r' \leq r$ and let  $1\leq i_1 < \ldots <i_m =r'$ and denote these data   by $I$.
Let  $w_\bullet \in (\bwp{\PP}{\PP})^r$, set  $i_0:=0$ and define
\beq\la{g56}
w_{I,k} := w_{i_{k-1}+1} \star_\PP \cdots \star_\PP w_{i_k}, \quad w''_{I,k} := w''_{i_{k-1}+1} \star_\m{Q} \cdots
 \star_\m{Q} w''_{i_k}.
\eeq
\begin{defi}\la{cnvzmpz}
{\rm 
We define 
{\em the convolution morphism $p:X_\PP(w_\bullet) \to X_\m{Q}(w_{I,\bullet})$
associated with $w_\bullet$ and with $I$} by setting
\beq\la{wwi}
 (g_1\PP, \ldots, g_r \PP) \mapsto (g_{i_1}\m{Q}, \ldots, g_{i_m}
\m{Q}).
\eeq

}
\end{defi}

The convolution morphism (\ref{wwi}) factors through  the natural convolution morphism 
$X_\PP(w_1, \ldots, w_r) \to X_\PP(w_1, \ldots, w_{r'})$.
The composition of convolution morphisms is a convolution morphism.
Generalized convolution morphisms are typically not surjective.

We have the commutative diagram of convolution morphisms with surjective horizontal arrows
\beq\la{e431}
\xymatrix{
X_\PP(w_\bullet) \ar[r] \ar[d] &  X_\PP(w_{I,\bullet}) \ar[d] \\
X_\m{Q}(w''_\bullet) \ar[r]  & X_\m{Q}(w''_{I,\bullet}).
}
\eeq

\begin{pr}
\la{gt50}
Let $p:X_\PP(w_\bullet) \to X_\m{Q}(w''_{I,\bullet})$ be a  convolution morphism.
Assume that the $w_i$ are of $\m{Q}$-type, i.e.,\,$X_P(w_i)=QX_P(w_i)$. Then, locally over 
$X_\m{Q}(w''_{I,\bullet})$, the map $p$ is isomorphic to the product
of the maps $p_k: X_\PP(w_{i_{k-1}+1}, \ldots w_{i_k}) \to X_\m{Q}(w''_{I,k})$ and $c: X_\PP(w_{r'+1}, \ldots
 w_{r}) \to \{pt\}.$
\end{pr}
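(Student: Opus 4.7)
The plan is to factor the map $p$ as the composition
\[
X_\PP(w_\bullet)\xrightarrow{\pi} X_\PP(w_1,\ldots,w_{r'})\xrightarrow{p'} X_\Q(w''_{I,\bullet}),
\]
where $\pi$ is the projection onto the first $r'$ coordinates. An iteration of Lemma \ref{zlt1} (equivalently, running the big-cell argument of Lemma \ref{loc_triv} on the last $r-r'$ coordinates, using the $r'$-th coordinate as the base point) shows that $\pi$ is a Zariski locally trivial fibration whose fiber is the twisted product $X_\PP(w_{r'+1},\ldots,w_r)$; this accounts for the factor $c$ in the statement. It therefore remains to show that $p'$ is, locally over $X_\Q(w''_{I,\bullet})$, isomorphic to the product $\prod_{k=1}^m p_k$.

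To build the required local trivialization, I would adapt the big-cell construction of Lemma \ref{loc_triv}. Fix a target point $(Q_1,\ldots,Q_m)$ with $\G$-representatives $\gamma_k \in \ov{\Q w''_{I,k}\Q}$. By Proposition \ref{01012}, each geometric Demazure product $w_{I,k}$ is again of $\Q$-type with $(w_{I,k})''=w''_{I,k}$, so each $p_k$ is surjective; choose lifts $(\eta_{i_{k-1}+1},\ldots,\eta_{i_k})\in X_\PP(w_{i_{k-1}+1},\ldots,w_{i_k})$ whose successive products project to $\gamma_k$ in $\G/\Q$. Intersecting with the big cells $\gamma_k\mathcal C_\Q$ in the target and $\eta_j\mathcal C_\PP$ in the source produces open neighborhoods $\widetilde V\subseteq X_\Q(w''_{I,\bullet})$ and $\widetilde U\subseteq p'^{-1}(\widetilde V)$ together with isomorphisms
\[
\widetilde V\simeq\prod_k\bigl(X_\Q(w''_{I,k})\cap\gamma_k\mathcal C_\Q\bigr),
\qquad
\widetilde U\simeq\prod_k\Bigl(\prod_{j=i_{k-1}+1}^{i_k}(X_\PP(w_j)\cap\eta_j\mathcal C_\PP)\Bigr).
\]
Applying Lemma \ref{loc_triv} in reverse within each block, the inner product over $j$ reassembles into an open $U_k\subseteq X_\PP(w_{i_{k-1}+1},\ldots,w_{i_k})$, so that $\widetilde U\simeq\prod_k U_k$.

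The main obstacle is to verify that under these identifications $p'|_{\widetilde U}$ becomes the product map $\prod_k p_k|_{U_k}$. Here the $\Q$-type hypothesis plays the decisive role: since $\ov{\PP w_j\PP}=\ov{\Q w_j\PP}$, each $X_\PP(w_j)$ is stable under left multiplication by $\Q$. Consequently, when one multiplies the successive block factors $\eta_j u_j$ and projects to $\G/\Q$ in order to compute $p'$, the ``$\Q$-linking'' elements that arise at the seams between consecutive blocks (due to the fact that $\gamma_k$ is only a $\G$-representative of the $\Q$-coset $\gamma_k\Q$) are absorbed into the left-$\Q$-invariance of the block Schubert subvarieties; equivalently, replacing $\gamma_k$ by $\gamma_k q$ for any $q\in\Q$ leaves the block parametrization unchanged as a subscheme of $\G/\PP$. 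It follows that the $\G/\Q$-image of the $k$-th endpoint depends only on the $k$-th block data, giving the sought-after identification $p'|_{\widetilde U}\simeq\prod_k p_k|_{U_k}$. Covering $X_\Q(w''_{I,\bullet})$ by such neighborhoods $\widetilde V$ completes the proof.
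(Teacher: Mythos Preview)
Your factoring $p=p'\circ\pi$ and the big-cell bookkeeping are natural first moves, but there is a genuine gap: what you prove is weaker than what the proposition asserts. The statement ``locally over $X_\Q(w''_{I,\bullet})$'' means that for each point of the target there is an open neighborhood $\widetilde V$ such that the \emph{full} preimage $p^{-1}(\widetilde V)$ is isomorphic, compatibly with $p$, to $\prod_k p_k^{-1}(V_k)\times X_\PP(w_{r'+1},\ldots,w_r)$. Your open set $\widetilde U$ is cut out by big cells $\eta_j\mathcal C_\PP$ in $\G/\PP$, and this is strictly smaller than $p'^{-1}(\widetilde V)$: over a $\G/\Q$-big-cell, the $P_j$'s (in particular the endpoints $P_{i_k}$) range over a whole $\Q/\PP$-fiber and are not confined to a single $\G/\PP$-big-cell. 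So the identification $p'|_{\widetilde U}\simeq\prod_k p_k|_{U_k}$ that you obtain is a product structure over an open in the \emph{source}, not over an open in the \emph{target}. The same issue undermines your handling of the $c$ factor: the local triviality of $\pi$ that Lemma~\ref{zlt1} gives is over $\G/\PP$-big-cells in the $r'$-th coordinate, not over $p'^{-1}(\widetilde V)$; so it does not by itself produce the required trivialization over a target neighborhood.

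The paper avoids this by \emph{never} using big cells in $\G/\PP$ to cut open the source. It chooses the target neighborhood $\widetilde A\cong\prod_k A_k$ via big cells in $\G/\Q$ only, writes a point of $A_k$ as $\gamma_k u_k\Q$ with $u_k\in\bar{\mathcal U}_\Q$, and then parametrizes the \emph{entire} fiber of $p$ over that point directly: one lifts to $T_{i_k}=\gamma_k u_k q_k\PP$ with $q_k\in\Q$ constrained only by $T_{i_k}\in X_\PP(w_{I,k})$, fills in $(T_{i_{k-1}+1},\ldots,T_{i_k})$ via surjectivity of the convolution map, and multiplies by $\prod_{j<k}\gamma_j u_j$ to re-embed each block into $X_\PP(w_\bullet)$. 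The $\Q$-type hypothesis is used exactly where you indicated, to absorb the $q_{k-1}$ at the seam, but now one gets an honest isomorphism $p^{-1}(\widetilde A)\cong\prod_k p_k^{-1}(A_k)\times X_\PP(w_{r'+1},\ldots,w_r)$ over the target neighborhood. Note also that the $c$ factor is not a separate trivialization of $\pi$: the shift for $(T_{r'+1},\ldots,T_r)$ uses $q_m$, which is already part of the $p_m$-fiber data, and its $\PP$-ambiguity is harmless. To repair your argument you would need to replace the source big-cell construction by this direct fiber parametrization, which is essentially the paper's proof.
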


\begin{proof}
The conclusion can phrased by stating the existence   of a cartesian diagram 
\beq\la{vizo}
\xymatrix{
X_\PP(w_\bullet) \ar[d]_p & {X_\PP (w_\bullet)}_{\w{A}} \ar[d]^{p_{\w{A}}}  \ar@{_(^->}[l]_{\,\, open} && \prod_{k=1}^m 
X_\PP( w_{i_{k-1}+1}, \ldots w_{i_k})_{A_k}  \times X_\PP(w_{r'+1}, \ldots w_r)
\ar[d]^{p_A:=\prod {p_k}_{A_k} \times c} \ar[ll]_{\hskip -2.85cm \sim} \\
X_\m{Q}(w''_{I,\bullet})  &     \w{A}  \ar@{_(^->}[l]_{\,\,\,\, open}  && A:= \prod_{k=1}^m A_k \times \{pt\} \ar[ll]_{\hspace{-.3in} \sim},
}
\eeq
where, for each $k$, the open subset   $A_k \subseteq X_\m{Q} (w''_{I,k})$ is the analogue of the  $A_\gamma$ appearing  in the proof of Lemma \ref{loc_triv} (we are now dropping $\gamma$ from the notation), and where the isomorphism $A\cong \w{A}$ is given 
explicitly by the assignment $\{\gamma_i u_i \m{Q}\}_{i=1}^r \mapsto \{ \prod_{j=1}^i \gamma_j u_j  \m{Q}\}_{i=1}^r.$

Our task is to provide the isomorphism on the top row of (\ref{vizo}).
The assignment is as follows:
\beq\la{ert5}
\left(
\left\{
\left(
T_{i_{k-1}+1}, \ldots, T_{i_k} =\gamma_ku_k q_k \PP
\right)     
\right\}_{k=1}^m   ,
\left(
T_{r'+1}, \ldots, T_r
\right)
\right)
\eeq
\vspace{.1in}
\[
\mbox{maps to}
\vspace{.1in}
\]
\[
\left(
\left\{ \prod_{j=1}^{k-1} \gamma_ju_j
\left(
T_{i_{k-1}+1}, \ldots, T_{i_k} =\gamma_ku_k q_k \PP
\right)   
\right\}_{k=1}^m   , 
\left(\prod_{j=1}^m \gamma_j u_j\right) q_m
\left(
T_{r'+1}, \ldots, T_r
\right)
\right) 
\]
which does the job: the verification  of this can be done by the reader  with the aid of the following list of items to
be considered and/or verified
\ben
\item We use the local isomorphisms 
\beq\la{azzr}
\{\gamma_j u_k \m{Q}\}_{k=1}^m \mapsto
\left\{\prod_{j=1}^k \gamma_j u_j  \m{Q} \right \}_{k=1}^m
\eeq between the targets of the maps $\prod p_k$ and $p$.

\item
The assignment (\ref{ert5})  should agree with the local isomorphisms
(\ref{azzr}).

\item
The $\m{Q}$-type assumption on the $w_i$ ensures, via Proposition \ref{01012}, that the maps
 $X_\PP(w_{I,k}) \to X_\m{Q}(w''_{I,k})$ are surjective.
 
\item
Given $\gamma_iu_i \m{Q}$, the expression $\gamma_ku_k q_k \PP$, with variable $q$, describes a point in the fiber of 
$\G/\PP\to \G/\m{Q}$
 over $\gamma_i u_i \m{Q}$ that, in addition lies in $X_\PP (w_{I,k})$ (this constrains $q_k$), i.e. a point in the fiber 
 over $\gamma_i u_i \m{Q}$ of the  surjective  map
 $X_\PP(w_{I,k}) \to X_\m{Q}(w''_{I,k})$. Note that $q_k$ has ambiguity $q_k p_k$.

\item
Once we have $T_{i_k}$ as above, we use the surjectivity of the convolution morphisms of type $X_\PP(w_\bullet) \to X_\PP(w_{\star_\PP})$ and Remark \ref{obvious}
to infer that we indeed can complete each $T_{i_k}$  with variables $T_{i_{k-1}+1}, \ldots, T_{i_k}$
with the correct set of consecutive relative position, to the left as indicated in the first line of (\ref{ert5}).
Of course, by construction, each $T_{i_k} \mapsto \gamma_k u_k \m{Q}.$

\item
The assignment (\ref{ert5}) is well-defined with values in 
$(\G/\PP)^r$: in fact, the ambiguities $q_kp_k$ do not effect the assignment.

\item
The assignment  (\ref{ert5}) is well-defined with values into $X_\PP(w_\bullet) \subseteq (\G/\PP)^r$: this is where we use that
the $w_{i_{k}+1}$ are of $Q$-type for $2\leq k \leq m-1$; in fact, we need to verify that, if we write
$P_{i_k+1} = g \PP$,  so that $g \in \ov{\PP w_{i_{k}+1}\PP}$, then we also have that
  $q_k^{-1} g \in \ov{\PP w_{i_{k}+1} \PP}$, and this follows from the $Q$-type assumption 
  on $w_{i_{k}+1}.$
  
  \n
  Note that if we replace the expression $\prod_{j=1}^{k-1} \gamma_j u_j$ in (\ref{ert5}) with
  $\prod_{j=1}^{k-1} \gamma_j u_j q_j$, or  even with  $(\prod_{j=1}^{k-1} \gamma_ju_j) q_{k-1}$,
  then what is  above works, but what follows does not.
  
  \item
 It is immediate to  verify that $p$ maps  the expression target of (\ref{ert5}) to the lhs of (\ref{azzr}).
  
  \item
  The assignment (\ref{ert5}), defined over our suitable open subsets, has an evident inverse.
\een
\end{proof}

\begin{rmk}\la{rt41}
As the proof of Proposition \ref{gt50} shows, if we assume that  $r=r'=m,$ i.e. that $p: X_\PP (w_\bullet)
\to X_\m{Q} (w''_\bullet)$ and that   the $w_i$ are $\m{Q}$-maximal,
then the map $p$ is a Zariski locally trivial bundle with smooth  fiber $(\m{Q}/\PP)^r$, in fact the 
elements $q_k$ in part (4) of the proof of Proposition \ref{gt50} are no longer constrained.
\end{rmk}

\subsection{Relation of convolution morphisms to convolutions of perverse sheaves}\la{fupo}$\;$

The twisted products are  
close in spirit to ordinary product varieties (see Lemma \ref{loc_triv}).  
A more  standard  notation for twisted products is
$\xbp{ \PP}{w_1} \tilde{\times} \cdots \tilde{\times} \xbp{ \PP}{w_r}$; we opted
for lighter notation.
The remark that follows clarifies the relation between the convolution morphism
$p: X_\PP (w_\bullet) \to X_\PP (w_\star)$ and the convolution of equivariant shifted-perverse sheaves
$\m{IC}_{X_\PP (w_1)} * \cdots * \m{IC}_{X_\PP (w_1)}.$

\begin{rmk}\la{conv_rmk} {\rm ({\bf  Lusztig's convolution product \cite{Lusz}})}
Let $P_{\PP}(\mathcal G/\mathcal P) \subset D^{b}_c(\mathcal G/\PP, \, \bar{\mathbb Q}_\ell)$ be the full subcategory consisting of $\PP$-equivariant perverse sheaves on the \textup{(}ind-\textup{)}scheme $\mathcal G/\PP$. Lusztig has defined a convolution operation 
$$
* ~ : ~ P_{\PP}(\mathcal G/\mathcal P) \times P_{\PP}(\mathcal G/\mathcal P) ~ \longrightarrow ~ D^b_c(\mathcal G/\PP, \, \bar{\mathbb Q}_\ell)
$$
as follows. There is a twisted product space $\mathcal G \times^{\PP} \mathcal G/\PP$ \textup{(}the quotient of the product with respect to the anti-diagonal action of $\PP$\textup{)} which fits into a diagram of \textup{(}ind-\textup{)}schemes
$$
\xymatrix{
\mathcal G/\mathcal P \times \mathcal G/\PP & \ar[l]_{\hspace{.2in}p_1} \ar[r]^{p_2\hspace{.1in}} \mathcal G \times \mathcal G/\mathcal P & \mathcal G \times^{\PP} \mathcal G/\PP \ar[r]^{\hspace{.2in} m} & \mathcal G/ \PP.}
$$
The morphisms $p_1$ and $p_2$ are the quotient morphisms; both are locally trivial with typical fiber $\PP$. The map $m$ is the ``multiplication'' morphism. Given $\mathcal F_1, \mathcal F_2 \in P_{\PP}(\mathcal G/\mathcal P)$, there exists on the twisted product a unique perverse \textup{(}up to cohomological shift\textup{)} sheaf $\mathcal F_1 \widetilde{\boxtimes}  \mathcal F_2$, such that there is an isomorphism $p^*_1(\mathcal F_1 \boxtimes \mathcal F_2) \cong p^*_2(\mathcal F_1 \widetilde{\boxtimes} \mathcal F_2)$.  Lusztig then defines
$$
\mathcal F_1 * \mathcal F_2 := m_! \left(\mathcal F_1 \widetilde{\boxtimes} \mathcal F_2\right) \, \in 
D^{b}_c(\mathcal G/\PP, \, \bar{\mathbb Q}_\ell).
$$
It is a well-known fact that there is a natural identification
\beq\la{luc}
p_* \m{IC}_{X_\PP (w_\bullet)} = \m{IC}_{X_\PP (w_1)} * \cdots * \m{IC}_{X_\PP (w_r)}.
\eeq
Of course, the right hand side is an abuse of notation since our intersection complexes are only perverse up-to-shift, but the meaning should be clear.
\end{rmk}

\section{Proofs of Theorems \ref{tma} and \ref{tmb} and a semisimplicity question}\la{abssq}$\;$

\subsection{The decomposition theorem over a finite field}\la{dtoff}$\,$

The following  proposition may be  well-known to experts.
We  could not find an adequate explicit reference in the literature. A stronger result, also possibly
well-known, holds and we refer to \ci[Prop.\,2.1]{dC} for this stronger statement and its proof, which follows  from some results
in \ci{bbd}.

\begin{pr}\la{dtff}
Let $f: X \to Y$ be a proper map  of varieties over the  finite field $k$, let $P$ be a pure perverse
sheaf of weight
$w$ on $X$. Then the direct image complex $f_* P \in D^b_m(Y,\oql)$ is pure of weight $w$
and splits into the direct sum of terms of the form $\m{IC}_{Z}(L)[i]$,
where   $i \in \zed,$ $Z\subseteq Y$ is a closed integral subvariety of $Y$,   and $L$ is lisse, pure and indecomposable
on a suitable Zariski dense smooth  subvariety ${Z}^o \subseteq Z$.
\end{pr}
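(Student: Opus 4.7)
The plan is to combine three standard ingredients from \cite{bbd}: preservation of purity under proper direct image, the splitting of a pure complex into its perverse cohomology sheaves, and the middle-extension description of pure perverse sheaves.

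First, I would invoke Deligne's theorem (\cite{bbd}, 5.4.1, essentially Weil II) to conclude that $f_* P$ is pure of weight $w$. Concretely, since $P$ is mixed of weight $\leq w$, proper pushforward yields $f_*P$ mixed of weight $\leq w$; applying the same to $\mathbb{D} P$, which is mixed of weight $\leq -w$, and using that $\mathbb{D}$ commutes with $f_*$ (as $f$ is proper), one gets $f_*P$ mixed of weight $\geq w$ as well.

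Next, one applies the splitting result for pure complexes over a finite field (\cite{bbd}, 5.3.8), which gives a noncanonical isomorphism
\[
f_* P \;\cong\; \bigoplus_{i \in \mathbb Z}\, \pc{i}{f_*P}\,[-i],
\]
reducing the problem to showing that each pure perverse sheaf $Q := \pc{i}{f_*P}$, which is pure of weight $w+i$, is itself a direct sum of shifted middle extensions $\ic{Z}(L)$ with $L$ pure, lisse, and indecomposable on a smooth dense open $Z^o \subseteq Z$. Because the Hom spaces between perverse sheaves in $\db{Y}$ are finite-dimensional over $\oql$, the Krull--Schmidt theorem applies to the abelian category of perverse sheaves on $Y$ over $k$: every $Q$ decomposes as a finite direct sum of indecomposable pure perverse sheaves, uniquely up to permutation of summands. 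Thus it suffices to treat a single indecomposable pure perverse sheaf $Q$ on $Y$.

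Finally, I would verify that any indecomposable pure perverse sheaf $Q$ on $Y$ has the desired form. The support of $Q$ is an integral closed subvariety $Z \subseteq Y$: it is irreducible since a decomposition of the support into two proper closed pieces would yield, by applying ${}^p\!\tau_{\leq 0}$ and ${}^p\!\tau_{\geq 0}$ for the standard $t$-structure on each irreducible component, a nontrivial direct sum decomposition of $Q$ (here one uses purity together with \cite[5.3.1]{bbd}, which ensures that such truncations produce pure summands and split). Choose a smooth Zariski-dense open $Z^o \subseteq Z$ such that $Q|_{Z^o}$ is a shifted lisse sheaf $L[\dim Z]$; since $Q$ is pure, $L$ is pure. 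By purity (\cite[4.3.1, 5.3.2]{bbd}), $Q$ admits no subobject or quotient object supported on $Z \setminus Z^o$, so the natural maps $j_!(L[\dim Z]) \to Q \to j_*(L[\dim Z])$ identify $Q$ with the middle extension $\ic{Z}(L)$ (in our normalization). Indecomposability of $Q$ is equivalent to indecomposability of $L$ via the equivalence between perverse sheaves on $Y$ with full support $Z$ (modulo those supported on $Z \setminus Z^o$) and local systems on $Z^o$. The main technical step to verify carefully is this last passage between indecomposable middle extensions and indecomposable lisse coefficients; the rest of the argument is an assembly of standard facts from \cite[\S5.3--5.4]{bbd}.
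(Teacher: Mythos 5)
The paper itself does not prove Proposition \ref{dtff}: it explicitly remarks that it ``could not find an adequate explicit reference'' and delegates to \cite[Prop.\,2.1]{dC}, which in turn follows from results in \cite{bbd}. Your proposal is therefore a genuine reconstruction. Your overall route is the standard one and is correct in outline: purity of $f_*P$ via \cite[5.4.1]{bbd} and duality; the splitting of a pure complex over $k$ into shifted perverse cohomology sheaves; Krull--Schmidt in the (finite-length, Karoubian, $\oql$-finite-Hom) abelian category of mixed perverse sheaves; and then the identification of indecomposable pure perverse sheaves with middle extensions of indecomposable pure lisse sheaves. The last step is exactly the paper's Fact~\ref{factjod} (= \cite[Prop.\,5.3.9]{bbd}) combined with the description of simple perverse sheaves as middle extensions \cite[4.3.1]{bbd}.

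The weak point is your final paragraph. The argument that the support of an indecomposable pure $Q$ is irreducible is not coherent as written: ``applying $\,^p\!\tau_{\leq 0}$ and $\,^p\!\tau_{\geq 0}$ for the standard $t$-structure'' is a contradiction in terms, and when two components of the support meet it is not clear how any truncation is supposed to produce a direct-sum decomposition. Likewise, the claim that ``by purity, $Q$ admits no subobject or quotient supported on $Z\setminus Z^o$'' is not supplied by the references you cite: \cite[4.3.1]{bbd} characterizes \emph{simple} perverse sheaves, and purity \emph{alone} does not give the conclusion (e.g.\ $j_{!*}(L[\dim Z])\oplus\delta_p$, with $\delta_p$ a skyscraper of the matching weight, is pure but has a boundary subobject). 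What makes both assertions true for an \emph{indecomposable} pure $Q$ is precisely \cite[5.3.9]{bbd}: $Q\cong \m{S}\otimes\m{J}_n$ with $\m{S}$ simple, whence $Q$ inherits irreducible support and the absence of boundary sub/quotients from $\m{S}$, and $Q=\ic{Z}(\m{L}_0\otimes\m{J}_n)$ with $\m{L}_0\otimes\m{J}_n$ lisse, pure and indecomposable. So you should invoke Fact~\ref{factjod} directly rather than try to re-derive it from 4.3.1 and ``5.3.2.'' One minor citation issue: the reference you give for the splitting of a pure complex into shifted perverse cohomologies (5.3.8) is about simple mixed perverse sheaves being pure; the splitting result lives in the 5.4.x range of \cite{bbd}.
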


\begin{ex}\la{jordan} {\rm ({\bf Jordan-block sheaves})}
Let ${\m J}_n$ be the lisse rank $n$-sheaf on ${\rm Spec} (k)$
with stalk $\oql^n$ and Frobenius acting by means of the unipotent
rank $n$ Jordan block \ci[p.138-139]{bbd}. The lisse sheaf ${\m J}_n$ is pure of weight zero, indecomposable,  and when $n > 1$, neither 
semisimple nor Frobenius  semisimple. The same is true after pull-back to 
a smooth irreducible variety. Of course, $\ov{{\m J}_n}$ is constant, hence semisimple, on 
${\rm Spec} (\ov{k}).$
\end{ex}

\begin{fact}\la{factjod} {\rm ({\bf Indecomposables})}
The indecomposable pure perverse sheaves on a variety $X$ are of the form $\m{S}\otimes
{\m J}_n$ for some $n$ and for some simple pure perverse sheaf $\m{S};$ see \ci[Prop.\,5.3.9]{bbd}.
\end{fact}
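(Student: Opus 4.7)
The plan is to follow the argument of \ci[Prop.\,5.3.9]{bbd}, which combines semisimplicity over $\ov{k}$ with the Weil-sheaf descent formalism. Let $P$ be an indecomposable pure perverse sheaf of weight $w$ on $X$.

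First, I would invoke the standard identity \ci[\S5.1]{bbd}
\[
\mathrm{Hom}_{\db{X}}(P, Q) \;=\; \mathrm{Hom}_{D^b_c(\ov{X}, \oql)}(\ov{P}, \ov{Q})^{F}
\]
for mixed perverse sheaves $P, Q$, where the superscript denotes invariants under geometric Frobenius. In particular $E := \mathrm{End}(P) = \mathrm{End}(\ov{P})^{F}$, and the Krull-Schmidt theorem, together with the indecomposability of $P$, forces $E$ to be a local finite-dimensional $\oql$-algebra. By the decomposition theorem of \ci{bbd}, $\ov{P}$ is semisimple, and so we may write $\ov{P} \cong \bigoplus_{\alpha} \ov{\m{S}}_\alpha \otimes V_\alpha$ with $\{\ov{\m{S}}_\alpha\}$ a family of pairwise non-isomorphic simple perverse sheaves on $\ov{X}$ and $V_\alpha$ finite-dimensional $\oql$-vector spaces. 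Accordingly $\mathrm{End}(\ov{P}) \cong \prod_\alpha \mathrm{End}_{\oql}(V_\alpha)$.

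Second, I would use that geometric Frobenius $F$ permutes the isomorphism classes $\{[\ov{\m{S}}_\alpha]\}$, and that its orbits correspond bijectively (up to twist by a rank-one Weil sheaf on $\mathrm{Spec}(k)$) to simple pure perverse sheaves on $X$, via $\m{S} \mapsto \{\text{simple summands of } \ov{\m{S}}\}$. If the simple factors of $\ov{P}$ came from two distinct orbits, then the corresponding block decomposition of $\mathrm{End}(\ov{P})$ would be $F$-stable and would produce a non-trivial idempotent in $E$, contradicting the locality of $E$. Hence exactly one orbit $\{\ov{\m{S}}_1, \ldots, \ov{\m{S}}_r\}$ contributes; since $F$ cyclically permutes these, the multiplicities $\dim V_\alpha$ within the orbit all share a common value $n$, and one obtains a factorization $\ov{P} \cong \ov{\m{S}} \otimes \ov{V}$ for a single simple pure perverse sheaf $\m{S}$ on $X$ satisfying $\ov{\m{S}} = \bigoplus_{i=1}^r \ov{\m{S}}_i$, together with a fixed $n$-dimensional $\oql$-vector space $\ov{V}$.

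Finally, I would descend the analysis to Weil sheaves on $\mathrm{Spec}(k)$. After fixing a Frobenius structure on $\m{S}$, the Frobenius on $\ov{P}$ factors as the tensor product of the chosen one on $\ov{\m{S}}$ with an $F$-semilinear automorphism $\phi$ of $\ov{V}$; equivalently, $P \cong \m{S} \otimes \m{V}$ for some rank-$n$ Weil sheaf $\m{V}$ on $\mathrm{Spec}(k)$. The indecomposability of $P$ forces $\m{V}$ to be indecomposable as a Weil sheaf; applying Jordan normal form to $\phi$ yields $\m{V} \cong \oql(\lambda) \otimes \m{J}_n$ for some $\lambda \in \oql^\times$, while purity of $P$ forces $\lambda$ to be a Weil number of weight zero. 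Absorbing $\oql(\lambda)$ into $\m{S}$ -- the resulting tensor product $\m{S} \otimes \oql(\lambda)$ is still simple and pure of weight $w$ -- one obtains the desired isomorphism $P \cong \m{S} \otimes \m{J}_n$. The main obstacle will lie in the bookkeeping of the middle paragraph: rigorously producing the clean tensor factorization $\ov{P} \cong \ov{\m{S}} \otimes \ov{V}$ from the $\ov{k}$-isotypic decomposition in a manner compatible with the cyclic $F$-action across the orbit, which is precisely where one needs the Weil-sheaf/descent formalism of \ci[\S5.1]{bbd} together with the fact that simple objects of the category of mixed perverse sheaves have endomorphism algebra equal to $\oql$.
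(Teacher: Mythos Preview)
The paper does not prove this statement at all: it is recorded as a Fact with a bare citation to \ci[Prop.\,5.3.9]{bbd}, and is used as a black box (e.g.\ in the proof of Theorem \ref{tma}). Your proposal is a correct and faithful sketch of the argument in \ci{bbd}, so there is nothing to compare; one minor terminological quibble is that the semisimplicity of $\ov{P}$ you invoke is \ci[Thm.\,5.3.8]{bbd} (purity implies geometric semisimplicity), not the decomposition theorem for proper pushforwards.
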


\begin{rmk}\la{rtkk} {\rm ({\bf Simple, yet not Frobenius semisimple?})} We are not aware of an example of a simple  lisse sheaf
that is not Frobenius semisimple.  According to general expectations related to the Tate conjectures over finite fields, there should be no such sheaf. 
\end{rmk}

\subsection{Proof of the semisimplicity criterion Theorem \ref{tma}}\la{sscrt}$\,$

We need the following elementary
\begin{lm} \label{ss_Kron}
Suppose $T_1: V_1 \rightarrow V_1$ and $T_2: V_2 \rightarrow V_2$ are linear automorphisms of finite dimensional vector spaces over an algebraically closed field. Suppose $T_1 \otimes T_2 : V_1 \otimes_k V_2 \rightarrow V_1 \otimes_k V_2$ is semisimple. Then both $T_1$ and $T_2$ are semisimple.
\end{lm}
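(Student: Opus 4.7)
The plan is to invoke the multiplicative Jordan decomposition. Since the $T_i$ are automorphisms of finite-dimensional vector spaces over an algebraically closed field, each admits a unique decomposition $T_i = S_i U_i$ with $S_i$ semisimple, $U_i$ unipotent, and $S_i U_i = U_i S_i$. The key observation is that
\[
T_1 \otimes T_2 \;=\; (S_1 \otimes S_2)\,(U_1 \otimes U_2),
\]
and this is itself a multiplicative Jordan decomposition on $V_1 \otimes V_2$: the tensor product of two diagonalizable operators is diagonalizable (pick eigenbases), the tensor product of two unipotent operators $I+N_1$, $I+N_2$ is unipotent (write it as $I + N_1\otimes I + I\otimes N_2 + N_1\otimes N_2$ and note each summand is nilpotent and they mutually commute), and the two factors commute by functoriality of tensor product.

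By uniqueness of the Jordan decomposition, the hypothesis that $T_1 \otimes T_2$ is semisimple forces its unipotent part $U_1 \otimes U_2$ to be the identity on $V_1 \otimes V_2$. I would then argue that this is only possible if $U_1 = I_{V_1}$ and $U_2 = I_{V_2}$. Indeed, $U_2$ is unipotent, hence has an eigenvector $v_2 \neq 0$ with $U_2 v_2 = v_2$; for any $v_1 \in V_1$ we then have
\[
(U_1 v_1) \otimes v_2 \;=\; (U_1 \otimes U_2)(v_1 \otimes v_2) \;=\; v_1 \otimes v_2,
\]
and since $v_2 \neq 0$ this gives $U_1 v_1 = v_1$, so $U_1 = I$. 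The symmetric argument gives $U_2 = I$. Hence $T_i = S_i$ is semisimple, as desired.

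No step here looks like a serious obstacle; the only subtlety is justifying that the tensor product of a semisimple and a unipotent pair again realizes the Jordan decomposition, which is standard but worth stating cleanly. Note that the hypothesis that $T_1,T_2$ be invertible is not actually needed: the same argument works with the additive Jordan decomposition $T_i = S_i + N_i$, concluding via $N_1 \otimes I + I \otimes N_2 = 0$ evaluated on eigenvectors of $S_1$ and $S_2$. We have stated it multiplicatively because this matches the application to Frobenius, which is always invertible.
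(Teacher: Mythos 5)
Your proof is correct and uses essentially the same argument as the paper: the multiplicative Jordan decomposition $T_i = S_i U_i$, the observation that $(S_1 \otimes S_2)(U_1 \otimes U_2)$ is again a Jordan decomposition, and uniqueness forcing $U_1 \otimes U_2 = \mathrm{id}$. You fill in the short final step (that $U_1 \otimes U_2 = \mathrm{id}$ forces each $U_i = \mathrm{id}$), which the paper leaves implicit; otherwise the two arguments coincide.
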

\begin{proof}
We may write in a unique way $T_i = S_i U_i,$ where $S_i U_i = U_i S_i$ and $S_i$ is semisimple and $U_i$ is unipotent. Then $T_1 \otimes T_2 = (S_1 \otimes S_2)(U_1 \otimes U_2) = (U_1 \otimes U_2)(S_1 \otimes S_2),$  where  $S_1 \otimes S_2$ is  semisimple and $U_1 \otimes U_2$ is unipotent (for the latter, observe that $U_1 \otimes U_2 - {\rm id}\otimes {\rm id} = (U_1 - {\rm id}) \otimes U_2 \, +\, {\rm id} \otimes (U_2 - {\rm id})$ is nilpotent). Thus $U_1 \otimes U_2 = {\rm id} \otimes {\rm id}$, which implies $U_i = {\rm id}$ and hence $T_i = S_i$ for $i = 1,2$.
\end{proof}

\smallskip
\noindent {\bf {\em Proof of   the semisimiplicity criterion for direct images Theorem \ref{tma}.}} 

One direction is trivial from the definitions, if
$ \m{F} \in \db{Y}$ and $f_* \m{F}$  is  Frobenius semisimple for every closed point $y$ in $Y$, then, by proper base change, we have that
$H^*(\ov{f^{-1} (y)}, \ov{\m F})$
 is Frobenius semisimple for every closed point $y$ in $Y$.

We argue the converse as follows. By the definition of semisimple complex,
it is enough to prove the assertion for a simple --hence pure-- perverse sheaf  ${\m{F}}$.
According to the decomposition theorem over a finite field Proposition \ref{dtff}, the direct image complex $f_* \m{F}$ splits into a direct
sum of cohomologically-shifted terms of the form 
$\ic{Z}(\m{R})$ where $Z$ is a closed integral subvariety of $Y$ and $\m{R}$
is a pure lisse sheaf on a suitable Zariski-dense open subset
${Z}^o \subseteq Z$. Without loss of generality, we may assume that the pure  lisse sheaves $\m{R}$ are indecomposable.

By applying Fact \ref{factjod},  we obtain that each lisse $\m{R}$ has the form $\m{L} \otimes
{\m J}_h$, for  some $h \geq 1$ and some  lisse simple $\m{L}$.
The desired conclusion follows if we show that  in each direct summand
above, we must have  that the only possible value for $h$ is $h = 1$. 

\n 
Fix such a summand. Pick any point $\ov{y} \in {Z}^o (\ov{k})$.  By proper base change, the semisimplicity assumption
ensures that the graded stalks $\m{H}^*(f_* \m{F})_{\ov{y}}$ are semisimple graded  Galois modules.
It is then clear that $\m{L}_{\ov{y}} \otimes {\m J}_h$, being a graded  Galois module which is a
graded subquotient of 
the  graded semisimple  Galois module  $\m{H}^*(f_* \m{F})_{\ov{y}}$,
is also semisimple. We conclude by using Lemma \ref{ss_Kron}. \qed

\subsection{Proof that the intersection complex splits off Theorem \ref{tmb}}\la{icsumd}$\,$

Recall that one can define the intersection complex $\ic{X}\in \db{X}$ for any variety over the finite field $k$  as follows (see \ci[\S4.6]{dC12}): since nilpotents are invisible for the \'etale topology,
we may assume that $X$ is reduced; let $\mu: \coprod_iX_i \to X$ be the natural finite
map from the disjoint union
of the irreducible components of $X;$ define $\ic{X}:= \mu_* \left(\oplus_i  \ic{X_i}\right).$
Note that $\ic{X}$ is then pure of weight zero and semisimple on $X$. 

\smallskip

\n
{\em \bf Proof of  Theorem \ref{tmb}.}
We may replace $Y$ with $f(X)$ and assume that $f$ is surjective. We may work with  irreducible components and assume that $X$ and $Y$ are integral.

\n
In view of Theorem \ref{dtff}, we have an isomorphism $f_* \ic{X} \cong \bigoplus_{a,i}
\ic{Z_a} (R_{ai})[-i],$ where the $Z_a$ range among a finite set of   closed  integral  subvarieties
of $Y$, $i \in \zed^{\geq 0}$ and the $R_{ai}$ are lisse  on suitable, smooth, open and dense 
subvarieties $Z^o_a \subseteq Z_a$. By removing from $Y$ all of the closed  subvarieties $Z_a \neq Y$, and possibly by further shrinking $Y$, we 
may assume that  $Y$ is smooth and that the direct sum decomposition takes the form
$f_* \ic{X} \cong \oplus_{i\geq 0} R^i[-i],$ where each  $R^i:=R^if_*\ic{X}$  is lisse on $Y$.

\smallskip

\noindent {\bf Claim.} After having shrunk $Y$ further, if necessary, we have that ${\oql}_Y$ is a direct summand of $R^0:=R^0f_* \ic{X}$.

Note that the claim implies immediately the desired conclusion: if the restriction of $(f_*\m{IC}_X)_{|U}$ over an open subset $U \subseteq Y$
admits a direct summand, then the intermediate extension of such summand to $Y$ is a direct summand  of $f_*\m{IC}_X.$

\noindent {\em Proof of the Claim.} 
Let $f= h\circ g: X \to Z \to Y$ be the Stein factorization of $f.$ In particular, $g$ and $h$ are proper surjective,  the fibers of $g$ are geometrically connected and $h$ is finite.  By  functoriality,
we have that $R^0:=R^0f_* \ic{X}= R^0h_* R^0g_* \ic{X}$. 
Without loss of generality, we may assume that $X$ is normal: take the normalization $\nu: \hat{X} \to X;$
we have $\nu_* \m{IC}_{\hat{X}} =\m{IC}_X$; then $(f \circ \nu)_* \m{IC}_{\hat{X}}= f_*\m{IC}_X$.
Since now $X$ is normal, we have that the natural map  ${\oql}_X  \to \m{IC}_X$ induces an isomorphism ${\oql}_X \cong \m{H}^0(\m{IC}_X).$ In particular, we get a distinguished triangle
${\oql}_X \to \m{IC}_X \to \tu{1} \m{IC}_X\to.$ We apply $Rg_*$ and obtain the distinguished triangle $Rg_* {\oql}_X \to Rg_* \m{IC}_{X} \to  Rg_* \tu{1}\m{IC}_{X} \to$.
Since   $g_*$ is left-exact for the standard $t$-structure, 
we see that $R^0 g_* (\tau_{\ge1}\m{IC}_{X})=0$.
We thus get natural isomorphism  ${\oql}_Z \cong R^0g_* {\oql}_X \cong R^0g_* \ic{X}$, where the first one stems from the fact that $g$ has geometrically connected fibers. It remains to show that
$R^0h_* {\oql}_Z$ admits ${\oql}_Y$ as a direct summand. By shrinking $Y$ if necessary, 
we may assume that $h: Z \to Y$ is finite surjective between smooth varieties.
Using Lemma \ref{factorize}, we factorize $h= s\circ i$, where $s$ is separable and $i$ is purely inseparable.
Since $i$ is a universal homeomorphism, we have that $i_*$ is isomorphic to the identity.

It remains to show that, possibly after shrinking $Y$ further, $R^0s_* {\oql}_Z$ admits ${\oql}_Y$ as a direct summand.  After shrinking $Y$, if necessary, we may assume by Lemma \ref{factorize} that $s$ is finite and \'etale. 
It follows that $s^! {\oql}_Y\cong {\oql}_Z.$ By consideration of the natural adjunction maps, we thus get natural maps $R^0s_* {\oql}_Z = R^0 s_! {\oql}_Y\stackrel{a} \to {\oql}_Y\stackrel{b} \to R^0 s_* {\oql}_Z$,  with $a\circ b= (\deg{s}) \, {\rm Id}$ (see \cite[Lem.\,V.1.12]{Mil}). The desired splitting follows. The Claim is thus proved, and so is the theorem.
\qed

\subsection{A semisimplicity conjecture}\la{ssconj}$\,$

Given a complete variety $X$ over the finite field $k$, one may conjecture that the graded Galois module
$H^*(X,\oql)$ is semisimple, i.e.,\,\,that there should be  no non-trivial Jordan factors under the action of the Frobenius
automorphism. We have the following 

\begin{conj}\la{conjic} Let $f: X\to Y$ be a proper map of  varieties over the finite field $k.$
For every closed point $y$ in $Y,$ the graded Galois modules  $H^*(\ov{f^{-1}(y)}, \ov{\ic{X}})$  are semisimple.  In particular, in view of Theorem  \ref{tma}, the direct image
$f_* \ic{X}$  is semisimple and Frobenius semisimple.
\end{conj}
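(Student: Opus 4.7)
The plan is to reduce Conjecture \ref{conjic} to the (also conjectural) Frobenius semisimplicity of the cohomology of smooth projective varieties over finite fields. First, since $\ic{X}$ is pure of weight zero and semisimple (as recalled in \S\ref{icsumd}), Theorem \ref{tma} applies with $\m{F}=\ic{X}$: to prove that $f_*\ic{X}$ is semisimple and Frobenius semisimple it is enough to prove, for every closed point $y \in Y$, that the graded Galois module $H^*(\ov{f^{-1}(y)}, \ov{\ic{X}})$ is Frobenius semisimple. By proper base change this invariant depends only on the restriction of $\ic{X}$ to the fiber, so the question becomes local on $Y$ and intrinsic to the geometry above a single closed point.

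Next, I would try to pass from Frobenius semisimplicity of $H^*(\ov{f^{-1}(y)}, \ov{\ic{X}})$ to ordinary Frobenius semisimplicity of cohomology of smooth projective varieties by means of de Jong's alterations. Choosing an alteration $\pi: \w{X}\to X$ with $\w{X}$ smooth and projective, the shifted constant sheaf on $\w{X}$ decomposes, after passage to $\ov{k}$, with $\ic{X}$ as a direct summand; one expects an analogous splitting over $k$ up to the appearance of Jordan-block sheaves (Example \ref{jordan}). Applying this on each stratum supporting a perverse constituent of $\ic{X}|_{\ov{f^{-1}(y)}}$, and using Theorem \ref{tmb} to recognize intersection complexes of images as direct summands, one should reduce Frobenius semisimplicity of $H^*(\ov{f^{-1}(y)}, \ov{\ic{X}})$ inductively to Frobenius semisimplicity of $I\!H^*$ of integral projective varieties, and hence, via alterations once more, to Frobenius semisimplicity of $H^*(\ov{Z}, \ql)$ for $Z$ smooth and projective.

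The main obstacle is precisely this last step. Frobenius semisimplicity of $H^*(\ov{Z}, \ql)$ for a smooth projective $k$-variety $Z$ is itself a major open problem, tightly linked to the Tate conjecture and to the standard conjectures; it is known in only a handful of classical cases (abelian varieties, K3 surfaces, cellular varieties, and varieties whose cohomology is generated by algebraic cycle classes), as recalled in \S\ref{intro}. A proof of Conjecture \ref{conjic} in the stated generality would thus imply this unconditional Frobenius semisimplicity statement, so genuinely new arithmetic input seems necessary beyond the techniques of this paper. The remainder of the paper bypasses this obstruction in the convolution setting by producing either an affine paving (Theorem \ref{pavingtm}) or a $\mathbb{G}_m$-contraction onto a fixed point, together with the surjectivity criterion of Theorem \ref{tmff}, forcing the relevant cohomology to be of Tate type and hence automatically Frobenius semisimple.
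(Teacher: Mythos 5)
You correctly recognize that this is a conjecture, not a theorem, and that the ``in particular'' clause is an immediate application of Theorem \ref{tma}; both observations match the paper. Your middle paragraph attempts the same reduction as the paper's remark following the conjecture (de Jong alterations, Theorem \ref{tmb}, proper base change), but your concrete scheme does not hold together and is more complicated than necessary. The phrase ``each stratum supporting a perverse constituent of $\ic{X}|_{\ov{f^{-1}(y)}}$'' does not have a clean meaning: restriction of $\ic{X}$ to a fiber of a proper map is not a perverse operation, so $\ic{X}|_{\ov{f^{-1}(y)}}$ is just some constructible complex and need not decompose into shifted intersection complexes of subvarieties of the fiber in any useful way. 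There is no need for stratification or induction.

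The paper's reduction is a single step. Take an alteration $a: X_1 \to X$ with $X_1$ smooth, and set $f_1 := f \circ a$. By Theorem \ref{tmb} (or, since $a$ is generically finite, by the more elementary \cite[Lemma 10.7]{GH}), $\ic{X}$ is a direct summand of $a_* \ql_{X_1}$ \emph{already over} $k$ --- so no Jordan-block bookkeeping is needed, contrary to your parenthetical. Therefore $f_*\ic{X}$ is a direct summand of $(f_1)_*\ql_{X_1}$, and by proper base change $H^*(\ov{f^{-1}(y)}, \ov{\ic{X}})$ is a Galois-module direct summand of $H^*(\ov{f_1^{-1}(y)}, \ql)$. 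The fiber $f_1^{-1}(y)$ is a complete (not generally smooth or projective) variety; the semisimplicity conjecture for cohomology of \emph{complete} varieties stated at the beginning of \S\ref{ssconj} then immediately gives Frobenius semisimplicity of $H^*(\ov{f^{-1}(y)}, \ov{\ic{X}})$. Note that the paper's target conjecture concerns complete varieties, not smooth projective ones as you state, which is exactly what lets the argument terminate in one step without a further reduction from complete to smooth projective.
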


Let us remark that in view of de Jong's theory of  alterations \ci{deJ}, Conjecture \ref{conjic}, concerning intersection cohomology, follows from the semisimplicity conjecture
in ordinary $\oql$-adic cohomology stated at the very  beginning of this subsection. This implication follows immediately by combining the   proper base change theorem with  the splitting-off  of the intersection complex Theorem \ref{tmb} (N.B.\, given that we are working with generically finite morphisms, in place of Theorem \ref{tmb} we may use the more elementary \cite[Lemma 10.7]{GH}), for then
we can take the composition $f_1: = f \circ a: X_1 \to X \to Y$, where $a$ is an alteration, and use the conjectural semisimplicity
of the graded Galois module $H^*(\ov{f_1^{-1} (y)}, \oql)$ to deduce it for its (non-canonical) Galois module direct summand
$H^*(\ov{f_1^{-1} (y)}, \ov{\ic{X}})$.

One may ask the  even more general 

\begin{???}\la{conjpure} 

Let $\m{F}$ be a simple mixed \textup{(}hence pure\textup{)} perverse sheaf on a variety $X$ over a finite field $k$. 
Is $\m{F}$ Frobenius semisimple, i.e.\,\,is the action of Frobenius on its stalks semisimple?
Recall that this does not seem to be known even in the case of  a simple lisse sheaf on $X$ smooth
and geometrically connected, nor in the case of the intersection complex $\m{IC}_X$.
Let $f: X\to Y$ be a proper morphism  of  $k$-varieties. 
Are
the graded Galois modules  $H^*(\ov{f_1^{-1} (y)}, \ov{\m{F}})$  semisimple for every $\ov{y} \in Y(\ov{k})$,
so that,  in view of Theorem  \ref{tma}, the direct image
$f_*  \m{F}$ is semisimple and Frobenius semisimple? \end{???}

\section{Proofs of Theorems \ref{tmff}, \ref{ctm} and \ref{dtm}}\la{tmctmffpfs}$\;$

\subsection{Proof of the surjectivity for fibers criterion Theorem \ref{tmff}}\la{pftmff}$\,$

In this section, we prove Theorem \ref{tmff}, which is the key to proving Theorems \ref{ctm}, \ref{dtm}.   We first remind the reader of the ``retraction" Lemma \ref{retrlm}.
We then establish  the  local product structure Lemma \ref{ulemma}.
We are unaware of a reference for these local product structure
results in the generality we need them here.
We introduce a certain  contracting $\mathbb G_m$-action on (\ref{prodfo}). With (\ref{prodfo}) and the contracting action  we then conclude the proof of Theorem \ref{tmff} by means 
of the  retraction Lemma \ref{retrlm} followed by weight considerations.
This  kind of argument has already appeared in the context of proper toric fibrations \ci{dC} and it can
be directly fed into to the context 
of this paper, once we have  the local product structure Lemma \ref{ulemma}.

\begin{lm}\label{retrlm} {\rm ({\bf Retraction lemma})}
Let $S$ be a $k$-variety endowed with a $\mathbb G_m$-action
that ``contracts" it to a $k$-rational point $s_o \in S$, i.e.
the action $\mathbb G_m \times S \to S$ extends
to a map $h: \mathbb A^1 \times S \to S$ such that
\[
h^{-1} (s_o) = (\mathbb A^1 \times \{s_o\}) 
\bigcup (\{0\} \times S).\]
Let $\m{E} \in \db{S}$ be $\mathbb G_m$-equivariant. 
Then the natural restriction map of graded Galois modules
$H^*(\ov{S},\ov{\m{E}}) \to \m{H}^*(\m{E})_{\ov{s}}$
is an isomorphism. 
\end{lm}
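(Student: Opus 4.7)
The plan is to apply the Kazhdan-Lusztig contracting argument. Write $h: \mathbb{A}^1 \times S \to S$ for the given extension of the $\mathbb{G}_m$-action, and let $i_0, i_1: S \hookrightarrow \mathbb{A}^1 \times S$ be the inclusions at $0$ and $1$, so that $h \circ i_1 = \mathrm{id}_S$ and $h \circ i_0 = c_{s_o}$, the constant map to $s_o$. Pulling back $\m{E}$ along $h$ yields
\[
 i_1^* h^* \m{E} = \m{E}, \qquad i_0^* h^* \m{E} = c_{s_o}^* \m{E},
\]
where $c_{s_o}^* \m{E}$ is the constant complex on $S$ with stalk equal to the stalk of $\m{E}$ at $s_o$.

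The heart of the argument is to establish that both restriction maps
\[
R\Gamma(\ov{S}, \ov{\m{E}}) \; \xleftarrow{i_1^*} \; R\Gamma(\ov{\mathbb{A}^1 \times S}, \ov{h^* \m{E}}) \; \xrightarrow{i_0^*} \; R\Gamma(\ov{S}, \ov{c_{s_o}^* \m{E}})
\]
are isomorphisms. Given this, chaining them yields
\[
H^*(\ov{S}, \ov{\m{E}}) \xrightarrow{\sim} H^*(\ov{S}, \oql) \otimes \m{H}^*(\m{E})_{\ov{s_o}} \xrightarrow{\sim} \m{H}^*(\m{E})_{\ov{s_o}},
\]
with the last step using $H^*(\ov{S}, \oql) = \oql$ concentrated in degree zero, the constant-coefficient case of the lemma applied to $\m{E} = \oql_S$.

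The constant-coefficient case I would treat first, independently. Let $p_2: \mathbb{A}^1 \times S \to S$ be the second projection. By the projection formula and $\ell$-adic acyclicity $R\Gamma(\ov{\mathbb{A}^1}, \oql) = \oql$, the pullback $p_2^*$ is an isomorphism on $\oql$-cohomology. Hence $i_0^* = i_1^*$ as maps $H^*(\ov{\mathbb{A}^1 \times S}, \oql) \to H^*(\ov{S}, \oql)$, both being $(p_2^*)^{-1}$. Applying this after $h^*$ forces $c_{s_o}^* = \mathrm{id}$ on $H^*(\ov{S}, \oql)$; since $c_{s_o}^*$ factors through $H^*(\ov{\{s_o\}}, \oql) = \oql$, every class is constant, giving $H^*(\ov{S}, \oql) = \oql$.

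For general $\mathbb{G}_m$-equivariant $\m{E}$ the same outline applies, with the new input being the equivariance isomorphism $h^* \m{E}|_{\mathbb{G}_m \times S} \cong p_2^* \m{E}|_{\mathbb{G}_m \times S}$. Combined with $Rp_{2*} p_2^* \m{E} \cong \m{E}$ (projection formula plus $\mathbb{A}^1$-acyclicity, yielding $R\Gamma(\ov{\mathbb{A}^1 \times S}, \ov{p_2^* \m{E}}) = R\Gamma(\ov{S}, \ov{\m{E}})$) and a careful analysis of how $h^* \m{E}$ and $p_2^* \m{E}$ differ along $\{0\} \times S$, one verifies that $i_0^*$ and $i_1^*$ are both isomorphisms on $R\Gamma(\ov{\mathbb{A}^1 \times S}, \ov{h^* \m{E}})$.

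The main obstacle is that the equivariance isomorphism exists only over $\mathbb{G}_m \times S$: along $\{0\} \times S$, $h^* \m{E}$ specializes to $c_{s_o}^* \m{E}$, a constant complex, whereas $p_2^* \m{E}$ remains $\m{E}$ itself. Controlling this discrepancy across the fiber at $0$, and verifying that it does not obstruct the isomorphism of the two restriction maps, is the essential technical content of the Kazhdan-Lusztig contracting principle and the delicate point of the proof.
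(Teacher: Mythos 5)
The paper's own ``proof'' of this lemma is a pointer to the literature: it cites \cite[Lemma 6.5]{DeLo}, \cite[Lemma 4.5.(a)]{KL}, \cite[Lemma 4.2]{dMM}, and \cite[Cor.\,1]{Spr84} and does not reproduce an argument. So you are attempting a reconstruction of the standard contraction argument rather than matching a proof in the paper.

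Your set-up is the correct starting point and the constant-coefficient computation ($H^*(\ov{S},\oql)=\oql$) is sound, but the general case contains a genuine gap, which you in fact flag yourself: you never prove that $i_0^*$ and $i_1^*$ induce isomorphisms $R\Gamma(\ov{\mathbb A^1\times S},\ov{h^*\m E})\to R\Gamma(\ov{S},-)$, and that is the entire content of the lemma, not a routine verification. The mechanism that works for $\oql$ — namely that $p_2^*$ is an isomorphism on $R\Gamma$, forcing $i_0^*=i_1^*=(p_2^*)^{-1}$ — breaks down here, because $h^*\m E$ is not $p_2^*\m E$ on all of $\mathbb A^1\times S$: the equivariance isomorphism $h^*\m E\cong p_2^*\m E$ exists only after restriction to $\mathbb G_m\times S$, and along $\{0\}\times S$ the two complexes are genuinely different ($c_{s_o}^*\m E$ versus $\m E$). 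Agreement over $\mathbb G_m\times S$ alone does \emph{not} imply that the two fiber restrictions on $R\Gamma$ agree or are isomorphisms: for instance, $\m G:=j_!\,p_2^*\m E$ (with $j:\mathbb G_m\times S\hookrightarrow \mathbb A^1\times S$) also agrees with $p_2^*\m E$ over $\mathbb G_m\times S$, yet $i_0^*\m G=0$ while $i_1^*\m G=\m E$; so some further input, specific to $h^*\m E$, is indispensable. What is needed is a concrete mechanism — e.g.\ the localization triangle $j_!j^*\to\mathrm{id}\to i_{0*}i_0^*$ on $\mathbb A^1\times S$ applied to both $h^*\m E$ and $p_2^*\m E$, combined with $\mathbb A^1$-acyclicity and a comparison of the two triangles, as in the cited sources — to show that the ``defect'' term $R\Gamma(\ov{\mathbb A^1\times S},\ov{j_!j^*h^*\m E})$ vanishes and that $h^*$ itself is an isomorphism, not merely a split injection (which is all that $i_1^*h^*=\mathrm{id}$ gives). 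Until this step is carried out, the proposal is an outline that records what must be shown but does not show it.
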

\begin{proof}
This lemma is proved in \ci[Lemma 6.5]{DeLo}, in the case
when $\m{E}=\m{IC}_S$ is the intersection complex (automatically $\mathbb G_m$-equivariant); this seems
to be rooted in \ci[Lemma 4.5.(a)]{KL}.
The proof of the above simple generalization to the direct image under a proper map  of a weakly equivariant  $\mathbb G_m$-equivariant
complex is contained in the proof of  \ci[Lemma 4.2]{dMM}. We also draw the reader's attention to \ci[Cor.\,1]{Spr84}, which is probably the original reference for this result. \end{proof}

\begin{rmk}\la{retrrml}
If  $\,\mathbb G_m$ acts linearly on
$\mathbb A^n$ with positive weights,
 $S \subseteq \mathbb A^n$ is a $\mathbb G_m$-invariant
closed subscheme and $\m{E}$ is $\mathbb G_m$-equivariant on
$S$,  then $(S,\m{E})$ satisfy the hypotheses of Lemma \ref{retrlm}. If, in addition, $f: T \to S$ is a proper  $\mathbb G_m$-equivariant map and $\m{F}$ is $\mathbb G_m$-equivariant on $T$, then Lemma \ref{retrlm} combined with proper base change
yields natural isomorphisms of graded Galois modules
$H^*(\ov{T}, \ov{\m{F}}) \to H^*( \ov{f}^{-1} (\ov{s}_o), \ov{\m{F}}),$ where $s_o$ is the origin in $\mathbb A^n$.
\end{rmk}

Consider  the ``dilation'' action $c$ of $\mathbb G_m$ on $k[\![t]\!]$ which sends $t$ to $at$ for $a\in k^\times$. 
We can define the same kind of  dilation action  on  
$T(k[\![t]\!])$, $T(k(\!(t)\!))$, $\mathcal B$,  $\mathcal{P}$,  $\G$, and  $\G/\PP$, thus on the
closures of $\B$ and of $\PP$-orbits.

Recall that we are in the context of Theorem \ref{tmff}: $X:= \xbp{\B \PP}{w} \subseteq \G/\PP$ is the closure
of a $\B$-orbit (special case: the closure of a $\PP$-orbit); we are fixing $\ov{x} \in X(\ov{k})$.
By passing to a finite extension of the finite ground field  $k$, if necessary, and by using the
$\B$-action, we may assume that the point $x$ is a $T(k)$-fixed point $x_v$ for a suitable $v\le w\in 
{\bwp{\B}{\PP}}$. This latter parameterizes the $\B$-orbits $Y_{\B\PP} (v)$  in $\G/\PP$, which, in what follows, we simply denote 
by $Y(v)$.

\begin{lm}\label{ulemma}
There is a commutative diagram with cartesian squares
\begin{equation}\la{pir}
\xymatrix{
Y(v) \times g^{-1} (S_v) \ar[r]^{\hspace{.15in}\sim}  \ar[d]^{1 \times g} &
g^{-1} (X_v) \ar@{^(_->}[r]  \ar[d] ^g & Z \ar[d]^g \\
Y(v) \times S_v \ar[r]^{\hspace{.12in} \sim} & X_v \ar@{^(_->}[r] & X
}
\end{equation}
where, $S_v \subseteq X_v$ is a  closed subvariety
containing $x_v$,  the inclusions are open immersions and the indicated isomorphisms are equivariant for the actions of the
groups $(\mathcal U\cap\, ^v\overline{\mathcal U}_\mathcal P)$, $T(k)$, $c$ \textup{(}actually, the given $c_Z$ on $Z$\textup{)}.  Moreover, there is a $\mathbb G_m$-action
on $S_v$ that  contracts it to $x_v$ and that lifts
to $g^{-1} (S_v)$.
\end{lm}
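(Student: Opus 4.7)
The plan is to obtain the local product structure by intersecting $X$ with the big cell at $x_v$, using the Iwahori-type decomposition \eqref{decomp2} together with the big cell theorem (Theorem \ref{big_cell_thm}). The required $\mathbb G_m$-contraction of $S_v$ will then be produced by combining the dilation action $c$ with a suitable cocharacter of $T$.

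First, I would set
\[
X_v := X \cap v\mathcal C_\PP, \qquad S_v := X \cap (\overline{\mathcal U} \cap \, ^v\overline{\mathcal U}_\PP)\, x_v,
\]
where $v\mathcal C_\PP = \, ^v\overline{\mathcal U}_\PP \cdot x_v$ is the big cell at $x_v$. Left-translating Theorem \ref{big_cell_thm} by $v$ and combining with the group decomposition \eqref{decomp2} gives a direct product structure
\[
(\mathcal U \cap \, ^v\overline{\mathcal U}_\PP) \times (\overline{\mathcal U} \cap \, ^v\overline{\mathcal U}_\PP)\, x_v \xrightarrow{\sim} v\mathcal C_\PP.
\]
Since $X$ is $\mathcal B$-stable — and in particular $(\mathcal U \cap \, ^v\overline{\mathcal U}_\PP)$-stable — intersecting this product with $X$ produces the bottom row of \eqref{pir}, once the orbit $(\mathcal U \cap \, ^v\overline{\mathcal U}_\PP)\, x_v$ is identified with $Y(v)$ via \eqref{Y_B_in_big_cell}. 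The equivariances fall out automatically: $(\mathcal U \cap \, ^v\overline{\mathcal U}_\PP)$ acts on the left factor by multiplication, while $T(k)$ and $c$ both normalize each unipotent factor in \eqref{decomp2} and fix $x_v$ (an easy check, using $T(k) \subset \mathcal P$ and that $c$ scales $t$).

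Next I would pull back along $g$. Since $g$ is $\mathcal B$-equivariant, it is in particular $(\mathcal U \cap \, ^v\overline{\mathcal U}_\PP)$-equivariant, and this subgroup acts freely on $X_v$ (being the first factor of a direct product). Hence $g^{-1}(X_v)$ automatically splits as $Y(v) \times g^{-1}(S_v)$, producing the top row of \eqref{pir}; the $T(k)$- and $c_Z$-equivariances of $g$ yield the remaining equivariances.

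For the $\mathbb G_m$-contraction, I would pick a regular cocharacter $\lambda : \mathbb G_m \to T$, with sign chosen so that $\lambda(a)$ contracts $\bar{U}$ as $a \to 0$, and in the affine case set $\mu_N(a) := c(a)\,\lambda(a)^N$ for $N \gg 0$ (in the finite case $c$ is unnecessary and $\lambda$ alone suffices). On each affine root group $U_{\alpha+n} \subseteq \overline{\mathcal U} \cap \, ^v\overline{\mathcal U}_\PP$, $\mu_N$ scales the coordinate by $a$ raised to an affine-linear function of $n$ and $N\langle\alpha,\lambda\rangle$; only finitely many affine roots occur, so for $N$ large every such exponent becomes strictly positive, and $\mu_N$ thus extends to an $\mathbb A^1$-action contracting $\overline{\mathcal U} \cap \, ^v\overline{\mathcal U}_\PP$ to the identity. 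A short Weyl-group calculation shows that $\mu_N$ fixes $x_v$, so $\mu_N$ contracts $S_v$ to $x_v$; the lift to $g^{-1}(S_v)$ is immediate from the $T(k)$- and $c_Z$-equivariance of $g$. The main obstacle in the plan is establishing the product decomposition of the big cell inside the ind-scheme $\mathcal G/\PP$, which is precisely what Theorem \ref{big_cell_thm} and the Iwahori-type decompositions of $\S\ref{Iwah_type_sec}$ provide; given those, the remaining steps are essentially formal.
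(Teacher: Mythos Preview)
Your construction of the local product structure (the bottom and top rows of \eqref{pir}) is correct and is essentially the paper's argument: intersect the big cell $v\mathcal C_\PP$ with $X$, use the decomposition \eqref{decomp2}, identify the orbit factor with $Y(v)$ via \eqref{Y_B_in_big_cell}, and pull back along the $\B$-equivariant map $g$.

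The contraction argument, however, has a real gap. You assert that only finitely many affine root groups occur in $\overline{\mathcal U} \cap \, ^v\overline{\mathcal U}_\PP$, but this is false: by Proposition~\ref{proto_decomp2} it is $\overline{\mathcal U} \cap \, ^v\PP$ that is a finite product of root groups, while $\overline{\mathcal U} \cap \, ^v\overline{\mathcal U}_\PP$ contains $L^{--}P_{\bf a}\langle m+1\rangle$ for every large $m$ and is an honest ind-scheme of infinite type. In particular it contains $U_{\alpha - m}$ for every root $\alpha$ once $m$ is large, so no single $N$ can make all the exponents positive. Worse, your specific action $\mu_N(a)=c(a)\,\lambda(a)^N$ fails even on a finite truncation: for $\alpha>0$ and $m\geq 1$ (so $U_{\alpha-m}\subset\overline{\mathcal U}$), the $c$-weight on $U_{\alpha-m}$ is $-m<0$ and, since your $\lambda$ is anti-dominant, the $\lambda^N$-weight $N\langle\alpha,\lambda\rangle$ is also negative. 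So $\mu_N$ \emph{expands} these root groups rather than contracting them.

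The paper repairs both points at once. First, although $\overline{\mathcal U}\cap\, ^v\overline{\mathcal U}_\PP$ is infinite-dimensional, the slice $S_v$ is a closed subvariety of the finite-type variety $X$; using a faithful representation $G\hookrightarrow{\rm GL}_N$, one gets a closed embedding $S_v\hookrightarrow \mathbb A_m\times\bar U$ for a suitable finite truncation $\overline{\mathcal U}_m$. Second, the contracting action is taken to be $(\mu,c^{-i})$ with $\mu=-2n\rho^\vee$ and $i\gg n\gg 0$: here $c^{-i}$ (not $c$) gives weight $ik>0$ on each coefficient of $t^{-k}$ with $k\geq 1$, dominating the possibly negative contribution of $\mu$ on positive roots, while on the $\bar U$-factor (where $c$ is trivial) the anti-dominant $\mu$ alone gives positive weight. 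The balance $i\gg n$ is exactly what makes both loop directions and the $\bar U$-direction contract simultaneously.
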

\begin{proof}
Denote $^v\overline{\mathcal U}_\mathcal P:=v\overline{\mathcal U}_\mathcal Pv^{-1}$. 
In $\S\ref{notsy}$, we stated the product decompositions
\begin{equation}\label{profo}
\;^v\overline{\mathcal U}_\mathcal P=(\mathcal U\cap\, ^v\overline{\mathcal U}_\mathcal P)( \overline{\mathcal U}\cap \, ^v\overline{\mathcal U}_\mathcal P).
\end{equation}

Let $v\mathcal{C}_\mathcal{P}=\,^v\overline{\mathcal{U}}_\mathcal{P}\cdot x_v\cong \,^v\overline{\mathcal{U}}_\mathcal{P}$ be the open big cell in $\mathcal{G}/\mathcal{P}$ at $x_v:=v\mathcal{P}/\mathcal{P}$ (cf.\,(\ref{bigcell})). 
According to (\ref{profo}) it admits a product decomposition 
\beq\la{evt}
v\mathcal{C}_\mathcal{P} \cong Y(v) \times C^v_\infty,
\eeq
where the $Y(v)$ factor can be identified, thanks to (\ref{decomp1}), as 
$$Y(v)=(\mathcal U \cap \, ^v\overline{\mathcal U}_\mathcal P)\cdot x_v \, \cong \, \mathcal U \cap \, ^v\overline{\mathcal U}_\mathcal P,$$
and the second factor, which  is not of finite type, is defined by setting
$$v\mathcal{C}_\mathcal{P}\supset C^v_\infty:=(\overline{\mathcal U} \cap \, ^v\overline{\mathcal U}_\mathcal P)\cdot x_v \, \cong \, \overline{\mathcal U} \cap \, ^v\overline{\mathcal U}_\mathcal P.$$
The $\mathcal{B}$-orbit $Y(v)$ is a $({\mathcal U} \cap \, ^v\overline{\mathcal U}_\mathcal P)$-torsor; also, let this latter group act trivially on $C^v_\infty$. 
By \eqref{profo}, we have  that (\ref{evt}) is an  $({\mathcal U} \cap \, ^v\overline{\mathcal U}_\mathcal P)$-equivariant isomorphism.

We set
 $X_v:=v\mathcal{C}_\mathcal{P}\cap X$. Then the composition
 $X_v\to v\mathcal{C}_{\mathcal P}\to Y(v)$
 is $({\mathcal U} \cap \, ^v\overline{\mathcal U}_\mathcal P)$-equivariant. Let
$$S_v:= C^v_\infty \cap X_v = C^v_\infty \cap X.$$
We thus see that there is an $({\mathcal U} \cap \, ^v\overline{\mathcal U}_\mathcal
 P)$-equivariant isomorphism 
\begin{equation*}
X_v \,\overset{\sim}{\rightarrow} \,Y(v)\times S_v.
\end{equation*}
Note that $Y(v)$ is a finite dimensional affine space; $S_v$ is what one calls    {\em  the slice of  $X_v$ at $x_v$ transversal to $Y(v)$.}

Let $2\rho^\vee$ be the sum of the positive coroots (viewed as a cocharacter), let $n, i$ be integers with $i >\!\!> n >\!\!> 0$, and define a cocharacter $\mu = -2n\rho^\vee$. We claim that for sufficiently large $i >\!\!> n >\!\!> 0$, the $\mathbb{G}_m$-action on $X$ defined using $(\mu,c^{-i})$ contracts $S_v$ to $x_v$, where {\em contract} means that the action extends to a morphism ${\mathbb A}^1 \times S_v \rightarrow S_v$ such that the hypotheses of Lemma \ref{retrlm} are satisfied with $s_o = x_v$. In order to prove this, it is enough to find an affine space $\mathbb A_v$  endowed with a $\mathbb G_m$-action and a closed embedding $(S_v, x_v) \hookrightarrow (\mathbb A_v, 0)$, such that 
\begin{enumerate}
\item[(i)] the $\mathbb G_m$-weights on $\mathbb A_v$ are $>0$ (see Remark \ref{retrrml});
\item[(ii)] the $\mathbb G_m$-action on $\mathbb A_v$ preserves $S_v$ and restricts to the action on $S_v$ via $(\mu, c^{-i})$.
\end{enumerate}
It is enough to prove these statements over $\bar{k}$, so we write $k$ for $\bar{k}$ in the rest of this argument.

Recall that $\ov{\mathcal U}$ is an ind-scheme which is ind-finite type and ind-affine.  We need to make this more precise.  Choose a faithful representation of $G \hookrightarrow {\rm GL}_N$, a maximal torus $T_N$ in ${\rm GL}_N$ as well as Borel subgroups $B_N = T_N U_N$ and $\bar{B}_N = T_N \bar{U}_N$ as in Remark \ref{Borel_pair_rem}.

We have an exact sequence of group ind-schemes
$$
1 \rightarrow L^{--}G \rightarrow \overline{\mathcal U} \rightarrow \bar{U} \rightarrow 1,
$$
where $L^{--}G$ is the kernel of the map $G(k[t^{-1}]) \rightarrow G(k[t^{-1}]/t^{-1})$. Using the embedding $G(k[t,t^{-1}]) \subset {\rm GL}_N(k[t,t^{-1}])$,  an element $g \in L^{--}G(k)$ can be regarded as a matrix of polynomials 
\begin{equation} \la{poly_space}
g_{ij} = \delta_{ij} + a^1_{ij}t^{-1} + a^2_{ij} t^{-2} + \cdots
\end{equation}
whose coefficients $a^k_{ij}$ satisfy certain polynomial relations which ensure that $(g_{ij})$ lies in $G(k[t^{-1}])$. Fix an integer $m \geq 0$, and let $L_{m}^{--}G$ be the set of $g \in G(k[t^{-1}])$ such that ${\rm deg}_{t^{-1}}(g_{ij}) \leq m$ for all $i,j$.  Similarly define $\overline{\mathcal U}_m$. The ind-scheme structure on $\overline{\mathcal U}$ is given by this increasing union of closed affine $k$-varieties: $\overline{\mathcal U} = \bigcup_m \overline{\mathcal U}_m$.  On the other hand, $S_v \subset (\overline{\mathcal U} \cap \, ^v\overline{\mathcal U}_{\PP}) \, x_v$ is an integral $k$-subvariety, and the closed subschemes $\overline{\mathcal U}_m x_v \cap S_v$ exhaust $S_v$.  Henceforth we fix $m$ so large that the generic point of $S_v$ is contained in $\overline{\mathcal U}_m x_v \cap S_v$; then this intersection coincides with $S_v$ and hence there is a closed embedding $S_v \subset (\overline{\mathcal U}_m \cap \, ^v\overline{\mathcal U}_{\PP})x_v$.  

As $S_v$ is isomorphic to a closed subscheme of $\overline{\mathcal U}_m \cap \, ^v\overline{\mathcal U}_{\PP}$,  it is enough to find a closed embedding of $\overline{\mathcal U}_m$ into an affine space ${\mathbb A}_v$ carrying a $\mathbb G_m$-action which satisfies (i) and (ii). Clearly $L^{--}_mG$ is a closed $k$-subvariety of the affine space ${\mathbb A}_m$ consisting of {\em all} matrices $(g_{ij})$ whose entries have the form (\ref{poly_space}) with ${\rm deg}_{t^{-1}}(g_{ij}) \leq m$ for all $i, j$. The group $\bar{U}$ is isomorphic as a variety to $\prod_{\alpha < 0} U_\alpha$ and each $U_\alpha$ is isomorphic to $\mathbb A^1$ (non-canonically).  We can therefore identify $\bar{U}$ with an affine space.  The space $\mathbb A_v := \mathbb A_m \times \bar{U}$ carries the diagonal ${\mathbb G}_m$-action via $(\mu, c^{-i})$ (by construction, $c$ acts trivially on $\bar{U}$). 

The exact sequence above splits, so  there is a canonical isomorphism of affine $k$-varieties
$$
\overline{\mathcal U}_m = L^{--}_m(G) \cdot \bar{U},
$$
and hence a closed embedding $\overline{\mathcal U}_m \hookrightarrow \mathbb A_v = \mathbb A_m \times \bar{U}$, compatible with the ${\mathbb G}_m$-actions defined via $(\mu, c^{-i})$.  The weights of the latter on $\mathbb A_v$ are clearly positive for $i >\!\!> n >\!\!>0$.  Also, these actions preserve the image of $(\overline{\mathcal U} \cap \, ^v \overline{\mathcal U}_{\PP}) x_v \cap X = S_v$.  Hence (i) and (ii) are verified, and we have constructed the desired contracting action of $\mathbb G_m$-action on $S_v$.

Finally, let us observe that  the $\mathbb G_m$-action $(\mu,c^{-i})$ on $X$ lifts to $Z$. Indeed, $\mu$ can be lifted because $\mu$ has image in $T(k) \subset \mathcal B$, and $g$ is $\mathcal B$-equivariant. By assumption, $g$ is also $c$-equivariant
($c$ on $X$, $c_Z$ on $Z$). It follows that  the $\mathbb G_m$-action given by  $(\mu,c_Z^{-i})$ acts on $Z$ and that $g$ is equivariant with respect to these $\mathbb G_m$-actions $(\mu,c_Z^{-i})$ and $(\mu, c^{-i})$.

Moreover, the map $g:Z\to X$ is $\mathcal{B}$-equivariant, hence $(\mathcal U \cap \, ^v\overline{\mathcal U}_\mathcal P)$-equivariant.  We thus have the   $(\mathcal U \cap \, ^v\overline{\mathcal U}_\mathcal P)$-equivariant isomorphism
of varieties 
\beq\la{prodfo}g^{-1}(X_v) \, \overset{\sim}{\rightarrow} \,
(\mathcal U \cap \,^{v}\overline{\mathcal U}_\mathcal P)
\times g^{-1}(C^v_\infty \cap X_v) \, \overset{\sim}{\rightarrow}\, Y(v)\times g^{-1}(S_v).
\eeq
This establishes  \eqref{pir}.
\end{proof}

{\bf Proof  Theorem \ref{tmff}.}
Recall that, by using the $\B$-action,  we have reduced ourselves to the  case of the special $k$-rational points $x_v \in X
:= X_{\B\PP}(w)$, with $v \leq w$ in $\m{W}/\m{W}_\PP.$
We use (\ref{pir}).
Consider the following natural restriction/pull-back maps of graded Galois modules
\beq\la{4rf}
H^*(\ov{Z}, \ic{\ov{Z}}) \to H^*(\ov{g}^{-1} (\ov{X}_{v}), {\ic{\ov{Z}}})  \,\overset{\sim}{\rightarrow} \,
H^* (\ov{g}^{-1}(\ov{S_{v}}), {\ic{\ov{Z}}}) \, \overset{\sim}{\rightarrow} \, H^*(\ov{g}^{-1} (\ov{x_v}), {\ic{\ov{Z}}}),
\eeq
where the first isomorphism is due to the K\"unneth formula, and the second is due to the retraction Lemma \ref{retrlm}.
We freely use   the  weight argument  in \cite[Lemma 2.2.1]{dC}, which we summarize.
First we establish purity by means of a classical argument: the second module is mixed
with weights $\geq 0$, the last is mixed with  weights $\leq 0,$ so that the second module
is pure with  weight zero. Next, the first module is pure with weight zero and surjects onto the pure weight zero part
of the second, which is the whole thing (let $i$ be the closed embedding of the complement of $g^{-1}(X_v)$ in $Z$; then use the exact sequence $H^*(\ov{Z}, \ic{\ov{Z}}) \rightarrow H^*(\ov{g}^{-1} (\ov{X}_{v}), {\ic{\ov{Z}}})  \rightarrow H^{*+1}(\ov{Z}, \bar{i}_* \bar{i}^! \ic{\ov{Z}})$ and the fact that the r.h.s.\,\,has weights $\geq *+1$). Therefore we conclude that the composition 
$H^*(\ov{Z}, \ic{\ov{Z}}) \to  H^*(g^{-1} (\ov{x_v}), {\ic{\ov{Z}}})$ is surjective. 
All the assertions of the theorem, except for the last one follow at once.

If we replace $Z$ with a dense open subset $U\subseteq Z$ containing the fiber over any closed point, then  the weight argument above
can be repeated: we no longer have an open set in the shape of a  nice product, but we can argue in the same way
that the images of $I\!H^*(\ov{Z}, \oql)$ and of $I\!H^*(\ov{U}, \oql)$ in $H^*( \ov{g}^{-1} ( \ov{x}), \m{IC}_{\ov{Z}}),$
coincide.
This completes the proof of Theorem \ref{tmff}.  \qed

\subsection{Proof of Theorem \ref{ctm}}\la{pftmc}$\,$

\begin{lm}\la{pavingfr}
Let $X$ be a $k$-scheme which is paved  by affine spaces.
The compactly supported cohomology   $H^*_c(\ov{X}, \oql)$  is a \good \, graded Galois module.
In particular, if $X$ is proper, then the ordinary cohomology $H^*(\ov{X}, \oql)$  is a \good \, graded  Galois module.
\end{lm}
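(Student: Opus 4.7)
My strategy is induction on the length $n$ of the affine paving $\emptyset = X_0 \subset X_1 \subset \cdots \subset X_n = X_{\rm red}$, the case $n=0$ being trivial. The base computation is that for each affine space $\mathbb A^m$ over $k$, $H^*_c(\ov{\mathbb A^m}, \oql) = \oql(-m)[-2m]$, which is manifestly \good. Taking disjoint unions, $H^*_c(\ov{X_i \setminus X_{i-1}}, \oql) = \bigoplus_j \oql(-n_{ij})[-2n_{ij}]$ is \good.

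For the inductive step, set $U := X_i \setminus X_{i-1}$ and $Z := X_{i-1}$. The open-closed triangle produces the long exact sequence
\begin{equation*}
\cdots \to H^j_c(\ov U, \oql) \to H^j_c(\ov{X_i}, \oql) \to H^j_c(\ov Z, \oql) \to H^{j+1}_c(\ov U, \oql) \to \cdots.
\end{equation*}
By induction and the base case, both outer terms vanish in odd degrees, so every boundary map is zero for parity reasons and the sequence breaks into short exact sequences
\begin{equation*}
0 \to H^{2k}_c(\ov U, \oql) \to H^{2k}_c(\ov{X_i}, \oql) \to H^{2k}_c(\ov Z, \oql) \to 0
\end{equation*}
of mixed graded Galois modules, while $H^{2k+1}_c(\ov{X_i}, \oql)=0$. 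Deligne's weight bound gives $H^{2k}_c(\ov{X_i}, \oql)$ weights $\leq 2k$, while purity of the sub and quotient forces weights $\geq 2k$. Hence the middle term is pure of weight $2k$; evenness and very-purity of weight zero follow.

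The remaining point---showing Frobenius acts semisimply and the module is Tate---is the main obstacle, since a priori the extension could introduce a unipotent Jordan contribution at the eigenvalue $q^k$. To handle this, I would exploit the $k$-rationality of the paving: each affine cell $C_{ij}$ is a $k$-subscheme, so its Zariski closure $\ov{C_{ij}}$ is a closed $k$-subvariety of $X$ of dimension $n_{ij}$ and determines a Galois-equivariant Borel--Moore cycle class of weight $2n_{ij}$. Ordering these classes by stratum, a standard argument (working stratum by stratum and using the vanishing of Ext's between Tate modules in different cohomological degrees) shows that they realize an explicit Galois-equivariant splitting
\begin{equation*}
H^*_c(\ov X, \oql) \;\cong\; \bigoplus_{i,j}\,\oql(-n_{ij})[-2n_{ij}]
\end{equation*}
of the filtration induced by $X_\bullet$. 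Frobenius then acts on each summand as the scalar $q^{n_{ij}}$, giving Frobenius semisimplicity and Tateness, and completing the inductive step.

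The final assertion is immediate: when $X$ is proper, $H^*_c(\ov X, \oql) = H^*(\ov X, \oql)$, so the goodness of $H^*(\ov X, \oql)$ follows at once from the first part.
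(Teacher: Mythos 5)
Your inductive scaffolding correctly yields evenness and very-purity, and you correctly identify the real obstacle: a short exact sequence of pure Tate modules concentrated in a single degree need not split, because $\mathrm{Ext}^1_{\mathrm{Gal}(\ov{k}/k)}(\oql(-k),\oql(-k))\neq 0$. However, the closing ``standard argument'' you invoke to split these extensions does not hold up as stated. Vanishing of Ext's between Tate modules in \emph{different} cohomological degrees is irrelevant here; the dangerous extensions are among copies of $\oql(-k)$ sitting in the \emph{same} degree $2k$, and there the Ext group is non-zero. Moreover, the cycle classes you appeal to live naturally in Borel--Moore homology, not in $H^*_c$, so they do not directly produce a splitting of the $H^*_c$-sequence you wrote; making this rigorous would require switching to the Borel--Moore localization sequence and tracking the classes there.

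The paper's proof takes the cycle class statement as the backbone rather than as a closing remark: it invokes the $\ell$-adic cycle class isomorphism $A_*(X)\otimes_\zed\oql \cong H^{BM}_{2*}(\ov{X},\oql)(-*)^{\mathrm{Frob}}=H^{BM}_{2*}(\ov{X},\oql)(-*)$ for a scheme paved by affine spaces (citing Fulton and Olsson), observes that the appropriately twisted classes of the $k$-rational cell closures therefore give a Frobenius eigenbasis of Borel--Moore homology --- yielding evenness, very-purity, Tateness and Frobenius semisimplicity simultaneously --- and then dualizes via $H^{BM}_*(\ov{X},\oql)\cong H^*_c(\ov{X},\oql)^\vee$. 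You should restructure your argument along those lines: the induction on the paving, if you wish to keep it, is cleanest when run on the Borel--Moore localization sequence, where the pushforward $H^{BM}_{2k}(\ov{X_{i-1}})\to H^{BM}_{2k}(\ov{X_i})$ is compatible with the cycle classes of closed cells and the restriction to the open stratum hits the fundamental classes of the open cells; that is what actually proves the cycle classes span and hence that Frobenius acts by scalars.
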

\begin{proof} Recall Definition \ref{affpav} (affine paving). The Borel-Moore homology $H^{BM}_*(\ov{X},\oql):
= H^{-*}(\ov{X}, \omega_{\ov{X}})$ ($\omega_X$ the  dualizing complex of $X$) is even and there is a natural isomorphism given by the cycle class map
\[{\rm cl}: A_*(X)\otimes_\zed \oql \cong H^{BM}_{2*}(\ov{X},\oql)(-*)^{Frob}=
H^{BM}_{2*}(\ov{X},\oql)(-*), \]
see, e.g.\,\ci[Example, 19.1.11]{Fulton} and \cite[$\S1.1$]{Olss}. Thus, there is a basis of Borel-Moore homology
given by Tate twists of the cycle classes of the closures $C_{ij} = \ov{\mathbb A^{n_{ij}}} \subseteq X$ of the affine cells.
In particular, each ${\rm cl}(C_{ij})(n_{ij}) \in H^{BM}_{2n_{ij}}(\ov{X}, \oql)$  is an eigenvector  of Frobenius with eigenvalue $|k|^{- n_{ij}}$ (note that, here,
$H^{BM}_{2k} (\ov{X} ,{\oql})$ is pure of weight $-2k$).
The conclusion follows by the Verdier duality isomorphisms of graded Galois modules 
$H^{BM}_*(\ov{X}, \oql)\cong  H^{*}_c (\ov{X}, \oql)^\vee$.
\end{proof}
\begin{lm}\la{good}
 {\rm ({\bf Demazure varieties are \good})} 
Let $X_{\B} (s_{\bullet})$ be a Demazure variety, i.e. a  twisted product with $s_\bullet \in \m{S}^r$ a vector of
simple reflections. Then we have that  $I\!H^*(\ov{X_\B (s_\bullet)}, \oql)= H^*(\ov{X_\mathcal B (s_\bullet)}, \oql)$  is \good \, and
generated 
by algebraic cycle classes.
\end{lm}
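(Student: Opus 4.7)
The plan is to reduce the lemma to the two structural facts that (a) a Demazure variety is smooth and (b) it admits an affine paving; Lemma \ref{pavingfr} then does the rest.

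First I would record the classical fact that $X_\mathcal B(s_\bullet)$ is smooth, indeed an iterated $\mathbb P^1$-bundle (a Bott--Samelson-type resolution). One way to see this: forgetting the last factor gives a morphism $X_\mathcal B(s_1,\dots,s_r) \to X_\mathcal B(s_1,\dots,s_{r-1})$ whose fiber over $(\mathcal B_1,\dots,\mathcal B_{r-1})$ is $\{\mathcal B_r \mid \mathcal B_{r-1} \,\overset{\leq s_r}{\textendash\textendash}\, \mathcal B_r\} \cong \mathbb P^1$, and this map is a Zariski locally trivial $\mathbb P^1$-bundle by an argument analogous to Lemma \ref{zlt1}, applied to the last projection instead of the first. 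Smoothness of $X_\mathcal B(s_\bullet)$ now follows inductively. Consequently $\mathcal{IC}_{X_\mathcal B(s_\bullet)} \cong \oql_{X_\mathcal B(s_\bullet)}$, so $I\!H^*(\ov{X_\mathcal B(s_\bullet)}, \oql) = H^*(\ov{X_\mathcal B(s_\bullet)}, \oql)$, giving the first assertion of the lemma.

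Second, I would exhibit an affine paving of $X_\mathcal B(s_\bullet)$ by induction on $r$. The base case $r=0$ is a point. For the inductive step, take the iterated $\mathbb P^1$-bundle $\pi: X_\mathcal B(s_\bullet) \to X_\mathcal B(s_1,\dots,s_{r-1})$ above. By induction, the base admits an affine paving $\emptyset = X_0 \subset X_1 \subset \cdots \subset X_n$; since $\pi$ is Zariski locally trivial with fiber $\mathbb P^1 = \mathbb A^1 \sqcup \mathbb A^0$, each $\pi^{-1}(X_i) \setminus \pi^{-1}(X_{i-1})$ is the total space of a trivial $\mathbb P^1$-bundle over a disjoint union of affine spaces (trivial because affine spaces have no nontrivial $\mathbb P^1$-bundles, by e.g.\,vanishing of $H^1_{\text{\'et}}(\mathbb A^n, \mathrm{PGL}_2)$, or one may simply pick a section), and so splits into a disjoint union of pieces of the form $\mathbb A^{n_{ij}} \times \mathbb A^1$ and $\mathbb A^{n_{ij}} \times \mathbb A^0$, which are again affine spaces. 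Concatenating these gives the desired paving of $X_\mathcal B(s_\bullet)$. (Alternatively, this is a special case of Theorem \ref{pavingtm}.(3), or of Theorem \ref{pavingtm}.(1) applied to the trivial fiber over a $T(k)$-fixed point in $\mathcal G/\mathcal B$.)

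Third, Lemma \ref{pavingfr} applied to the paving just constructed yields immediately that $H^*(\ov{X_\mathcal B(s_\bullet)}, \oql)$ is \good. Moreover, as is made explicit in the proof of that lemma, the cycle classes of the closures of the affine cells of the paving, with appropriate Tate twists, form a basis of the Borel--Moore homology, hence by Poincar\'e duality (using smoothness and projectivity) of the ordinary cohomology as well. Therefore $H^*(\ov{X_\mathcal B(s_\bullet)}, \oql)$ is spanned by algebraic cycle classes, completing the proof.

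There is no real obstacle here: everything reduces to the standard iterated $\mathbb P^1$-bundle description of Demazure varieties and the invocation of Lemma \ref{pavingfr}. The mildest technical point is verifying that the $\mathbb P^1$-bundle trivializes over each affine piece of the inductively constructed paving, but this is automatic.
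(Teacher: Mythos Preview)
Your proof is correct and follows essentially the same route as the paper: smoothness via the iterated $\mathbb P^1$-bundle structure identifies $I\!H^*$ with $H^*$, an affine paving (the paper simply invokes Theorem \ref{pavingtm}(3), which you also offer as an alternative) feeds into Lemma \ref{pavingfr}, and Poincar\'e duality converts the Borel--Moore cycle-class basis into one for ordinary cohomology. The only cosmetic difference is that your direct inductive paving argument relies on trivializing a $\mathbb P^1$-bundle over each affine cell, which is true but is handled more transparently in the paper's proof of Theorem \ref{pavingtm}(3) by observing that each $Y_{\B\PP}(v)$ lies inside a big cell, over which the bundle is manifestly trivial.
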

\begin{proof}
Since, by construction,  $X_\mathcal B (s_\bullet)$ is an iterated $\pn{1}$-bundle,  it is smooth 
of dimension $r$
so that we have natural isomorphims of   graded Galois modules
\[I\!H^*(\ov{X_\mathcal B (s_\bullet)}, \oql) \cong H^*(\ov{X_\mathcal B (s_\bullet)}, \oql) \cong 
H^{BM}_{2r- *}(\ov{X_\mathcal B (s_\bullet)}, \oql) (-r).\]
As the proof of  Lemma \ref{pavingfr} shows, the middle term is \good \, with weight zero and the r.h.s  is generated by algebraic cycle classes. In order to apply Lemma \ref{pavingfr}, we invoke the special case of Theorem \ref{pavingtm}(3) which asserts that $X_\B(s_\bullet)$ is paved by affine spaces.
\end{proof}

\begin{lm}\la{covro}
A twisted product variety  is the surjective image of a convolution morphism with domain a Demazure variety.
\end{lm}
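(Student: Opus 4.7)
The plan is to construct the desired surjective convolution morphism by replacing each class $w_i \in \,_\PP\W_\PP$ with a specific lift to $\W$ and then breaking it into simple reflections.

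First, for each $i$, I would choose the unique maximal-length representative $\tilde{w}_i \in \,^\PP\W^\PP \subset \W$ of the class $w_i \in \,_\PP\W_\PP$. The key property of this choice (used already in the proof of Proposition \ref{fqco}) is that $\ov{\B \tilde{w}_i \B} = \ov{\PP w_i \PP}$ inside $\G$. I then fix a reduced expression $\tilde{w}_i = s_{i,1} s_{i,2} \cdots s_{i,k_i}$ in simple reflections and concatenate these to form a sequence $s_{\bullet\bullet} = (s_{1,1}, \ldots, s_{1,k_1}, s_{2,1}, \ldots, s_{r,k_r})$, which yields a Demazure variety $X_\B(s_{\bullet\bullet})$ of dimension $k_1 + \cdots + k_r$.

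Next, I would introduce the morphism $p: X_\B(s_{\bullet\bullet}) \to X_\PP(w_\bullet)$ sending
\[
 (\B_{1,1}, \ldots, \B_{r,k_r}) \longmapsto (P_1, \ldots, P_r), \qquad P_i := \text{image of } \B_{i,k_i} \text{ in } \G/\PP,
\]
i.e., the generalized convolution morphism of Definition \ref{cnvzmpz} associated with the cut indices $I = (k_1, k_1+k_2, \ldots, k_1+\cdots+k_r)$ and with the projection $\G/\B \to \G/\PP$ applied in each remembered factor. To see $p$ lands in $X_\PP(w_\bullet)$, I would use that the image of $\tilde{w}_i = s_{i,1} \ast \cdots \ast s_{i,k_i}$ in $\,_\PP\W_\PP$ equals $w_i$ (the reduced expression realizes the standard Demazure product, and then apply Proposition \ref{im_of_p} to pass to $\star_\PP$), combined with the fact that relative positions in $\G/\B$ project to no larger relative positions in $\G/\PP$.

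Finally, I would establish surjectivity in two steps. Given $(P_1, \ldots, P_r) \in X_\PP(w_\bullet)$, pick arbitrary lifts $g_i \in \G$ with $g_i \PP = P_i$ (set $g_0 = 1$); then $g_{i-1}^{-1} g_i \in \ov{\PP w_i \PP} = \ov{\B \tilde{w}_i \B}$, so the tuple $(g_1 \B, \ldots, g_r \B)$ lies in $X_\B(\tilde{w}_\bullet)$ and maps to $(P_1, \ldots, P_r)$. I then lift this element further back to $X_\B(s_{\bullet\bullet})$ using that the geometric convolution morphism $X_\B(s_{\bullet\bullet}) \to X_\B(\tilde{w}_\bullet)$ is surjective (this follows from Remark \ref{obvious} applied block by block, since each $\tilde{w}_i$ is the Demazure product of the $s_{i,j}$). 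The composition of the two liftings produces the required preimage in the Demazure variety, so $p$ is surjective. The main (modest) obstacle is purely bookkeeping, namely checking that the constructed map really fits Definition \ref{cnvzmpz} and that the two surjectivity steps compose properly; the underlying geometric facts — maximality of $\tilde{w}_i$ in its double coset and the surjectivity statement for convolutions of reduced expressions — are already in place.
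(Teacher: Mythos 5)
Your proof is correct and takes essentially the same route as the paper: choose the maximal representative $\tilde w_i = u_i \in\,^\PP\W^\PP$ of $w_i$, fix a reduced word $s_{i\bullet}$ for it, and factor the required surjection as $X_\B(s_{\bullet\bullet}) \twoheadrightarrow X_\B(u_\bullet) \twoheadrightarrow X_\PP(w_\bullet)$. You supply more detail than the paper's terse two-line proof (which simply cites the surjectivity of each leg), but the underlying idea and all the cited facts coincide.
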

\begin{proof}
Let $X_\PP (w_\bullet)$ be a twisted product variety. Let $u_i$ be the  maximal representative
in $\m{W}$  of $w_i$. Let $s_{i\bullet}$ be a reduced word for $u_i$.
The composition of surjective convolution morphisms $X_\B (s_{\bullet \bullet}) \to X_\B (u_\bullet) \to X_\PP (w_\bullet)$
yields the desired conclusion. 
\end{proof}

Let us record for later use  (proofs of Theorem \ref{ctm} below and Theorem \ref{pavingtm} in $\S \ref{pf??}$) that the construction in the proof of Lemma \ref{covro}, coupled with Remark \ref{rt41}
and Proposition \ref{01012}
yields the following commutative diagram (to this end, note that: by construction, we have  $w_\bullet = u''_\bullet$; by the proposition, we have that $u_\star''= w_\star$; by the remark, we have the indicated bundle structures)  
\beq\la{mmh}
\xymatrix{
X_\B (s_{\bullet \bullet}) \ar[r]^{\pi}  & X_\B (u_\bullet) \ar[r]^{p'} \ar[d]_q  & X_\B (u_\star) \ar[d]^{q'} \\
& X_\PP (w_\bullet) \ar[r]^p & X_\PP (w_\star),
}
\eeq
where all maps are surjective, $q,q'$ are Zariski locally trivial bundles with respective fibers $(\PP/\B)^r$ and 
$(\PP/\B)$. By the associativity of the Demazure product, we have that the Demazure product of the $s_{\bullet \bullet}$
coincides with that of the $u_\bullet$, i.e.  $s_\star= u_\star$.

\medskip

\noindent {\bf Proof of  Theorem \ref{ctm}.}

\n
Let $X_\PP (w_\bullet)$ be a twisted product variety. We need to prove that
its intersection cohomology groups $I\!H^*(X_\PP(w_\bullet), \oql)$  and its intersection complex
 $\m{IC}_{X_\PP(w_\bullet)}$ are \good.

\n
The first statement follows from Lemmata \ref{good}, \ref{covro} and Theorem \ref{tmb}. As for the second, the twisted product variety $X_{\PP}(w_\bullet)$ is locally isomorphic to the usual product, and $\m{IC}_{X_{\PP}(w_\bullet)}$ is locally isomorphic to $\m{IC}_{X_{\PP}(w_1)} \boxtimes \cdots \boxtimes \m{IC}_{X_{\PP}(w_r)}$.  Therefore it is enough to prove the case $r=1$, i.e.  it is enough to prove that $\m{IC}_{X_{\PP}(w)}$ is \good \, for every $w.$ We use diagram (\ref{mmh}) in the case $r=1$. By Theorem \ref{tmb} applied to the surjective morphism $q \circ \pi$, it is enough to prove that $(q\pi)_*(\ov{\mathbb Q}_\ell)$ is \good. For any closed point $x \in X_{\PP}(w)$, Theorem \ref{tmff} gives a surjection $I\!H^*(\ov{X_\B(s_{\bullet})}, \ov{\mathbb Q}_\ell) \twoheadrightarrow H^*(\ov{q\pi}^{-1}(\ov{x}), \ov{\mathbb Q}_\ell)$ of graded Galois modules, which shows that $R(q\pi)_*(\ov{\mathbb Q}_\ell)$ is \good \, by Theorem \ref{tma} and Lemma \ref{good}. (Alternatively, in place of Theorem \ref{tmff} and Lemma \ref{good}, we can use the paving
results Theorem \ref{pavingtm}(2) and Lemma \ref{pavingfr}.) 
\qed

\subsection{Proof of Theorem \ref{dtm}}\la{spdtm}$\;$

We use freely the diagram (\ref{vizo}) and the notation used the proof of Proposition \ref{gt50}.

Let $x\in X_\m{Q} (w''_{I,\bullet})$ be a closed point. Pick $\w{A}$ so that $x\in \w{A}$.
Theorem \ref{tmff} applies to each factor of the product map $p_A$.
By the K\"unneth formula, it follows that the restriction map  $I\!H^*(\ov{p_{\w{A}}^{-1}(\w{A})}, \oql) \to  H^*(\ov{p^{-1}(x)}, \ov{\m{IC}_Z})$  is surjective. By using the same weight argument as in the proof of Theorem \ref{tmff}
(below (\ref{4rf})), we deduce that the restriction map from any Zariski  open subset $U$ of $Z$ containing
$p^{-1}(x)$  is a surjection.

By taking $U=Z$, we see that  the restriction map 
$I\!H^*(\ov{Z}, \oql) \to  H^*(\ov{p^{-1}(x)}, \ov{\m{IC}_Z})$  is surjective. By Theorem \ref{ctm},
the domain of this restriction map is \good, hence so is the target. 

The just-established  fact that  the fibers are \good, coupled with  the proper base change theorem and with Theorem \ref{tma} ensures that $p_* \m{IC}_X$ is \good.

Finally, since $I\!H^0(\ov{Z}, \oql)$ is one-dimensional, and the fibers of $p$ are non-empty, we deduce that they are geometrically connected.
\qed

\section{Proof of the affine paving Theorem \ref{pavingtm}} \la{pf??}$\;$

\subsection{Proof of the paving fibers of Demazure maps Theorem \ref{pavingtm}.(1)}\la{oiu34}$\,$

Our original proof went along the lines of \cite[Prop.\,3.0.2]{H05}; see our earlier arXiv posting arXiv:1602.00645v2. Here we will give a more conceptual approach which was suggested by an anonymous referee. The key step is the following general technique for producing affine pavings (cf.\,Def.\,\ref{affpav}) of fibers of morphisms using the Bialynicki-Birula decomposition \cite{BB}. In what follows $k$ will denote any field, and ``point'' will mean ``closed point.''

\begin{lm} \label{BB_paving}
Suppose a split $k$-torus $\mathbb T$ acts on $k$-varieties $X$ and $Y$  and let $f : X \rightarrow Y$ be a proper $\mathbb T$-equivariant $k$-morphism. Assume $X$ is smooth and can be $\mathbb T$-equivariantly embedded into the projective space of a finite dimensional $\mathbb T$-module. Suppose a $k$-rational fixed point $y \in Y^{\mathbb T}(k)$ possesses a $\mathbb T$-invariant open affine neighborhood $V_y \subset Y$ such that there exists a cocharacter $\mu_y : \mathbb G_m \rightarrow \mathbb T$ which contracts $V_y$ onto $y$, and such that the set $X^{\mu_y}$ of fixed-points is finite and consists of $k$-rational points. Then $f^{-1}(y)$ possesses a paving by affine spaces defined over the field $k$.
\end{lm}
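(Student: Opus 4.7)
The plan is to apply the Bialynicki-Birula decomposition to a suitable smooth $\mu_y$-stable open neighborhood of $f^{-1}(y)$ in $X$, and then to restrict the resulting cell decomposition to the fiber $f^{-1}(y)$.

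First, I would set $U := f^{-1}(V_y) \subseteq X$, which is $\mathbb T$-invariant, open in $X$, hence smooth and quasi-projective via the restriction of the $\mathbb T$-equivariant embedding $X \hookrightarrow \mathbb{P}(V)$. The initial step is to show every point $u \in U$ has a $\mu_y$-limit $\lim_{t \to 0} \mu_y(t)\cdot u$ existing in $U$ and lying in the finite set $X^{\mu_y} \cap f^{-1}(y)$. Indeed, the orbit map $\mathbb{G}_m \to V_y$, $t \mapsto \mu_y(t)\cdot f(u)$, extends to a morphism $\mathbb{A}^1 \to V_y$ sending $0$ to $y$ by the contracting hypothesis on $V_y$; the valuative criterion of properness applied to $f$ then produces a unique extension of the lift $\mathbb{G}_m \to X$ to a morphism $\mathbb{A}^1 \to X$ whose value at $0$ is a $\mu_y$-fixed point necessarily lying in $f^{-1}(y) \subseteq U$. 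In particular $U^{\mu_y} = X^{\mu_y}\cap f^{-1}(y)$ is a finite set of $k$-rational points.

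Next, I would apply the Bialynicki-Birula decomposition to the smooth quasi-projective $\mu_y$-variety $U$: since every point admits a $\mu_y$-limit and the fixed-point set $U^{\mu_y}$ is finite and consists of $k$-rational points, BB yields a decomposition into locally closed $k$-subvarieties
$$
U \;=\; \bigsqcup_{x_i \,\in\, U^{\mu_y}} W_i^+, \qquad W_i^+ \;:=\; \bigl\{u \in U \,\big|\, \lim_{t\to 0}\mu_y(t)\,u = x_i\bigr\},
$$
with each $W_i^+$ isomorphic (as a $\mu_y$-variety over $k$) to the affine space $T_{x_i}U(\!>\!0)$ of positive $\mu_y$-weight vectors in the tangent space, and with the retraction $\rho: U \to U^{\mu_y}$, $u \mapsto \lim_{t\to 0}\mu_y(t)\,u$, a $k$-morphism. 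I would then choose a total order on the fixed points $x_1, \ldots, x_N$ refining the closure order on the BB strata, so that $\overline{W_i^+} \subseteq \bigsqcup_{j \leq i} W_j^+$; setting $Z_i := f^{-1}(y) \cap \bigsqcup_{j \leq i} W_j^+$, one obtains a filtration
$$
\emptyset \;=\; Z_0 \;\subset\; Z_1 \;\subset\; \cdots \;\subset\; Z_N \;=\; f^{-1}(y)
$$
by closed $k$-subschemes, whose successive differences are $Z_i \setminus Z_{i-1} = f^{-1}(y) \cap W_i^+$.

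The main obstacle is to show that each $f^{-1}(y) \cap W_i^+$ is topologically an affine space over $k$. One identifies this intersection with the attracting cell at $x_i$ in the proper $\mu_y$-variety $f^{-1}(y)$, which sits as a $\mu_y$-stable closed subscheme of $W_i^+ \cong \mathbb{A}^{n_i}$; scheme-theoretically it is cut out in $W_i^+$ by the image of the maximal ideal $\mathfrak m_y \subseteq \mathcal O(V_y)$ under the $\mu_y$-equivariant morphism $f|_{W_i^+} : W_i^+ \to V_y$. Choosing $\mu_y$-eigencoordinates on $W_i^+$ adapted to the weight decomposition, and using the differential $df_{x_i} : T_{x_i}U \to T_yV_y$ together with the ambient smoothness of $U$, one argues that the reduced intersection is a coordinate subspace of $\mathbb{A}^{n_i}$, hence topologically an affine space. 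Making this last step fully rigorous is the delicate point; in the concrete applications needed for Theorem \ref{pavingtm}, where $f$ is a Demazure or convolution morphism and the fixed points and their attracting cells can be computed explicitly from the Bruhat order, the cells $f^{-1}(y) \cap W_i^+$ match the known affine Bruhat-type cells, and the paving assertion follows.
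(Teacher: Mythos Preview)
Your proposal has a genuine gap at exactly the point you flag as ``delicate.'' The attracting-cell intersection $f^{-1}(y)\cap W_i^+$ is indeed the $\mu_y$-attracting cell of $f^{-1}(y)$ at $x_i$, but Bialynicki--Birula gives affine spaces only on \emph{smooth} varieties, and $f^{-1}(y)$ has no reason to be smooth. Your attempt to cut $W_i^+$ by the image of $\mathfrak m_y$ and argue that the reduced locus is a coordinate subspace does not work in general: there is no reason the equations defining the fiber should be linear in the BB coordinates. Appealing to the explicit structure of Demazure fibers at the end is circular, since this lemma is precisely what is used to prove those paving results.

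The paper's proof avoids this entirely by a sign change. Instead of the $\mu_y$-attracting decomposition of $U$, take the Bialynicki--Birula decomposition of all of $X$ for the action of $-\mu_y$: one gets $X=\coprod_i X_i$ with each $X_i$ an affine space (since $X$ is smooth and $X^{-\mu_y}=X^{\mu_y}$ is finite and $k$-rational). The point $y$ is \emph{repelling} for $-\mu_y$. Now if $X_i$ meets $f^{-1}(y)$, then $x_i\in f^{-1}(y)$ because the fiber is closed and $\mathbb T$-invariant; conversely, for any $x\in X_i$ one has $\lim_{t\to 0}(-\mu_y)(t)\cdot f(x)=f(x_i)=y$, and since $y$ is an isolated repelling fixed point in $V_y$ this forces $f(x)=y$. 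Hence every BB cell is either contained in $f^{-1}(y)$ or disjoint from it, and the fiber is a union of full affine cells. This is the missing idea: switching to $-\mu_y$ turns the problematic ``intersection'' step into a clean ``all-or-nothing'' dichotomy.
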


\begin{proof}
The fixed-point $y$ is ``attractive'' for the $\mathbb G_m$-action defined by $\mu_y$.  It is therefore an (isolated) ``repelling'' fixed point for the action defined by $-\mu_y$. Let $\{x_i\}_i:=X^{-\mu_y}= X^{\mu_y} $ be the common finite set of fixed points, which, by our assumptions, are $k$-rational.

In what follows, the notion of $\underset{t \rightarrow 0}{\rm lim}\, t\cdot x$ is made precise by using the language of $\mathbb A^1$-monoid actions as in Lemma \ref{retrlm}. We warn the reader that when used in this way, the symbol $t$ denotes a varying element of $\mathbb G_m$, and not the uniformizer in the rings $k[\![t]\!]$, $k(\!(t)\!)$, etc. 

Consider the Bialynicki-Birula decomposition of $X$ for the action defined by $-\mu_y$.  By our assumptions, and according to \cite[Thm.\,4.4]{BB} and \cite[Thm.\,5.8]{Hes}, we obtain a finite decomposition of $X$ by affine spaces defined over $k$
\begin{equation} \label{BB_X_decomp}
X = \coprod_i X_i, \qquad X_i := \{ x \in X \, | \, \underset{t \rightarrow 0}{\rm lim} \, t\cdot x = x_i\}\cong \mathbb A^{d_i}, 
\end{equation} 
where $t\cdot x = -\mu_y(t)(x)$ and $X_i$ is the ``attracting set'' for $x_i$ w.\,r.\,t.~the action defined by $-\mu_y$. 
We claim that if $f^{-1}(y) \cap X_i \neq \emptyset$, then $X_i \subseteq f^{-1}(y)$.  Let $x \in X_i$, so that 
\begin{equation} \label{limit}
x_i = \underset{t \rightarrow 0}{\rm lim}\, t\cdot x.
\end{equation}
If $x \in f^{-1}(y) \cap X_i$, then $x_i \in f^{-1}(y)$, for  $f^{-1}(y)$ is $\mathbb T$-invariant and closed.  Let $x \in X_i$ be arbitrary. Applying $f$ to (\ref{limit}), we find $\underset{t \rightarrow 0}{\rm lim}\, t \cdot f(x) = y$. Since $y$ is a repelling fixed-point for $-\mu_y$, this forces $f(x) = y$, so that $x \in f^{-1}(y)$.

It follows that the fiber $f^{-1}(y)$ is the union of certain cells in the decomposition (\ref{BB_X_decomp}) of $X$.
 \end{proof}

We now prove Theorem \ref{pavingtm}.(1).

We will apply Lemma \ref{BB_paving} to the morphism $p: X_\B(s_\bullet) \rightarrow X_\B(s_\star)$. The torus $\mathbb T$ is taken to be the product $\mathbb T := T \times \mathbb G_m$, where $T \subset G$ will act as usual and $\mathbb G_m$ will act through $c$, the dilation action discussed in $\S\ref{pftmff}$. 

Since $Y_\B(v) \subset v \overline{\mathcal U} x_e$, we see that $\mathcal G/\B$ is covered by the open $\mathbb T$-invariant subsets $v \overline{\mathcal U} x_e$ \,($v \in \widetilde{W}$). Further, as in the proof of Lemma \ref{ulemma}, for integers $i >\!\!> \!n \!>\!\!> 0$ we set $\mu = -2n\rho^\vee$ and define $\mu_{c^{-i}}: \mathbb G_m \rightarrow T \times \mathbb G_m, \,\, a \mapsto (\mu(a), a^{-i})$. Then $v \mu_{c^{-i}} v^{-1}$ contracts the open neighborhood $v\overline{\mathcal U} x_e$ onto the fixed point $v x_e = x_v$. 

In particular the only $\mathbb T$-fixed points in $\mathcal G/\B$ are the points $x_w$ for $w \in \widetilde{W}$. Moreover we claim that the $v \mu_{c^{-i}} v^{-1}$-fixed points in $\mathcal G/\B$ are also just the points $x_w$\, ($w \in \widetilde{W}$).  We easily reduce to the case $v =1$. A point in $(\mathcal G/\B)(\bar{k})$ can be written in the form $\bar{u} \cdot w \cdot x_e$ for  unique elements $w \in \widetilde{W}$ and $\bar{u} \in \overline{\mathcal U} \cap \, ^w\overline{\mathcal U}$ (see \cite[Lem.\,3.1]{GH} and (\ref{decomp2})).  Clearly $\bar{u} \cdot w \cdot x_e$ is fixed by $\mu_{c^{-i}}$ if and only if $\bar{u} \cdot w \cdot x_e = \underset{t \rightarrow 0}{\rm lim}\, t\cdot (\bar{u} \cdot w \cdot x_e)$ if and only if $\bar{u} \cdot w \cdot x_e = w \cdot x_e$. 

It follows that each Schubert variety $X_\B(w)$ and consequently each twisted product $X_\B(w_\bullet)$ has only finitely many $\mu_{c^{-i}}$-fixed points.

From these remarks it follows that any $\mathbb T$-fixed point $y = x_v \in X_\B(s_\star)$ has an invariant neighborhood which is contracted onto $y$ by a cocharacter $\mu_y := v \mu_{c^{-i}} v^{-1}$ for which $X_\B(s_\bullet)^{\mu_y}$ consists of finitely-many $k$-rational points. Thus all the hypotheses of Lemma \ref{BB_paving} are satisfied for the morphism $p: X_\B(s_\bullet) \rightarrow X_\B(s_\star)$, and we conclude that the fibers of $p$ over $\mathbb T$-fixed points are paved by affine spaces.

Finally we prove the triviality of the map $p$ over $\mathcal B$-orbits contained in its image. Assume $Y_\B(v) \subset X_\B(s_\star)$.  An element $\mathcal B' \in Y_\B(v)$ can be written in the form 
$$
\mathcal B' = {uv}\mathcal B
$$
for a unique element $u \in \mathcal U \cap \, ^v\overline{\mathcal U}$. We can then define an isomorphism
$$
p^{-1}(Y_\B(v)) ~ \overset{\sim}{\longrightarrow} ~ \, p^{-1}(v\mathcal B) \times Y_\B(v)
$$
by sending $(\mathcal B_1, \dots, \mathcal B_{r-1}, {uv}\mathcal B)$ to $(\, {u^{-1}}\mathcal B_1, \cdots, {u^{-1}}\mathcal B_{r-1}, {v}\mathcal B) \times {uv}\mathcal B$. 

This completes the proof of Theorem \ref{pavingtm}.(1).
\qed

\subsection{Proof of  the paving Theorem \ref{pavingtm}.(2)}\la{rtyuk}$\,$

In terms of diagram (\ref{mmh}), we need to pave by affine spaces the fibers of $q' \circ p' \circ \pi$. By $\B$-equivariance, we need consider only the fiber over $w\mathcal P/\mathcal P$ for $w \leq u_*$ in $\mathcal W/\mathcal W_{\PP}$. Let $w \in \mathcal W$ be a {\em minimal} element in its coset $w \mathcal W_\mathcal P$. Then
$$
q'^{-1}(w\mathcal P/\mathcal P) = \coprod_{w' \in \mathcal W_\mathcal P} Y_\mathcal B(ww').
$$
Each $Y_\mathcal B(ww' )$ is locally closed  in this fiber, and $Y_\mathcal B(w w'') \subset \overline{Y_\mathcal B(w w')}$ if and only if $w'' \leq w'$. By Theorem \ref{pavingtm}(1) applied to $p' \circ \pi$, we see that each $(p' \circ \pi)^{-1}(Y_\B(ww'))$ is paved by affine spaces. Theorem \ref{pavingtm}(2) follows.
\qed

\subsection{Proof of Theorem \ref{pavingtm}.(3)}\la{pzvr}$\;$

We need to prove that the variety $X_{\PP}(w_\bullet)$ is paved by affine spaces. The result can be proved by induction on $r$.  The case $r=1$ is just the statement that $X_{\PP}(w_1)$ is paved by affine spaces, which is clear. In fact, we even have the $\B$-invariant paving $X_{\PP} (w_1) = \coprod_{v}Y_{\B \PP}(v)$ where $v$ ranges over elements in $\W/\W_{\PP}$ such that $v \W_{\PP} \, \leq \, w_1 \mathcal W_{\PP}$.  The fact that $Y_{\B \PP}(v)$ is an affine space is shown in \eqref{Y_B_in_big_cell}.

The morphism  $X_\PP (w_\bullet) \to X_\PP (w_1)$ in Lemma \ref{zlt1} is: $\B$-equivariant
with fibers  isomorphic to 
   $X_{\PP} (w_2, \ldots, w_r)$; Zariski-locally trivial
over the base, and in fact  trivial over the intersection of $X_{\PP}(w_1)$ with any big cell.
Each $Y_{\B \PP}(v)$ is an affine space, so by induction, it suffices to prove this morphism is trivial over all of $Y_{\B \PP}(v)$. But by (\ref{Y_B_in_big_cell}), $Y_{\B \PP}(v)$ is contained in the big cell through $x_v$, and hence we get the desired triviality assertion.
\qed

\subsection{Proof of Corollary \ref{conv} via paving}\la{anches}$\;$

In light of  (\ref{luc}), Corollary \ref{conv} is the special case of Theorem \ref{dtm} with
$r'=r$ and $m=1$ and $\PP=\m{Q}$ (recall that in this case $\Q$-maximality is automatic). We offer a different proof
based on the paving  Theorem \ref{pavingtm}.(2) for  the fibers of the map $\phi := p \circ q \circ \pi = q' \circ p' \circ \pi$ arising from diagram (\ref{mmh}).

By Theorem \ref{tmb}, the complex $\m{IC}_{X_\PP (w_\bullet)}$  is a direct summand of $q_* \pi_*
{\oql}_{X_\B (s_{\bullet \bullet})}.$ For the same reason, the complex $p_* \m{IC}_{X_\PP (w_\bullet)}$  is a direct summand of $\phi_* {\oql}_{X_\B (s_{\bullet \bullet})}.$ It follows that it is enough to show that the latter is \good.  

By proper base change and Theorem \ref{tma}, we see that the paving of the fibers of $\phi$  Theorem \ref{pavingtm}.(2)
ensures that  $\phi_* {\oql}_{X_\B (s_{\bullet \bullet})}$ is \good.
\qed


\section{Remarks on the Kac-Moody setting and results over other fields $k$} \label{comp_lit_sec}

\subsection{Remarks on the Kac-Moody setting} \la{KM_rmks}

As noted in the introduction, if $G$ is a $k$-split simply connected semisimple group, then $\mathcal G = LG$ is a Kac-Moody group over $k$ but if $G$ is only reductive, then $LG$ is not Kac-Moody. We remark here that our techniques give results also when $\mathcal G$ is an arbitrary Kac-Moody group.  In this case, one has a refined Tits system $(\mathcal G, N, \mathcal U, \mathcal U^-, T, S)$ (see \cite[Def.\,5.2.1, Thm.\,6.2.8]{Kum}) and for any parabolic subgroup $\mathcal P \subset \mathcal G$, one has the Kac-Moody partial flag ind-variety $\mathcal G/\mathcal P$, Schubert varieties $\overline{\mathcal P w \mathcal P/\mathcal P}$, associated Bruhat decompositions $\mathcal G = \cup_{w \in \, _\mathcal P W_\mathcal P} \mathcal P w \mathcal P$ and Bott-Samelson morphisms $X_\mathcal P(w_\bullet) \rightarrow \mathcal G/\mathcal P$, as well as a theory of big cells and a Birkhoff decomposition (as in \cite[Thm.\,6.2.8]{Kum}). These objects satisfy the formal properties listed axiomatically in \cite[Chap.\,5]{Kum}. This is all described in detail in chapters 5-7 of \cite{Kum}, when the base field is $k = \mathbb C$. Over general base fields, one can invoke the standard references such as Tits \cite{Tits81a, Tits81b}, Slodowy \cite{Slo84}, Matthieu \cite{Mat88, Mat89}, and Littelmann \cite{Lit}, to get the same structures and properties over our finite field $k$.  Granting this, one can deduce formally the Kac-Moody analogues of our Theorem \ref{dtm} and Corollary \ref{conv}, using either the contraction or the affine paving method.

Results in the Kac-Moody setting have been proved earlier by Bezrukavnikov-Yun: in fact \cite[Prop.\,3.2.5]{BY} seems to be the first place the semisimplicity and Frobenius semisimplicity of $IC_{w_1} * IC_{w_2}$  was proved, for $IC$-complexes for $B$-orbits on full flag varieties of Kac-Moody groups. Their argument is different from ours. Note that \cite{BY} does not imply our full result for two reasons: 1) $LG$ is not a Kac-Moody group when $G$ is not simply-connected, and 2) we consider all {\em partial} affine flag varieties attached to $LG$ for connected reductive groups $G$.

Achar-Riche have developed in \cite{AR} an abstract framework which implies Frobenius semisimplicity results in various concrete situations. However, it appears to us that their method does not prove our Theorem \ref{dtm} or Corollary \ref{conv} in general. The main difficulty seems to be that, in most cases, our convolution morphisms  $p: X_\mathcal P(w_\bullet) \rightarrow X_\mathcal P(w_\star)$ are {\bf not} stratified morphisms of affable spaces (in the sense of \cite[9.13]{AR}), for any natural choices of affine even stratifications on the source and target; for more discussion we refer to our earlier arXiv posting arXiv:1602.00645v2.

\subsection{Results over other fields $k$}\la{otherk}$\;$

The results in \S\ref{r42} concerning  generalized convolution morphisms and 
the surjectivity criterion Theorem \ref{tmff}  hold, by the usual specialization arguments over an arbitrary algebraically closed field if we replace \good \, with even. Over the complex numbers,
they hold if we replace \good \, with even {\em and} Tate and we use M. Saito's theory of mixed Hodge modules to state them.
At present, we do not see how to establish the surjectivity assertions  in Theorems \ref{dtm} and \ref{tmff} 
without using weights (Frobenius, or M. Saito's). The paving results  hold over any field. Theorem \ref{tmb} holds over any algebraically closed field and so does Corollary \ref{corbb}, with the same provisions 
as above. The construction of $L^{--}P_{\bf f}$ and Theorem \ref{big_cell_thm} hold at least over any perfect field.

\end{document}